\titleformat{\chapter}[display]
{\normalfont\huge\bfseries}{\chaptertitlename\ \thechapter}{20pt}{\Huge}
\newtheorem{theorem}{Theorem}[section]
\newtheorem{lemma}[theorem]{Lemma}
\newtheorem{corollary}[theorem]{Corollary}
\newtheorem{proposition}[theorem]{Proposition}
\theoremstyle{definition}
\newtheorem{defn}[theorem]{Definition}
\newtheorem{remark}[theorem]{Remark}
\newtheorem{example}[theorem]{Example}
\def\E{\mathbb{E}}
\def\Z{\mathbb{Z}}
\def\R{\mathbb{R}}
\def\T{\mathbb{T}}
\def\C{\mathbb{C}}
\def\N{\mathbb{N}}
\newcommand{\ud}{\,\mathrm{d}}
\newcommand{\id}{\mathrm{id}}
\DeclareMathOperator{\stab}{Stab}
\DeclareMathOperator{\aut}{Aut}
\DeclareMathOperator{\ab}{Z}
\DeclareMathOperator{\bnd}{B}
\DeclareMathOperator{\tran}{\Theta}
\DeclareMathOperator{\supp}{Supp}
\DeclareMathOperator{\codim}{codim}
\DeclareMathOperator{\comp}{K}
\DeclareMathOperator{\q}{c}
\DeclareMathOperator{\ns}{X}
\DeclareMathOperator{\nss}{Y}
\DeclareMathOperator{\co}{\circ\hspace{-0.02 cm}}
\DeclareMathOperator{\cu}{C}
\DeclareMathOperator{\cor}{Cor}
\DeclareMathOperator{\cs}{s}
\newcommand{\Zmod}[1]{\Z/#1\Z} 
\newcommand*{\sbr}[1]{\scalebox{0.8}{$(#1)$}}
\newcommand{\rk}{\mathrm{rk}}
\providecommand{\arr}[1]{\langle#1\rangle}
\DeclareMathOperator{\cA}{\mathcal{A}}
\DeclareMathOperator{\cC}{\mathcal{C}}
\DeclareMathOperator{\cD}{\mathcal{D}}
\DeclareMathOperator{\cE}{\mathcal{E}}
\DeclareMathOperator{\cF}{\mathcal{F}}
\DeclareMathOperator{\cK}{\mathcal{K}}
\DeclareMathOperator{\cL}{\mathcal{L}}
\DeclareMathOperator{\cQ}{\mathcal{Q}}
\DeclareMathOperator{\cT}{\mathcal{T}}
\DeclareMathOperator{\cU}{\mathcal{U}}
\DeclareMathOperator{\trem}{\Psi}
\DeclareMathOperator{\tRe}{Re}
\begin{document}

\begin{frontmatter}[classification=]

\title{\Huge{Notes on compact nilspaces} } 
\author[pc]{Pablo Candela}
\end{frontmatter}


\tableofcontents

\titlespacing*{\chapter}{0pt}{-0.9cm}{40pt}

\chapter{Introduction}\label{chap:intro}
These notes form the second part of a detailed account of the theory of nilspaces developed by Camarena and Szegedy in the paper \cite{CamSzeg}. The material in these notes expands on the third chapter of their paper, providing more detailed proofs of the main results. To that end we also include several additional  results that are implicit in \cite{CamSzeg}. This material relies strongly on the first part of our exposition, given in \cite{Cand:Notes1}. We shall have to assume some familiarity with the basic theory of nilspaces and some of their algebraic properties, but we shall always refer to the relevant results in \cite{Cand:Notes1}.

\smallskip
Recall the notion of a nilspace from \cite[Definition 1.2.1]{Cand:Notes1}. The principal objects treated in these notes are \emph{compact nilspaces}. These are nilspaces equipped with a compact topology that is compatible with the cube structure. To give the precise definition, we adopt the following terminology.

\begin{defn}\label{def:compspace}
Throughout the sequel, to be concise we shall use the term \emph{compact space} to mean a compact, Hausdorff, second-countable topological space.
\end{defn}
\noindent In particular, compact spaces in these notes are Polish spaces \cite[Theorem 5.3]{Ke}. More generally, topological spaces will usually be assumed to be Hausdorff and second-countable.

Recall from \cite[Definition 1.2.3]{Cand:Notes1} the notion of a cubespace.
\begin{defn}\label{def:compnils}
A cubespace $\ns$ is called a \emph{compact cubespace} if $\ns$ is a compact space and $\cu^n(\ns)$ is a closed subset of $\ns^{\{0,1\}^n}$ for every $n\in \N$. A \emph{compact nilspace} is a compact cubespace satisfying the ergodicity and corner-completion axioms from \cite[Definition 1.2.1]{Cand:Notes1}.
\end{defn}

\noindent The main goal in the sequel is to characterize compact nilspaces.

A central result in this direction is a description of a general compact nilspace as an inverse limit of simpler spaces, namely compact nilspaces of finite rank (these are defined in Section \ref{sec:CFRdef}). This result is treated in Section \ref{sec:invlim} (see Theorem \ref{thm:invlim}). 

Another central result concerns finite-rank compact nilspaces, and involves \emph{nilmanifolds}, i.e. compact homogeneous spaces of nilpotent Lie groups, the study of which goes back to \cite{Malcev}. To situate  the result, let us note that nilmanifolds provide important examples of compact nilspaces. Indeed, a nilmanifold becomes a compact nilspace when, given a filtration on the Lie group (with certain topological properties), the nilmanifold is equipped with the natural cubes associated with the filtration, namely the cubes  introduced by Host and Kra \cite{HK,HKparas}. This construction is treated from a purely algebraic viewpoint in \cite[Section 2.3]{Cand:Notes1}, and its basic topological aspects are detailed in Section \ref{sec:filnilmcompns} below. The central result in question here goes in the converse direction, and states that if a compact nilspace of finite rank has connected structure groups,\footnote{For the notion of the structure groups of a nilspace, recall \cite[Definition 3.2.17]{Cand:Notes1} and \cite[Theorem 3.2.19]{Cand:Notes1}.} then it is isomorphic to a nilmanifold with a cube structure of the kind mentioned above. This is treated in Section \ref{sec:CFRnilsnilm} (see Theorem \ref{thm:toralnilspace}).

The work toward these main theorems yields several other results of inherent interest, including the following: endowing every compact nilspace with a Borel probability measure that generalizes the Haar measure on compact abelian groups (Proposition \ref{prop:nilspaceHaar}); an automatic continuity result for Borel morphisms between compact nilspaces (Theorem \ref{thm:Klepgen}); a rigidity result for morphisms into compact nilspaces of finite rank (Theorem \ref{thm:rigidity}). Several of the main tools used to obtain these results rely on the theory of \emph{continuous systems of measures}, which provides a natural framework for measure theoretic aspects of  compact nilspaces. This is detailed in Section \ref{sec:measprel}.

An alternative treatment of compact nilspaces is given by Gutman, Manners, and Varj\'u in the series of papers \cite{GMV1,GMV2,GMV3}.

Let us end this introduction by evoking one of the central motivations for the study of compact nilspaces. This motivation concerns the analysis of \emph{uniformity norms}. These norms were introduced in arithmetic combinatorics by Gowers \cite{GSz}, and independently in ergodic theory (as the analogous \emph{uniformity seminorms}) by Host and Kra \cite{HK}. The uniformity norms, or $U^d$ norms (one such norm for each integer $d\geq 2$), are defined on the space of complex-valued functions on a finite (or more generally a compact) abelian group. These norms provide useful tools to control averages of bounded functions over certain linear configurations in abelian groups, configurations such as arithmetic progressions (such control can be obtained via the so-called \emph{generalized Von Neumann theorems}); see \cite{GTarith} and \cite{GTlin}. A major topic concerning these norms is the analysis, for each $d\geq 2$, of the harmonics that determine whether the $U^d$ norm of a function is small or large. For the $U^2$ norm these characteristic harmonics are simply the characters from Fourier analysis. For $d>2$, this topic leads to the theory of \emph{higher order Fourier analysis}. Central to this theory are the results known as \emph{inverse theorems} for the $U^d$ norms, proved by Green, Tao and Ziegler \cite{GTZ}, and independently by Szegedy \cite{Szegedy:HFA}. Essentially, these theorems tell us that, for the $U^d$ norm of functions on $\Zmod{N}$, one can use as characteristic harmonics the functions known as \emph{$(d-1)$-step nilsequences}. In the approach to these theorems developed by Szegedy (consisting principally in \cite{Szegedy:HFA} and his work with Camarena \cite{CamSzeg}), compact nilspaces play a fundamental role. Indeed, roughly speaking, in this approach a nilsequence on $\Zmod{N}$ is obtained as a composition $F\co\varphi$, where $\varphi$ is a nilspace morphism from $\Zmod{N}$ to a compact nilspace $\ns$, and $F$ is some continuous function $\ns\to\C$. In this way, compact nilspaces play a role in higher order Fourier analysis that generalizes the role played by the circle group $\R/\Z$ in the definition of characters in Fourier analysis. In these notes we shall concentrate on the theory of compact nilspaces in itself. For further introduction to higher order Fourier analysis and its applications, we refer the reader to the survey \cite{GHFA}. 

\vspace{3cm}

\noindent \textbf{Acknowledgements.} I am very grateful to Bal\'azs Szegedy for discussions that were crucial for my understanding of \cite{CamSzeg}, and to  anonymous referees for advice that helped to improve this paper. I also thank Yonatan Gutman, Frederick Manners and P\'eter Varj\'u for informing me of their work prior to publication, and for useful comments. The present work was supported by the ERC Consolidator Grant No. 617747.

\newpage

\section{Filtered nilmanifolds as compact nilspaces}\label{sec:filnilmcompns}

\medskip
Recall from \cite[Section 2.2]{Cand:Notes1} the notion of a filtration $G_\bullet$ on a group $G$ and the basic properties of  cube sets of the form $\cu^n(G_\bullet)$.
\begin{defn}\label{def:nilmani}
A \emph{nilmanifold} is a quotient space $G/\Gamma$ where $G$ is a connected nilpotent Lie group and $\Gamma$ is a discrete, cocompact subgroup. If $G_\bullet$ is a filtration on $G$ of degree at most $k$, with terms $G_i$ being closed subgroups of $G$, and with each subgroup $\Gamma \cap G_i$ being cocompact in $G_i$, then we call $(G/\Gamma, G_\bullet)$ a \emph{filtered nilmanifold} of degree at most $k$.
\end{defn}
\noindent The groups in the filtration $G_\bullet$ are sometimes required  to be connected (in particular in applications of nilmanifolds in arithmetic combinatorics; see for instance \cite[Definition 1.3]{GTarith}), but we shall not need this assumption here.

A compact nilspace $\ns$ is \emph{connected} if the underlying  topological space is connected. We now show that every filtered nilmanifold of degree $k$ can be viewed as a $k$-step connected compact nilspace, in the following sense.
\begin{proposition}\label{prop:nilmanilspace}
Let $G$ be a connected nilpotent Lie group with a degree-$k$ filtration $G_\bullet$ of closed subgroups, let $\Gamma\leq G$ be discrete, and for each $i\in [k]=\{1,2,\dots,k\}$ let $\Gamma_i=\Gamma \cap G_i$. Let $\ns$ denote the coset space $G/\Gamma$ equipped with the sets $\cu^n(\ns)=\{\pi_\Gamma\co \q: \q\in \cu^n(G_\bullet)\}$, where $\pi_\Gamma$ is the quotient map $G\to G/\Gamma$. Then $\ns$ is a connected compact nilspace if and only if $\Gamma_i$ is cocompact in $G_i$ for each $i\in [k]$.
\end{proposition}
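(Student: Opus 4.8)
The plan is to prove the two directions of the equivalence separately, with the forward direction (from nilspace to cocompactness) being comparatively soft, and the converse (from cocompactness to the full compact-nilspace structure) requiring the bulk of the work.

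First I would dispose of the easy implication. Suppose $\ns=G/\Gamma$ is a compact nilspace. Then the underlying space is compact, so $G/\Gamma$ is compact, which gives cocompactness of $\Gamma$ in $G=G_1$. To obtain cocompactness of $\Gamma_i$ in $G_i$ for each $i$, I would exploit the internal structure of cubes: the subgroup $G_i$ arises as the group of ``$i$-th order'' diagonal perturbations fixing most vertices of a cube, so that $G_i/\Gamma_i$ embeds as a fibre of an appropriate face map or as a structure group of $\ns$. Concretely, elements of $G_i$ act on $\cu^n(\ns)$ via the face-group action recalled in \cite[Section 2.2]{Cand:Notes1}, and since $\cu^n(\ns)$ is by hypothesis a \emph{closed}, hence compact, subset of the compact space $\ns^{\{0,1\}^n}$, the orbit space must be compact; tracing this through shows $G_i/\Gamma_i$ is a continuous image of a compact set, forcing cocompactness.

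For the main direction, assume each $\Gamma_i$ is cocompact in $G_i$; equivalently $(G/\Gamma,G_\bullet)$ is a filtered nilmanifold. I would verify the three requirements of Definition \ref{def:compnils} in turn, using as a black box the purely algebraic fact from \cite[Section 2.3]{Cand:Notes1} that $\cu^\bullet(G_\bullet)$ descends to a nilspace structure on $G/\Gamma$ (so the ergodicity and corner-completion axioms hold algebraically, and what remains is topological). The connectedness and compactness of $\ns=G/\Gamma$ follow from standard Lie theory, since $G$ is connected and $\Gamma_1$ is cocompact. The serious point is to show that each $\cu^n(\ns)$ is \emph{closed} in $\ns^{\{0,1\}^n}$. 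The natural strategy is to realise $\cu^n(\ns)$ as the image of the compact set $\cu^n(G_\bullet)/\cu^n(\Gamma_\bullet)$ under the continuous map induced by $\pi_\Gamma$; for this I would first check that $\cu^n(G_\bullet)$ is a closed subgroup of $G^{\{0,1\}^n}$ (immediate from the $G_i$ being closed and the description of cube groups as intersections of kernels of face and edge conditions) and that $\cu^n(\Gamma_\bullet)=\cu^n(G_\bullet)\cap \Gamma^{\{0,1\}^n}$ is cocompact in it. Granting this, $\cu^n(\ns)$ is the image of a compact space under a continuous map into the Hausdorff space $\ns^{\{0,1\}^n}$, hence compact and therefore closed.

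The main obstacle, then, is the claim that $\cu^n(\Gamma_\bullet)$ is cocompact in $\cu^n(G_\bullet)$ given only that each $\Gamma_i$ is cocompact in $G_i$. This does not follow formally from cocompactness term-by-term, because $\cu^n(G_\bullet)$ is an iterated product of \emph{conjugated} and \emph{intersected} copies of the $G_i$, and cocompactness is not in general preserved under such operations. I expect to handle this by an induction on $n$ (or on the dimension of the cube), using the decomposition of $\cu^n(G_\bullet)$ furnished by the structural results on cube groups in \cite[Section 2.2]{Cand:Notes1}: one fibres $\cu^n(G_\bullet)$ over $\cu^{n-1}(G_\bullet)$ with fibre a shifted copy of a lower cube group, and checks at each stage that a discrete cocompact subgroup in the base, together with a cocompact one in the fibre, yields a cocompact subgroup in the total space. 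The key supporting fact is the Lie-theoretic principle that if $H\leq G$ are closed subgroups with $\Gamma\cap H$ cocompact in $H$ and the image of $\Gamma$ cocompact in $G/H$, then $\Gamma$ is cocompact in $G$; assembling these fibrewise cocompactness statements along the cube's coordinate structure is where the real care is needed.
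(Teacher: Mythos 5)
Your overall architecture matches the paper's: both directions are handled the same way (necessity by exhibiting $G_i/\Gamma_i$ as a continuous image of a compact set of cubes fixing all vertices but one, sufficiency by reducing closedness of $\cu^n(\ns)$ to cocompactness of $\cu^n(G_\bullet)\cap\Gamma^{\{0,1\}^n}$ in $\cu^n(G_\bullet)$), and you correctly identify that this cocompactness is the only serious point. Where you genuinely diverge is in how you propose to prove it. The paper follows \cite[Lemma E.10]{GTlin}: it uses the unique Host--Kra factorization $\q=g_0^{F_0}\cdots g_{2^n-1}^{F_{2^n-1}}$ over the upper faces in colex order, observes that the subgroups $H_i$ of cubes whose first $i$ coefficients are trivial form a normal series of $\cu^n(G_\bullet)$ with successive layers isomorphic to $G_{\codim(F_i)}$, and replaces each $g_i$ by $k_i\gamma_i$ with $k_i$ in a fixed compact set, pushing the $\gamma_i^{F_i}$ to the right using normality of $H_i$; this is a single pass over the $2^n$ faces within a fixed dimension. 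Your proposal instead inducts on the cube dimension $n$, fibring $\cu^n(G_\bullet)$ over $\cu^{n-1}(G_\bullet)$ by face restriction with fibre the cube group of the shifted filtration, and assembling cocompactness via the base-plus-fibre principle. This works, but note it forces a double induction (on $n$ and on the filtration shift), and you must separately verify that the lattice cubes surject onto the lattice cubes of the base (easy, by duplicating a cube) and that the intersection of the lattice with the fibre is exactly the lattice cube group of the shifted filtration restricted to the opposite face --- the latter is essentially the same unique-factorization input the paper uses directly. The paper's route is more economical because the normal series of the $H_i$ packages all of this into one induction; yours buys a cleaner conceptual statement (cocompactness passes through extensions) at the cost of more bookkeeping. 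Both are correct.
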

\begin{proof}
The nilspace axioms are proved for $\ns$ in \cite[Proposition 2.3.1]{Cand:Notes1}. To complete the proof, we show that the compactness of each $G_i/\Gamma_i$ is necessary and sufficient for each set $\cu^n(\ns)$ to be closed in $\ns^{\{0,1\}^n}$.\\
\indent To see the sufficiency, it is enough to deduce that $\cu^n(G_\bullet)\cap \Gamma^{\{0,1\}^n}$ is cocompact in $\cu^n(G_\bullet)$, as then $\cu^n(\ns)$ is homeomorphic to the compact set $\cu^n(G_\bullet)/ \big(\cu^n(G_\bullet)\cap \Gamma^{\{0,1\}^n}\big)$ (see \cite[Proposition 2.7]{Leib}). This cocompactness is proved elsewhere in the literature but we include a proof here for completeness, following the argument from \cite[Lemma E.10]{GTlin}. By Definition \ref{def:nilmani} there is a compact set $K_j \subseteq G_j$ such that the product set $K_j \,\Gamma_j$ equals $G_j$. Recall from \cite[Lemma 2.2.5]{Cand:Notes1} that every cube $\q\in \cu^n(G_\bullet)$ has a unique factorization $\q= g_0^{F_0}g_1^{F_1}\cdots g_{2^n-1}^{F_{2^n-1}}$ 
with $g_i\in G_{\codim(F_i)}$ for each $i$, where the $F_i$ are the upper faces of $\{0,1\}^n$, of the form $F_i=F(v_i)=\{v:\supp(v)\supset \supp(v_i)\}$ and the $v_i$ are ordered in the colex order. We then have the following inductive formula for the coefficients $g_i$:
\begin{equation}\label{eq:compcubecoeffs}
 g_0=\q(v_0),\text{ and for each }i>0,\; g_i=  \Big( \prod_{ j < i : \;  \supp(v_j)\subset\supp(v_i)} g_j^{F_j}\Big)^{-1}\cdot \q \,(v_i).
\end{equation}
Let $H_0=\cu^n(G_\bullet)$ and for each $i\in [2^n]$ let $H_i=\{\q\in\cu^n(G_{\bullet}): g_0= \dots = g_{i-1} = \id_G\}$. It follows from \eqref{eq:compcubecoeffs} that $\q\in H_i$ if and only if $\q(v_j)=\id_G$ for all $j<i$. We deduce that $H_i$ is a normal subgroup of $\cu^n(G_\bullet)$. We shall now use these subgroups to show that every $\q\in \cu^n(G_\bullet)$ is a product of an element of $\cu^n(G_\bullet)\cap \Gamma^{\{0,1\}^n}$ with an element of $K:=\{ k_0^{F_0} \dots k_{2^n-1}^{F_{2^n-1}} : k_i \in  K_i\}$. This will prove the claimed sufficiency, since the set $K$ is compact. Let $i \in [2^n]$ and suppose that $\q \in H_{i-1}$. Then $\q = g_i^{F_i}\,\q'$, where $\q' \in H_i$. We then have $g_i^{F_i}=(k_i \gamma_i)^{F_i}$ where $k_i \in K_{\codim(F_i)}$ and $\gamma_i \in \Gamma_{\codim(F_i)}$. Since $H_i$ is normal, we have 
$\q = k_i^{F_i}\; \q'' \; \gamma_i^{F_i}$ with $\q''\in H_i$. Starting with any cube in $H_0$ and repeating this  procedure, we eventually obtain the desired factorization showing that $\q\in K\cdot\big(\cu^n(G_\bullet)\cap \Gamma^{\{0,1\}^n}\big)$.\\
\indent To see the necessity of each set $G_i/\Gamma_i$ being compact, let $Q_i$ denote the set of cubes $\q\in \cu^i(\ns)$ such that $\q(v)$ is the trivial coset $\Gamma$ for all $v\neq 0^i$. Since $\cu^i(\ns)$ is compact, so is $Q_i$. We have $\q=\pi_\Gamma\co \q'$ for some $\q'\in \cu^i(G_\bullet)$ with $\q'(v)\in \Gamma$ for $v\neq 0^i$. The fact that $\q'\in \cu^i(G_\bullet)$ implies that the Gray-code product $\sigma_i(\q')$ lies in $G_i$ (recall  \cite[Definition 2.2.22 and Proposition 2.2.25]{Cand:Notes1}). By the formula for the Gray-code product, we deduce that $\sigma_i(\q')\q'(0^i)^{-1} \in \Gamma$. It follows that $\q'(0^i)\in G_i\,\Gamma$. Thus the projection $\pi_{0^i}: \q\to \q(0^i)$ maps $Q_i$ into $(G_i\,\Gamma)/\Gamma$. Conversely, for every coset $g \Gamma$ with $g\in G_i$ there is a cube $\q\in Q_i$ with $\pi_{0^i}(\q)=g\Gamma$, namely $\q=\pi_\Gamma\co \q'$ where $\q'(0^i)=g$ and $\q'(v)=\id_G$ otherwise. It follows that $\pi_{0^i}(Q_i)=(G_i\,\Gamma)/\Gamma$, so the compactness of $Q_i$ implies that $(G_i\,\Gamma)/\Gamma$ is compact, whence  $G_i/\Gamma_i$ is compact.
\end{proof}

\chapter{Characterization of compact nilspaces}\label{chap:compnils}

\section{Topological preliminaries}\label{sec:toprelims}

\medskip
In order to characterize a general compact nilspace, we first have to examine how the basic algebraic structures treated in \cite{Cand:Notes1} behave under the additional topological assumptions in Definition \ref{def:compnils}.\\

\noindent We begin with a result showing that the closure condition on $\cu^n(\ns)$ needs to be checked only for $n=k+1$.

\begin{lemma}\label{lem:compclosure}
A $k$-step nilspace $\ns$ is a compact nilspace if and only if $\ns$ is a  compact space and $\cu^{k+1}(\ns)$ is a closed subset of $\ns^{\{0,1\}^{k+1}}$.
\end{lemma}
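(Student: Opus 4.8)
The plan is to establish the nontrivial (``if'') direction, since necessity is immediate: by Definition~\ref{def:compnils} a compact nilspace has $\cu^n(\ns)$ closed in $\ns^{\{0,1\}^n}$ for every $n$, in particular for $n=k+1$. So suppose $\ns$ is a $k$-step nilspace that is a compact space and such that $\cu^{k+1}(\ns)$ is closed; I must show that $\cu^n(\ns)$ is closed for every $n\in\N$. I would treat the ranges $n\le k+1$ and $n>k+1$ separately, and note that throughout I use the following two facts: precomposition with a fixed morphism $\phi\colon\{0,1\}^m\to\{0,1\}^{m'}$ of discrete cubes induces a \emph{continuous} map $\phi^*\colon\ns^{\{0,1\}^{m'}}\to\ns^{\{0,1\}^m}$, $f\mapsto f\circ\phi$ (each coordinate of $\phi^*(f)$ is a coordinate of $f$), and the cubespace axiom gives $c\in\cu^{m'}(\ns)\Rightarrow c\circ\phi\in\cu^{m}(\ns)$.

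For $n\le k+1$ no hypothesis on the step is needed. I would fix the coordinate projection $p\colon\{0,1\}^{k+1}\to\{0,1\}^n$ onto the first $n$ coordinates and the face inclusion $\iota\colon\{0,1\}^n\to\{0,1\}^{k+1}$, $\iota(v)=(v,0,\dots,0)$, both of which are morphisms of discrete cubes and satisfy $p\circ\iota=\id$. Applying the cubespace axiom to $p$ gives $f\in\cu^n(\ns)\Rightarrow f\circ p\in\cu^{k+1}(\ns)$, while applying it to $\iota$ together with $p\circ\iota=\id$ gives the converse $f\circ p\in\cu^{k+1}(\ns)\Rightarrow f=(f\circ p)\circ\iota\in\cu^n(\ns)$. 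Hence $\cu^n(\ns)=(p^*)^{-1}\big(\cu^{k+1}(\ns)\big)$, which is closed since $p^*$ is continuous and $\cu^{k+1}(\ns)$ is closed (for $n=k+1$ this is just the hypothesis, $p$ being the identity).

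For $n>k+1$ I would invoke the algebraic characterization of cubes in a $k$-step nilspace from \cite{Cand:Notes1}: a map $c\colon\{0,1\}^n\to\ns$ lies in $\cu^n(\ns)$ if and only if $c\circ\phi\in\cu^{k+1}(\ns)$ for every morphism $\phi\colon\{0,1\}^{k+1}\to\{0,1\}^n$ (equivalently, its restriction to every $(k+1)$-dimensional subcube is a cube). Granting this, and since there are only finitely many such morphisms $\phi$,
\[
\cu^n(\ns)=\bigcap_{\phi}(\phi^*)^{-1}\big(\cu^{k+1}(\ns)\big)
\]
is a finite intersection of preimages of the closed set $\cu^{k+1}(\ns)$ under the continuous maps $\phi^*$, and is therefore closed.

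The hard part is the input used for $n>k+1$, namely that higher-dimensional cubes of a $k$-step nilspace are detected by their $(k+1)$-dimensional restrictions; this is exactly where the step hypothesis (beyond the bare cubespace axioms) enters, reflecting the triviality of the structure groups above level $k$. I expect to either cite this characterization directly from \cite{Cand:Notes1}, or, failing a ready reference, to prove it by induction on $n$ using the uniqueness of corner-completion for corners of dimension exceeding $k$.
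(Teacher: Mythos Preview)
Your proof is correct and follows essentially the same route as the paper's: for $n>k+1$ both arguments invoke the algebraic characterization \cite[Lemma~3.2.13]{Cand:Notes1} (a map is an $n$-cube on a $k$-step nilspace iff all its $(k+1)$-face restrictions are cubes) and write $\cu^n(\ns)$ as a finite intersection of preimages of the closed set $\cu^{k+1}(\ns)$ under continuous restriction maps. The only minor difference is in the case $n\le k+1$: the paper expresses $\cu^n(\ns)$ as the continuous \emph{image} of the compact set $\cu^{k+1}(\ns)$ under restriction to an $n$-face (hence compact, hence closed), whereas you express it as the \emph{preimage} of $\cu^{k+1}(\ns)$ under $f\mapsto f\circ p$---a slight variation that avoids appealing to compactness at this step.
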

\begin{proof}
Note that for all $n\leq m$ we can view $\cu^n(\ns)$ as the projection of $\cu^m(\ns)$ to $\ns^{\{0,1\}^n}$ (by the composition axiom, embedding $\{0,1\}^n$ as a face in $\{0,1\}^m$). In particular, if we assume that $\cu^{k+1}(\ns)$ is closed then for every $n<k+1$ the projection $\cu^n(\ns)$ is also closed. If $n> k+1$, then for any $(k+1)$-dimensional face $F$ let
\[
Q_F=\big\{f:\{0,1\}^n\to \ns ~|~ f\co \phi_F \in \cu^{k+1}(\ns)\big\},
\]
where $f\co \phi_F$ is the restriction of $f$ to $F$ (recall \cite[Definition 1.1.5]{Cand:Notes1}). Thus $Q_F$ is the preimage of a closed set under the restriction map $f\mapsto f\co \phi_F $, and is therefore closed. Then, by \cite[Lemma 3.2.13]{Cand:Notes1}, we have $\cu^n(\ns)=\bigcap_F Q_F$ where $F$ runs through $(k+1)$-dimensional faces of $\{0,1\}^n$.
\end{proof}

\noindent In the remainder of this section we establish topological properties of the structures in \cite[Chapter 3]{Cand:Notes1} under the compactness assumption. The main result that we shall obtain along these lines is that if a finite-step nilspace is compact, then the associated abelian bundle obtained in \cite[Theorem 3.2.19]{Cand:Notes1} is endowed with a topology making it an iterated \emph{continuous} abelian bundle (in a sense that we shall formalize in Definition \ref{def:CpctAbBund}), with structure groups being compact abelian groups.

We begin by looking at the behaviour of some basic constructions. Recall from \cite[Definitions 3.1.19 and 3.1.22]{Cand:Notes1} the notions of arrow spaces and the spaces $\partial_x\ns$.

\begin{lemma}
Let $\ns$ be a compact nilspace, let $\ab$ be a compact abelian group, and fix any $x\in \ns$. Then the arrow spaces $\ns \Join_i \ns$, $i\geq 1$, the nilspaces $\cD_k(\ab)$, $k\geq 1$, and the space $\partial_x \ns$, are all compact nilspaces.
\end{lemma}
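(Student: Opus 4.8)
The algebraic content of this lemma---that each of $\ns \Join_i \ns$, $\cD_k(\ab)$ and $\partial_x \ns$ is a nilspace of finite step satisfying the ergodicity and corner-completion axioms---is already established in \cite{Cand:Notes1} (in the results accompanying the cited definitions). So the plan is to regard the nilspace axioms as given and to verify only the two topological conditions from Definition \ref{def:compnils}: that the underlying set is a compact space, and that each cube set $\cu^n$ is closed. The unifying observation I would exploit is that each of these three constructions is realised with its underlying set sitting inside a \emph{finite} Cartesian power of $\ns$ (or of $\ab$), cut out by conditions that are closed because $\ns$ is a compact nilspace (so every $\cu^m(\ns)$ is closed) and because $\ab$ is a compact group (so its operations are continuous and $\{0_\ab\}$ is closed, $\ab$ being Hausdorff). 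Once the underlying set is exhibited as a closed subset of a finite power of a compact space it is automatically a compact space, since compactness passes to closed subspaces of compact spaces, while the Hausdorff property and second-countability pass to finite products and to arbitrary subspaces; the cube sets, described by the same kind of closed conditions, are then closed as well. By Lemma \ref{lem:compclosure} it would even suffice to check closedness for the single value $n=k+1$, where $k$ is the (finite) step, but in fact the descriptions make all the $\cu^n$ visibly closed simultaneously.

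I would treat $\cD_k(\ab)$ first, as the cleanest case. Its underlying space is simply $\ab$, a compact space. A configuration $c\colon \{0,1\}^n \to \ab$ lies in $\cu^n(\cD_k(\ab))$ precisely when certain fixed integer-coefficient combinations of its values vanish in $\ab$; since addition and inversion in $\ab$ are continuous and $\{0_\ab\}$ is closed, each such requirement is a closed condition, so $\cu^n(\cD_k(\ab))$ is closed in $\ab^{\{0,1\}^n}$. This already yields that $\cD_k(\ab)$ is a compact nilspace.

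For the arrow spaces $\ns \Join_i \ns$, I would use that (by the cited definition) the underlying set is identified with a subset of $\ns^{\{0,1\}^i}\times \ns^{\{0,1\}^i}$, a finite power of $\ns$, consisting of pairs of $i$-cubes of $\ns$ subject to a compatibility condition phrased through membership in a cube set of $\ns$. Concretely, the defining requirement---that each component lie in the closed set $\cu^i(\ns)$ and that the two components assemble, along a fresh coordinate, into an element of the closed set $\cu^{i+1}(\ns)$---is the preimage of closed sets under continuous (coordinate and assembly) maps, so the underlying set is closed, hence a compact space. The cube sets $\cu^n(\ns \Join_i \ns)$ are described in the same spirit: one asks that a configuration obtained continuously from the given data lie in a cube set $\cu^m(\ns)$ of bounded dimension $m$, and closedness again follows formally.

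Finally, $\partial_x \ns$ I would handle as a based variant: it sits inside a finite power of $\ns$, cut out by the (closed) cube conditions inherited from $\ns$ together with the condition that a distinguished coordinate equal $x$. The latter is closed because $\{x\}$ is closed in the Hausdorff space $\ns$, so $\partial_x \ns$ is a closed subset of a compact space, hence a compact space, and its cube sets are the ambient closed cube conditions intersected with this pinning, hence closed. In each case the requirements of Definition \ref{def:compnils} would then be met. The only genuinely delicate point is the bookkeeping for the arrow space: one must pin down exactly which cube set $\cu^m(\ns)$ controls $\cu^n(\ns \Join_i \ns)$ and check that the assembly of the data into an $m$-cube configuration is continuous. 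Once this identification is made precise, closedness is automatic, and this is the step I would expect to require the most care.
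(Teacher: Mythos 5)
Your proposal is correct and follows essentially the same route as the paper: everything reduces to the closedness of the cube sets $\cu^m(\ns)$ inside finite powers of $\ns$ (respectively, to linear equations in the compact group $\ab$), the only cosmetic difference being that you realise $\cu^n(\ns\Join_i\ns)$ as the \emph{preimage} of the closed set $\cu^{n+i}(\ns)$ under the continuous assembly map, whereas the paper presents it as the continuous \emph{image} of a closed set of cubes in $\cu^{n+i}(\ns)$. One small correction of bookkeeping: the underlying set of $\ns\Join_i\ns$ is simply $\ns\times\ns$ (pairs of points, not pairs of $i$-cubes), which is compact outright, so no cutting-out is needed at the level of points.
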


\begin{proof}
The space $\ns \times \ns$ underlying $\ns \Join_i \ns$ is compact and $\cu^{k+1}(\ns \Join_i \ns)$ is the image under a continuous map of a closed set of cubes in $\cu^{k+1+i}(\ns)$ (see \cite[(3.4)]{Cand:Notes1}), so it is a compact set as required. The cube set $\cu^{k+1}(\cD_k(\ab))$ is also compact, since it is the set of solutions of a linear equation (recall \cite[(2.9)]{Cand:Notes1}). Finally, the cube set $\cu^{k+1}(\partial_x \ns)$ is a closed subset of $\cu^{k+1}(\ns\Join_1 \ns)$ (recall \cite[Definition 3.1.22]{Cand:Notes1}).   
\end{proof}

\noindent We now look at the more involved constructions, especially those using quotients by equivalence relations. 
\begin{remark}\label{rem:stfacts} We shall use the following standard facts.
\begin{enumerate}\vspace{-0.2cm}
\item Let $X,Y$ be compact spaces (as in Definition \ref{def:compspace}). Then a function $f:X\rightarrow Y$ is continuous if and only if its graph is closed. (This is a special case of the closed graph theorem \cite[Ex. 8, p. 171]{Munkres}.)
\item Compact spaces are metrizable. (This follows from Urysohn's metrization theorem \cite[Thm. 34.1, p. 215]{Munkres}, using that a compact Hausdorff space is regular \cite[Ex. 3, p. 205]{Munkres}.)
\item Let $X$ be a compact space and let $\sim$ be a closed equivalence relation on $X$, that is $\{(x,y): x\sim y\}$ is a closed subset of $X\times X$. Then $X/\sim$ is compact in the quotient topology \cite[Prop. 8,  p. 105]{Bourb1}.
\end{enumerate}
\end{remark}
\noindent Recall from \cite[Definition 1.2.3]{Cand:Notes1} that $\ns$ is said to be $k$\emph{-fold ergodic} if $\cu^k(\ns)=\ns^{\{0,1\}^k}$.

\begin{lemma}\label{lem:topkfolderg} Let $\ns$ be a $k$-step $k$-fold ergodic compact nilspace. Then $\ns$ is isomorphic as a compact nilspace to $\cD_k(\ab)$ for some compact abelian group $\ab$. 
\end{lemma}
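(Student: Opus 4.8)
The plan is to reduce the statement to the purely algebraic classification of $k$-fold ergodic $k$-step nilspaces and then supply the topological content by hand. First I would invoke the algebraic version of this result established in \cite{Cand:Notes1}: for a $k$-step $k$-fold ergodic nilspace there is an abelian group $\ab$ together with a nilspace isomorphism $\phi\colon\ns\to\cD_k(\ab)$ (concretely, $k$-fold ergodicity forces all structure groups below the top one to vanish, and $\ab$ is the top structure group). This $\phi$ is a priori only an isomorphism of abstract, non-topological nilspaces. I would then equip $\ab$ with the unique topology making $\phi$ a homeomorphism, namely the pushforward of the topology of $\ns$; since $\ns$ is a compact space and $\phi$ is a bijection, $\ab$ becomes a compact space in the sense of Definition \ref{def:compspace}, and $\phi$ is by construction simultaneously a nilspace isomorphism and a homeomorphism. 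Thus it remains only to check that this topology is compatible with the given group structure on $\ab$, that is, that $\ab$ is a compact abelian group; granting this, $\cD_k(\ab)$ is a compact nilspace and $\phi$ is an isomorphism of compact nilspaces, as required.

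To verify that $\ab$ is a topological group it suffices to show that the subtraction map $\mu\colon\ab\times\ab\to\ab$, $(x,y)\mapsto x-y$, is continuous. The key point is that this map is encoded by the \emph{closed} cube set $\cu^{k+1}(\cD_k(\ab))$. Recall that, by \cite[(2.9)]{Cand:Notes1}, a function $f\colon\{0,1\}^{k+1}\to\ab$ lies in $\cu^{k+1}(\cD_k(\ab))$ if and only if its top alternating sum $\sum_{v}(-1)^{|v|}f(v)$ vanishes. I would choose three distinct vertices of $\{0,1\}^{k+1}$, say $a=0^{k+1}$, $b=(1,0,\dots,0)$ and $c=(0,1,0,\dots,0)$, whose Hamming weights are even, odd and odd respectively, and consider the continuous insertion map $\iota\colon\ab^3\to\ab^{\{0,1\}^{k+1}}$ placing $x,y,z$ at $a,b,c$ and $0$ at every other vertex. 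For such an $f=\iota(x,y,z)$ the defining relation reads exactly $x-y-z=0$, so that $\iota(x,y,z)\in\cu^{k+1}(\cD_k(\ab))$ if and only if $z=x-y$. Hence the graph of $\mu$ equals $\iota^{-1}\bigl(\cu^{k+1}(\cD_k(\ab))\bigr)$.

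Now $\cu^{k+1}(\cD_k(\ab))$ is the image of the closed set $\cu^{k+1}(\ns)$ under the homeomorphism $\ns^{\{0,1\}^{k+1}}\to\ab^{\{0,1\}^{k+1}}$ induced by $\phi$, and $\cu^{k+1}(\ns)$ is closed because $\ns$ is a compact nilspace (cf. Lemma \ref{lem:compclosure}); hence $\cu^{k+1}(\cD_k(\ab))$ is closed. Therefore the graph of $\mu$ is closed in the compact space $\ab^3$, and by the closed graph theorem (Remark \ref{rem:stfacts}(i), applied to the compact spaces $\ab\times\ab$ and $\ab$) the map $\mu$ is continuous. Since $\ab$ is abelian and its underlying space is a compact space, this shows that $\ab$ is a compact abelian group, which completes the argument. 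The main obstacle is the middle step: one must realize the group operation as a fibre of a \emph{closed} cube condition, so that the closed graph theorem becomes applicable. Once the correct three-vertex configuration is identified this is routine, and the remaining ingredients, namely the algebraic classification and the transport of topology, are respectively cited and formal.
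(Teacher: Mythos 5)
Your proof is correct, and the essential topological mechanism is the same one the paper uses: continuity of the group operation is deduced from the closed graph theorem for compact spaces (Remark \ref{rem:stfacts}(i)) together with the closedness of a cube set. The organization is genuinely different, though. The paper treats $k=1$ first, building the group from the cube structure via [Proposition 2.4.1, \cite{Cand:Notes1}] and reading the graph of addition off as the slice $\{x_{00}=e\}$ of the closed set $\cu^2(\ns)$; for $k>1$ it passes to the compact $1$-step nilspace $\partial_e^{k-1}(\ns)$ and re-runs the algebraic argument of [Proposition 3.2.14, \cite{Cand:Notes1}] to identify the cube sets. You instead invoke that algebraic classification wholesale, transport the topology to $\ab$, and do a single closed-graph check at the top level, realizing the graph of subtraction as $\iota^{-1}\bigl(\cu^{k+1}(\cD_k(\ab))\bigr)$ for the three-vertex insertion map; the parity bookkeeping is right (the chosen vertices have weights $0,1,1$, so the $\sigma_{k+1}$-condition reads $x-y-z=0$), and continuity of subtraction does yield continuity of addition and inversion. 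Your route handles all $k$ uniformly and avoids the reduction to the $1$-step case, at the cost of leaning entirely on the cited algebraic isomorphism, while the paper's version keeps the topological verification at the simplest level ($2$-cubes of a $1$-step space) but must re-derive the cube-set identification for $k>1$. Both are complete; note only that to conclude that $\cD_k(\ab)$ is itself a compact nilspace you should record that $\cu^{k+1}(\cD_k(\ab))$ is closed (which you have, as the homeomorphic image of $\cu^{k+1}(\ns)$) and appeal to Lemma \ref{lem:compclosure}.
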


\begin{proof} As seen in \cite[Proposition 2.4.1]{Cand:Notes1}, in the case $k=1$, if we fix any $e\in \ns$ as a distinguished point, then the cube structure on $\ns$ yields a commutative group operation on $\ns$ with identity element $e$, making $\ns$ a principal homogeneous space of the resulting abelian group $\ab$. Let us check the continuity of addition and inverse on $\ab$. The graph of addition can be written
\[
\{(x_{00},x_{10},x_{01},x_{11})\in \cu^2(\ns) : x_{00}=e\}.
\]
The graph of the inverse is similarly defined by the equation $x_{00}=x_{11}=e$. These are closed sets, so the operations are continuous.

For $k>1$, as in \cite[Proposition 3.2.14]{Cand:Notes1} we consider the 1-step nilspace $\partial_e^{k-1}(\ns)$, which is here a compact nilspace, and therefore isomorphic to $\cD_1(\ab)$ for some compact abelian group $\ab$. We then argue just as in the proof of \cite[Proposition 3.2.14]{Cand:Notes1} to show that $\cu^n(\ns)=\cu^n(\cD_k(\ab))$ for all $n$. 
\end{proof}

\noindent Recall from \cite[Definition 3.2.3]{Cand:Notes1} the relation $\sim_k$ on a nilspace $\ns$ and the corresponding nilspace factor $\cF_k(\ns)$ from \cite[Lemma 3.2.10]{Cand:Notes1}.
\begin{lemma}\label{lem:compfactor} Let $\ns$ be a compact nilspace and let $k\in\N$. Then $\cF_k(\ns)$ with the quotient topology is a compact nilspace.
\end{lemma}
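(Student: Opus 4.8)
The plan is to add the topology to a factor whose nilspace axioms are already in hand. By \cite[Lemma 3.2.10]{Cand:Notes1} the quotient $\cF_k(\ns)=\ns/\!\sim_k$ is a $k$-step nilspace, with quotient map $\pi:\ns\to\cF_k(\ns)$ and cube sets given by the images $\cu^n(\cF_k(\ns))=\pi^{\{0,1\}^n}\big(\cu^n(\ns)\big)$. So it remains only to verify the topological part of Definition \ref{def:compnils}: that $\cF_k(\ns)$ is a compact space and that each $\cu^n(\cF_k(\ns))$ is closed. The crux is to show that the relation $R_k=\{(x,y)\in\ns\times\ns : x\sim_k y\}$ is closed; everything else will follow from Remark \ref{rem:stfacts} together with routine compactness arguments.

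First I would unwind \cite[Definition 3.2.3]{Cand:Notes1}: the condition $x\sim_k y$ amounts to the membership of an explicit configuration $c_{x,y}\in\ns^{\{0,1\}^{k+1}}$ — obtained by placing $x$ and $y$ at prescribed vertices and keeping the remaining values constant — in the cube set $\cu^{k+1}(\ns)$. The map $(x,y)\mapsto c_{x,y}$ is continuous, and $\cu^{k+1}(\ns)$ is closed because $\ns$ is a compact nilspace; hence $R_k$, being the preimage of a closed set under a continuous map, is closed. This is the step I expect to be the main obstacle, and the only place where care is needed: if the defining condition is instead phrased through the \emph{existence} of an auxiliary completing cube, one argues that the relevant cube set is compact and that its projection onto $\ns\times\ns$ (forgetting the auxiliary coordinates) is compact, hence closed in the Hausdorff space $\ns\times\ns$; so $R_k$ is again closed.

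Granting that $R_k$ is closed, Remark \ref{rem:stfacts}(3) gives that $\cF_k(\ns)=\ns/\!\sim_k$ is compact, and Hausdorff, in the quotient topology. For second countability I would establish metrizability: $\cF_k(\ns)$ is a Hausdorff continuous image, under $\pi$, of the compact metric space $\ns$ (metrizable by Remark \ref{rem:stfacts}(2)). Such an image is metrizable, for instance because pullback $\pi^*$ embeds $C(\cF_k(\ns))$ isometrically into the separable space $C(\ns)$, so $C(\cF_k(\ns))$ is separable, and a countable point-separating family of continuous functions then embeds $\cF_k(\ns)$ into a metrizable space. Thus $\cF_k(\ns)$ is a compact space in the sense of Definition \ref{def:compspace}.

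Finally, for each $n$ the cube set $\cu^n(\cF_k(\ns))=\pi^{\{0,1\}^n}\big(\cu^n(\ns)\big)$ is the image of the compact set $\cu^n(\ns)$ under the continuous map $\pi^{\{0,1\}^n}$, hence compact, hence closed in the Hausdorff space $\cF_k(\ns)^{\{0,1\}^n}$. (Since $\cF_k(\ns)$ is $k$-step one could instead invoke Lemma \ref{lem:compclosure} and check closedness only for $n=k+1$.) Combined with the algebraic nilspace structure supplied by \cite[Lemma 3.2.10]{Cand:Notes1}, this shows that $\cF_k(\ns)$ is a compact nilspace.
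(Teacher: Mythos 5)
Your proposal is correct and follows essentially the same route as the paper: the heart of the matter is that $\sim_k$ is closed in $\ns\times\ns$ because it is detected by the constant-except-at-one-vertex configuration in the closed set $\cu^{k+1}(\ns)$ (the characterization from \cite[Lemma 3.2.4]{Cand:Notes1}), after which Remark \ref{rem:stfacts}(iii) gives compactness of the quotient. The paper's proof is terser and leaves the second-countability and the closedness of the quotient cube sets implicit, whereas you spell these out; the substance is the same.
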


\begin{proof} As a subset of $\ns\times \ns$, the relation $\sim_k$ consists of  restrictions to the 1-face $\{0^{k+1},(1,0,\dots,0)\}$ of the cubes satisfying the condition in \cite[Lemma 3.2.4]{Cand:Notes1}. The set of these cubes is
\[
\cu^{k+1}(\ns)\;\cap\;\big\{f\in \ns^{\{0,1\}^{k+1}}~:~\forall\, v,v'\in \{0,1\}^{k+1}\setminus\{0^{k+1}\},\;f(v)=f(v')\big\},
\]
and is therefore closed.
\end{proof}

\noindent We now move toward the main result of this section, namely the addition of suitable topological properties to the bundle decomposition of a nilspace.

\subsection{Continuous abelian bundles}\label{sec:contbundec}

Recall from \cite[Definition 3.2.17]{Cand:Notes1} the notion of a $k$-fold abelian bundle. 
\begin{defn}\label{def:CpctAbBund}
Let $\bnd$ be an abelian bundle with base $S$, structure group $\ab$ and projection $\pi$. We say that $\bnd$ is  \emph{continuous} if the following conditions hold:
\begin{enumerate}
\item $\bnd$ and $S$ are topological spaces.
\item $\ab$ is an abelian topological group.
\item The action $\alpha:\ab\times \bnd\to \bnd$ is continuous.
\item A set $U\subset S$ is open if and only if $\pi^{-1}(U)$ is open in $\bnd$.
\end{enumerate}
We say that $\bnd$ is a \emph{compact} abelian bundle if, in addition to these conditions, $\bnd,S,\ab$ are compact spaces.
\end{defn}
\begin{remark}\label{rem:open-and-closed}
Recall that by \cite[Definition 3.2.17]{Cand:Notes1} the projection $\pi$ yields a bijection between $S$ and the orbits of $\alpha$, namely the bijection $s\mapsto \pi^{-1}(s)$. It follows that we obtain a definition equivalent to Definition \ref{def:CpctAbBund} if we replace condition (iv)  with the following:\vspace{0.1cm}\\
\hspace*{0.15cm} (iv')\; Denoting by $\bnd/\alpha$ the set of $\ab$-orbits  equipped with the quotient topology, we have that the map\\
\hspace*{1cm} $S\to \bnd/\alpha$, $\;s\mapsto \pi^{-1}(s)$ is a homeomorphism. \vspace{0.1cm}\\
\noindent Thus the base $S$ is topologically identified with the orbit space for the $\ab$-action on $\bnd$. In particular, since the orbit map $\bnd\to \bnd/\alpha$, $x\mapsto x+\ab$ is an open map, we have that the projection $\pi$ is an open map. Note also that if $\ab$ is compact, then the orbit map is also a closed map, and hence so is $\pi$.
\end{remark}
\begin{defn}\label{def:compdegkbund}
Let $\ns=\bnd_k$ be a $k$-fold abelian bundle, with factors $\bnd_i$, $i=0,\ldots,k$. We say that $\ns$ is a \emph{$k$-fold compact abelian bundle} if for each $i\in [k]$ the factor $\bnd_i$ is a compact $\ab_i$-bundle with base $\bnd_{i-1}$. A \emph{compact degree-$k$ bundle} is a $k$-fold compact abelian bundle that is also a compact cubespace with respect to the same topology, and such that condition (3.5) from \cite[Definition 3.2.18]{Cand:Notes1} is satisfied, that is, for every $i \in [0,k-1]$ and every $n\in \N$, we have $\cu^n(\bnd_i) = \{\pi_i\co \q: \q\in  \cu^n(\ns)\}$, and for every $\q\in \cu^n(\bnd_{i+1})$ we have
\begin{equation}\label{eq:comp-k-deg-bund}
\{\q_2 \in \cu^n(\bnd_{i+1}): \pi_i\co \q = \pi_i\co \q_2\} = \{\q+\q_3: \q_3\in \cu^n(\cD_{i+1}(\ab_{i+1}))\}.
\end{equation}
\end{defn}
We can now establish the main result of this subsection.
\begin{proposition}\label{prop:topbundec} A compact cubespace $\ns$ is a compact degree-$k$ bundle if and only if $\ns$ is a $k$-step compact nilspace. 
\end{proposition}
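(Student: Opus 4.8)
The plan is to prove the two implications separately, with the reverse implication being essentially a matter of invoking the algebraic theory and the forward implication carrying the topological substance. For the reverse implication, suppose $\ns$ is a compact degree-$k$ bundle. Forgetting the topology, $\ns$ is then a $k$-fold abelian bundle satisfying condition (3.5) of \cite[Definition 3.2.18]{Cand:Notes1}, so by the algebraic part of \cite[Theorem 3.2.19]{Cand:Notes1} it is a $k$-step nilspace. Since a compact degree-$k$ bundle is by definition a compact cubespace, $\ns$ is a $k$-step compact nilspace, as required.

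For the forward implication I would argue by induction on $k$. In the base case $k=1$, the ergodicity axiom gives $\cu^1(\ns)=\ns^{\{0,1\}}$, so a $1$-step compact nilspace is $1$-step $1$-fold ergodic; by Lemma \ref{lem:topkfolderg} it is then isomorphic to $\cD_1(\ab_1)$ for a compact abelian group $\ab_1$, which exhibits $\ns$ as a $1$-fold compact abelian bundle over the one-point base, conditions (i)--(iv) of Definition \ref{def:CpctAbBund} and \eqref{eq:comp-k-deg-bund} being then immediate. For the inductive step with $k\geq 2$, Lemma \ref{lem:compfactor} shows that $\cF_{k-1}(\ns)$ with the quotient topology is a $(k-1)$-step compact nilspace, hence by the induction hypothesis a compact degree-$(k-1)$ bundle with factors $\bnd_i=\cF_i(\ns)$, structure groups $\ab_i$ and continuous actions for $i\leq k-1$. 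Since $\cF_i(\cF_{k-1}(\ns))=\cF_i(\ns)$ and, by the standard fact that a map through which a quotient map factors is itself a quotient map, the two candidate quotient topologies on $\cF_i(\ns)$ agree, these data supply all but the top level of the desired bundle structure on $\ns$. It therefore remains to treat the top bundle $\ns=\bnd_k\to\bnd_{k-1}=\cF_{k-1}(\ns)$.

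For the top structure group, iterating the lemma on the spaces $\partial_x\ns$ shows that $\partial_e^{k-1}(\ns)$ (for a fixed base point $e$) is a compact nilspace; algebraically it is $1$-step, as in \cite[Proposition 3.2.14]{Cand:Notes1}, and being a nilspace it is $1$-fold ergodic, so Lemma \ref{lem:topkfolderg} identifies it with $\cD_1(\ab_k)$ for a compact abelian group $\ab_k$. This endows the top structure group with the structure of a compact abelian topological group. Condition (iv) for this level is automatic, since $\cF_{k-1}(\ns)$ carries by construction the quotient topology with respect to $\pi_{k-1}:\ns\to\cF_{k-1}(\ns)$. The identity $\cu^n(\bnd_i)=\{\pi_i\co \q:\q\in\cu^n(\ns)\}$ and the relation \eqref{eq:comp-k-deg-bund} hold at the algebraic level by \cite[Theorem 3.2.19]{Cand:Notes1}; compactness enters only to ensure that the cube sets involved are closed, which follows from $\ns$ being a compact cubespace together with Lemma \ref{lem:compclosure}. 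Assembling these facts with the lower levels from the induction hypothesis shows that $\ns$ is a $k$-fold compact abelian bundle which is a compact cubespace satisfying \eqref{eq:comp-k-deg-bund}, i.e. a compact degree-$k$ bundle.

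The hard part, and the one genuinely topological step, is establishing the continuity of the top action $\alpha_k:\ab_k\times\ns\to\ns$, which is at first only defined algebraically. Here I would use that $\ab_k\times\ns\times\ns$ is compact and appeal to Remark \ref{rem:stfacts}(i): it suffices to show that the graph $\{(a,x,y)\in\ab_k\times\ns\times\ns:y=\alpha_k(a,x)\}$ is closed. The plan is to characterize this relation through the cube structure, namely as the condition that $x\sim_{k-1}y$ together with the requirement that the $\ab_k$-difference of $x$ and $y$, read off via the identification $\partial_e^{k-1}(\ns)\cong\cD_1(\ab_k)$, equals $a$. Each of these conditions is cut out by membership in a closed cube set (the relation $\sim_{k-1}$ is closed as in the proof of Lemma \ref{lem:compfactor}, and the difference condition pulls back a closed set under continuous coordinate maps into $\cu^{k+1}(\ns)$), so the graph is closed and $\alpha_k$ is continuous. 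Making this characterization precise, and in particular translating the abstract structure-group difference into an explicit closed cube-theoretic condition, is where I expect the main technical effort to lie.
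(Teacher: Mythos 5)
Your overall strategy --- quoting the algebraic content of \cite[Theorem 3.2.19]{Cand:Notes1}, reducing to the top level, putting a compact topology on $\ab_k$ via Lemma \ref{lem:topkfolderg}, and establishing continuity of the $\ab_k$-action by the closed graph theorem --- is exactly the paper's. But the step you defer at the end (``making this characterization precise\dots is where I expect the main technical effort to lie'') is the entire content of the proposition, and your global formulation hides a real difficulty. The topology you put on $\ab_k$ comes from a single object anchored at a base point $e$ (for you, $\partial_e^{k-1}(\ns)$; for the paper, the $\sim_{k-1}$-class $F\ni e$, which is shown to be closed, hence a compact $k$-fold ergodic sub-nilspace, so that Lemma \ref{lem:topkfolderg} gives $F\cong\cD_k(\ab_k)$). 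The action of $a\in\ab_k$ at a point $x$ lying in a \emph{different} fibre of $\pi_{k-1}$ is defined via local translations in that fibre, and there is no direct cube-theoretic equation saying ``$y-x=a$'' until $a$ has been transported from the fibre of $e$ to the fibre of $x$. So the graph of the global action $\ab_k\times\ns\to\ns$ is not visibly cut out by closed cube conditions, and your sentence about ``pulling back a closed set under continuous coordinate maps into $\cu^{k+1}(\ns)$'' does not yet describe an actual argument.

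The paper resolves this by splitting the difficulty into two separate closed-graph arguments. First, continuity of the action on a single fibre $F$: by \cite[Lemma 3.2.15]{Cand:Notes1}, for $x=(x_0,x_1)$ and $y=(y_0,y_1)$ in $F^2\subset\ns\Join_1\ns$ one has $x\sim_{k-1}y$ if and only if $x_0-x_1=y_0-y_1$ in $\ab_k$, so after fixing $e\in F$ the graph of the action on $F$ is homeomorphic to the closed set $\{(x,y)\in F^2\times F^2 : x\sim_{k-1}y,\; x_0=e\}$. Second, independence of the topology from the choice of fibre: the comparison isomorphism $\vartheta:\ab_{F_0}\to\ab_{F_1}$ of \cite[Lemma 3.2.24]{Cand:Notes1}, defined by $\vartheta(a)=b$ iff $(x_0,x_1)\sim_{k-1}(x_0+a,x_1+b)$, has graph identifiable with the closed set $\{y\in F_0\times F_1 : y\sim_{k-1}(x_0,x_1)\}$ and is therefore a homeomorphism. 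You need either to reproduce this two-step structure or to explain precisely how a single global closed-graph condition encodes both pieces; as written, the proposal stops exactly where the proof begins.
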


\begin{proof} The algebraic part of the statement is given by \cite[Theorem 3.2.19]{Cand:Notes1}, so we only need to check the topological properties in each direction.\\
\indent If $\ns$ is a compact degree-$k$ bundle then in particular it is a compact cubespace satisfying the nilspace axioms, so it is a $k$-step compact nilspace. Conversely, if $\ns$ is a $k$-step compact nilspace then, to show that the degree-$k$ bundle structure given by \cite[Theorem 3.2.19]{Cand:Notes1} is compact, it suffices to show that the abelian structure group $\ab_k$ can be equipped with a topology making it a compact abelian group such that $\ns$ is a compact $\ab_k$-bundle with base $\cF_{k-1}(\ns)$ (if we show this then the result follows by repeating the argument for $\ab_{k-1}$ and so on by induction). For this it suffices to show that $\ab_k$ can be equipped with a compact topology such that the action of $\ab_k$ on $\ns$ is continuous (since Lemma \ref{lem:compfactor} then implies the remaining condition (iv') in Definition \ref{def:CpctAbBund}).  
Let $F$ be a class of $\sim_{k-1}$ in $\ns$. From the proof of \cite[Corollary 3.2.16]{Cand:Notes1} we know that $F$ together with the $F$-valued cubes in each $\cu^n(\ns)$ is a $k$-fold ergodic $k$-step nilspace. Using \cite[Lemma 3.2.4]{Cand:Notes1} we deduce that $F$ is closed and it follows that $F$ is a compact nilspace. Then by Lemma \ref{lem:topkfolderg} there is a compact topology on $\ab_k$ such that $F$ is isomorphic to $\cD_k(\ab_k)$ as a compact nilspace. We now check that the action of $\ab_k$ is continuous on $F$ by showing that its graph $\{(a,y_0,y_0+a): a\in \ab,y_0\in \ns\}$ is closed in $\ab_k\times \ns^2$. This graph can be described using the arrow space $\nss=\ns\Join_1 \ns$. Indeed, recall from \cite[Lemma 3.2.15]{Cand:Notes1} that for $x=(x_0,x_1)$, $y=(y_0,y_1)$ in $F^2\subset \nss$ we have $x\sim_{k-1} y$ if and only if $x_0-x_1=y_0-y_1$ in $\ab_k$. It follows that, fixing some $e\in F$, the graph above is homeomorphic to the closed set $\{(x,y) \in F^2\times F^2: x \sim_{k-1} y,\; x_0 =e\}$. We have thus shown that from any given fibre $F$ the group $\ab_k$ acquires a compatible compact topology making its action on $F$ continuous. To see that this topology is the same for every fibre, recall that by \cite[Lemma 3.2.24]{Cand:Notes1} for any two fibres $F_0,F_1$ and fixed points $x_i\in F_i$, we have an isomorphism $\vartheta:\ab_{F_0}\to \ab_{F_1}$, defined by $\vartheta(a)=b$ if and only if $(x_0,x_1)\sim_{k-1}(x_0+a,x_1+b)$. We then see that $\vartheta$ is continuous by identifying its graph with the closed set $\{y \in F_0\times F_1: y \sim_{k-1} (x_0,x_1)\}$. It follows that $\vartheta$ is a homeomorphism.
\end{proof}

\noindent Recall the notions of a bundle morphism \cite[Definition 3.3.1]{Cand:Notes1} and of a sub-bundle \cite[Definition 3.3.3]{Cand:Notes1}. These notions are transferred to the setting of compact abelian bundles in a straightforward manner, by specifying in their definitions that all the maps involved are continuous. The same holds for the kernel of a bundle morphism $\psi:\bnd\to\bnd'$ (recall \cite[Definition 3.3.4]{Cand:Notes1}). Note that such a kernel $K$ is indeed a compact abelian bundle, since each set $K_i$ in \cite[Definition 3.3.4]{Cand:Notes1} is a closed subset of $\bnd_i\times \bnd'$ and therefore compact, and every structure group $\ker(\alpha_i)$ is compact.

Recall also that in the purely algebraic setting we had a description of a set  of restricted morphisms $\hom_f(P,\ns)$ as a sub-bundle of $\ns^P$, namely \cite[Lemma 3.3.11]{Cand:Notes1}. This has the following version for compact nilspaces.

\begin{lemma}\label{lem:top-restrmorph=subbund}
Let $P$ be a subcubespace of $\{0,1\}^n$ with the extension property, let $S$ be a subcubespace of $P$ with the extension property in $P$, let $\ns$ be a $k$-step compact nilspace and let $f:S\to \ns$ be a morphism. Then $\hom_f(P,\ns)$ is a compact $k$-fold abelian bundle that is a sub-bundle of $\ns^P$, with factors $\hom_{\pi_i\co f}(P,\ns_i)$ and structure groups $\hom_{S\to 0}(P,\cD_i(\ab_i))$, where $\ab_i$ is the $i$-th structure group of $\ns$.
\end{lemma}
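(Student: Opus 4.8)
The plan is to separate the algebraic content, which is already supplied by \cite[Lemma 3.3.11]{Cand:Notes1}, from the topological verifications that remain. By that lemma, $\hom_f(P,\ns)$ is a $k$-fold abelian bundle and a sub-bundle of $\ns^P$ with exactly the stated factors and structure groups, so I only need to equip everything with a topology and check the conditions in Definitions \ref{def:CpctAbBund} and \ref{def:compdegkbund}. I would topologize $\hom_f(P,\ns)$, each factor $\hom_{\pi_i\co f}(P,\ns_i)$, and each group $\hom_{S\to 0}(P,\cD_i(\ab_i))$ as subspaces of the finite products $\ns^P$, $\ns_i^P$ and $\ab_i^P$. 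Since $P$ is finite and, by Lemma \ref{lem:compfactor} and Proposition \ref{prop:topbundec}, the spaces $\ns$, $\ns_i=\cF_i(\ns)$ and $\ab_i$ are all compact, these products are compact; thus the problem reduces to showing that the relevant subsets are closed and that the bundle maps and actions are continuous.

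First I would establish compactness, the key point being that \emph{being a morphism is a closed condition}: for each cube $\q$ of $P$ the map $g\mapsto g\co\q$ is continuous (it involves only the finitely many coordinates indexed by $P$) and the target cube set is closed, so the morphisms form an intersection of closed sets; the boundary conditions on $S$ are closed as well. This realizes $\hom_f(P,\ns)$ and each factor as closed, hence compact, subsets of the appropriate product. For the structure groups I would additionally observe that morphisms into $\cD_i(\ab_i)$ form a group under pointwise addition and that vanishing on $S$ is preserved by this operation, so $\hom_{S\to 0}(P,\cD_i(\ab_i))$ is a closed subgroup of the compact group $\ab_i^P$, hence itself a compact abelian group.

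It then remains to check continuity of the bundle data. The structure group acts on its factor by pointwise addition via the action of $\ab_i$ on $\ns_i$, so continuity follows coordinatewise from the continuity of that action, already established in Proposition \ref{prop:topbundec}. The one step I expect to require genuine care is condition (iv') of Remark \ref{rem:open-and-closed} at each layer, namely that the base $\bnd_{i-1}$ is, topologically, the orbit space of the action on $\bnd_i$. Here the projection $\bnd_i\to\bnd_{i-1}$ is induced by composing with the continuous factor map $\cF_i(\ns)\to\cF_{i-1}(\ns)$, so it is continuous and surjective with fibres equal to the orbits; I would then conclude using the standard fact that the induced continuous bijection from the compact space $\bnd_i/\alpha$ to the Hausdorff space $\bnd_{i-1}$ is a homeomorphism. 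With (iv') in hand, each $\bnd_i$ is a compact $\ab_i$-bundle over $\bnd_{i-1}$, and since the inclusion $\hom_f(P,\ns)\hookrightarrow\ns^P$ is continuous and respects the bundle data, this exhibits $\hom_f(P,\ns)$ as a compact $k$-fold abelian sub-bundle of $\ns^P$, completing the plan.
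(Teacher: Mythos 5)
Your proposal is correct and follows essentially the same route as the paper: defer the algebraic bundle structure to \cite[Lemma 3.3.11]{Cand:Notes1}, observe that $\hom_f(P,\ns)$ and the groups $\hom_{S\to 0}(P,\cD_i(\ab_i))$ are closed (hence compact) subsets of the compact products $\ns^P$ and $\ab_i^P$, note that the action is continuous coordinatewise, and verify condition (iv) at each layer. The paper disposes of condition (iv) in one line by saying it is inherited from $\ns^P$ via the subspace topology, whereas you supply the slightly more explicit (and equally valid) argument that the continuous bijection from the compact orbit space onto the Hausdorff factor is a homeomorphism.
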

\begin{proof}
We just have to check the topological properties. The set $\hom_f(P,\ns)$ is   closed in $\ns^P$ and therefore compact. For each $i\in [k]$ the abelian group $\hom_{S\to 0}(P,\cD_i(\ab_i))$ is also compact, as a closed subgroup of $\ab_i^P$, and the action of $\hom_{S\to 0}(P,\cD_i(\ab_i))$ inherits continuity from that of $\ab_i$. Condition (iv) from Definition \ref{def:CpctAbBund} is also inherited from $\ns^P$ since the topology on each factor $\hom_{\pi_i\co f}(P,\ns_i)$ is the subspace topology from $\ns_i^P$.
\end{proof}

\subsection{Metrics}

\noindent By Remark \ref{rem:stfacts} (ii), we know that a compact nilspace  can always be equipped with a compatible metric. We shall often use the fact that this metric can be assumed to be invariant under the action of the last  structure group. We record this fact as follows.

\begin{lemma}\label{lem:d-invariance}
Let $\ns$ be a $k$-step compact nilspace and let $d_0$ be a metric on $\ns$ generating its topology. Define $d(x,y)=\int_{z\in \ab_k} d_0(x+z,y+z) \ud\mu_{\ab_k}(z)$, for any $x,y\in \ns$. Then $d$ is a $\ab_k$-invariant metric that generates the same topology on $\ns$.
\end{lemma}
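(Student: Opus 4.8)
The plan is to verify that $d$ is well defined, check the metric axioms directly, and then spend the main effort on showing that $d$ induces the same topology as $d_0$. Throughout I use that $\ab_k$ is a compact abelian group carrying a Haar probability measure $\mu_{\ab_k}$ of full support, and that by Proposition \ref{prop:topbundec} (via condition (iii) of Definition \ref{def:CpctAbBund}) the action $\alpha\colon \ab_k\times\ns\to\ns$, $(z,x)\mapsto x+z$, is continuous. Since $d_0$ is continuous on the compact space $\ns\times\ns$, the map $(x,y,z)\mapsto d_0(x+z,y+z)$ is continuous, so for fixed $x,y$ the integrand is a bounded Borel function of $z$ and the integral defining $d(x,y)$ exists and is finite; indeed $0\le d(x,y)\le D$, where $D=\sup_{u,v\in\ns}d_0(u,v)<\infty$ by compactness.

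First I would obtain the metric axioms by integrating the corresponding pointwise properties of $d_0$ against $\mu_{\ab_k}$. Symmetry and the triangle inequality pass immediately through the linear, monotone integral. For positivity, if $d(x,y)=0$ then the nonnegative continuous function $z\mapsto d_0(x+z,y+z)$ has integral zero against a measure of full support, so it vanishes identically; evaluating at the identity $0\in\ab_k$, which acts trivially, gives $d_0(x,y)=0$ and hence $x=y$. The $\ab_k$-invariance $d(x+a,y+a)=d(x,y)$ follows from the translation invariance of Haar measure after the substitution $z\mapsto z+a$.

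The main obstacle is to show that $d$ and $d_0$ generate the same topology, and the key step is to prove that $d$ is continuous as a function on $(\ns\times\ns,\tau_{d_0})$. I would do this by dominated convergence: for a $d_0$-convergent sequence $(x_n,y_n)\to(x,y)$, the integrand $d_0(x_n+z,y_n+z)$ converges to $d_0(x+z,y+z)$ for each fixed $z$ (by continuity of $\alpha$ and of $d_0$) and is uniformly bounded by $D$, so $d(x_n,y_n)\to d(x,y)$. Consequently, for each fixed $y$ the map $x\mapsto d(x,y)$ is $\tau_{d_0}$-continuous, so every open $d$-ball is $\tau_{d_0}$-open; thus $\tau_d\subseteq\tau_{d_0}$, i.e.\ the identity map $(\ns,\tau_{d_0})\to(\ns,\tau_d)$ is continuous. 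Finally, since $(\ns,\tau_{d_0})$ is compact (it carries the original topology of the compact nilspace) and $(\ns,\tau_d)$ is Hausdorff, this continuous bijection is a homeomorphism, whence $\tau_d=\tau_{d_0}$. (The existence of such a metric $d_0$ in the first place is guaranteed by Remark \ref{rem:stfacts}(ii).)
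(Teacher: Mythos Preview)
Your proof is correct and follows essentially the same route as the paper's: the paper also notes that $(x,y)\mapsto d(x,y)$ is continuous relative to $d_0$, concludes $\tau_d\subseteq\tau_{d_0}$, and then invokes the fact that a compact Hausdorff topology admits no strictly finer compact topology (equivalently, that a continuous bijection from a compact space to a Hausdorff space is a homeomorphism). The only difference is that you spell out the metric axioms and the continuity argument in more detail (dominated convergence, full support of Haar measure for positivity), whereas the paper simply declares these ``clear.''
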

\noindent This invariant metric is a basic construction in the theory of $G$-spaces  (see \cite[Proposition 1.1.12]{Pal-G}).
\begin{proof}
It is clear that $d$ is a $\ab_k$-invariant metric. To see that $d$ generates the same topology as $d_0$, note first that the function $\ns\times \ns\to \R_{\geq 0}$, $(x,y)\mapsto d(x,y)$ is continuous and in particular for a fixed $x\in \ns$ we have that $y\mapsto d(x,y)$ is continuous (relative to $d_0$). It follows that the topology generated by $d$ is a subtopology of the given topology on $\ns$. In particular the former topology is compact. However, that topology is also Hausdorff, so it is maximal among the compact topologies on $\ns$, and so it must be the original topology on $\ns$ (see \cite[\S 9.4, Corollary 3]{Bourb1}).
\end{proof}
\noindent From now on we always assume that a metric on $\ns$ is $\ab_k$-invariant in this sense. Given such a metric $d$ on $\ns$, we may define the following quotient metric on $\cF_{k-1}(\ns)$ (see  \cite[Proposition 1.1.12]{Pal-G}):
\begin{equation}\label{eq:quotientmetric}
d'(x',y')=\inf\big\{d(x,y): x,y\in \ns,\; \pi_{k-1}(x)=x',\; \pi_{k-1}(y)=y'\big\}.
\end{equation}
\noindent In the sequel we shall use the following fact several times.
\begin{lemma}\label{lem:contcomp}
Let $\ns$ be a $k$-step compact nilspace, and let $\cor^{k+1}(\ns)$ denote the set of $(k+1)$-corners on $\ns$ with the subspace topology obtained from the product topology on $\ns^{\{0,1\}^{k+1}\setminus \{1^{k+1}\}}$. Let $\comp:\cor^{k+1}(\ns)\to \ns$ denote the map sending a $(k+1)$-corner $\q'$ to $\q(1^{k+1})$, where $\q$ is the unique completion of $\q'$. Then $\comp$ is continuous.
\end{lemma}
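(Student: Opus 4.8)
The plan is to invoke the closed-graph characterization of continuity on compact spaces (Remark \ref{rem:stfacts}(i)): since $\ns$ is compact by hypothesis, it suffices to verify that $\cor^{k+1}(\ns)$ is itself a compact space and that the graph
\[
\big\{(\q',x)\in \cor^{k+1}(\ns)\times\ns : x=\comp(\q')\big\}
\]
is closed in $\cor^{k+1}(\ns)\times \ns$.

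First I would dispose of the domain. The space $\ns^{\{0,1\}^{k+1}\setminus\{1^{k+1}\}}$ is a finite power of a compact space, hence compact. A $(k+1)$-corner is a map whose restriction to each of the $k+1$ codimension-one faces of $\{0,1\}^{k+1}$ that avoid $1^{k+1}$ lies in $\cu^{k}(\ns)$; each such restriction map is continuous and $\cu^k(\ns)$ is closed, so $\cor^{k+1}(\ns)$ is a finite intersection of preimages of closed sets, hence closed, and therefore a compact space. (Equivalently, by the corner-completion axiom $\cor^{k+1}(\ns)$ is the continuous image of the compact set $\cu^{k+1}(\ns)$ under the coordinate projection forgetting $1^{k+1}$.)

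To show the graph is closed, I would realize it as the preimage of $\cu^{k+1}(\ns)$ under the natural reassembly map $\Phi:\cor^{k+1}(\ns)\times \ns\to \ns^{\{0,1\}^{k+1}}$ sending $(\q',x)$ to the map $\widetilde{\q}$ with $\widetilde{\q}(v)=\q'(v)$ for $v\neq 1^{k+1}$ and $\widetilde{\q}(1^{k+1})=x$. This map merely regroups coordinates and is manifestly continuous. The crux is the identity: the graph equals $\Phi^{-1}\big(\cu^{k+1}(\ns)\big)$. Indeed, if $x=\comp(\q')$ then $\widetilde{\q}$ agrees with the unique completion $\q$ of $\q'$ both off $1^{k+1}$ and at $1^{k+1}$, so $\widetilde{\q}=\q\in \cu^{k+1}(\ns)$; conversely, if $\widetilde{\q}\in \cu^{k+1}(\ns)$ then $\widetilde{\q}$ is a completion of $\q'$, and since $\ns$ is $k$-step this completion is unique, forcing $\widetilde{\q}=\q$ and hence $x=\widetilde{\q}(1^{k+1})=\comp(\q')$. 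This is where the $k$-step hypothesis enters, and it is really the only non-formal point: uniqueness of completion is exactly what makes $\comp$ well defined and what converts the graph condition into membership in $\cu^{k+1}(\ns)$.

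Finally, since $\ns$ is a compact nilspace the set $\cu^{k+1}(\ns)$ is closed in $\ns^{\{0,1\}^{k+1}}$ (Definition \ref{def:compnils}), and $\Phi$ is continuous, so the graph $\Phi^{-1}(\cu^{k+1}(\ns))$ is closed. With $\cor^{k+1}(\ns)$ and $\ns$ both compact, Remark \ref{rem:stfacts}(i) yields the continuity of $\comp$. I do not expect a serious obstacle here; the only points requiring care are the bookkeeping of the coordinate identifications in the definition of $\Phi$ and the correct application of uniqueness of completion in the reverse inclusion.
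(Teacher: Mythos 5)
Your proof is correct. The two ingredients you isolate --- closedness of $\cu^{k+1}(\ns)$ in $\ns^{\{0,1\}^{k+1}}$ and uniqueness of completion in a $k$-step nilspace --- are exactly the ones the paper uses, but you package them differently: you verify that $\cor^{k+1}(\ns)$ is compact, identify the graph of $\comp$ with $\Phi^{-1}\big(\cu^{k+1}(\ns)\big)$ for the continuous reassembly map $\Phi$, and then invoke the closed-graph criterion of Remark \ref{rem:stfacts}(i). The paper instead argues by contradiction with sequences: assuming a sequence of corners $\q_n'\to\q'$ with $\comp(\q_n')\notin U$ for some open $U\ni\comp(\q')$, it extracts a convergent subsequence of the completions inside the compact set $\cu^{k+1}(\ns)$ and uses uniqueness of completion to force the limit to be the completion of $\q'$, contradicting $\q(1^{k+1})\notin U$. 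Your version is arguably cleaner and more structural (it makes explicit where the $k$-step hypothesis enters, namely in the reverse inclusion of the graph identity), at the cost of having to check separately that the domain $\cor^{k+1}(\ns)$ is compact Hausdorff so that the closed-graph theorem applies; the paper's sequential argument sidesteps that check but buries the same compactness of $\cu^{k+1}(\ns)$ inside the subsequence extraction. Both descriptions you give of $\cor^{k+1}(\ns)$ (as a finite intersection of preimages of the closed set $\cu^k(\ns)$ under face restrictions, or as the continuous image of $\cu^{k+1}(\ns)$ under the projection forgetting the $1^{k+1}$ coordinate) are valid and suffice.
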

\begin{proof}
We argue by contradiction. If $\comp$ were not continuous then there would exist some corner $\q'$, some open set $U\ni \comp(\q')$ and a sequence of corners $\q_n'$ converging to $\q'$ such that for all $n$ we have $\comp(\q_n')\not\in U$. Let $\q_n$ denote the completion in $\cu^{k+1}(\ns)$ of $\q'_n$. By the compactness of $\cu^{k+1}(\ns)$ there exists a subsequence $(\q_m)$ of $(\q_n)$  such that $\q_m$ converges to some cube $\q\in \cu^{k+1}(\ns)$. This convergence combined with the assumption that $\q_n'\to \q'$ implies that for each $v\neq 1^{k+1}$ we have $\q(v)=\q'(v)$, so by uniqueness of completion we have $\q(1^{k+1})=\comp(\q')\in U$. On the other hand, the convergence $\q_m\to \q$ also implies that $\q_m(1^{k+1})\to \q(1^{k+1})$. Since by assumption $\q_m(1^{k+1})=\comp(\q_m')\not\in U$ for all $m$, we have $\q(1^{k+1})\not\in U$, a contradiction.
\end{proof}
\noindent Let us record also the following consequence of Lemma \ref{lem:contcomp}, to the effect that proximate cubes on $\cF_{k-1}(\ns)$ can always be lifted to proximate cubes on $\ns$.
\begin{lemma}\label{lem:closelifts}
Let $\ns$ be a $k$-step compact nilspace and let $d,d'$ be the metrics on $\ns$, $\cF_{k-1}(\ns)$ given by Lemma \ref{lem:d-invariance} and \eqref{eq:quotientmetric}  respectively. Then for every $\epsilon>0$ there exists $\delta>0$ such that the following holds. If $\q_1,\q_2\in \cu^{k+1}(\cF_{k-1}(\ns))$ satisfy $d'(\q_1(v),\q_2(v))\leq \delta$ for all $v$, then there exist cubes $\tilde\q_1,\tilde\q_2\in \cu^{k+1}(\ns)$ such that $\pi_{k-1}\co\tilde\q_i=\q_i$ for $i=1,2$ and $d\big(\tilde\q_1(v),\tilde\q_2(v)\big)\leq \epsilon$ for all $v$.
\end{lemma}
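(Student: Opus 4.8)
The plan is to extract the modulus $\delta$ from the compactness of the relevant cube spaces, via a contradiction argument. Suppose the conclusion fails for some $\epsilon>0$. Then for each $n\in\N$ there are cubes $\q_1^n,\q_2^n\in\cu^{k+1}(\cF_{k-1}(\ns))$ with $\max_v d'(\q_1^n(v),\q_2^n(v))\to 0$, yet such that \emph{no} pair of lifts of $\q_1^n$ and $\q_2^n$ (that is, cubes $a,b\in\cu^{k+1}(\ns)$ with $\pi_{k-1}\co a=\q_1^n$ and $\pi_{k-1}\co b=\q_2^n$) satisfies $\max_v d(a(v),b(v))\leq\epsilon$. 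Since $\ns$ is a compact degree-$k$ bundle (Proposition \ref{prop:topbundec}), condition (3.5) in Definition \ref{def:compdegkbund} with $i=k-1$ gives a surjection $\cu^{k+1}(\ns)\to\cu^{k+1}(\cF_{k-1}(\ns))$, $\q\mapsto\pi_{k-1}\co\q$, so such lifts exist; I fix lifts $\tilde\q_1^n,\tilde\q_2^n$ of $\q_1^n,\q_2^n$.

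First I would pass to convergent subsequences. As $\cu^{k+1}(\cF_{k-1}(\ns))$ is compact, after passing to a subsequence we may assume $\q_1^n\to\q$; since $\max_v d'(\q_1^n(v),\q_2^n(v))\to 0$ we also get $\q_2^n\to\q$. Likewise $\cu^{k+1}(\ns)$ is compact, so after a further subsequence $\tilde\q_1^n\to\tilde\q_1$ and $\tilde\q_2^n\to\tilde\q_2$ for some $\tilde\q_1,\tilde\q_2\in\cu^{k+1}(\ns)$. By continuity of $\pi_{k-1}$ we have $\pi_{k-1}\co\tilde\q_1=\pi_{k-1}\co\tilde\q_2=\q$, so $\tilde\q_1$ and $\tilde\q_2$ are two lifts of the \emph{same} base cube $\q$. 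Hence, by \eqref{eq:comp-k-deg-bund} (again with $i=k-1$), there is a cube $\q_3\in\cu^{k+1}(\cD_k(\ab_k))$ with $\tilde\q_2=\tilde\q_1+\q_3$.

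The key step is then to transport this single correction back along the sequence. Since $\cu^{k+1}(\cD_k(\ab_k))$ is a subgroup of $\ab_k^{\{0,1\}^{k+1}}$ (being the solution set of a homogeneous linear equation over $\ab_k$), we have $-\q_3\in\cu^{k+1}(\cD_k(\ab_k))$, and by \eqref{eq:comp-k-deg-bund} the cube $\check\q_2^n:=\tilde\q_2^n-\q_3$ lies in $\cu^{k+1}(\ns)$ over the same base $\q_2^n$ as $\tilde\q_2^n$, so it is again a lift of $\q_2^n$. Because the action of $\ab_k$ on $\ns$ is continuous (the bundle is compact), subtracting the fixed element $\q_3$ vertexwise is continuous, whence $\check\q_2^n\to\tilde\q_2-\q_3=\tilde\q_1$. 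Thus $\tilde\q_1^n\to\tilde\q_1$ and $\check\q_2^n\to\tilde\q_1$, so $\max_v d(\tilde\q_1^n(v),\check\q_2^n(v))\to 0$; for large $n$ this is at most $\epsilon$, which contradicts the separation hypothesis since $\tilde\q_1^n$ and $\check\q_2^n$ are lifts of $\q_1^n$ and $\q_2^n$.

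The hard part is precisely this middle transport: one cannot independently lift $\q_1^n$ and $\q_2^n$ and expect the lifts to be close, because the fibres of $\pi_{k-1}$ are full $\ab_k$-cosets and an arbitrary lift of $\q_2^n$ may be displaced by an essentially unrelated element of $\ab_k$ at each vertex. What rescues the argument is that in the limit the two base cubes coincide, so the two limiting lifts differ by a \emph{single} global cube $\q_3\in\cu^{k+1}(\cD_k(\ab_k))$; condition \eqref{eq:comp-k-deg-bund} ensures that subtracting this same $\q_3$ from $\tilde\q_2^n$ still produces a genuine cube over the correct base, and the continuity of the $\ab_k$-action carries the correction back to finite $n$. (One could alternatively lift $\q_2$ corner-by-corner and invoke the continuity of completion from Lemma \ref{lem:contcomp}, but the compactness argument above avoids the bookkeeping of iterated completions.)
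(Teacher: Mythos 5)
Your argument is correct, but it takes a genuinely different route from the paper's. The paper proves the lemma directly: writing $P=\{0,1\}^{k+1}\setminus\{1^{k+1}\}$, it uses the definition of the quotient metric $d'$ (together with the $\ab_k$-invariance of $d$ and the fact that vertexwise $\ab_k$-shifts preserve $k$-cubes) to lift $\q_1|_P,\q_2|_P$ to $(k+1)$-corners on $\ns$ that are vertexwise $\delta$-close, and then completes both corners, invoking the continuity of the completion map (Lemma \ref{lem:contcomp}, made uniform by compactness of the corner space) to control the distance at the remaining vertex $1^{k+1}$. You instead run a compactness/contradiction argument on whole cubes: after extracting convergent subsequences of arbitrary lifts, the two limit cubes lie over the same base cube, hence by \eqref{eq:comp-k-deg-bund} differ by a single $\q_3\in\cu^{k+1}(\cD_k(\ab_k))$, and you transport $-\q_3$ back along the sequence using the group structure of $\cu^{k+1}(\cD_k(\ab_k))$ and the continuity of the $\ab_k$-action. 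All the ingredients you cite are available in the paper (existence of lifts from condition (3.5) of Definition \ref{def:compdegkbund}, closedness of $\cu^{k+1}(\ns)$ so that the subsequential limits are genuine cubes, continuity of $\pi_{k-1}$ and of the action from Proposition \ref{prop:topbundec}), so the proof is complete. The trade-off is that the paper's proof is shorter once Lemma \ref{lem:contcomp} is in hand and produces the lifts somewhat more explicitly, whereas yours bypasses corners and completion entirely in favour of the algebraic description of the fibres of $\q\mapsto\pi_{k-1}\co\q$; both ultimately rest on compactness of the cube sets.
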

\begin{proof}
Let $P=\{0,1\}^{k+1}\setminus\{1^{k+1}\}$ and note that, by definition of $d'$ and \cite[Remark 3.2.12]{Cand:Notes1}, the corners $\q_i|_P$ can be lifted to corners $\q_i'$, $i=1,2$ such that $d\big(\q_1'(v),\q_2'(v)\big)\leq \delta$ for every $v\in P$. By Lemma \ref{lem:contcomp}, if $\delta\leq \epsilon$ is sufficiently small then the unique completions $\tilde\q_i$ of $\q_i'$ satisfy $d\big(\tilde\q_1(1^{k+1}),\tilde\q_2(1^{k+1})\big)\leq \epsilon$.
\end{proof}

\medskip
\section{Measure-theoretic preliminaries}\label{sec:measprel}

\medskip
In this section we collect the main tools from measure theory that we shall use in the sequel.\\
\indent A notion that unifies most of these tools is that of a \emph{continuous system of measures}, discussed in Subsection \ref{subsec:csm}.\\
\indent Using continuous systems of measures, we shall then define in Subsection \ref{subsec:Haar} an invariant measure on a continuous abelian bundle, and use this to construct a generalization of the Haar measure for  compact nilspaces.\\
\indent In Subsection \ref{subsec:morph-prob-spaces} we collect several constructions of probability spaces that will be used repeatedly later in the chapter.

\subsection{Continuous systems of measures}\label{subsec:csm}
Recall that a measure $\mu$ on a measure space $X$ is said to be \emph{concentrated on} $S\subset X$ if $\mu(X\setminus S)=0$.

The study of continuous systems of measures goes back at least to Bourbaki (see \cite[Ch. V, \S 3]{BourInt}). A recent treatment is given in \cite{C&Gra}. We shall use the following definition.

\begin{defn}\label{def:CSM}
Let $X,Y$ be compact spaces and let $\pi : X \to Y$ be continuous. A \emph{continuous system of measures} (CSM) on the map $\pi$ is a family of Borel measures $\{ \mu_y :y \in Y\}$ on $X$ satisfying the following conditions:
  \begin{enumerate}
    \item For every $y\in Y$ the measure $\mu_y$ is concentrated on $\pi^{-1}(y)$.
    \item\label{contprop} For every continuous function $f:X\to\C$, the function $Y\to \C$, $y \mapsto \int_{\pi^{-1}(y)} f \; \ud\mu_y$ is continuous.
  \end{enumerate}
\end{defn}
\noindent As explained in \cite{C&Gra}, replacing $\C$ by $\R$ in condition \eqref{contprop} yields an equivalent definition.\\

\noindent Note that all the CSMs that we shall consider in these notes consist of strictly positive measures $\mu_y$, i.e. Borel measures taking positive values on non-empty open sets. 

A simple example of a CSM is obtained by taking a product space, as follows.

\begin{lemma}\label{lem:GenCSM}
Let $V$ and $Y$ be compact spaces, let $\mu$ be a Borel probability measure on $V$, let $X=V\times Y$ and let $\pi : X\to Y,\, (v,y)\mapsto y$. For each $y\in Y$ let $\mu_y$ denote the measure on $X$ concentrated on $\pi^{-1}(y)$, where it equals the pushforward of $\mu$ under the map $V\to \pi^{-1}(y)$, $v\mapsto (v,y)$. Then $\{\mu_y:y\in Y\}$ is a CSM on $\pi$.
\end{lemma}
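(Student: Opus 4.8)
The plan is to verify the two defining conditions of a CSM directly from the definition, using the concrete product structure. Condition (i) holds essentially by construction: the measure $\mu_y$ is the pushforward of $\mu$ under the map $\iota_y:V\to X$, $v\mapsto(v,y)$, and the image of this map is exactly $\{(v,y):v\in V\}=\pi^{-1}(y)$, so $\mu_y$ is concentrated on $\pi^{-1}(y)$. The only point to note is that $\mu_y$ is genuinely a Borel measure on $X$: since $V$ is compact (hence the product $X=V\times Y$ is a compact space) and $\iota_y$ is continuous, it is Borel measurable, so the pushforward of the Borel measure $\mu$ is a well-defined Borel measure on $X$.

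For condition (ii), fix a continuous function $f:X\to\C$ and compute the integral explicitly using the change-of-variables formula for a pushforward measure. Since $\mu_y=(\iota_y)_*\mu$, we have
\[
\int_{\pi^{-1}(y)} f\,\ud\mu_y = \int_V f(v,y)\,\ud\mu(v),
\]
so I must show that the map $g:Y\to\C$, $y\mapsto\int_V f(v,y)\,\ud\mu(v)$ is continuous. The natural approach is to fix $y\in Y$ and a sequence $y_n\to y$ (legitimate since $Y$ is metrizable by Remark \ref{rem:stfacts}(ii)), and apply the dominated convergence theorem to the integrands $f(\cdot,y_n)$. For each $v$ we have $f(v,y_n)\to f(v,y)$ by continuity of $f$, and the integrands are dominated by the constant $\sup_X|f|$, which is finite because $f$ is continuous on the compact space $X$ and hence bounded, and integrable against the probability measure $\mu$. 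Dominated convergence then gives $g(y_n)\to g(y)$, establishing continuity of $g$ and hence condition (ii).

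I expect no serious obstacle here; the statement is a routine verification and its role is simply to record a convenient supply of examples. The one mild subtlety worth flagging is the justification that pointwise convergence $f(\cdot,y_n)\to f(\cdot,y)$ follows from joint continuity of $f$: for fixed $v$, the map $y\mapsto f(v,y)$ is continuous as a composition of $y\mapsto(v,y)$ with $f$, which suffices. An alternative, avoiding dominated convergence altogether, would be to invoke uniform continuity of $f$ on the compact space $X$: given $\epsilon>0$ there is a neighbourhood of $y$ on which $|f(v,y')-f(v,y)|<\epsilon$ uniformly in $v$, whence $|g(y')-g(y)|\le\epsilon\,\mu(V)=\epsilon$ directly, since $\mu$ is a probability measure. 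Either route completes the proof, and I would present whichever is more concise in context.
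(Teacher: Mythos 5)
Your proof is correct, but it takes a genuinely different route from the paper. The paper applies the Stone--Weierstrass theorem: it approximates $f$ uniformly by finite sums $\sum_i f_{i,1}(v)f_{i,2}(y)$ (using Urysohn's lemma to see that this algebra separates points), observes that integrating each such product over $v$ gives a continuous function of $y$, and concludes that $y\mapsto\int_{\pi^{-1}(y)}f\,\ud\mu_y$ is a uniform limit of continuous functions. You instead compute the integral directly as $\int_V f(v,y)\,\ud\mu(v)$ and establish continuity in $y$ either by dominated convergence along a sequence $y_n\to y$ (legitimate, since $Y$ is metrizable) or, more cleanly, by uniform continuity of $f$ on the compact metrizable space $X$, which gives $|g(y')-g(y)|\le\epsilon$ for $y'$ in a suitable neighbourhood of $y$. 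Your argument is more elementary and entirely adequate for this product-space special case; what the paper's approach buys is a reusable template, since the same Stone--Weierstrass approximation scheme is invoked repeatedly later (in the proofs of Lemma \ref{lem:bundHaar}, Lemma \ref{lem:prodCSMconv}, Lemma \ref{lem:contact}, and Proposition \ref{prop:CSM-dif-bund}), where the measures are no longer of product type and a direct uniform-continuity argument would not suffice.
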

\begin{proof}
Let $f : V\times Y \to \R$ be a continuous function. Then $f$ is a uniform limit of functions of the form $F:(v,y) \mapsto \sum_{i=1}^n f_{i,1}(v)f_{i,2}(y)$ where $f_{i,1}:V\to \R$, $f_{i,2}:Y\to \R$ are continuous. Indeed, the algebra of such functions $F$ satisfies the assumptions of the Stone-Weierstrass theorem \cite[Appendix A, \S A14]{Rudin}. (The fact that this algebra separates points follows from Urysohn's lemma \cite[Theorem 33.1]{Munkres}.) Now, for a function $f_{i,1}(v)f_{i,2}(y)$ as above, it is clear that integrating over $v$ yields a continuous function of $y$. It follows that $y\mapsto \int_{\pi^{-1}(y)} f\; \ud\mu_y$ is a uniform limit of continuous functions and is therefore continuous.
\end{proof}

\noindent One of our main uses of CSMs below is to equip any compact abelian bundle with a probability measure that is invariant under the action of the structure group. This measure will be obtained as a simple application of the following result concerning general CSMs.
\begin{lemma}\label{lem:CSM-measure}
Let $\{\mu_y:y\in Y\}$ be a family of Borel probability measures on $X$ forming a CSM on $\pi : X \to Y$, and let $\nu$ be a Borel probability measure on $Y$. Then the following function on Borel sets $E\subset X$ is a Borel probability measure:
\[
\mu(E) = \int_Y \mu_y\big(E \cap \pi^{-1}(y)\big)\, \ud\nu(y).
\]
\end{lemma}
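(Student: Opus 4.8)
The plan is to verify the three defining properties of a Borel probability measure for the set function $\mu$: non-negativity, the correct total mass $\mu(X)=1$, and countable additivity; the genuine work lies in showing $\mu$ is well-defined, i.e. that the integrand $y\mapsto \mu_y\big(E\cap\pi^{-1}(y)\big)$ is a Borel-measurable (indeed $\nu$-integrable) function of $y$ for every Borel set $E\subset X$. Since each $\mu_y$ is concentrated on $\pi^{-1}(y)$, we have $\mu_y\big(E\cap\pi^{-1}(y)\big)=\mu_y(E)$, so it is cleaner to work throughout with the function $\Phi_E:y\mapsto \mu_y(E)$ and show it is measurable and bounded in $[0,1]$.

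First I would establish measurability of $\Phi_E$ by a standard good-sets (monotone class / Dynkin system) argument. The defining property \eqref{contprop} of a CSM tells us precisely that $\Phi_E$ is continuous, hence Borel, whenever $E$ is replaced by a continuous function $f$; taking $f$ to approximate indicators, or integrating continuous $f$, gives that $\Phi_U$ is a pointwise limit of continuous functions (hence Borel) for open $U$, using that an open set's indicator is an increasing pointwise limit of continuous functions by Urysohn's lemma and that each $\mu_y$ is a measure so monotone convergence applies fibrewise. Let $\mathcal{D}$ be the collection of Borel sets $E$ for which $\Phi_E$ is Borel-measurable. Since $\mu_y$ is a finite (probability) measure for each $y$, one checks directly that $\mathcal{D}$ is closed under complements (as $\Phi_{X\setminus E}=1-\Phi_E$) and under countable increasing unions (by the monotone convergence theorem applied in each fibre, $\Phi_{\bigcup_n E_n}=\lim_n \Phi_{E_n}$, a pointwise limit of Borel functions). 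Thus $\mathcal{D}$ is a Dynkin system containing the open sets, and since the open sets are closed under finite intersection, the Dynkin $\pi$-$\lambda$ theorem gives $\mathcal{D}\supseteq \sigma(\text{open sets})=\mathcal{B}(X)$. This secures measurability of the integrand for every Borel $E$.

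With well-definedness in hand, the remaining verifications are routine. Non-negativity and the bound $\mu(E)\le \nu(Y)=1$ follow from $0\le \Phi_E(y)\le 1$ and monotonicity of the integral; taking $E=X$ gives $\Phi_X\equiv 1$ (since $\mu_y$ is a probability measure), whence $\mu(X)=\int_Y 1\,\ud\nu=1$. For countable additivity, given disjoint Borel sets $E_1,E_2,\dots$ with union $E$, fibrewise countable additivity of each $\mu_y$ gives $\Phi_E=\sum_n \Phi_{E_n}$ pointwise, and since the partial sums increase to $\Phi_E$ with all terms non-negative, the monotone convergence theorem lets us interchange the sum with the integral over $Y$: $\mu(E)=\int_Y \sum_n \Phi_{E_n}\,\ud\nu=\sum_n\int_Y \Phi_{E_n}\,\ud\nu=\sum_n \mu(E_n)$.

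The main obstacle is the measurability step, since that is the only place where the CSM hypothesis does real work and where one must pass carefully from the continuity of $y\mapsto\int f\,\ud\mu_y$ for continuous $f$ to Borel-measurability of $y\mapsto\mu_y(E)$ for arbitrary Borel $E$. The cleanest route is the Dynkin system argument outlined above, seeding it with open sets (or equivalently with the continuous-function case granted directly by property \eqref{contprop}); the finiteness of the probability measures $\mu_y$ is what guarantees the complementation and convergence steps behave, so it is worth emphasising that the argument uses $\mu_y(X)=1<\infty$ uniformly.
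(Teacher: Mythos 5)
Your proof is correct, but it is genuinely more self-contained than the paper's: the paper disposes of this lemma in one line by citing \cite[Corollary 3.7]{C&Gra} together with \cite[Proposition 2.23]{C&Gra}, whereas you reconstruct the underlying argument from scratch. The substance of what you supply --- passing from the continuity of $y\mapsto\int f\,\ud\mu_y$ for continuous $f$ to Borel measurability of $y\mapsto\mu_y(E)$ for all Borel $E$ via open sets and a monotone class argument, then verifying normalisation and countable additivity by monotone convergence --- is exactly the content hidden in the cited results of Censor--Grandini, so your version buys transparency at the cost of a page the author chose not to spend. One small point of hygiene: closure under complements together with closure under increasing unions is not, on its own, the standard axiomatisation of a $\lambda$-system (one wants either proper differences or countable disjoint unions); but for your particular collection $\mathcal{D}$ these closures are immediate from finite additivity of the probability measures $\mu_y$ (e.g.\ $\Phi_{B\setminus A}=\Phi_B-\Phi_A$ for $A\subseteq B$), so the $\pi$-$\lambda$ theorem applies as you intend and there is no gap. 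Your appeal to metrizability of $X$ (guaranteed since compact spaces in this paper are second-countable Hausdorff) to write $1_U$ as an increasing limit of continuous functions is also legitimate.
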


\begin{proof}
This is a special case of \cite[Corollary 3.7]{C&Gra}, using \cite[Proposition 2.23]{C&Gra}.
\end{proof}

\subsection{Haar measure on compact abelian bundles and nilspaces}\label{subsec:Haar}

Recall that every compact abelian group can be equipped with a regular Borel probability measure that is invariant under translation, called the (normalized) Haar measure \cite{Rudin}. In this subsection we shall construct a generalization of this measure for compact nilspaces. As we shall see,  there is a natural inductive construction of a probability measure on a $k$-fold compact abelian bundle, starting with the Haar measure on a compact abelian group. This construction will yield the desired measure on a $k$-step compact nilspace, thanks to \cite[Theorem 3.2.19]{Cand:Notes1}.

Our first step, then, is to define a Haar measure on a compact abelian bundle.

\begin{lemma}\label{lem:bundHaar}
Let $\bnd$ be a compact abelian bundle with base $S$, structure group $\ab$ and projection $\pi$. Let $\mu_S$ be a regular Borel probability measure on $S$. Then there is a unique regular Borel probability measure $\mu$ on $\bnd$ that is invariant under the action of $\ab$ and satisfies $\mu_S=\mu\co\pi^{-1}$.
\end{lemma}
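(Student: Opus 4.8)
The plan is to construct $\mu$ using the theory of continuous systems of measures developed in this section, specifically Lemma \ref{lem:CSM-measure}. The key observation is that a compact abelian bundle gives us exactly the data we need to build a CSM on the projection $\pi:\bnd\to S$: for each $s\in S$ the fibre $\pi^{-1}(s)$ is a principal homogeneous space of the compact abelian group $\ab$, so it carries a canonical $\ab$-invariant Borel probability measure $\mu_s$ obtained by pushing forward the Haar measure $\mu_\ab$ under the orbit map $a\mapsto x_s+a$ for any choice of base point $x_s\in\pi^{-1}(s)$ (this pushforward is independent of the choice of $x_s$ precisely because $\mu_\ab$ is translation-invariant). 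Once I verify that $\{\mu_s:s\in S\}$ is a CSM on $\pi$, I can define
\[
\mu(E)=\int_S \mu_s\big(E\cap\pi^{-1}(s)\big)\ud\mu_S(s),
\]
which is a Borel probability measure by Lemma \ref{lem:CSM-measure}.

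First I would establish the continuity condition (ii) of Definition \ref{def:CSM} for this family. Given a continuous $f:\bnd\to\C$, I want $s\mapsto\int_{\pi^{-1}(s)}f\ud\mu_s$ to be continuous. The natural approach is to use the continuity of the action $\alpha:\ab\times\bnd\to\bnd$ and a local trivialization of the bundle. Since the action is continuous and transitive on each fibre, one can argue that locally the situation reduces to the product CSM of Lemma \ref{lem:GenCSM}: on a neighbourhood admitting a continuous section $s\mapsto x_s$, the integral equals $\int_\ab f(x_s+a)\ud\mu_\ab(a)$, which is continuous in $s$ by continuity of $\alpha$ and of the section, together with the continuity-under-the-integral argument already used in Lemma \ref{lem:GenCSM}. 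The main technical point here is producing continuous local sections; this follows because $\pi$ is an open (and, since $\ab$ is compact, closed) map by Remark \ref{rem:open-and-closed}, so $S$ is the orbit space and the orbit map is a quotient map admitting local sections in this compact setting.

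Having obtained $\mu$, I would then verify the two required properties. For the compatibility $\mu_S=\mu\co\pi^{-1}$: for any Borel $A\subseteq S$, taking $E=\pi^{-1}(A)$ gives $\mu_s(\pi^{-1}(A)\cap\pi^{-1}(s))=\mu_s(\pi^{-1}(s))=1$ when $s\in A$ and $0$ otherwise, so $\mu(\pi^{-1}(A))=\mu_S(A)$ as desired. For $\ab$-invariance: fixing $b\in\ab$, I would show $\mu(b+E)=\mu(E)$ by noting that the translation by $b$ preserves each fibre $\pi^{-1}(s)$ and that each fibre measure $\mu_s$ is $\ab$-invariant by construction; hence $\mu_s((b+E)\cap\pi^{-1}(s))=\mu_s(b+(E\cap\pi^{-1}(s)))=\mu_s(E\cap\pi^{-1}(s))$, and integrating over $S$ gives the claim.

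Finally I would address regularity and uniqueness. Regularity is automatic since $\bnd$ is a compact metric space (Remark \ref{rem:stfacts} (ii)), on which every finite Borel measure is regular. For uniqueness, suppose $\mu'$ is another $\ab$-invariant regular Borel probability measure with $\mu'\co\pi^{-1}=\mu_S$. The $\ab$-invariance forces the conditional measures of $\mu'$ on the fibres to be $\ab$-invariant probability measures on each $\pi^{-1}(s)$, and since each fibre is a principal homogeneous $\ab$-space the only such measure is $\mu_s$; the disintegration of $\mu'$ over the base with respect to $\mu_S$ then coincides with that of $\mu$, forcing $\mu'=\mu$. The main obstacle I anticipate is the continuity step (ii): rigorously reducing the fibrewise integral to the product case via local sections requires care with the topology of the orbit space, though the openness of $\pi$ from Remark \ref{rem:open-and-closed} and the Stone--Weierstrass argument from Lemma \ref{lem:GenCSM} provide the needed tools.
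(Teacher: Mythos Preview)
Your overall structure matches the paper's closely: define fibre measures $\mu_s$ as pushforwards of Haar measure, verify they form a CSM, apply Lemma~\ref{lem:CSM-measure}, then check invariance, regularity, and uniqueness. The invariance, compatibility, regularity, and uniqueness parts are fine and essentially identical to the paper's.

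There is, however, a real gap in your argument for the CSM continuity condition. You propose to establish continuity of $s\mapsto\int_{\pi^{-1}(s)}f\,\ud\mu_s$ by producing continuous local sections of $\pi$ and reducing to the product case. But continuous local sections need not exist: a compact abelian bundle in the sense of Definition~\ref{def:CpctAbBund} is \emph{not} assumed to be locally trivial, and for general compact abelian $\ab$ it need not be. (The paper only obtains local triviality later, in Proposition~\ref{prop:Gleason}, under the additional hypothesis that $\ab$ is a Lie group, i.e.\ has finite rank.) Openness of $\pi$ from Remark~\ref{rem:open-and-closed} does not give you sections.

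The paper sidesteps this entirely with a trick you should note: since $S$ carries the quotient topology (condition~(iv) of Definition~\ref{def:CpctAbBund}), the map $g:s\mapsto\int_{\pi^{-1}(s)}f\,\ud\mu_s$ is continuous if and only if $g\circ\pi:\bnd\to\R$ is continuous. But $g(\pi(b))=\int_{\ab}f(a+b)\,\ud\mu_{\ab}(a)$, and the function $(a,b)\mapsto f(a+b)$ on $\ab\times\bnd$ is continuous because the action is. Now Lemma~\ref{lem:GenCSM} with $V=\ab$ and $Y=\bnd$ gives continuity of $b\mapsto\int_{\ab}f(a+b)\,\ud\mu_{\ab}(a)$ directly, with no sections needed.
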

We call $\mu$ the \emph{Haar measure} on $\bnd$ (given $\mu_S$).
\begin{proof}
For each $s\in S$ the set $\pi^{-1}(s)$ is a principal homogeneous space of $\ab$, so there is a $\ab$-equivariant homeomorphism $f:\ab\to \pi^{-1}(s)$. Let $\mu_s$ be the pushfoward measure $\mu_{\ab}\co f^{-1}$ on $\pi^{-1}(s)$, where $\mu_{\ab}$ is the Haar measure on $\ab$. If we show that $\{\mu_s:s\in S\}$ is a CSM on $\pi$, then by Lemma \ref{lem:CSM-measure} we can define a Borel measure $\mu$ on $\bnd$ by the following formula:
\begin{equation}\label{eq:HaarMeas}
\mu(E)=\int_S \; \mu_s\big(\pi^{-1}(s)\cap E\big)\; \ud\mu_S(s),
\end{equation}
for every Borel set $E\subset \bnd$.

To show that $\{\mu_s:s\in S\}$ is a CSM on $\pi$ we check condition \eqref{contprop} from Definition \ref{def:CSM}. Given any continuous function $f : \bnd \to \mathbb{R}$, we need to show that $g : S \to \mathbb{R}$, 
$s\mapsto \int_{\pi^{-1}(s)} f \, \ud\mu_s$
is continuous. Since $S$ has the quotient topology (Definition \ref{def:CpctAbBund}), it suffices to show that $g \circ \pi : \bnd \to \R$ is continuous.
Now $g(\pi(b)) = \int_{\ab} f(a+b) \; \ud\mu_{\ab}(a) = \int_{\ab} f'(a,b) \; \ud\mu_{\ab}(a)$, where the function $f':\ab\times \bnd \to \R, \; (a,b) \mapsto f(a+b)$ is continuous. Therefore, the continuity of $g \circ \pi$ follows from (the proof of) Lemma \ref{lem:GenCSM} applied with $V=\ab$ and $Y=\bnd$.\\
\indent We thus obtain the Borel measure $\mu$ given by \eqref{eq:HaarMeas}, and  $\mu$ is regular by the standard fact that any Borel probability measure on a metric space is regular \cite[Theorem 1.1]{Bill2}. The $\ab$-invariance of $\mu$ clearly follows from that of each measure $\mu_s$. The uniqueness can be deduced from uniqueness of Haar measure on $\ab$ combined with a disintegration result for invariant measures, such as \cite[Proposition 2]{Rip}.
\end{proof}

As mentioned above, this lemma yields the following result.

\begin{proposition}\label{prop:nilspaceHaar}
Let $\bnd$ be a compact $k$-fold abelian bundle, with factors $\bnd_0,\bnd_1,\dots,\bnd_k=\bnd$ and structure groups $\ab_1,\dots,\ab_k$. Then there is a unique regular Borel probability measure $\mu$ on $\bnd$ with the following property: for each $i\in [k]$, letting $\pi_i$ be the projection $\bnd\to \bnd_i$, we have that the Borel probability measure $\mu\co\pi_i^{-1}$ on $\bnd_i$ is invariant under the action of $\ab_i$. 
\end{proposition}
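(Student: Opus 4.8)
The plan is to induct on $k$, adding one structure group at each step via Lemma \ref{lem:bundHaar}. For $k=0$ the bundle $\bnd=\bnd_0$ is a single point, which carries a unique (point-mass) probability measure, and the asserted property is vacuous since $[0]=\emptyset$. For the inductive step I would view $\bnd_{k-1}$ as a compact $(k-1)$-fold abelian bundle, with factors $\bnd_0,\dots,\bnd_{k-1}$ and structure groups $\ab_1,\dots,\ab_{k-1}$, and apply the inductive hypothesis to obtain a unique regular Borel probability measure $\mu_{k-1}$ on $\bnd_{k-1}$ such that, writing $\rho_i:\bnd_{k-1}\to\bnd_i$ for the projection, $\mu_{k-1}\co\rho_i^{-1}$ is $\ab_i$-invariant for each $i\in[k-1]$. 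Since $\bnd=\bnd_k$ is a compact $\ab_k$-bundle with base $\bnd_{k-1}$, Lemma \ref{lem:bundHaar} (applied with $\mu_S=\mu_{k-1}$, which is regular) then yields a unique regular Borel probability measure $\mu$ on $\bnd$ that is $\ab_k$-invariant and satisfies $\mu\co\pi_{k-1}^{-1}=\mu_{k-1}$.

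For existence I would check that this $\mu$ has the required property. The case $i=k$ is immediate: $\pi_k$ is the identity on $\bnd$, and $\mu$ is $\ab_k$-invariant by construction. For $i<k$ I would use that the projection factors as $\pi_i=\rho_i\co\pi_{k-1}$, whence $\mu\co\pi_i^{-1}=(\mu\co\pi_{k-1}^{-1})\co\rho_i^{-1}=\mu_{k-1}\co\rho_i^{-1}$, which is $\ab_i$-invariant by the inductive hypothesis.

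For uniqueness, let $\mu'$ be any regular Borel probability measure on $\bnd$ with the stated property. Using the same factorization $\pi_i=\rho_i\co\pi_{k-1}$, the pushforward $\mu'\co\pi_{k-1}^{-1}$ satisfies $(\mu'\co\pi_{k-1}^{-1})\co\rho_i^{-1}=\mu'\co\pi_i^{-1}$, which is $\ab_i$-invariant for every $i\in[k-1]$; hence $\mu'\co\pi_{k-1}^{-1}$ has the defining property on $\bnd_{k-1}$ and so equals $\mu_{k-1}$ by the inductive uniqueness. Since the $i=k$ condition says $\mu'$ is $\ab_k$-invariant, the uniqueness clause of Lemma \ref{lem:bundHaar} gives $\mu'=\mu$, completing the induction.

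This argument is essentially bookkeeping layered on top of Lemma \ref{lem:bundHaar}, which already carries the analytic content (the continuous-system-of-measures construction of the fibrewise Haar measure). The only points needing care are the factorization $\pi_i=\rho_i\co\pi_{k-1}$ of the projections and the observation that, for indices $i<k$, the invariance conditions constrain $\mu'$ solely through its pushforward to $\bnd_{k-1}$; this is precisely what allows the inductive uniqueness to determine that pushforward before the final condition $i=k$ pins down $\mu'$ itself.
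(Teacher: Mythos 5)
Your proposal is correct and follows exactly the route the paper takes: the paper's proof is the one-line remark ``By induction on $i\in[k]$, starting with the Haar probability on $\ab_1$ and applying Lemma \ref{lem:bundHaar} for $i=2,\dots,k$,'' and your argument is precisely this induction with the existence and uniqueness bookkeeping written out. The details you supply (the factorization $\pi_i=\rho_i\co\pi_{k-1}$ and the observation that uniqueness propagates through the pushforward to $\bnd_{k-1}$ before the $\ab_k$-invariance pins down $\mu'$) are the intended ones and are all sound.
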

\noindent In particular, for every $k$-step compact nilspace $\ns$, applying the proposition to the bundle structure we obtain the measure that we shall call the \emph{Haar measure on} $\ns$.
\begin{proof}
By induction on $i\in [k]$, starting with the Haar probability on $\ab_1$ and applying Lemma \ref{lem:bundHaar} for $i=2,\dots,k$.
\end{proof}

\noindent Recall from \cite[Definition 3.3.1]{Cand:Notes1} that a bundle morphism $\phi:\bnd\to \bnd'$ between $k$-fold abelian bundles is said to be \emph{totally surjective} if the structure morphism $\alpha_i:\ab_i\to\ab_i'$ is surjective for each $i\in [k]$.

The following basic result is the analogue for nilspaces of the fact that continuous surjective homomorphisms between compact abelian groups preserve the Haar measures.

\begin{lemma}\label{lem:MeasPres}
Let $\bnd,\bnd'$ be compact $k$-fold abelian bundles, and let $\phi:\bnd\to \bnd'$ be a totally surjective continuous bundle morphism.  Then $\phi$ preserves the Haar measures, that is, for every Borel set $E\subset \bnd'$ we have $\mu_{\bnd}\big(\phi^{-1}(E)\big)=\mu_{\bnd'}(E)$.
\end{lemma}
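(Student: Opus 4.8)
The plan is to reduce the claim to the uniqueness clause in Proposition \ref{prop:nilspaceHaar}. Set $\nu:=\mu_{\bnd}\co\phi^{-1}$, the pushforward under $\phi$ of the Haar measure on $\bnd$; since $\phi$ is continuous and hence Borel measurable, $\nu$ is a well-defined Borel probability measure on $\bnd'$. The assertion $\mu_{\bnd}(\phi^{-1}(E))=\mu_{\bnd'}(E)$ for all Borel $E$ is exactly the statement $\nu=\mu_{\bnd'}$. By Proposition \ref{prop:nilspaceHaar} this will follow once I show that $\nu$ has the property that characterizes the Haar measure on $\bnd'$, namely that for each $i\in[k]$ the measure $\nu\co(\pi_i')^{-1}$ on $\bnd_i'$ is invariant under the action of $\ab_i'$.

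I would first unpack the data of the morphism. The bundle morphism $\phi=\phi_k$ carries component maps $\phi_i:\bnd_i\to\bnd_i'$ and structure morphisms $\alpha_i:\ab_i\to\ab_i'$ (recall \cite[Definition 3.3.1]{Cand:Notes1}), compatible with the factor projections, $\pi_i'\co\phi=\phi_i\co\pi_i$, and equivariant for the $\ab_i$-actions, $\phi_i(a+x)=\alpha_i(a)+\phi_i(x)$ for all $a\in\ab_i,\ x\in\bnd_i$. Writing $\rho_i:=\mu_{\bnd}\co\pi_i^{-1}$, the defining property of $\mu_{\bnd}$ in Proposition \ref{prop:nilspaceHaar} says precisely that $\rho_i$ is $\ab_i$-invariant. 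Using the projection compatibility,
\[
\nu\co(\pi_i')^{-1}=\mu_{\bnd}\co(\pi_i'\co\phi)^{-1}=\mu_{\bnd}\co(\phi_i\co\pi_i)^{-1}=\rho_i\co\phi_i^{-1}.
\]

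The heart of the argument is then to transport the $\ab_i$-invariance of $\rho_i$ across $\phi_i$. Let $b'\in\ab_i'$; total surjectivity gives $b\in\ab_i$ with $\alpha_i(b)=b'$, and for any Borel $A\subset\bnd_i'$ the equivariance relation yields
\[
\phi_i^{-1}(b'+A)=\{x:\phi_i(x)-\alpha_i(b)\in A\}=\{x:\phi_i(x-b)\in A\}=b+\phi_i^{-1}(A).
\]
Hence $(\rho_i\co\phi_i^{-1})(b'+A)=\rho_i\big(b+\phi_i^{-1}(A)\big)=\rho_i\big(\phi_i^{-1}(A)\big)=(\rho_i\co\phi_i^{-1})(A)$, using the $\ab_i$-invariance of $\rho_i$ at the middle step. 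This is the desired $\ab_i'$-invariance of $\nu\co(\pi_i')^{-1}$, and the uniqueness in Proposition \ref{prop:nilspaceHaar} then gives $\nu=\mu_{\bnd'}$.

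I do not anticipate a serious obstacle: the proof is a clean reduction to uniqueness, and the only delicate point is that the surjectivity of each $\alpha_i$ is genuinely needed, since it is what ensures every translate $b'+A$ on the target pulls back to a translate $b+\phi_i^{-1}(A)$ on the source. As a sanity check, the case $k=1$ of this computation recovers the familiar fact that a continuous surjective homomorphism of compact abelian groups preserves Haar measure.
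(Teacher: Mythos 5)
Your proof is correct, but it takes a genuinely different route from the paper. The paper argues by induction on $k$: the base case $k=1$ is the classical fact for compact abelian groups, and the inductive step disintegrates $\mu_{\bnd}$ and $\mu_{\bnd'}$ along the fibres of $\pi_{k-1}$ and $\pi_{k-1}'$ using the CSMs from Lemma \ref{lem:bundHaar}, observes that $\phi$ restricted to each fibre is a continuous affine homomorphism of torsors (hence measure-preserving on fibres), and then applies the induction hypothesis to $\phi_{k-1}$ via a Fubini-type computation. You instead push $\mu_{\bnd}$ forward and verify directly that $\nu=\mu_{\bnd}\co\phi^{-1}$ satisfies the invariance properties that characterize $\mu_{\bnd'}$ in Proposition \ref{prop:nilspaceHaar}, then invoke the uniqueness clause; the computation $\phi_i^{-1}(b'+A)=b+\phi_i^{-1}(A)$ is exactly where total surjectivity enters, as you note. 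Your argument is shorter and dispenses with both the induction and the disintegration machinery, at the cost of resting entirely on the uniqueness statement in Proposition \ref{prop:nilspaceHaar} (which the paper does assert, but whose proof is itself delegated, via Lemma \ref{lem:bundHaar}, to a disintegration result for invariant measures); the paper's inductive proof is more self-contained given the CSM framework it has already built. One small point worth making explicit: the uniqueness in Proposition \ref{prop:nilspaceHaar} is stated among \emph{regular} Borel probability measures, so you should observe that $\nu$ is automatically regular because $\bnd'$ is compact and metrizable (Remark \ref{rem:stfacts}(ii)), as the paper itself does for the measure in Lemma \ref{lem:bundHaar}.
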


\begin{proof}
We argue by induction on $k$. The case $k=1$ follows from the fact recalled above. For $k>1$, recall that by \cite[Definition 3.3.1]{Cand:Notes1} the map $\phi$ induces a totally surjective continuous bundle morphism $\phi_{k-1}:\bnd_{k-1}\to \bnd'_{k-1}$. Let $\{\mu_s:s\in \bnd_{k-1}\}$ and $\{\mu'_{s'}:s'\in \bnd'_{k-1}\}$ be the CSMs on $\pi_{k-1}:\bnd\to\bnd_{k-1}$, $\pi_{k-1}':\bnd'\to\bnd_{k-1}'$ respectively (as in the proof of Lemma \ref{lem:bundHaar}). For $s\in \bnd_{k-1}$ and any Borel set $E \subset \bnd'$, let $f_{\bnd}(s)=\mu_s\big(\pi_{k-1}^{-1}(s)\cap \phi^{-1}(E)\big)$, and for $s'\in \bnd_{k-1}'$ let $f_{\bnd'}(s')=\mu'_{s'}\big({\pi'}^{-1}_{k-1}(s')\cap E\big)$. The restriction of $\phi$ to a fibre $\pi_{k-1}^{-1}(s)$, being a continuous affine homomorphism onto some fibre ${\pi'}^{-1}_{k-1}(s')$, preserves the measures $\mu_s,\mu'_{s'}$. Hence we have $f_{\bnd}(s)= f_{\bnd'}(\phi_{k-1}(s))$ for each $s \in \bnd_{k-1}$. Since $\phi_{k-1}$ is measure-preserving (by the induction hypothesis), we have
\[
\mu_{\bnd}\big(\phi^{-1}(E)\big) = \int_{\bnd_{k-1}} f_{\bnd}\ud \mu_{\bnd_{k-1}}= \int_{\bnd_{k-1}} f_{\bnd'}\co\phi_{k-1}\ud \mu_{\bnd_{k-1}}
 =  \int_{\bnd_{k-1}'} f_{\bnd'}\ud \mu_{\bnd_{k-1}'}= \mu_{\bnd'}(E). \qedhere
\]
\end{proof}
\noindent Recall from \cite[Definition 3.3.7]{Cand:Notes1} the notion of a fibre-surjective morphism.
\begin{corollary}\label{cor:ctsfibsurmorph}
Let $\ns,\ns'$ be $k$-step compact nilspaces and let $\phi:\ns\to\ns'$ be a continuous fibre-surjective morphism. Then $\phi$ preserves the Haar measures.
\end{corollary}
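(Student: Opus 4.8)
The plan is to deduce the corollary directly from Lemma \ref{lem:MeasPres} by recognizing that a continuous fibre-surjective morphism $\phi\colon\ns\to\ns'$ is precisely a continuous totally surjective bundle morphism between the canonical bundle structures on $\ns$ and $\ns'$. By Proposition \ref{prop:topbundec}, both $\ns$ and $\ns'$ are compact degree-$k$ bundles, with factors $\cF_i(\ns)$, $\cF_i(\ns')$ and structure groups $\ab_i$, $\ab_i'$; and by Proposition \ref{prop:nilspaceHaar} their Haar measures are exactly the measures built inductively from these bundle structures. Thus, once $\phi$ is identified as a continuous totally surjective bundle morphism, Lemma \ref{lem:MeasPres} yields $\mu_{\ns}\big(\phi^{-1}(E)\big)=\mu_{\ns'}(E)$ for every Borel set $E\subset\ns'$, which is the assertion.

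First I would recall the relevant algebraic content from \cite{Cand:Notes1}: a nilspace morphism $\phi$ respects each relation $\sim_i$, hence descends to maps $\cF_i(\ns)\to\cF_i(\ns')$ on the factors, and on each level it induces a structure-group homomorphism $\alpha_i\colon\ab_i\to\ab_i'$ characterized (at the top level, say) by $\phi(x+a)=\phi(x)+\alpha_k(a)$, with the analogous relation holding at each factor. These data assemble $\phi$ into a bundle morphism in the sense of \cite[Definition 3.3.1]{Cand:Notes1}. Fibre-surjectivity (\cite[Definition 3.3.7]{Cand:Notes1}) is exactly the statement that $\phi$ is onto each fibre, which in bundle terms means that each $\alpha_i$ is surjective, i.e. that the bundle morphism is totally surjective.

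The remaining, and principal, task is to verify that this bundle morphism is continuous in the compact sense, that is, that both the induced factor maps and the structure-group maps $\alpha_i$ are continuous. The factor maps $\cF_i(\ns)\to\cF_i(\ns')$ are continuous because each factor carries the quotient topology (Remark \ref{rem:open-and-closed} and Proposition \ref{prop:topbundec}) and $\phi$ descends to them, so continuity follows from continuity of $\phi$ together with the universal property of the quotient topology. For the maps $\alpha_i$ I would argue at the top level, the same reasoning applying over each factor $\cF_{i-1}(\ns)$: fixing a point $x$ in a $\sim_{k-1}$-class $F$, Lemma \ref{lem:topkfolderg} and Proposition \ref{prop:topbundec} show that the orbit map $a\mapsto x+a$ is a homeomorphism of $\ab_k$ onto $F$ (the topology on $\ab_k$ having been defined precisely through such an identification), while the map $y'\mapsto y'-\phi(x)$ is continuous on the target fibre, being the inverse of the corresponding orbit map. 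Since $\phi$ sends $F$ into a single fibre of $\ns'$ and $\alpha_k(a)=\phi(x+a)-\phi(x)$, continuity of $\alpha_k$ follows from continuity of $\phi$.

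I expect this last continuity check for the structure-group morphisms to be the main obstacle: one must ensure that the identification of $\ab_k$ with a fibre via the orbit map, on which the topology of $\ab_k$ was defined in Proposition \ref{prop:topbundec}, is compatible on both the source and target sides with $\phi$, so that the formula $\alpha_k(a)=\phi(x+a)-\phi(x)$ genuinely exhibits $\alpha_k$ as a composition of continuous maps. The surjectivity of the $\alpha_i$ and the intertwining relations defining a bundle morphism are then inherited from the algebraic theory of \cite{Cand:Notes1}, so that Lemma \ref{lem:MeasPres} applies and completes the proof.
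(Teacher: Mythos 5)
Your proposal is correct and follows essentially the same route as the paper, which simply combines Lemma \ref{lem:MeasPres} with the fact (cited as \cite[Lemma 3.3.8]{Cand:Notes1}) that a fibre-surjective morphism between $k$-step nilspaces is a totally surjective bundle morphism. The additional continuity checks you carry out for the factor maps and structure morphisms $\alpha_i$ are the details the paper leaves implicit in that citation together with Proposition \ref{prop:topbundec}, and your treatment of them is sound.
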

\begin{proof}
We combine Lemma \ref{lem:MeasPres} with \cite[Lemma 3.3.8]{Cand:Notes1}.
\end{proof}
\noindent In the sequel we shall have to use CSMs on certain maps from cube sets $\cu^n(\ns)$ to $\ns$, such as the evaluation map $\q\mapsto \q(0^n)$. These CSMs will be obtained as special cases of the construction in the next lemma.\\
\indent Recall from \cite[Definition 3.2.17]{Cand:Notes1} the notion of a \emph{relative} $k$-fold abelian bundle. We may similarly define a \emph{compact} relative $k$-fold abelian bundle $\bnd$ with ground set $\bnd_0$. Note that each fibre of the projection $\pi_0:\bnd\to\bnd_0$ is then itself a compact $k$-fold abelian bundle and so it has a Haar measure by  Lemma \ref{lem:bundHaar}. 

\begin{lemma}\label{lem:relcsm} Let $\bnd$ be a compact relative $k$-fold abelian bundle with factors $\bnd_0, \bnd_1, \ldots,\bnd_{k-1}$. Then the Haar measures on the fibres of the projection $\pi_0:\bnd\to \bnd_0$ form a CSM on $\pi_0$.
\end{lemma}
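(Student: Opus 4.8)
The plan is to verify the two conditions of Definition \ref{def:CSM} for the family $\{\mu_b : b\in\bnd_0\}$, where $\mu_b$ denotes the Haar measure on the fibre $\pi_0^{-1}(b)$ viewed as a compact (absolute) abelian bundle via Proposition \ref{prop:nilspaceHaar}. Condition (i) is immediate: by construction $\mu_b$ is a probability measure concentrated on $\pi_0^{-1}(b)$. The content is condition (ii), namely the continuity of $b\mapsto \int f\,\ud\mu_b$ for every continuous $f:\bnd\to\R$, and I would prove it by induction on the number of structure groups in the relative bundle. The base case of a single structure group is exactly the CSM claim established inside the proof of Lemma \ref{lem:bundHaar} (via its reduction to Lemma \ref{lem:GenCSM}), since there $\pi_0:\bnd\to\bnd_0$ is itself a compact abelian bundle and $\{\mu_b\}$ is its family of fibrewise Haar measures.

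For the inductive step I would factor the projection as $\pi_0=q\co p$, where $p:\bnd\to\bnd'$ is the top-level projection onto the penultimate factor $\bnd'$ (a compact abelian bundle with the top structure group $\ab$), and $q:\bnd'\to\bnd_0$ exhibits $\bnd'$ as a compact relative abelian bundle over $\bnd_0$ with one fewer structure group. Let $\{\nu_t:t\in\bnd'\}$ be the fibrewise Haar measures of $p$, which form a CSM on $p$ by Lemma \ref{lem:bundHaar}, and let $\{\lambda_b:b\in\bnd_0\}$ be the fibrewise Haar measures of $q$, which form a CSM on $q$ by the induction hypothesis. The key identity is that the Haar measure $\mu_b$ on $\pi_0^{-1}(b)$ is built recursively exactly as in \eqref{eq:HaarMeas}: regarding $\pi_0^{-1}(b)$ as an $\ab$-bundle over $q^{-1}(b)$ with base measure $\lambda_b$, one has for every continuous $f$
\[
\int_{\pi_0^{-1}(b)} f \, \ud\mu_b \;=\; \int_{q^{-1}(b)} \Big( \int_{p^{-1}(t)} f \, \ud\nu_t \Big) \, \ud\lambda_b(t).
\]

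Granting this identity, the conclusion follows by composing the two continuity properties: the inner integral $\tilde f(t)=\int_{p^{-1}(t)} f\,\ud\nu_t$ is a continuous function of $t\in\bnd'$ by the CSM property of $\{\nu_t\}$, and then $b\mapsto \int_{q^{-1}(b)} \tilde f\,\ud\lambda_b$ is continuous by the CSM property of $\{\lambda_b\}$; since this map is precisely $b\mapsto\int f\,\ud\mu_b$, the induction closes. The main obstacle is justifying the displayed disintegration, i.e.\ confirming that the intrinsic Haar measure on the fibre coincides with this iterated integral. This reduces to checking that the top level of the recursive construction in Proposition \ref{prop:nilspaceHaar} uses precisely the base measure $\lambda_b$ on $q^{-1}(b)$ (which it does, by definition of the fibre Haar measures of $q$) together with the top-level fibrewise measures $\nu_t$; once the base measures are matched, the identity is the function-version of \eqref{eq:HaarMeas}, passing from the set-version by the standard approximation of $f$ by simple functions and monotone convergence. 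Everything else is the routine combination of the two CSM continuity statements.
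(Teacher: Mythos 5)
Your proposal is correct and follows essentially the same route as the paper: the paper composes the level-wise CSMs on the maps $\bnd_i\to\bnd_{i-1}$ (citing the fact that a composition of CSMs is again a CSM) and then identifies the composed fibre measures with the Haar measures by an inductive argument using the recursion \eqref{eq:CSMcomp}, which is exactly your two-level factorization $\pi_0=q\co p$ iterated. The only cosmetic difference is that you verify the continuity of the composed system directly by chaining the two condition-(ii) statements, where the paper invokes \cite[Proposition 3.3]{C&Gra}.
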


To prove this we shall have to compose several CSMs, in the following sense.

\begin{defn}
Given CSMs $\{\mu_y: y \in Y\}$ on $\pi : X \to Y$ and $\{\mu_z: z \in Z\}$ on $\tau : Y \to Z$, their \emph{composition} is the CSM $\{\nu_z: z \in Z\}$ on $\tau \co \pi : X \to Z$ consisting of measures defined for Borel sets $E\subset X$ by
\begin{equation}\label{eq:CSMcomp}
\nu_z(E) = \int_{\tau^{-1}(z)} \mu_y\big( \pi^{-1}(y) \cap E\big) \; \ud\mu_z(y).
\end{equation}
\end{defn}
The fact that this composition is indeed a CSM is established in \cite[Proposition 3.3]{C&Gra}.

\begin{proof}[Proof of Lemma \ref{lem:relcsm}]
By the proof of Lemma \ref{lem:bundHaar}, for each $i\in [k]$ there is a CSM on the bundle map $\pi_{i-1,i}:\bnd_i\to \bnd_{i-1}$ consisting of the pushforward  of the Haar measure on $\ab_i$ to each fibre of $\pi_{i-1,i}$. Let us compose these CSMs for $i\in [k]$, and let $\{\mu_x: x\in \bnd_0\}$ be the resulting CSM on $\pi_0$. For each $x\in \bnd_0$, a straightforward inductive argument using \eqref{eq:CSMcomp} shows that the measure $\mu_x$ on the compact $k$-fold abelian bundle $\pi^{-1}(x)$  satisfies the properties of the measure in Proposition \ref{prop:nilspaceHaar}, so it must be the Haar measure on $\pi^{-1}(x)$.
\end{proof}
\noindent The following result is an analogue for compact abelian bundles of the quotient integral formula.
\begin{lemma}\label{lem:quotint}
Let $\bnd,\bnd'$ be compact $k$-fold abelian bundles, with Haar measures $\mu,\mu'$ respectively, and let $\phi:\bnd\to \bnd'$ be a totally surjective continuous bundle morphism. For each $t\in \bnd'$ let $\mu_t$ denote the Haar measure on $\phi^{-1}(t)$. Then the measures $\mu_t$, $t\in \bnd'$ disintegrate $\mu$ relative to $\mu'$, that is for every Borel set $E\subset \bnd$ we have $\mu(E)=\int_{\bnd'}\mu_t\big(\phi^{-1}(t)\cap E\big)\,\ud\mu'(t)$.
\end{lemma}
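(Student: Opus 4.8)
The plan is to argue by induction on $k$, in parallel with the proof of Lemma \ref{lem:MeasPres}. I would begin with two preliminary observations. First, since $\phi$ is totally surjective, its kernel is a compact $k$-fold abelian bundle with structure groups $\ker(\alpha_i)$, and each fibre $\phi^{-1}(t)$ is a torsor of this kernel bundle, hence itself a compact $k$-fold abelian bundle carrying a Haar measure $\mu_t$ by Proposition \ref{prop:nilspaceHaar}. Second, the family $\{\mu_t:t\in\bnd'\}$ is a CSM on $\phi$: as in Lemma \ref{lem:relcsm}, one obtains it by composing, along the successive fibrations in the tower of the fibres $\phi^{-1}(t)$, the CSMs given by the pushforwards of the Haar measures on the groups $\ker(\alpha_i)$. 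Granting these two facts, Lemma \ref{lem:CSM-measure} shows that $\nu(E):=\int_{\bnd'}\mu_t(\phi^{-1}(t)\cap E)\,\ud\mu'(t)$ is a well-defined Borel probability measure on $\bnd$, and it remains to prove $\nu=\mu$.

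For the inductive step ($k>1$) I would expand both measures over the level-$(k-1)$ bases. The defining formula \eqref{eq:HaarMeas} gives $\mu(E)=\int_{\bnd_{k-1}}\sigma_s(\pi_{k-1}^{-1}(s)\cap E)\,\ud\mu_{\bnd_{k-1}}(s)$, where $\sigma_s$ is the Haar measure on the $\ab_k$-torsor $\pi_{k-1}^{-1}(s)$; applying the induction hypothesis to the totally surjective morphism $\phi_{k-1}:\bnd_{k-1}\to\bnd'_{k-1}$ (obtained as in Lemma \ref{lem:MeasPres}) rewrites $\mu_{\bnd_{k-1}}$ as $\int_{\bnd'_{k-1}}\lambda_{s'}\,\ud\mu_{\bnd'_{k-1}}(s')$, with $\lambda_{s'}$ the Haar measure on $\phi_{k-1}^{-1}(s')$. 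On the other hand, disintegrating $\mu'$ over $\bnd'_{k-1}$ through its own CSM $\{\sigma'_{s'}\}$ writes $\nu(E)$ as an iterated integral over $s'\in\bnd'_{k-1}$ and $t\in{\pi'}^{-1}_{k-1}(s')$. Comparing the two, it suffices to prove, for each fixed $s'$, the localized identity $\int_{\phi_{k-1}^{-1}(s')}\sigma_s(\pi_{k-1}^{-1}(s)\cap E)\,\ud\lambda_{s'}(s)=\int_{{\pi'}^{-1}_{k-1}(s')}\mu_t(\phi^{-1}(t)\cap E)\,\ud\sigma'_{s'}(t)$.

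To establish this localized identity I would disintegrate the fibre measure $\mu_t$ itself: applying \eqref{eq:HaarMeas} to the bundle $\phi^{-1}(t)$, whose level-$(k-1)$ base is $\phi_{k-1}^{-1}(s')$ with Haar measure $\lambda_{s'}$, expresses $\mu_t(\phi^{-1}(t)\cap E)$ as $\int_{\phi_{k-1}^{-1}(s')}\tau_{t,s}(\phi^{-1}(t)\cap\pi_{k-1}^{-1}(s)\cap E)\,\ud\lambda_{s'}(s)$, where $\tau_{t,s}$ is the Haar measure on the $\ker(\alpha_k)$-torsor $\phi^{-1}(t)\cap\pi_{k-1}^{-1}(s)$. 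Substituting this into the right-hand side and interchanging the order of integration in $s$ and $t$ (a Fubini step), it then suffices to show, for each $s$ with $\phi_{k-1}(s)=s'$, that $\sigma_s(\pi_{k-1}^{-1}(s)\cap E)=\int_{{\pi'}^{-1}_{k-1}(s')}\tau_{t,s}(\phi^{-1}(t)\cap\pi_{k-1}^{-1}(s)\cap E)\,\ud\sigma'_{s'}(t)$. This last identity is exactly the $k=1$ case of the lemma, applied to the restriction of $\phi$ to the $\ab_k$-torsor $\pi_{k-1}^{-1}(s)$: this restriction is a surjective affine map, over the homomorphism $\alpha_k:\ab_k\to\ab_k'$, onto the $\ab_k'$-torsor ${\pi'}^{-1}_{k-1}(s')$ (with target measure $\sigma'_{s'}$, by the torsor case of Lemma \ref{lem:MeasPres}), and its fibres are the $\ker(\alpha_k)$-torsors above; so the identity is the classical quotient integral (Weil) formula for the compact abelian group $\ab_k$ and the closed subgroup $\ker(\alpha_k)$, transported to torsors. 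The main obstacle is to make the two preliminary facts and this Fubini interchange rigorous: one must establish the CSM property of $\{\mu_t\}$ and of the composed systems with enough care that all the iterated integrals and the order-swap are legitimate, and one must identify the conditional measures of $\mu_t$ correctly, which rests on the uniqueness clause of Proposition \ref{prop:nilspaceHaar}.
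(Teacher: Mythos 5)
Your proposal is correct and follows essentially the same route as the paper: establish that the fibre Haar measures $\mu_t$ form a CSM on $\phi$ (via Lemma \ref{lem:relcsm}), define $\nu$ by Lemma \ref{lem:CSM-measure}, and then run an induction on $k$ whose top-level step consists of disintegrating over the $(k-1)$-level bases, a Fubini interchange, and the classical quotient integral formula for $\ab_k$ and $\ker(\alpha_k)$ transported to the torsor fibres. The only organizational difference is that you conclude $\nu=\mu$ by matching the two iterated-integral expansions directly, whereas the paper verifies that $\nu$ has the $\ab_i$-invariance properties characterizing the Haar measure and then invokes the uniqueness clause of Proposition \ref{prop:nilspaceHaar}; the underlying computations are the same.
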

Recall from \cite[Definition 3.3.4]{Cand:Notes1} the notion of the kernel of a bundle morphism.
\begin{proof}
The kernel of $\phi$ is a compact relative $k$-fold abelian bundle, with ground set $\bnd'$, and the fibres $\phi^{-1}(t)$ are compact $k$-fold abelian bundles (recall \cite[Lemma 3.3.6]{Cand:Notes1}). By Lemma \ref{lem:relcsm} the Haar measures $\mu_t$ form a CSM on $\phi$. By Lemma \ref{lem:CSM-measure}, with this CSM and $\mu'$ we can define a Borel probability $\nu$ on $\bnd$ by
\begin{equation}\label{eq:nukey}
\nu(E)=\int_{\bnd'}\mu_t\big(\phi^{-1}(t)\cap E\big)\,\ud\mu'(t).
\end{equation}
We have to show that $\nu=\mu$. For this it suffices to show that $\nu$ has the invariance properties that characterize the Haar measure $\mu$, namely that for each projection $\pi_i:\bnd\to\bnd_i$ the measure $\nu\co\pi_i^{-1}$ on $\bnd_i$ is $\ab_i$-invariant (recall Proposition \ref{prop:nilspaceHaar}). We prove this by induction on $k$.\\
\indent For $k=1$, we have that $\bnd,\bnd'$ are principal homogeneous spaces of compact abelian groups $\ab,\ab'$ respectively, and we just have to show that $\nu$ is $\ab$-invariant. But from the quotient integral formula \cite[Theorem 1.5.2]{D&E} it actually follows that $\nu=\mu$ in this case, so $\nu$ is indeed $\ab$-invariant.\\
\indent Supposing now that the claim holds for $k\geq 1$, we prove it for $k+1$. First we show that we can suppose by induction that $\nu\co\pi_i^{-1}$ is already $\ab_i$-invariant on $\bnd_i$ for each $i\in [k]$. For this it suffices to show that $\nu\co \pi_k^{-1}$ is of the same form as $\nu$ (as in \eqref{eq:nukey}), but relative to the totally surjective morphism $\phi_k:\bnd_k\to \bnd_k'$ induced by $\phi:\bnd_{k+1}\to \bnd_{k+1}'$. By construction, the Haar measure $\mu'$ has a disintegration relative to the Haar measure $\mu'_k$ on $\bnd'_k$, into measures $\mu'_s$, $s\in \bnd_k'$, each of which is the Haar measure on  $\ab_{k+1}'$ pushed forward to the fibre ${\pi'_k}^{-1}(s)$. Thus for any Borel set $E\subset \bnd_k$ we have
\[
\nu\co\pi_k^{-1}(E)\; = \;\int_{\bnd'}\mu_t\big(\pi_k^{-1}(E)\cap \phi^{-1}(t)\big) \ud\mu'(t) \; = \; \int_{\bnd'_k}\Big(\int_{{\pi'_k}^{-1}(s)} \mu_t\big(\pi_k^{-1}(E)\cap \phi^{-1}(t)\big) \ud\mu'_s(t)\Big) \ud\mu'_k(s).
\]
For each $s\in \bnd'_k$ and each $t\in \bnd'$ with $\pi'_k(t)=s$, note that $\phi^{-1}(t)$ is a $(k+1)$-fold compact abelian bundle, with $k$-th factor $\phi_k^{-1}(s)$ having Haar measure denoted $\mu_s$. Then $\pi_k$ restricted to $\phi^{-1}(t)$ pushes $\mu_t$ forward to $\mu_s$. It follows that for any such $s,t$ we have $\mu_t\big(\pi_k^{-1}(E)\cap \phi^{-1}(t)\big)=\mu_s\big(E\cap \phi_k^{-1}(s)\big)$. Hence
\[
\nu\co\pi_k^{-1}(E) = \int_{\bnd'_k}\Big(\int_{{\pi'_k}^{-1}(s)} \mu_s\big(E\cap \phi_k^{-1}(s)\big)\ud\mu_s'(t)\Big) \ud\mu_k'(s) = \int_{\bnd'_k} \mu_s\big(E\cap \phi_k^{-1}(s)\big) \ud\mu_k'(s).
\]
The right side here is indeed the measure on $\bnd_k$ constructed in the same way as $\nu$ in \eqref{eq:nukey}. Therefore, as mentioned above, by induction it now suffices to show that $\nu$ is $\ab_{k+1}$-invariant.\\
\indent Again we use the disintegration of $\mu'$ relative to $\mu'_{k-1}$, so that for any Borel set $E\subset \bnd'$ we have
\[
\nu(E) = \int_{\bnd'_k}\Big(\int_{{\pi'_k}^{-1}(s)} \mu_t\big(E\cap \phi^{-1}(t)\big) \ud\mu_s'(t)\Big) \ud\mu_k'(s).
\]
For each $s\in \bnd_k'$, we claim that the inner integral on the right side here integrates $1_E$ over a union of orbits of $\ab_{k+1}$. Indeed, we have $\mu_t\big( E\cap \phi^{-1}(t)\big)=\int_{\phi^{-1}(t)} 1_E(x)\ud\mu_t(x) $, and by disintegrating $\mu_t$ into the fibres of $\pi_k:\phi^{-1}(t)\to \phi_k^{-1}(s)$, the inner integral in question is written
\[
\int_{{\pi'_k}^{-1}(s)} \; \int_{\phi_k^{-1}(s)} \;\int_{\pi_k^{-1}(y)\cap \phi^{-1}(t)} 1_E(x) \;\ud\mu_{\pi_k^{-1}(y)}(x) \; \ud\mu_s(y) \; \ud\mu_s'(t).
\]
By Fubini's theorem we can interchange the two outer integrals, so this equals
\[
\int_{\phi_k^{-1}(s)} \; \int_{{\pi'_k}^{-1}(s)} \;\int_{\pi_k^{-1}(y)\cap \phi^{-1}(t)} 1_E(x) \;\ud\mu_{\pi_k^{-1}(y)}(x) \; \ud\mu_s'(t)\; \ud\mu_s(y) .
\]
For each $y\in  \phi_k^{-1}(s)$, note that ${\pi'_k}^{-1}(s)$ is a $\ab_{k+1}'$-torsor (or principal homogeneous space of $\ab_{k+1}'$) and for every $t$ in this torsor we have that $\pi_k^{-1}(y)\cap \phi^{-1}(t)$ is a $\ker(\alpha_{k+1})$-torsor, where $\alpha_{k+1}$ is the $(k+1)$-th structure morphism of $\phi$ (recall \cite[Definition 3.3.1 and Lemma 3.3.6]{Cand:Notes1}). Fixing any $t_0$ in the former torsor, and then for each $r\in \ab_{k+1}'$ fixing some $x_r\in \pi_k^{-1}(y)\cap \phi^{-1}(t_0+r)$, the last inner double-integral equals
\[
\int_{\ab_{k+1}'}  \int_{\pi_k^{-1}(y)\cap \phi^{-1}(t_0+r)}  1_E(x)\, \ud\mu_{\pi_k^{-1}(y)}(x)\, \ud\mu_{\ab_{k+1}'}(r)
= \int_{\ab_{k+1}'} \int_{\ker(\alpha_{k+1})} 1_E(x_r+u) \,\ud\mu_{\ker(\alpha_{k+1})}(u)\, \ud\mu_{\ab_{k+1}'}(r).
\]
Note that the sets $\{x_r+u:u\in \ker(\alpha_{k+1})\}$, $r\in \ab_{k+1}'$ form a partition of $\pi_k^{-1}(y)$. It follows from the quotient integral formula \cite[Theorem 1.5.2]{D&E} that for any $x_0\in \pi_k^{-1}(y)$ this integral equals 
\[
\int_{\ab_{k+1}} 1_E(x_0+z) \ud\mu_{\ab_{k+1}}(z)=  \int_{\pi_k^{-1}(y)} 1_E(x) \ud\mu_{\pi_k^{-1}(y)}(x).
\]
This is clearly invariant under shifting $E$ by any fixed $z\in \ab_{k+1}$. The $\ab_{k+1}$-invariance of $\nu$ follows.
\end{proof}
\noindent Recall the fact that the Haar measure on a compact abelian group is strictly positive. This generalizes to compact nilspaces.
\begin{proposition}\label{prop:posmeasopen}
Let $\ns$ be a $k$-step compact nilspace, with Haar measure $\mu$. Then for every open set $U\subset \ns$ we have $\mu(U)>0$.
\end{proposition}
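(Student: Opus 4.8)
The plan is to induct on the step $k$, using the inductive construction of the Haar measure from Proposition~\ref{prop:nilspaceHaar}. Write $\pi=\pi_{k-1}$ for the projection $\ns\to\bnd_{k-1}=\cF_{k-1}(\ns)$. By Lemma~\ref{lem:bundHaar} the Haar measure satisfies
\[
\mu(E)=\int_{\bnd_{k-1}}\mu_s\big(\pi^{-1}(s)\cap E\big)\,\ud\mu_{\bnd_{k-1}}(s),
\]
where $\mu_{\bnd_{k-1}}$ is the Haar measure on the $(k-1)$-step compact nilspace $\bnd_{k-1}$, and each $\mu_s$ is the Haar measure on the fibre $\pi^{-1}(s)$, namely the pushforward of the Haar measure of the compact abelian group $\ab_k$ under an equivariant homeomorphism $\ab_k\to\pi^{-1}(s)$. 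The idea is to show that for non-empty open $U$ the non-negative integrand $s\mapsto \mu_s\big(\pi^{-1}(s)\cap U\big)$ is strictly positive on a subset of $\bnd_{k-1}$ of positive $\mu_{\bnd_{k-1}}$-measure, which forces $\mu(U)>0$.

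For the base case $k=1$, the nilspace $\ns$ is isomorphic to $\cD_1(\ab_1)$ for a compact abelian group $\ab_1$ (Lemma~\ref{lem:topkfolderg}), and its Haar measure is the pushforward of that of $\ab_1$; since the Haar measure of a compact abelian group is strictly positive, so is $\mu$. For the inductive step, assume the result for $(k-1)$-step compact nilspaces. The key point is that $\pi$ is an open map, by Remark~\ref{rem:open-and-closed} (the orbit map of the $\ab_k$-action is open). Hence $\pi(U)$ is a non-empty open subset of $\bnd_{k-1}$, and by the induction hypothesis $\mu_{\bnd_{k-1}}\big(\pi(U)\big)>0$.

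It remains to see that the integrand is positive on all of $\pi(U)$. For each $s\in\pi(U)$ the slice $\pi^{-1}(s)\cap U$ is non-empty and relatively open in the fibre $\pi^{-1}(s)$; transporting it through the equivariant homeomorphism $\ab_k\to\pi^{-1}(s)$ gives a non-empty open subset of the compact abelian group $\ab_k$, which has positive Haar measure, so $\mu_s\big(\pi^{-1}(s)\cap U\big)>0$. Thus the function $g(s)=\mu_s\big(\pi^{-1}(s)\cap U\big)$, which is measurable by the CSM property underlying Lemma~\ref{lem:CSM-measure}, is non-negative and strictly positive on $\pi(U)$. Since $\mu_{\bnd_{k-1}}\big(\pi(U)\big)>0$, the elementary fact that a non-negative measurable function positive on a set of positive measure has positive integral yields $\mu(U)=\int_{\bnd_{k-1}} g\,\ud\mu_{\bnd_{k-1}}>0$. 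The one step requiring care is securing that the base image $\pi(U)$ has positive measure: this is exactly where the openness of $\pi$ (Remark~\ref{rem:open-and-closed}) combines with the inductive hypothesis on $\cF_{k-1}(\ns)$, and it is the crux of the argument.
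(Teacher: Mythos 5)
Your proof is correct, and its skeleton (induction on $k$, openness of $\pi_{k-1}$ from Remark~\ref{rem:open-and-closed}, inductive hypothesis applied to $\pi_{k-1}(U)$) coincides with the paper's. Where you diverge is in the final step. The paper, having found that $B=\pi_{k-1}(U)$ has positive measure, uses regularity to extract a compact $C\subset B$ of positive measure, notes that $\pi_{k-1}$ is proper so $\pi_{k-1}^{-1}(C)$ is compact of positive measure, covers it by the open translates $U+z$, $z\in\ab_k$, extracts a finite subcover, and concludes from the $\ab_k$-invariance of $\mu$ that $\mu(U)>0$. You instead invoke the disintegration $\mu(E)=\int_{\cF_{k-1}(\ns)}\mu_s\big(\pi^{-1}(s)\cap E\big)\,\ud\mu'(s)$, which is exactly how $\mu$ is built in Lemma~\ref{lem:bundHaar} and Proposition~\ref{prop:nilspaceHaar}, and observe that the integrand is strictly positive on all of $\pi_{k-1}(U)$ because each slice $\pi^{-1}(s)\cap U$ is a non-empty open subset of a fibre equivariantly homeomorphic to $\ab_k$, on which Haar measure is strictly positive. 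Your route is more direct: it trades the regularity/properness/finite-cover argument for the fibrewise positivity of the measures $\mu_s$, at the small cost of having to note that $s\mapsto\mu_s\big(\pi^{-1}(s)\cap U\big)$ is measurable (for open $U$ it is in fact lower semicontinuous, by approximating $1_U$ from below by continuous functions and using the CSM property), which you do address. Both arguments are sound; yours is arguably the more economical given that the disintegration formula is already the definition of $\mu$ in this paper.
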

\begin{proof}
We argue by induction on $k$. The case $k=1$ follows from the fact recalled above. Let $k>1$ and let $U$ be open in $\ns$. By Remark \ref{rem:open-and-closed} we have that $\pi_{k-1}$ is an open map, so $B=\pi_{k-1}(U)$ is open in $\cF_{k-1}(\ns)$. Hence, letting $\mu'$ denote the Haar measure on $\cF_{k-1}(\ns)$, we have by induction that $\mu'(B)>0$. By regularity of $\mu'$ there is a compact subset $C\subset B$ such that $\mu'(C)>0$, and then since $\pi_{k-1}$ preserves Haar measures we have $\mu(\pi_{k-1}^{-1}(C))>0$. Now by Remark \ref{rem:open-and-closed} we also have that $\pi_{k-1}$ is a proper map, so $\pi_{k-1}^{-1}(C)$ is a compact set. This set is covered by the union of open sets $\bigcup_{z\in \ab_k} U+z$, so there is a finite set of translates of $U$ by elements of $\ab_k$ that covers $\pi_{k-1}^{-1}(C)$, and it follows that $\mu(U)>0$.
\end{proof}

\subsection{Probability spaces of morphisms}\label{subsec:morph-prob-spaces}

Recall from \cite[Definition 3.1.3]{Cand:Notes1} that given a cubespace $P$ and a subcubespace $S$ of $P$, we say that $S$ has the \emph{extension property} in $P$ if for every non-empty nilspace $\ns$ and every morphism $g': S \to \ns$ there is a morphism $g: P\to \ns$ with $g|_S=g'$. 

In this subsection we collect several constructions of probability spaces of morphisms between nilspaces, which will be used repeatedly in the sequel. These constructions rely on two basic ideas.\\
\indent Firstly, every set of restricted morphisms $\hom_f(P,\ns)$ has a compact abelian-bundle structure, as described in Lemma \ref{lem:top-restrmorph=subbund}, which we restate here for convenience. 
\begin{lemma}\label{lem:comp-restrmorph=subbund}
Let $P$ be a subcubespace of $\{0,1\}^n$ with the extension property, let $S$ be a subcubespace of $P$ with the extension property in $P$, let $\ns$ be a $k$-step compact nilspace and let $f:S\to \ns$ be a morphism. Then $\hom_f(P,\ns)$ is a compact $k$-fold abelian bundle that is a sub-bundle of $\ns^P$, with factors $\hom_{\pi_i\co f}(P,\ns_i)$ and structure groups $\hom_{S\to 0}(P,\cD_i(\ab_i))$, where $\ab_i$ is the $i$-th structure group of $\ns$.
\end{lemma}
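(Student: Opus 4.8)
Since this lemma is explicitly a restatement of Lemma \ref{lem:top-restrmorph=subbund}, the plan is simply to reproduce that earlier argument. The purely algebraic content — that $\hom_f(P,\ns)$ is a $k$-fold abelian bundle and a sub-bundle of $\ns^P$, with factors $\hom_{\pi_i\co f}(P,\ns_i)$ and structure groups $\hom_{S\to 0}(P,\cD_i(\ab_i))$ — is supplied by \cite[Lemma 3.3.11]{Cand:Notes1}. So I would take that structure as given and only verify the topological conditions of Definition \ref{def:CpctAbBund}, together with compactness of all the spaces involved.

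First I would establish compactness of $\hom_f(P,\ns)$. Since $P$ is a subcubespace of the finite set $\{0,1\}^n$, the space $\ns^P$ is a finite power of the compact space $\ns$, hence compact; it therefore suffices to show that $\hom_f(P,\ns)$ is closed in $\ns^P$. The condition that a map $g:P\to\ns$ be a morphism amounts to requiring $g\co c\in\cu^m(\ns)$ for every $m$ and every cube $c\in\cu^m(P)$. Each $\cu^m(\ns)$ is closed by Definition \ref{def:compnils}, and the map $g\mapsto g\co c$ is continuous, so each such requirement cuts out a closed subset of $\ns^P$; the agreement condition $g|_S=f$ is closed as well. Thus $\hom_f(P,\ns)$ is an intersection of closed sets, hence closed and compact. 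The same argument shows that each structure group $\hom_{S\to 0}(P,\cD_i(\ab_i))$ is a closed subgroup of $\ab_i^P$, and is therefore compact since $\ab_i$ is a compact abelian group.

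It then remains to check continuity of the action and condition (iv). The action of $\hom_{S\to 0}(P,\cD_i(\ab_i))$ is, coordinate by coordinate, a restriction of the action of $\ab_i$ on $\ns_i$, whose continuity is guaranteed by Proposition \ref{prop:topbundec}; continuity of the restricted action follows. Finally, condition (iv) of Definition \ref{def:CpctAbBund} for each factor is inherited from the ambient bundle $\ns_i^P$, since each factor $\hom_{\pi_i\co f}(P,\ns_i)$ carries the subspace topology from $\ns_i^P$. I do not expect any genuine obstacle here, as everything reduces to the algebraic structure plus routine closedness arguments; the only point needing a little care is the closedness of $\hom_f(P,\ns)$, which rests on the defining closure property of compact nilspaces (Definition \ref{def:compnils}) and on the continuity of precomposition by a fixed cube.
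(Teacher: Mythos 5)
Your proposal is correct and follows essentially the same route as the paper: take the algebraic bundle structure from \cite[Lemma 3.3.11]{Cand:Notes1} and verify only the topological conditions, with compactness of $\hom_f(P,\ns)$ and of the structure groups coming from closedness in $\ns^P$ and $\ab_i^P$ respectively, continuity of the action inherited from that of $\ab_i$, and condition (iv) inherited via the subspace topology. Your spelled-out argument for why $\hom_f(P,\ns)$ is closed (an intersection of closed conditions, one for each cube of $P$, using the closedness of $\cu^m(\ns)$ and continuity of precomposition) is a detail the paper leaves implicit, but it is the intended justification.
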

\noindent As a consequence of this structure we have a Haar measure on $\hom_f(P,\ns)$, by Proposition \ref{prop:nilspaceHaar}. 

The second idea consists in giving a simple description of the measure-theoretic properties of a restriction map from one set of such morphisms to another. In particular, we want a convenient criterion to decide whether, for a  subcubespace $S$ of $P$, the restriction map $\hom(P,\ns)\to  \hom(S,\ns)$ preserves the Haar measures. The following notion provides a general criterion of this type.
\begin{defn}\label{defn:GoodPair}
Let $P$ be a cubespace, and let $P_1,P_2$ be subcubespaces of $P$. We call the pair of sets $P_1,P_2$ a \emph{good pair} if $P_1$ and $P_1\cap P_2$ both have the extension property in $P$ and, for every abelian group $\ab$ and every $k\in \N$, every morphism $f':P_2\to \cD_k(\ab)$ satisfying $f'|_{P_1\cap P_2}=0$ extends to a morphism $f:P\to \cD_k(\ab)$ satisfying $f|_{P_1}=0$.
\end{defn}
\noindent Note that in particular if $S$ has the extension property in $P\subset \{0,1\}^n$ and $P_1=\emptyset$ then $P_1,S$ is a good pair in $P$. 
The main purpose of this definition is the following result.
\begin{lemma}\label{lem:GoodPairHoms}
Let $P\subset\{0,1\}^n$ be a subcubespace with the extension property, and let $P_1,P_2$ be a good pair in $P$. Let $\ns$ be a $k$-step nilspace and let   $f:P_1\to \ns$ be a morphism. Then the restriction map
\begin{equation}\label{eq:restricthom}
\phi:\hom_f(P,\ns)\to \hom_{f|_{P_1\cap P_2}} (P_2,\ns)
\end{equation}
is a totally-surjective bundle morphism. In particular, if $\ns$ is a $k$-step compact nilspace, then $\phi$ preserves the Haar measures. Moreover, the Haar measures on the fibres $\phi^{-1}(t)$ form a CSM on $\phi$ that disintegrates the Haar measure on $\hom_f(P,\ns)$ relative to the Haar measure on $\hom_{f|_{P_1\cap P_2}} (P_2,\ns)$.
\end{lemma}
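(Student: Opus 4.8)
The plan is to realise $\phi$ as the restriction to sub-bundles of the coordinate projection $\ns^P\to\ns^{P_2}$, to read off total surjectivity directly from the defining condition of a good pair, and then to quote the measure-theoretic conclusions from the general theory of totally surjective bundle morphisms.

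First I would fix the two bundle structures. Writing $\ab_i$ for the $i$-th structure group of $\ns$, the algebraic sub-bundle description \cite[Lemma 3.3.11]{Cand:Notes1}, applied with $S=P_1$, exhibits $\hom_f(P,\ns)$ as a sub-bundle of $\ns^P$ with factors $\hom_{\pi_i\co f}(P,\ns_i)$ and structure groups $\hom_{P_1\to 0}(P,\cD_i(\ab_i))$. Since $P_1\cap P_2$ has the extension property in $P$, it also has the extension property in $P_2\subseteq P$ (any extension to $P$ restricts to one on $P_2$), so the same description applied with the ambient space $P_2$ and $S=P_1\cap P_2$ exhibits $\hom_{f|_{P_1\cap P_2}}(P_2,\ns)$ as a sub-bundle of $\ns^{P_2}$ with structure groups $\hom_{(P_1\cap P_2)\to 0}(P_2,\cD_i(\ab_i))$. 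The map $\phi$ is the restriction of morphisms from $P$ to $P_2$, i.e. the restriction of the coordinate projection $\ns^P\to\ns^{P_2}$ to these sub-bundles; in the compact case (via Lemma \ref{lem:comp-restrmorph=subbund}) both bundles are compact and $\phi$, being a coordinate projection, is continuous.

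Next I would verify that $\phi$ is a totally surjective bundle morphism. Restriction to $P_2$ commutes with the factor projections $\ns\to\ns_i$, so $\phi$ carries $\hom_{\pi_i\co f}(P,\ns_i)$ into $\hom_{\pi_i\co f|_{P_1\cap P_2}}(P_2,\ns_i)$, giving factor maps $\phi_i$; and since restriction is linear, $\phi_i(g+h)=\phi_i(g)+h|_{P_2}$, so $\phi_i$ is affine over the structure morphism $\alpha_i\colon \hom_{P_1\to 0}(P,\cD_i(\ab_i))\to\hom_{(P_1\cap P_2)\to 0}(P_2,\cD_i(\ab_i))$, $h\mapsto h|_{P_2}$. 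The crux is that each $\alpha_i$ is surjective, and this is \emph{exactly} the good-pair condition, applied with the abelian group $\ab_i$ and degree $i$: it says precisely that every morphism $P_2\to\cD_i(\ab_i)$ vanishing on $P_1\cap P_2$ is the restriction of a morphism $P\to\cD_i(\ab_i)$ vanishing on $P_1$. Hence all structure morphisms are surjective and $\phi$ is totally surjective. (One may note in passing that this forces $\phi$ itself to be onto: the bottom factor is the one-point nilspace $\hom(P,\ns_0)$, and surjectivity then propagates up the bundle, since each $\phi_i$ maps the torsor fibres onto the torsor fibres via the surjective $\alpha_i$.)

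Finally, with $\phi$ identified as a continuous totally surjective morphism of compact $k$-fold abelian bundles, the remaining assertions are direct citations: measure preservation is Lemma \ref{lem:MeasPres}, and the statement that the fibrewise Haar measures form a CSM disintegrating the Haar measure of $\hom_f(P,\ns)$ relative to that of $\hom_{f|_{P_1\cap P_2}}(P_2,\ns)$ is Lemma \ref{lem:quotint}. I expect the only real work to be in the middle paragraph: matching the two iterated sub-bundle structures across the restriction map and recognising that the good-pair hypothesis is nothing other than the surjectivity of the structure morphisms $\alpha_i$. Everything else is either inherited from the ambient product bundles $\ns^P,\ns^{P_2}$ or quoted from the already-established measure theory.
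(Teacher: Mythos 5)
Your proposal is correct and follows essentially the same route as the paper's proof: both identify the structure morphisms $\alpha_i$ as restriction maps $\hom_{P_1\to 0}(P,\cD_i(\ab_i))\to\hom_{(P_1\cap P_2)\to 0}(P_2,\cD_i(\ab_i))$, observe that their surjectivity is precisely the good-pair condition, and then quote Lemma \ref{lem:MeasPres} and Lemma \ref{lem:quotint} for the measure-theoretic conclusions. The only difference is organizational: the paper verifies the bundle-morphism axioms and total surjectivity by induction on $k$ (reducing to the top structure morphism $\alpha_k$), whereas you check all levels directly, which amounts to the same thing.
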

\begin{proof}
We prove that $\phi$ is a totally-surjective bundle morphism by induction on $k$. For $k=0$ the claim holds trivially,  so let $k\geq 1$ and suppose that $\ns$ is a $k$-step nilspace and that the lemma holds for $(k-1)$-step nilspaces. To see that condition (i) from \cite[Definition 3.3.1]{Cand:Notes1} holds, note that by a straightforward calculation we have that $\phi$ induces a map $\phi_{k-1}:\hom_{\pi_{k-1}\co f}(P,\ns_{k-1})\to \hom_{\pi_{k-1}\co f|_{P_1\cap P_2}} (P_2,\ns_{k-1})$ well-defined by $\pi_{k-1}\co g\mapsto \pi_{k-1}\co \phi(g)$, so the condition holds for $i=k-1$. We also have that $\phi_{k-1}$ is precisely the restriction map on $\hom_{\pi_{k-1}\co f}(P,\ns_{k-1})$, so by induction the condition holds for all $i\leq k-1$. Condition (ii) from the same definition is clearly satisfied, with the structure morphism $\alpha_i$ being the restriction $\hom_{P_1\to 0}(P,\cD_i(\ab_i))\to \hom_{P_1\cap P_2\to 0}(P_2,\cD_i(\ab_i))$. To check that $\phi$ is totally surjective, by induction it suffices to check that $\alpha_k$ is surjective, which holds by Definition \ref{defn:GoodPair}. Lemma \ref{lem:MeasPres} then implies that $\phi$ preserves the Haar measures. The last sentence of the lemma follows from Lemma \ref{lem:quotint}.
\end{proof}
Another useful feature of good pairs is the following extension result for morphisms.
\begin{lemma}\label{lem:Gpair-union-ext}
Let $P$ be a subcubespace of $\{0,1\}^n$ with the extension property, and let $P_1,P_2$ be a good pair in $P$. Then $P_1\cup P_2$ equipped with the union of the cube structures on $P_1,P_2$ has the following extension property: any morphism $f:P_1\cup P_2\to \ns$ into a non-empty $k$-step nilspace $\ns$ extends to a morphism $f':P\to \ns$.
\end{lemma}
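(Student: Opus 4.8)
The plan is to obtain this directly from the surjectivity contained in Lemma \ref{lem:GoodPairHoms}, so that no fresh induction on the step $k$ is needed (the relevant induction already lives inside that lemma). First I would unpack the data. By definition of the union cube structure, a morphism $f:P_1\cup P_2\to\ns$ is exactly a map whose two restrictions $f|_{P_1}:P_1\to\ns$ and $f|_{P_2}:P_2\to\ns$ are each morphisms; moreover these restrictions necessarily agree on $P_1\cap P_2$, both being equal to $f|_{P_1\cap P_2}$. In particular $f|_{P_2}$ is a morphism $P_2\to\ns$ whose restriction to $P_1\cap P_2$ is $f|_{P_1\cap P_2}$, so $f|_{P_2}$ is an element of $\hom_{f|_{P_1\cap P_2}}(P_2,\ns)$.

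Next I would apply Lemma \ref{lem:GoodPairHoms} taking the morphism there to be $f|_{P_1}:P_1\to\ns$. Since $P_1,P_2$ is a good pair in $P$ and $P$ has the extension property, all hypotheses are met, and the lemma gives that the restriction map $\phi:\hom_{f|_{P_1}}(P,\ns)\to\hom_{f|_{P_1\cap P_2}}(P_2,\ns)$, $g\mapsto g|_{P_2}$, is a totally surjective bundle morphism. Its target contains the element $f|_{P_2}$ identified above, so once surjectivity of $\phi$ as a map of sets is in hand, any $f'\in\hom_{f|_{P_1}}(P,\ns)$ with $\phi(f')=f|_{P_2}$ is the desired extension: by membership in $\hom_{f|_{P_1}}(P,\ns)$ it satisfies $f'|_{P_1}=f|_{P_1}$, and by $\phi(f')=f|_{P_2}$ it satisfies $f'|_{P_2}=f|_{P_2}$, whence $f'|_{P_1\cup P_2}=f$ and $f':P\to\ns$ is the sought morphism.

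The only point requiring a word of justification, and thus the main (minor) obstacle, is that a totally surjective bundle morphism is surjective on underlying sets, rather than merely having surjective structure morphisms. I would verify this by a short induction on the number of bundle levels: the induced base map $\phi_{k-1}$ is again totally surjective, hence surjective by induction, and over any point $s$ of the base the morphism $\phi$ restricts to an affine map between a fibre modelled on the structure group $\hom_{P_1\to 0}(P,\cD_k(\ab_k))$ and a fibre of the target, whose linear part is the top structure morphism $\alpha_k$; since $\alpha_k$ is surjective, an affine map of torsors with surjective linear part is onto, and so $\phi$ is surjective. (Alternatively one may cite this directly if it is recorded for totally surjective morphisms in \cite{Cand:Notes1}.) Nonemptiness causes no trouble, as the given $f$ and the hypothesis that $\ns$ is nonempty guarantee the relevant morphism spaces are nonempty; beyond correctly matching the restricted-morphism spaces to the data of $f$, there is no genuine obstacle.
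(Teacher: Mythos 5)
Your proof is correct, but it takes a genuinely different route from the paper's. The paper proves the lemma by a direct induction on the step $k$: it extends $\pi_{k-1}\co f$ over $P$ using the induction hypothesis, lifts that extension to a morphism $f_2':P\to \ns$ via \cite[Lemma 3.2.11]{Cand:Notes1}, and then corrects $f_2'$ by subtracting two morphisms into $\cD_k(\ab_k)$ --- first an extension of $f_2'|_{P_1}-f|_{P_1}$ (using that $P_1$ has the extension property in $P$), then an extension of the remaining discrepancy on $P_2$ chosen to vanish on $P_1$ (using the defining clause of a good pair). You instead delegate the whole induction to Lemma \ref{lem:GoodPairHoms}, applied with the morphism $f|_{P_1}$, and extract a preimage of $f|_{P_2}$ under the restriction map $\phi$. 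This is legitimate: Lemma \ref{lem:GoodPairHoms} precedes the present lemma and its proof does not rely on it, so there is no circularity; and your supporting claim --- that a totally surjective bundle morphism between nonempty $k$-fold abelian bundles is surjective on underlying sets, proved by inducting through the levels and noting that an affine map of torsors with surjective linear part is onto --- is exactly the torsor argument that the paper's two correction steps carry out by hand. What your route buys is brevity and reuse of existing machinery; what the paper's buys is an explicit construction of $f'$ that is independent of the bundle formalism. The one point to be careful about is the step you yourself flag: the definition of total surjectivity refers only to the structure morphisms $\alpha_i$, so the set-theoretic surjectivity must either be located in \cite{Cand:Notes1} or written out as you do; with that included, the argument is complete.
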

\begin{proof}
We argue again by induction on $k$. The statement is trivial for $k=0$, as $\ns$ is then a 1-point space, so suppose that the lemma holds for $(k-1)$-step nilspaces and let $\ns$ be a $k$-step nilspace.
Let $f:P_1\cup P_2\to \ns$ be a morphism and let $f_2:P\to\cF_{k-1}(\ns)$ be an extension of $\pi_{k-1}\co f$. By \cite[Lemma 3.2.11]{Cand:Notes1} there is a morphism $f_2':P\to \ns$ such that $\pi_{k-1}\co f_2'=f_2$. Let $g=f'_2|_{P_1}-f|_{P_1}$. By Definition \ref{defn:GoodPair}, there is an extension $f_3:P\to\cD_k(\ab_k)$ of $g$. Let $g_2=f_2'-f_3$. We have that $g_2|_{P_1}=f|_{P_1}$. Now let $g_3$ be an extension of $g_2|_{P_2}-f|_{P_2}$ to $P$ with $g_3|_{P_1}=0$. Then $f'=g_2-g_3$ is an extension of $f$ to $P$.
\end{proof}
\noindent For the remainder of this section we apply the results above to define various  probability spaces of morphisms. We begin with the cube sets $\cu^n(\ns)$ on a compact nilspace. Note that $\cu^n(\ns)=\hom(\{0,1\}^n,\ns)$ is a compact $k$-fold abelian bundle and therefore has a Haar measure (by Proposition \ref{prop:nilspaceHaar}).

\begin{lemma}\label{lem:surjmorphcubemeas}
Let $\ns,\nss$ be $k$-step compact nilspaces and let $n\in \N$. Then for every continuous fibre-surjective morphism $\psi:\ns \to \nss$, the induced map $\cu^n(\ns)\to \cu^n(\nss)$, $\q\mapsto \psi\co\q$ preserves the Haar measures. Moreover, the fibres of this map can all be equipped with the Haar measure.
\end{lemma}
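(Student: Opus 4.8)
The plan is to exhibit the induced map $\psi_*:\q\mapsto\psi\co\q$ as a \emph{totally surjective continuous bundle morphism} between the compact $k$-fold abelian bundles $\cu^n(\ns)=\hom(\{0,1\}^n,\ns)$ and $\cu^n(\nss)=\hom(\{0,1\}^n,\nss)$. Once this is established, both assertions follow at once: measure preservation is Lemma \ref{lem:MeasPres}, while the claim about fibres is Lemma \ref{lem:quotint} together with \cite[Lemma 3.3.6]{Cand:Notes1}, which exhibit each fibre $\psi_*^{-1}(t)$ as a compact $k$-fold abelian bundle carrying a Haar measure, these measures disintegrating the Haar measure on $\cu^n(\ns)$.

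First I would record the bundle structures. Applying Lemma \ref{lem:comp-restrmorph=subbund} with $P=\{0,1\}^n$ and $S=\emptyset$, the set $\cu^n(\ns)$ is a compact $k$-fold abelian bundle with factors $\hom(\{0,1\}^n,\ns_i)$ and structure groups $\hom(\{0,1\}^n,\cD_i(\ab_i))$, and likewise for $\nss$ with structure groups $\hom(\{0,1\}^n,\cD_i(\ab_i'))$. Continuity of $\psi_*$ is immediate from the product topology: if $\q_m\to\q$ then $\psi(\q_m(v))\to\psi(\q(v))$ for every $v$, so $\psi\co\q_m\to\psi\co\q$. Since $\psi$ is fibre-surjective, by \cite[Lemma 3.3.8]{Cand:Notes1} it is a totally surjective bundle morphism, hence induces factor maps $\psi_i:\ns_i\to\nss_i$ commuting with the factor projections, together with continuous \emph{surjective} structure morphisms $\beta_i:\ab_i\to\ab_i'$. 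A short computation then shows $\psi_*$ is a bundle morphism in the sense of \cite[Definition 3.3.1]{Cand:Notes1}: on factors it acts by $\q\mapsto\psi_i\co\q$ (which commutes with the cube-bundle projections because $\psi_i\co\pi_i=\pi_i\co\psi$), and on structure groups, using the $\ab_i$-equivariance $\psi(x+a)=\psi(x)+\beta_i(a)$, one finds $\psi_*(\q+h)=\psi_*(\q)+\beta_i\co h$, so the $i$-th structure morphism is $\alpha_i:\hom(\{0,1\}^n,\cD_i(\ab_i))\to\hom(\{0,1\}^n,\cD_i(\ab_i'))$, $h\mapsto\beta_i\co h$ (reading $\beta_i$ as a morphism $\cD_i(\ab_i)\to\cD_i(\ab_i')$).

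The main work, and the step I expect to be the principal obstacle, is proving that each $\alpha_i$ is surjective, i.e. that $\psi_*$ is \emph{totally} surjective. For this I would use the description of $\hom(\{0,1\}^n,\cD_i(A))$ as the group of degree-$\le i$ polynomial maps $\{0,1\}^n\to A$, which are freely and uniquely determined by their coefficients in $A$ attached to the monomials $\prod_{j\in T}v_j$ with $|T|\le i$ (these being linearly independent as functions on the finite set $\{0,1\}^n$). Consequently $\hom(\{0,1\}^n,\cD_i(-))$ is functorial and free in its coefficient group, and the map $\alpha_i$ induced by the surjection $\beta_i$ is, coefficientwise, a direct power of $\beta_i$; hence it is surjective. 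Concretely, given any $h\in\hom(\{0,1\}^n,\cD_i(\ab_i'))$ one lifts each of its coefficients through $\beta_i$ to obtain a preimage in $\hom(\{0,1\}^n,\cD_i(\ab_i))$. With total surjectivity in hand, Lemma \ref{lem:MeasPres} yields that $\psi_*$ preserves the Haar measures, and Lemma \ref{lem:quotint} supplies the Haar measures on the fibres and their disintegration property, completing both parts of the statement.
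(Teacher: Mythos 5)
Your proposal is correct and takes essentially the same route as the paper: the paper's proof simply invokes \cite[Lemma 3.3.12 (ii) and (iii)]{Cand:Notes1} for the facts that $\q\mapsto \psi\co\q$ is a totally surjective bundle morphism and that its fibres are compact $k$-fold abelian bundles, and then concludes via Lemma \ref{lem:MeasPres} and Proposition \ref{prop:nilspaceHaar}, exactly as you do. The only difference is that you re-derive the total surjectivity directly, via the free-coefficient (polynomial) description of $\hom(\{0,1\}^n,\cD_i(\ab_i))$, which is a valid self-contained substitute for the citation.
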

\begin{proof}
By \cite[Lemma 3.3.12 (ii)]{Cand:Notes1} we have that $\q\mapsto \psi\co\q$ is a totally surjective bundle morphism, so it preserves the Haar measures by Lemma \ref{lem:MeasPres}. Moreover, by \cite[Lemma 3.3.12 (iii)]{Cand:Notes1}, the fibres of this map are compact $k$-fold abelian bundles, so the last claim in the lemma follows from Proposition \ref{prop:nilspaceHaar}.
\end{proof}
\noindent We shall often want to use a disintegration of the Haar measure on $\cu^n(\ns)$ into a CSM on the projection $\q\mapsto \q(0^n)$. For this purpose we record the following special case of Lemma \ref{lem:GoodPairHoms}.
\begin{lemma}\label{lem:cube-set-CSM}
Let $\ns$ be a $k$-step compact nilspace with Haar measure $\mu$, let $\pi:\cu^n(\ns)\to \ns$, $\q\mapsto \q(0^n)$, and let $\mu_n$ denote the Haar measure on $\cu^n(\ns)$. Then for every $x\in \ns$, the space $\pi^{-1}(x)=\cu^n_x(\ns)$ is a compact $k$-fold abelian bundle and therefore has a Haar measure $\mu_x$. The family $\{\mu_x:x\in \ns\}$ is a CSM on $\pi$  disintegrating $\mu_n$ relative to $\mu$.
\end{lemma}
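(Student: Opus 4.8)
The plan is to recognize Lemma~\ref{lem:cube-set-CSM} as the special case of Lemma~\ref{lem:GoodPairHoms} obtained by choosing the right subcubespaces and morphism. First I would set $P=\{0,1\}^n$ and $S=\{0^n\}$, the one-point subcubespace consisting of the origin. The cube set $\cu^n(\ns)$ is exactly $\hom(P,\ns)$, and the projection $\pi:\q\mapsto \q(0^n)$ is precisely the restriction map $\hom(P,\ns)\to \hom(S,\ns)$, where we identify $\hom(S,\ns)\cong \ns$ since a morphism from a single point is just a choice of image point. Thus $\pi^{-1}(x)=\hom_x(P,\ns)=\cu^n_x(\ns)$ in the notation of the lemma, where $x$ is viewed as the morphism $S\to \ns$ sending $0^n\mapsto x$.

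To apply Lemma~\ref{lem:GoodPairHoms} I must exhibit a good pair. Here I would take $P_1=\emptyset$ and $P_2=S=\{0^n\}$. By the remark immediately following Definition~\ref{defn:GoodPair}, since $S$ trivially has the extension property in $P=\{0,1\}^n$ (a morphism on a single point extends to the whole cube, e.g.\ by a constant), the pair $P_1=\emptyset,\;S$ is a good pair in $P$. With $P_1=\emptyset$ there is a unique (empty) morphism $f:P_1\to\ns$, and the restriction map \eqref{eq:restricthom} becomes exactly $\hom(P,\ns)\to\hom(S,\ns)$, i.e.\ our projection $\pi$.

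With the good pair in hand, Lemma~\ref{lem:GoodPairHoms} directly yields everything claimed. The first conclusion of that lemma gives that each fibre $\phi^{-1}(t)=\pi^{-1}(x)=\cu^n_x(\ns)$ is a compact $k$-fold abelian bundle, so by Proposition~\ref{prop:nilspaceHaar} it carries a Haar measure $\mu_x$. The final sentence of Lemma~\ref{lem:GoodPairHoms} states precisely that these fibre Haar measures form a CSM on $\phi$ that disintegrates the Haar measure on $\hom(P,\ns)=\cu^n(\ns)$ relative to the Haar measure on $\hom(S,\ns)\cong\ns$, which is the desired conclusion with $\mu_n$ and $\mu$ in the roles of the two Haar measures. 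The only point requiring a moment of care is the identification of $\hom(S,\ns)$ with $\ns$ as compact nilspaces so that its Haar measure matches the given Haar measure $\mu$ on $\ns$; this is immediate since evaluation at $0^n$ is a nilspace isomorphism $\hom(\{0^n\},\ns)\to\ns$ carrying one Haar measure to the other. I do not expect any genuine obstacle here, as the statement is engineered to be a clean instance of the general lemma; the entire content is in verifying that the chosen $P_1,P_2$ form a good pair, which is handled by the post-definition remark.
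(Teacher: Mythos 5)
Your proposal is correct and is exactly the paper's proof: the paper proves this lemma in one line by applying Lemma \ref{lem:GoodPairHoms} with $P=\{0,1\}^n$, $P_1=\emptyset$, $P_2=\{0^n\}$, and identifying $\ns$ with $\hom(P_2,\ns)$. Your additional remarks — that the good-pair condition is the remark following Definition \ref{defn:GoodPair} and that evaluation at $0^n$ carries the Haar measures to each other — are the right details to check and are consistent with what the paper leaves implicit.
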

\begin{proof}
By Lemma \ref{lem:GoodPairHoms} with $P=\{0,1\}^n$, with $P_1=\emptyset$, $P_2=\{0^n\}$, identifying $\ns$ with $\hom(P_2,\ns)$.
\end{proof}
\noindent We move on to some results concerning the tricube $T_n$. Recall from  \cite[Subsection 3.1.3]{Cand:Notes1} the definitions of tricubes, of morphisms $\trem_v$, and of the outer-point map $\omega_n:\{0,1\}^n\to T_n$.

\begin{lemma}\label{lem:ope-to-tricube-ext}
The set $\omega_n(\{0,1\}^n)\subset T_n$ has the extension property in $T_n$.
\end{lemma}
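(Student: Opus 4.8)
The plan is to reduce the extension property to the construction of a single auxiliary morphism, namely a retraction $p\colon T_n\to\{0,1\}^n$ from the tricube onto the discrete cube (regarded as a nilspace) satisfying $p\co\omega_n=\id_{\{0,1\}^n}$. Once such a $p$ is available, the extension is immediate: given any non-empty nilspace $\ns$ and any morphism $f\colon\omega_n(\{0,1\}^n)\to\ns$, recall from \cite{Cand:Notes1} that $\omega_n\in\cu^n(T_n)$, so that $c:=f\co\omega_n$ is a morphism $\{0,1\}^n\to\ns$, i.e.\ a cube $c\in\cu^n(\ns)$. Then $g:=c\co p\colon T_n\to\ns$ is a composite of morphisms, hence a morphism, and it satisfies $g\co\omega_n=c\co p\co\omega_n=c=f\co\omega_n$. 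Since $\omega_n$ surjects onto $\omega_n(\{0,1\}^n)$, this yields $g|_{\omega_n(\{0,1\}^n)}=f$, which is exactly the required extension.

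To construct $p$ I would work coordinatewise. Writing $T_n=\{-1,0,1\}^n$ with $\trem_v(w)=v+w-1^n$ and $\omega_n(v)=\trem_v(v)=(2v_i-1)_i$, as follows from the conventions of \cite{Cand:Notes1}, I define $p$ by $p(t)_i=1$ if $t_i=1$ and $p(t)_i=0$ otherwise. The identity $p\co\omega_n=\id$ is then clear, since $2v_i-1=1$ precisely when $v_i=1$. The substantive point is that $p$ is a morphism, and for this I would invoke the characterization of morphisms out of the tricube from \cite{Cand:Notes1}: a map $T_n\to\ns$ is a morphism if and only if its precomposition with each $\trem_v$, $v\in\{0,1\}^n$, lies in $\cu^n(\ns)$. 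Applying this with target $\{0,1\}^n$, it suffices to check that each $p\co\trem_v\colon\{0,1\}^n\to\{0,1\}^n$ is a cube morphism. A direct computation gives $(p\co\trem_v)(w)_i=\mathbb{1}[\,v_i+w_i=2\,]=v_iw_i$, so $p\co\trem_v$ is the coordinatewise map $w\mapsto v\wedge w$; in coordinate $i$ this is the identity when $v_i=1$ and the constant $0$ when $v_i=0$. As a product of morphisms $\{0,1\}\to\{0,1\}$ it is a morphism of the discrete cube, hence lies in $\cu^n(\{0,1\}^n)$, as needed.

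I expect the only genuine obstacle to be at the level of tricube bookkeeping: one must fix the conventions for $\trem_v$ and $\omega_n$ from \cite{Cand:Notes1} and confirm both that $\omega_n\in\cu^n(T_n)$ and that the morphism criterion for maps out of $T_n$ is available, since the whole argument hinges on applying that criterion with the discrete cube as target. Once these facts are in place every verification is a routine finite coordinatewise check, and the nilspace axioms of $\ns$ are used only through the fact that $\{0,1\}^n$ and $\ns$ are nilspaces. I note finally that the retraction is not unique — the symmetric ``$\vee$'' choice $p(t)_i=\mathbb{1}[\,t_i\geq 0\,]$, for which $p\co\trem_v(w)=v\vee w$, works equally well — so the construction is robust to the precise convention adopted for the tricube.
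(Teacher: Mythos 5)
Your proposal is correct and is essentially the paper's own argument: the paper also extends $f$ by composing with a coordinatewise folding map $h^n\colon T_n\to\{0,1\}^n$ (it takes $h(-1)=0$, $h(0)=1$, $h(1)=1$, whereas you send $0\mapsto 0$; both choices make every $p\co\trem_v$ a discrete-cube morphism, so both work). The only difference is that you spell out the verification that the folding map is a morphism of cubespaces, which the paper leaves implicit.
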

\begin{proof}
Let $\ns$ be a nilspace, let $P=\omega_n(\{0,1\}^n)$ and let $f:P\to \ns$ be a morphism. Let $h:\{-1,0,1\}\to \{0,1\}$ be the map $-1\mapsto 0$, $0\mapsto 1$, $1\mapsto 1$. Then the map $f'=f\co h^n$ is an extension of $f$ to $T_n$.
\end{proof}

\begin{lemma}\label{lem:subcube-in-Tn}
For every $v\in \{0,1\}^n$, the subcube $\Psi_v(\{0,1\}^n)$ of $T_n$ has the extension property in $T_n$.
\end{lemma}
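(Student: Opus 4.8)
The plan is to extend $g'$ by precomposing with a coordinatewise retraction of the tricube onto the subcube $\Psi_v(\{0,1\}^n)$. Concretely, I would construct a morphism $\rho:T_n\to\{0,1\}^n$ satisfying $\rho\co\Psi_v=\id_{\{0,1\}^n}$, and then, given any morphism $g':\Psi_v(\{0,1\}^n)\to\ns$ into a non-empty nilspace $\ns$, set $g=(g'\co\Psi_v)\co\rho:T_n\to\ns$. Here $g'\co\Psi_v$ is a morphism $\{0,1\}^n\to\ns$ because $\Psi_v$ is an isomorphism of $\{0,1\}^n$ onto the subcube $\Psi_v(\{0,1\}^n)$ (Notes I, \S3.1.3), and $\rho$ is a morphism by construction, so $g$ is a morphism. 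For any $w\in\{0,1\}^n$ we have $g(\Psi_v(w))=g'\big(\Psi_v(\rho(\Psi_v(w)))\big)=g'(\Psi_v(w))$ since $\rho\co\Psi_v=\id$, so $g$ restricts to $g'$ on $\Psi_v(\{0,1\}^n)$, which is exactly the required extension property.

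It remains to build $\rho$. Recalling that $\Psi_v$ acts coordinatewise, sending the $i$-th coordinate $w_i\in\{0,1\}$ into $\{0,1\}$ when $v_i=1$ and into $\{-1,0\}$ when $v_i=0$, I would define $\rho=(\rho_1,\dots,\rho_n)$ with each $\rho_i:T_1\to\{0,1\}$ given as follows: if $v_i=1$, let $\rho_i$ be the map $-1\mapsto 0,\,0\mapsto 0,\,1\mapsto 1$; if $v_i=0$, let $\rho_i$ be the map $h:-1\mapsto 0,\,0\mapsto 1,\,1\mapsto 1$ already used in the proof of Lemma \ref{lem:ope-to-tricube-ext}. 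A direct check shows that in both cases $\rho_i$ inverts the $i$-th coordinate of $\Psi_v$, whence $\rho\co\Psi_v=\id_{\{0,1\}^n}$. Since $T_n$ and $\{0,1\}^n$ both carry the product cube structure, a coordinatewise map between them is a morphism as soon as each coordinate map $T_1\to\{0,1\}$ is a morphism; thus $\rho$ is a morphism provided each $\rho_i$ is.

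The only real content, then, is checking that these two coordinate maps $T_1\to\{0,1\}$ are morphisms. For $v_i=0$ this is exactly the map $h$, which is a morphism by the argument of Lemma \ref{lem:ope-to-tricube-ext}. For $v_i=1$ the map equals $\sigma\co h\co\eta$, where $\eta:T_1\to T_1$ is the negation $x\mapsto -x$ and $\sigma:\{0,1\}\to\{0,1\}$ is the reflection $t\mapsto 1-t$; since $\eta$ and $\sigma$ are automorphisms of $T_1$ and of $\{0,1\}$ respectively, this composite is again a morphism. This symmetry reduction is the one point to verify with care—one must confirm that negation is an automorphism of the tricube and that the product cube structure genuinely reduces morphism-checking to the coordinates—but both facts are immediate from the definitions of $T_n$ and $\Psi_v$ in Notes I, \S3.1.3. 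With each $\rho_i$ shown to be a morphism, $\rho$ is a morphism and the construction in the first paragraph completes the proof.
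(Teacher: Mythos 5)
Your overall strategy---extend $g'$ by precomposing with a coordinatewise retraction $\rho:T_n\to\{0,1\}^n$ satisfying $\rho\co\Psi_v=\id$---is the right one, and it is essentially what the paper does: for $v=0^n$ the paper's extension $f\co h^n$ with $h(-1)=1$, $h(0)=0$, $h(1)=1$ is precisely such a coordinatewise retraction onto $\Psi_{0^n}(\{0,1\}^n)=\{0,1\}^n$, and the general case is then deduced by composing with the evident coordinatewise symmetries of $T_n$ (this is what the terse appeal to the composition axiom amounts to). Your reduction of the morphism property to coordinates, and your use of negation as an automorphism of $T_1$, are both fine and are implicitly used by the paper as well.

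The concrete problem is that you have recalled the definition of $\Psi_v$ backwards, and as a result neither of your coordinate maps inverts the corresponding coordinate of $\Psi_v$: the identity $\rho\co\Psi_v=\id$ fails, so your $g$ does not restrict to $g'$. In the conventions of these notes, the $i$-th coordinate map of $\Psi_v$ sends $\{0,1\}$ into $\{-1,0\}$ when $v_i=1$ (via $0\mapsto-1$, $1\mapsto 0$) and into $\{0,1\}$ when $v_i=0$ (via $0\mapsto 1$, $1\mapsto 0$); in particular $\Psi_v(1^n)=0^n$ is the centre of the tricube and $\Psi_{0^n}(0^n)=1^n$. (This is forced by several statements in the paper: Corollary \ref{cor:ext-in-Tn} requires $t\co\Psi_{0^n}$ to be rooted at $f(1^n)$, i.e.\ $\Psi_{0^n}(0^n)=1^n$, and the proof of Lemma \ref{lem:key-tricube-mp} uses that $\Psi_v(0^n)$ has a coordinate equal to $-1$ precisely when some $v_i=1$.) With your maps one gets, for instance when $v_i=1$, that $\rho_i$ applied to the image of $1$ is $\rho_i(0)=0\neq 1$. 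The repair is mechanical: take $\rho_i=h$ (the map $-1\mapsto 0$, $0\mapsto 1$, $1\mapsto 1$ of Lemma \ref{lem:ope-to-tricube-ext}) when $v_i=1$, and $\rho_i=h\co(x\mapsto -x)$, i.e.\ $1\mapsto 0$, $0\mapsto 1$, $-1\mapsto 1$, when $v_i=0$; these do satisfy the required inversion identity on each coordinate and are morphisms for exactly the reasons you give, after which the rest of your argument goes through.
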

\begin{proof}
The claim is checked for $v=0^n$ using the map $h:\{-1,0,1\}\to \{0,1\}$, $-1\mapsto 1$, $0\mapsto 0$, $1\mapsto 1$. (Indeed then $f\co h^n$ extends $f$ to $T_n$.) The lemma then follows by the composition axiom.
\end{proof}

\begin{corollary}\label{cor:tricubeprobspaces}
Let $P\subset T_n$ be $\Psi_v(\{0,1\}^n)$ or $\omega_n(\{0,1\}^n)$, let $\ns$ be a $k$-step compact nilspace, and let $f:P \to \ns$ be a morphism. Then $\hom_f(T_n,\ns)$ has a Haar measure.
\end{corollary}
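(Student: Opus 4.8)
The plan is to realize $\hom_f(T_n,\ns)$ as a compact $k$-fold abelian bundle and then invoke Proposition~\ref{prop:nilspaceHaar}, which attaches a Haar measure to any such bundle. This bundle structure is precisely the conclusion of Lemma~\ref{lem:comp-restrmorph=subbund}, so the whole argument reduces to checking that that lemma applies with the tricube $T_n$ in the role of the ambient cubespace and with $P\in\{\Psi_v(\{0,1\}^n),\,\omega_n(\{0,1\}^n)\}$ in the role of the distinguished subcubespace on which the morphism is prescribed.

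First I would record the extension-property input that Lemma~\ref{lem:comp-restrmorph=subbund} demands of the inner subcubespace. For $P=\Psi_v(\{0,1\}^n)$ the fact that $P$ has the extension property in $T_n$ is Lemma~\ref{lem:subcube-in-Tn}, and for $P=\omega_n(\{0,1\}^n)$ it is Lemma~\ref{lem:ope-to-tricube-ext}. This property does double duty: besides being a hypothesis of the bundle-structure lemma, it guarantees that the given morphism $f:P\to\ns$ extends to $T_n$, so that $\hom_f(T_n,\ns)$ is non-empty and the resulting bundle has a non-empty base. I would also note that $T_n$ is a finite cubespace, whence $\ns^{T_n}$ is a finite power of the compact space $\ns$ and is itself compact, and that $\hom_f(T_n,\ns)$ is cut out inside it by the (closed) cube conditions together with the (closed) constraint $g|_P=f$; hence it is a compact subset, as the topological part of Lemma~\ref{lem:comp-restrmorph=subbund} requires.

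The only point needing genuine care is that Lemma~\ref{lem:comp-restrmorph=subbund} is phrased for a subcubespace of a hypercube $\{0,1\}^N$, whereas here the ambient object is $T_n$. I would close this gap via the tricube theory of \cite[Subsection~3.1.3]{Cand:Notes1}: either one observes that $T_n$ is itself a subcubespace of a hypercube carrying the extension property, or one notes that the argument for Lemma~\ref{lem:comp-restrmorph=subbund} (its topological part being established above, its algebraic part by \cite[Lemma~3.3.11]{Cand:Notes1}) uses nothing about the ambient hypercube beyond that extension property, so it applies verbatim with $T_n$ in its place. Either way one obtains that $\hom_f(T_n,\ns)$ is a compact $k$-fold abelian bundle, with factors $\hom_{\pi_i\co f}(T_n,\ns_i)$ and structure groups $\hom_{P\to 0}(T_n,\cD_i(\ab_i))$, and Proposition~\ref{prop:nilspaceHaar} then supplies the Haar measure. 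I expect this verification---that the bundle-structure lemma genuinely applies to the tricube---to be the main (indeed essentially the only substantive) obstacle; everything else is a direct substitution into results already in hand.
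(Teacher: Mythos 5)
Your proposal is correct and follows exactly the paper's route: the paper's entire proof is the one-line citation of Lemma \ref{lem:comp-restrmorph=subbund} together with Proposition \ref{prop:nilspaceHaar}, and your filling-in of the extension-property inputs (Lemmas \ref{lem:subcube-in-Tn} and \ref{lem:ope-to-tricube-ext}) and of the embedding of $T_n$ as a subcubespace of $\{0,1\}^{2n}$ with the extension property matches what the paper does implicitly elsewhere. No gaps.
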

\begin{proof}
This follows again from Lemma \ref{lem:comp-restrmorph=subbund} and Proposition \ref{prop:nilspaceHaar}.
\end{proof}
\begin{lemma}\label{lem:Gpair-in-Tn}
The sets $P_1=\omega_n(\{0,1\}^n)=\{-1,1\}^n$ and $P_2=\{0,1\}^n$ form a good pair in $T_n$.
\end{lemma}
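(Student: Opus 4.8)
The plan is to verify the two conditions in Definition~\ref{defn:GoodPair} for the pair $P_1=\omega_n(\{0,1\}^n)=\{-1,1\}^n$ and $P_2=\{0,1\}^n$ inside the tricube $T_n=\{-1,0,1\}^n$. The extension properties of $P_1$ in $T_n$ and of $P_1\cap P_2$ in $T_n$ should follow from the results just established: Lemma~\ref{lem:ope-to-tricube-ext} gives that $\omega_n(\{0,1\}^n)=P_1$ has the extension property in $T_n$, and for $P_1\cap P_2=\{-1,1\}^n\cap\{0,1\}^n=\{1\}^n$ (the single point $1^n$), the extension property is trivial since a morphism from a point always extends. So the first condition of the definition is quickly dispatched, and the real content lies in the second condition.

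**The main step.** The heart of the proof is showing that any morphism $f':P_2\to\cD_k(\ab)$ with $f'|_{P_1\cap P_2}=0$ extends to a morphism $f:T_n\to\cD_k(\ab)$ with $f|_{P_1}=0$. Here I would exploit the special algebraic structure of $\cD_k(\ab)$: morphisms into $\cD_k(\ab)$ are governed by a polynomial/additive condition (recall the linear equation defining $\cu^{k+1}(\cD_k(\ab))$ from \cite[(2.9)]{Cand:Notes1}), so morphisms from a cube-like set into $\cD_k(\ab)$ correspond to functions satisfying the vanishing of certain $(k+1)$-fold difference operators. The idea is to write down an explicit extension formula using the coordinate structure of $T_n=\{-1,0,1\}^n$. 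Since $P_2=\{0,1\}^n$ is a face of $T_n$, and $P_1=\{-1,1\}^n$ is the image of $P_2$ under the coordinatewise map $0\mapsto -1$, $1\mapsto 1$, I would look for an extension that interpolates polynomially in each coordinate across the three values $\{-1,0,1\}$, arranged so that it reproduces $f'$ on the $0/1$ face while vanishing identically on the $-1/1$ face.

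**Constructing the explicit extension.** Concretely, I would proceed coordinate by coordinate. On a single coordinate, I need a degree-controlled interpolation scheme sending the three points $-1,0,1$ to group-valued data so that the value at $1$ is forced to be $0$ (the $P_1$ constraint there) and at $0$ equals the prescribed $f'$-value, while the value at $-1$ is also $0$ (the other $P_1$ constraint). Taking tensor products of such one-variable schemes across the $n$ coordinates should give a candidate function $f:T_n\to\cD_k(\ab)$; the constraint $f|_{P_1}=0$ holds because every point of $P_1$ has all coordinates in $\{-1,1\}$, where the one-variable scheme vanishes. The remaining verification is that this $f$ is genuinely a morphism, i.e.\ that it respects the cube structure on $T_n$: this amounts to checking that $f$ applied to any cube of $T_n$ lands in $\cu^m(\cD_k(\ab))$, equivalently that $f$ satisfies the defining additive relations of $\cD_k(\ab)$-morphisms. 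I expect this to reduce, via the polynomial characterization, to a statement that the interpolation has degree at most $k$ in an appropriate sense, which can be checked one coordinate at a time and then combined.

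**The main obstacle.** The hardest part will be the morphism verification in the last step: guaranteeing that the tensor-product interpolation respects the full cube structure of $T_n$ and not merely the pointwise constraints. The subtlety is that morphisms into $\cD_k(\ab)$ must satisfy a \emph{global} degree-$k$ (polynomial) condition, so an ad hoc coordinatewise interpolant could easily introduce a degree that is too high or fail the cross-coordinate compatibility. I would address this by phrasing the extension through the known structure of $\hom(T_n,\cD_k(\ab))$ as an abelian group and using the fact that $\cD_k(\ab)$-morphisms form a group under addition: it suffices to extend the difference data in a degree-preserving way. The cleanest route is likely to invoke that $f'$, viewed via the degree filtration, decomposes into homogeneous pieces, each of which admits an obvious extension vanishing on $P_1$, and then to sum these extensions. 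Verifying that this sum lands in $\hom_{P_1\to 0}(T_n,\cD_k(\ab))$ is the crux, and I would expect to lean on the explicit polynomial description of cubes in $\cD_k(\ab)$ to carry it through.
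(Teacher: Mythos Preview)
Your verification of the extension properties for $P_1$ and $P_1\cap P_2=\{1^n\}$ is correct and matches the paper. For the main extension step, however, the paper's argument is a one-line precomposition trick rather than an interpolation scheme: define $h:\{-1,0,1\}\to\{0,1\}$ by $h(-1)=1$, $h(0)=0$, $h(1)=1$, and set the extension to be $f\circ h^n$. Since $h^n$ is a cubespace morphism $T_n\to\{0,1\}^n$, the composition is automatically a morphism into $\cD_k(\ab)$; it agrees with $f$ on $P_2=\{0,1\}^n$ because $h$ is the identity there, and it vanishes on $P_1=\{-1,1\}^n$ because $h$ collapses $\{-1,1\}$ to $\{1\}$ and $f(1^n)=0$. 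No degree or polynomial bookkeeping is required.

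Your tensor-product interpolation, as written, has a genuine gap: a one-variable scheme that vanishes at both $-1$ and $1$ and is nonzero only at $0$ will, after tensoring across coordinates, be supported only at the single point $0^n$, and hence cannot reproduce $f$ on all of $\{0,1\}^n$ (e.g.\ at a point like $(0,\dots,0,1)\in P_2$ it gives $0$ regardless of $f$). You would need instead some inclusion--exclusion over subsets of coordinates, and then the morphism verification you flag as the obstacle becomes real work. The folding map avoids all of this by pulling the already-verified morphism $f$ back through a morphism of cubespaces, so that the cube axioms come for free.
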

\begin{proof}
Note first that we may assume that $\omega_n\in \cu^n(T_n)$, since by \cite[Lemma 3.1.16]{Cand:Notes1} this assumption does not affect any set of morphisms from $T_n$ into any nilspace. Now since $P_1\cap P_2=\{1^n\}$ is a singleton, it has the extension property. The set $P_1$ itself has the extension property, by Lemma \ref{lem:ope-to-tricube-ext}.
Let $f:\{0,1\}^n \to \cD_k(\ab)$ be a morphism with $f(1^n)=0$. Let $h(-1)=1$, $h(0)=0$, $h(1)=1$. Then $f'=f \circ h^n$ is an extension of $f$ with $f|_{P_1}=0$.
\end{proof}
\noindent Combining this with Lemma \ref{lem:GoodPairHoms}, we obtain that a set of morphisms $T_n\to \ns$ with a constraint at the outer points of $T_n$ is still large enough to cover a whole set of rooted cubes $\cu^n_x(\ns)$, in the following sense.
\begin{corollary}\label{cor:ext-in-Tn}
Let $\ns$ be a $k$-step compact nilspace, let $P_1$ and $P_2$ be as in Lemma \ref{lem:Gpair-in-Tn}, and let $f:P_1\to \ns$ be a morphism. Then the restriction map $\hom_f(T_n,\ns)\,\to\, \hom_{0^n\to f(1^n)}(P_2,\ns)=\cu^n_{f(1^n)}(\ns)$, $\;t\mapsto t\co \trem_{0^n}$ preserves the Haar measures.
\end{corollary}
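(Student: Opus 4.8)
The plan is to recognize this corollary as a direct instance of Lemma \ref{lem:GoodPairHoms}, applied to the good pair $(P_1,P_2)$ in the tricube $T_n$ furnished by Lemma \ref{lem:Gpair-in-Tn}. First I would record that $T_n$ has the extension property (viewing it as a subcubespace of $\{0,1\}^{2n}$ via the standard embedding of $\{-1,0,1\}$ into $\{0,1\}^2$, as in \cite{Cand:Notes1}; this is the same role in which $T_n$ already appears in Corollary \ref{cor:tricubeprobspaces}), so that Lemma \ref{lem:GoodPairHoms} is available with $P=T_n$. Since $P_1,P_2$ form a good pair in $T_n$ and $f:P_1\to\ns$ is a morphism, that lemma immediately gives that the restriction map $\phi:\hom_f(T_n,\ns)\to\hom_{f|_{P_1\cap P_2}}(P_2,\ns)$ is a totally-surjective bundle morphism, and hence, as $\ns$ is compact, that it preserves the Haar measures.

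It then remains to match this abstract restriction map with the concrete map $t\mapsto t\co\trem_{0^n}$, and to identify the target $\hom_{f|_{P_1\cap P_2}}(P_2,\ns)$ with $\cu^n_{f(1^n)}(\ns)$. The key observation is that $\trem_{0^n}$ is a cubespace isomorphism from $\{0,1\}^n$ onto the subcube $P_2$, carrying $0^n$ to the unique vertex of $P_2$ lying in $P_1$, namely $1^n\in P_1\cap P_2$. Consequently $t\co\trem_{0^n}=(t|_{P_2})\co\trem_{0^n}$, so $t\mapsto t\co\trem_{0^n}$ is the composition of $\phi$ with precomposition by $\trem_{0^n}$. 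Moreover, since $t|_{P_1}=f$ and $1^n\in P_1$, we have $(t\co\trem_{0^n})(0^n)=t(1^n)=f(1^n)$, so the image indeed lies in $\cu^n_{f(1^n)}(\ns)=\hom_{0^n\to f(1^n)}(\{0,1\}^n,\ns)$.

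Finally I would argue that precomposition by the isomorphism $\trem_{0^n}$ is itself Haar-measure-preserving: it induces an isomorphism of compact $k$-fold abelian bundles $\hom_{f|_{P_1\cap P_2}}(P_2,\ns)\to\cu^n_{f(1^n)}(\ns)$, which is in particular a totally-surjective continuous bundle morphism, and so preserves the Haar measures by Lemma \ref{lem:MeasPres} (the Haar measure on each such set of restricted morphisms being the intrinsic one of Proposition \ref{prop:nilspaceHaar}). Combining this with the measure-preservation of $\phi$ yields the claim for $t\mapsto t\co\trem_{0^n}$. The substantive content is thus entirely contained in Lemmas \ref{lem:Gpair-in-Tn} and \ref{lem:GoodPairHoms}; the only real work is the bookkeeping of the second paragraph, i.e.\ verifying that the coordinate conventions make $\trem_{0^n}$ the isomorphism onto $P_2$ that sends the root $0^n$ to the constrained vertex $1^n$. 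This is the one place where I would be careful, since an error in the reindexing would misidentify the target space.
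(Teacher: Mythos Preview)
Your proposal is correct and follows exactly the approach intended by the paper, which simply presents the corollary as an immediate consequence of combining Lemma~\ref{lem:Gpair-in-Tn} with Lemma~\ref{lem:GoodPairHoms} (no separate proof is written). Your additional bookkeeping verifying that $\trem_{0^n}$ identifies the abstract cube with $P_2$ and sends $0^n$ to $1^n\in P_1\cap P_2$ is correct and makes explicit what the paper leaves implicit.
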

\noindent As a complement to this corollary, we also have that a set of morphisms $T_n\to \ns$ with a given constraint at just one outer point is always large enough to cover a full cube set $\cu^n(\ns)$, as follows. 
\begin{lemma}\label{lem:key-tricube-mp}
Let $\ns$ be a $k$-step compact nilspace, let $x\in \ns$, and let $Q_x$ denote the compact abelian bundle $\hom_{1^n\mapsto x}(T_n,\ns)$. Then for every $v\in\{0,1\}^n\setminus \{0^n\}$, the map $Q_x\to \cu^n(\ns)$, $t\mapsto t\circ\Psi_v$ preserves the Haar measures.
\end{lemma}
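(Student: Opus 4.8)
The plan is to realize the map $t\mapsto t\circ\Psi_v$ as the restriction map associated with a good pair in $T_n$, so that Lemma \ref{lem:GoodPairHoms} applies directly, exactly as in the proof of Corollary \ref{cor:ext-in-Tn}. I set $P_1=\{1^n\}$ and $P_2=\Psi_v(\{0,1\}^n)$, and let $f:P_1\to\ns$ be the morphism $1^n\mapsto x$, so that $\hom_f(T_n,\ns)=Q_x$. The first point to record is that, because $v\neq 0^n$, the orthant $P_2$ omits the outer point $1^n$: there is a coordinate $i_0$ in which every point of $P_2$ takes a value in $\{-1,0\}$, whereas $1^n$ takes the value $1$ in that coordinate. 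Hence $P_1\cap P_2=\emptyset$, the space $\hom_{f|_{P_1\cap P_2}}(P_2,\ns)$ is simply $\hom(P_2,\ns)$, which is identified with $\cu^n(\ns)$ via the cube-isomorphism $\Psi_v$, and under this identification the restriction map $\hom_f(T_n,\ns)\to\hom(P_2,\ns)$ becomes precisely $t\mapsto t\circ\Psi_v$.

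It then remains to check that $(P_1,P_2)$ is a good pair in $T_n$, after which Lemma \ref{lem:GoodPairHoms} gives the measure-preservation. The extension-property requirements are immediate: $P_1=\{1^n\}$ is a singleton (a constant map extends any morphism from it) and $P_1\cap P_2=\emptyset$ trivially has the extension property. The substantive requirement is the algebraic extension condition of Definition \ref{defn:GoodPair}: for every abelian group $\ab$ and every $k$, every morphism $f':P_2\to\cD_k(\ab)$ must extend to a morphism $T_n\to\cD_k(\ab)$ vanishing at $1^n$. I would prove this by first extending $f'$ to some $\tilde f:T_n\to\cD_k(\ab)$ using Lemma \ref{lem:subcube-in-Tn}, and then correcting the value at $1^n$ by subtracting a morphism $g$ supported away from $P_2$. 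Concretely, writing $a=\tilde f(1^n)$ and taking $i_0$ as above, I would set $g=\phi_a\circ p_{i_0}$, where $p_{i_0}:T_n\to T_1$ is the $i_0$-th coordinate projection and $\phi_a:T_1\to\cD_k(\ab)$ is the map sending $-1,0\mapsto 0$ and $1\mapsto a$. Then $g$ vanishes on $P_2$ (whose $i_0$-th coordinate lies in $\{-1,0\}$) and $g(1^n)=a$, so that $f=\tilde f-g$ extends $f'$ and satisfies $f(1^n)=0$.

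The main obstacle is the verification that $\phi_a$ is genuinely a morphism $T_1\to\cD_k(\ab)$ for every $k\geq 1$; this is where the precise cube structure of the tricube enters. The mechanism I expect to use is that for any cube $c\in\cu^m(T_1)$ the composite $\phi_a\circ c$ factors as $\delta_a\circ a'$, where $a':\{0,1\}^m\to\{0,1\}$ is the cube morphism recording where $c$ takes the value $1$ and $\delta_a:\{0,1\}\to\cD_k(\ab)$ is the affine (hence valid) morphism $0\mapsto 0$, $1\mapsto a$; thus $\phi_a\circ c$ is a cube of $\cD_k(\ab)$. This uses that each coordinate of a tricube cube decomposes as a difference of two cube morphisms of disjoint support, so I would confirm this decomposition against the definition of $T_n$ in \cite{Cand:Notes1}. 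Granting that $\phi_a$ is a morphism, $g$ is a morphism as a composition, and the good pair condition holds.

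Finally, applying Lemma \ref{lem:GoodPairHoms} with $P=T_n$ (legitimate here, just as in Corollary \ref{cor:ext-in-Tn}, viewing $T_n$ as a subcubespace of a discrete cube with the extension property) shows that the restriction map $Q_x\to\hom(P_2,\ns)$ preserves the Haar measures; composing with the measure-preserving bundle isomorphism $\hom(P_2,\ns)\cong\cu^n(\ns)$ induced by $\Psi_v$ then yields that $t\mapsto t\circ\Psi_v$ preserves the Haar measures, as required.
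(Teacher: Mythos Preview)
Your proposal is correct and follows essentially the same route as the paper: set $P_1=\{1^n\}$, $P_2=\Psi_v(\{0,1\}^n)$, verify that $(P_1,P_2)$ is a good pair in $T_n$, and apply Lemma~\ref{lem:GoodPairHoms}. The paper's correcting morphism $g$ (constant equal to $\tilde f(1^n)$ on the slab $\{w\in T_n:w_{i_0}=1\}$ and $0$ elsewhere) is exactly your $\phi_a\circ p_{i_0}$; where you sketch a factorization through $\{0,1\}$ to justify that $\phi_a$ is a morphism, the paper simply invokes the cube equation \cite[(2.9)]{Cand:Notes1} for $\cD_k(\ab)$ as a direct check.
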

\begin{proof}
There is a Haar measure on $Q_x$ by Lemma \ref{lem:comp-restrmorph=subbund}. Let $P_1=\{1^n\}$ and $P_2=\Psi_v(\{0,1\}^n)$. The lemma will follow if we show that $P_1,P_2$ form a good pair in $T_n$, by Lemma \ref{lem:subcube-in-Tn}. Since the singleton $P_1$ has the extension property in $T_n$ and so does $P_1\cap P_2=\emptyset$, it suffices to show that any given morphism $f':P_2\to \cD_k(\ab)$ can be extended to a morphism $f:T_n\to\cD_k(\ab)$ satisfying $f|_{P_1}=0$. We can certainly extend $f'$ to a morphism $f_2:T_n\to\cD_k(\ab)$, by Lemma \ref{lem:subcube-in-Tn}. Now $v\neq 0^n$ implies that $\trem_v(0^n)\neq 1^n$, so this has some coordinate equal to $-1$, say without loss of generality $\trem_v(0^n)\sbr{1}=-1$. Then the set $F=\{1\}\times\{-1,0,1\}^{n-1}$ is disjoint from $P_2$. Let $g:T_n\to\cD_k(\ab)$ equal $f_2(1^n)$ on $F$ and $0$ otherwise. A straightforward calculation shows that $g$ is a morphism, using \cite[(2.9)]{Cand:Notes1}. Then the function $f=f_2-g$ is also a morphism $T_n\to\cD_k(\ab)$, it extends $f'$,  and satisfies $f|_{P_1}=0$.
\end{proof}

\section{Topological spaces associated with continuous systems of measures}\label{subsec:CSM-bundle}

\medskip
\begin{defn}
Let $X,Y$ be compact spaces and let $\mu$ be a Borel measure on $X$. We denote by $L(X,Y)$ the quotient of the set of Borel measurable functions $f:X\to Y$ by the equivalence relation $\sim$ defined by $f\sim g\;$ $\Leftrightarrow$ $\;\mu\big(\{x\in X: f(x)\neq g(x)\}\big)=0$.
\end{defn}

\noindent In this section, given a continuous system of measures, we construct an associated topological space that will be crucial in the sequel. In order to motivate the construction, let us outline how it will be used.

The central application will be given in the next section, which is one of the main sections of this chapter. The aim there is to show that given a compact nilspace $\ns$ and a compact abelian group $\ab$, given a cocycle $\rho:\cu^k(\ns)\to \ab$ (recall \cite[Definition 3.3.14]{Cand:Notes1}), if $\rho$ is Borel measurable then the extension $M(\rho)$ constructed in \cite[Proposition 3.3.26]{Cand:Notes1} can be equipped with a compact nilspace structure compatible with that of $\ns$. The purpose of the construction in this section is to provide an ambient continuous abelian bundle from which $M(\rho)$ will inherit the desired topological structure. The construction will be completed with Proposition \ref{prop:CSM-dif-bund}.\\
\indent Let us now turn to the definition of the topological space in the construction. To motivate this, consider the following general theme of the application mentioned above: given a Borel measurable function on a space (e.g. the cocycle $\rho$), we have to build a topological space that is associated with the function in a useful way. A very basic example of such a construction is the following: let $\ab$ be a compact abelian group, suppose that we only know the Borel $\sigma$-algebra on $\ab$ (not the underlying topology), and that we are given a Borel function $f:\ab \to \R$. Then we can build a topology on $\ab$ naturally associated with $f$ using the $L^1$ seminorm on the Borel functions on $\ab$ (relative to the Haar measure), by restricting this seminorm to the translates of $f$, i.e. the functions $f_z:x\mapsto f(x+z)$, $z\in \ab$. More precisely, we can define a pseudometric $d$ on $\ab$ by $d(z,z')=\|f_z-f_{z'}\|_{L^1(\ab)}$, and then take the topology generated by the open balls with respect to $d$. The central application mentioned above will use a more elaborate version of this idea. Indeed, recall from \cite[(3.11)]{Cand:Notes1} that the extension generated by the cocycle $\rho$ is $M(\rho)=\bigcup_{x\in \ns} \{\rho_x+z:z\in \ab\}$, where for each $x$ we denote by $\rho_x$ the restriction of $\rho$ to the set of rooted cubes $\cu^k_x(\ns)=\{\q\in\cu^k(\ns):\q(0^k)=x\}$. To put a useful topology on $M(\rho)$, we shall first put an analogue of the $L^1$-topology on each family of shifts $\rho_x +z,\,z\in \ab$, for each $x\in \ns$, and we shall then want to tie together adequately these different topologies over different points $x$ into a global topology. Now the Haar measures on sets $\cu^k_x(\ns)$ form a CSM, by Lemma \ref{lem:cube-set-CSM}. This leads to the following definition, which concerns CSMs more generally, and which provides a topology that will be adequate for our purposes.
\begin{defn}\label{def:extspace}
Let $V,W,Z$ be compact spaces, and let $\{\mu_w: w\in W\}$ be a family of strictly positive Borel probability measures forming a CSM on a continuous map $\pi:V\to W$. For each function $f$ in $\bigcup_{w\in W} L\big(\pi^{-1}(w),Z\big)$, let $\tilde\pi(f)$ be the element $w\in W$ such that $f\in L(\pi^{-1}(w),Z)$. We define the topological space $\cL(V,Z)$ to be the set $\bigcup_{w\in W} L\big(\pi^{-1}(w),Z\big)$ equipped with the coarsest topology making the following functions continuous:
\begin{equation}\label{eq:initopfns}
\varphi_{F_1,F_2}:f\mapsto \int_{\pi^{-1}(\tilde{\pi}(f))}F_1(f(v))\;F_2(v)~\ud\mu_{\tilde{\pi}(f)}(v),
\end{equation}
for every pair of continuous functions $F_1:Z\to\C$ and $F_2:V\to\C$. 
\end{defn}
\noindent We shall discuss below how this topology relates to the $L^1$ topology, but for now let us record some of its basic properties.
\begin{proposition}\label{prop:CSM-top}
The topological space $\cL(V,Z)$ is regular Hausdorff and second-countable,  and the map $\tilde\pi$ is continuous.
\end{proposition}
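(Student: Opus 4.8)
The plan is to verify each of the three asserted properties of $\cL(V,Z)$ in turn, exploiting throughout that its topology is \emph{defined} to be the initial topology induced by the family of maps $\varphi_{F_1,F_2}$ into $\C$. The continuity of $\tilde\pi$ is immediate and should be dispatched first: since $W$ carries (by Remark \ref{rem:stfacts}~(ii)) a metric generating its topology, and since for a fixed continuous $F_2:V\to\C$ and $F_1\equiv 1$ the map $\varphi_{1,F_2}(f)=\int_{\pi^{-1}(\tilde\pi(f))}F_2\,\ud\mu_{\tilde\pi(f)}$ equals $g_{F_2}\big(\tilde\pi(f)\big)$, where $g_{F_2}:W\to\C$ is the continuous function furnished by condition (ii) of Definition \ref{def:CSM}, one sees that $\tilde\pi$ composed with any continuous function on $W$ is continuous on $\cL(V,Z)$. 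More directly, I would argue that $\tilde\pi$ is continuous because the topology on $W$ is itself generated by the functions $g_{F_2}$ (as $W$ is compact Hausdorff second-countable, continuous functions separate points and the $g_{F_2}$ range over all continuous functions on $W$ via the CSM property), so $\tilde\pi$ is continuous as the composition of the initial-topology structure with these separating functions.

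For second-countability I would use that $V$ and $Z$ are compact metric spaces, so the spaces $C(Z,\C)$ and $C(V,\C)$ of continuous complex-valued functions are separable in the uniform norm. Fixing countable uniformly-dense subsets $\{F_1^{(m)}\}\subset C(Z,\C)$ and $\{F_2^{(\ell)}\}\subset C(V,\C)$, I claim the countable subfamily $\{\varphi_{F_1^{(m)},F_2^{(\ell)}}\}$ already generates the full initial topology. The point is that for arbitrary continuous $F_1,F_2$ and any $f_0\in\cL(V,Z)$, approximating $F_1,F_2$ uniformly by the chosen dense functions gives uniform (hence pointwise in $f$, with uniform error controlled by $\sup|F_i|$ and the total mass $1$ of each $\mu_w$) approximation of $\varphi_{F_1,F_2}$ by $\varphi_{F_1^{(m)},F_2^{(\ell)}}$, so the latter suffice to generate the topology. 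An initial topology generated by a countable family of maps into the second-countable space $\C$ is itself second-countable, which gives the claim.

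The Hausdorff and regularity properties are where the real content lies, and I expect the \textbf{Hausdorff separation to be the main obstacle}. Regularity will follow formally once Hausdorffness is known: an initial topology induced by a family of continuous maps into the regular (indeed metrizable) space $\C$ is always completely regular, and a completely regular Hausdorff ($T_1$) space is regular. So the crux is to show that distinct points $f\neq g$ in $\cL(V,Z)$ are separated by some $\varphi_{F_1,F_2}$. If $\tilde\pi(f)\neq\tilde\pi(g)$ they lie over different fibres and are separated by $\tilde\pi$ together with the Hausdorffness of $W$, handled via the functions $\varphi_{1,F_2}$ as above. The genuinely delicate case is $\tilde\pi(f)=\tilde\pi(g)=w$ with $f\not\sim g$ as elements of $L(\pi^{-1}(w),Z)$, i.e. $\mu_w\big(\{v:f(v)\neq g(v)\}\big)>0$. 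Here I would argue that, since $Z$ is compact metric, continuous functions $F_1:Z\to\C$ separate its points and generate its Borel structure; so on the positive-measure set where $f(v)\neq g(v)$ there is a continuous $F_1$ with $F_1(f(v))\neq F_1(g(v))$ on a subset of positive $\mu_w$-measure. One then selects $F_2$ (for instance approximating an indicator of a suitable positive-measure set, using that $\mu_w$ is a Borel measure on the compact metric fibre and that such measures are regular by \cite[Theorem 1.1]{Bill2}) so that $\int F_1(f)\,F_2\,\ud\mu_w\neq\int F_1(g)\,F_2\,\ud\mu_w$, giving $\varphi_{F_1,F_2}(f)\neq\varphi_{F_1,F_2}(g)$. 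Making this selection rigorous — in particular producing a single continuous $F_1$ that distinguishes $f$ and $g$ on a positive-measure set rather than merely pointwise, and then a continuous $F_2$ witnessing the integral gap — is the step requiring care, and is where I would spend the bulk of the argument.
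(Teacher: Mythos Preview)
Your plan is correct and largely parallels the paper's proof, with two genuine differences worth noting.

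For regularity, your route is cleaner: you invoke the general fact that an initial topology induced by maps into $\C$ is completely regular, and then completely regular plus Hausdorff gives regular. The paper instead argues directly, using that finite intersections of sets $\varphi_{F_1,F_2}^{-1}(U)$ form a base and pulling back the regularity of $\C$ through these. Your abstract argument is shorter and equally valid.

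For the Hausdorff separation in the same-fibre case, the paper takes a concrete route via Lusin's theorem: given Borel $f_0,f_0'$ representing $f,f'$, approximate each by a continuous function off a set of small measure, find a point $v_0$ where the continuous approximants differ, and then build $F_1,F_2$ as Urysohn bump functions around $g(v_0)$ and $v_0$ respectively. Your outline (find $F_1$ with $F_1\co f\neq F_1\co g$ on a positive-measure set, then $F_2$ witnessing the integral gap) is also correct and can be made rigorous via a countable-separation argument: take a countable dense $\{F_1^{(n)}\}\subset C(Z)$, note that the set $\{v:f(v)\neq g(v)\}$ is the union over $n$ of $\{v:F_1^{(n)}(f(v))\neq F_1^{(n)}(g(v))\}$, so some such set has positive measure, and then $F_1^{(n)}\co f - F_1^{(n)}\co g$ is nonzero in $L^1(\mu_w)$, hence pairs nontrivially against some continuous $F_2$. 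Either approach works; the Lusin route is perhaps more self-contained.

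One small point on the continuity of $\tilde\pi$: your claim that the $g_{F_2}$ ``range over all continuous functions on $W$'' should be justified by noting explicitly that for $F_2 = h\co\pi$ with $h\in C(W)$ one has $g_{F_2}=h$ (since each $\mu_w$ is a probability measure); once that is said, your argument goes through.
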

In particular $\cL(V,Z)$ is metrizable, by Urysohn's theorem \cite[Theorem 34.1]{Munkres}.
\begin{proof}
We show first that $\tilde\pi$ is continuous. Fix a metric on $W$, and let $U$ be any open subset of $W$. Then for any $w\in U$ there is an open ball $B_r(w)\subset U$ of radius $r>0$. By Urysohn's lemma \cite[Theorem 33.1]{Munkres} there exists a continuous function $f_w$ with value 1 on the closure of $B_{r/2}(w)$ and value 0 outside $B_r(w)$. Let $B_w'$ denote the open set $\{x\in W: f_w(x)>0 \}$, and note that $U=\bigcup_{w\in U} B_w'$. By definition of $\cL(V,Z)$, for every $w\in U$ the function $\varphi_{1,f_w\co\pi}$ is continuous. Since $\varphi_{1,f_w\co\pi}(f)=f_w(\tilde\pi(f))$, this function is positive precisely on $\tilde\pi^{-1}(B_w')$, so this set is open. Therefore $\tilde\pi^{-1}(U)=\bigcup_{w\in U} \tilde\pi^{-1}(B_w')$ is an open set.\\
\indent Next we show that $\cL(V,Z)$ is Hausdorff. Let $f,f'$ be distinct points in $\cL(V,Z)$. We distinguish two cases. In the first case $f,f'$ lie in different fibres of $\tilde\pi$. Then since $W$ is Hausdorff there exist disjoint open sets $U,U'$ containing $\tilde\pi(f)$, $\tilde\pi(f')$ respectively. The preimages of these sets under $\tilde\pi$ are open sets separating $f,f'$. In the second case we have $f,f'$ in the same fibre $L(\pi^{-1}(w),Z)$. Then there are Borel functions $f_0,f_0'$ in the classes $f,f'$ respectively such that the set $D=\{v:f_0(v)\neq f_0'(v)\}$ has $\mu_w(D)>0$. For some $\epsilon<\mu_w(D)$ to be fixed later, by Lusin's theorem \cite[Appendix E, \S E8]{Rudin} there exist continuous functions $g,g'$ and a set $C\subset \pi^{-1}(w)$ of probability at most $\epsilon$ such that  $f_0=g$ and $f_0'=g'$ outside $C$. Then there is a point $v_0\in D\setminus C$ where $g(v_0)\neq g'(v_0)$. By continuity there exist $r_1,r_2,r_3>0$ such that    for all $v\in B_{r_1}(v_0)\subset V$ we have $g(v)\in B_{r_2}(g(v_0))\subset Z$, $g'(v)\in B_{r_3}(g'(v_0))$, and $B_{r_3}(g'(v_0))\cap B_{2r_2}(g(v_0))=\emptyset$. Urysohn's lemma gives us continuous functions $F_1:Z\to [0,1]$, $F_2:V\to [0,1]$, such that $1_{B_{r_2}(g(v_0))}\leq F_1\leq 1_{B_{2r_2}(g(v_0))}$ and $1_{B_{r_1/2}(v_0)}\leq F_2 \leq 1_{B_{r_1}(v_0)}$. We then have $\varphi_{F_1,F_2}(g)\geq \mu_w(B_{r_1/2}(v_0))>0$, whereas $\varphi_{F_1,F_2}(g')=0$. We also have $|\varphi_{F_1,F_2}(g)-\varphi_{F_1,F_2}(f_0)|\leq 2 \mu_w(C)$ and similarly for $g',f_0'$. Hence, choosing $\epsilon$ small enough, we deduce that $\varphi_{F_1,F_2}(f_0)>\varphi_{F_1,F_2}(f_0')$.\\
\indent To see that $\cL(V,Z)$ is regular, first note that since by definition the finite intersections of sets $\varphi_{F_1,F_2}^{-1}(U)$ form a base for the topology, we have that for every closed set $S$ and $f\not\in S$, there exist $\varphi_{F_{1,i},F_{2,i}}^{-1}(U_i)$, $i\in [N]$ such that $f\in \bigcap_i \varphi_{F_{1,i},F_{2,i}}^{-1}(U_i)$ and $S\subset \bigcup_i \varphi_{F_{1,i},F_{2,i}}^{-1}(\C\setminus U_i)$. Since $\C$ is regular, for each $i$ there exist disjoint open sets $U_i',U_i''$ such that $\varphi_{F_{1,i},F_{2,i}}(f)\in U_i'\subset U_i$ and $\C\setminus U_i \subset U_i''$. Then the open sets $\bigcap_i \varphi_{F_{1,i},F_{2,i}}^{-1}(U_i')$ and $\bigcup_i \varphi_{F_{1,i},F_{2,i}}^{-1}(U_i'')$ are disjoint, the former contains $f$, and the latter includes $S$.\\
\indent Finally, we show that $\cL(V,Z)$ is second-countable. It suffices to find sequences of continuous functions $F_{1,i}:Z\to \C$, $F_{2,j}:V\to \C$ and open sets $U_k\subset \C$, $i,j,k\in \N$, such that for every $\varphi_{F_1,F_2}$, every open set $U\subset \C$, and every $f\in \cL(V,Z)$  with $\varphi_{F_1,F_2}(f)\in U$, there exist $i,j,k$ such that $f\in \varphi_{F_{1,i},F_{2,j}}^{-1}(U_k)\subset \varphi_{F_1,F_2}^{-1}(U)$. Indeed, if this holds then the finite intersections of sets $\varphi_{F_{1,i},F_{2,j}}^{-1}(U_k)$ form a countable base for the topology on $\cL(V,Z)$. 
Let $\{U_k:k\in \N\}$ be a base of open discs in $\C$. The spaces of continuous functions $C(V,\C)$ and  $C(Z,\C)$, equipped with the uniform metric, are separable \cite[Theorem (4.19)]{Ke}. Let $(F_{1,i})_{i\in \N}$ and $(F_{2,j})_{j\in \N}$ be separating sequences for $C(V,\C)$ and  $C(Z,\C)$ respectively. Given any $\varphi_{F_1,F_2}$, open set $U\subset \C$, and $f\in \varphi_{F_1,F_2}^{-1}(U)$, for some $\epsilon>0$ there is a disc $U_k\subset U$ of radius at most $\epsilon$ with center at most $\epsilon/2$ away from $\varphi_{F_1,F_2}(f)$ and such that the distance from $U_k$ to the complement of $U$ is at least $\epsilon$. There exist also $i,j$ such that $\|F_1- F_{1,i}\|_\infty<\epsilon/(4(1+\|F_2\|_\infty))$ and $\|F_2- F_{2,j}\|_\infty< \epsilon/(4(1+\|F_{1,i}\|_\infty))$. Then for every $g\in \cL(V,Z)$, letting $w=\tilde\pi(g)$, we have
\[
|\varphi_{F_1,F_2}(g)-\varphi_{F_{1,i},F_{2,j}}(g)|  \leq \int_{\pi^{-1}(w)} |F_1(g(v)) F_2(v)- F_{1,i}(g(v)) F_{2,j}(v)|\;\ud\mu_w(v).
\]
By the triangle inequality, this is at most $\|F_2\|_\infty \, \|F_1- F_{1,i}\|_\infty + \|F_{1,i}\|_\infty \, \|F_2- F_{2,j}\|_\infty < \epsilon/2$. 
This implies that $f\in \varphi_{F_{1,i},F_{2,j}}^{-1}(U_k)\subset \varphi_{F_1,F_2}^{-1}(U)$.
\end{proof}

\begin{remark}
There are spaces $\cL(V,Z)$ that do not admit any complete metric. For example, let $V$ be the one-point compactification of the half-strip $[0,\infty)\times [0,1] \subset \R^2$, let $W$ be the one-point compactification of $[0,\infty)$, let $\pi:V\to W$ be the projection to the first coordinate, let $Z=\{0,1\}$, and for each $w\in W\setminus \{\infty\}$ let the interval $\pi^{-1}(w)$ be equipped with the Lebesgue probability measure. For each $n\in \N$ let $f_n$ be the function on $\pi^{-1}(\{n\})$ that takes value $0$ on $\{n\}\times [0,1/2)$ and $1$ on $\{n\}\times [1/2,1]$. The sequence $(f_n)$ is Cauchy for any compatible metric on $\cL(V,Z)$ but it does not converge to a function $\{\infty\}\to Z$. To see this, we can view $\cL(V,Z)$ as a subspace of a larger  space $D(V,Z)=\bigcup_{w\in W} L\big(\pi^{-1}(w),P(Z)\big)$, where each fibre  $L\big(\pi^{-1}(w),P(Z)\big)$ consists of probability-valued functions $\tilde f$ on $\pi^{-1}(w)$, i.e. functions mapping each $v\in \pi^{-1}(w)$ to a probability measure $p$ on $Z$. (We can identify $L\big(\pi^{-1}(w),P(Z)\big)$ with $L\big(\pi^{-1}(w),[0,1]\big)$ by identifying $\tilde f$ with the map $v\mapsto p_v(\{0\})$.) The topology on $D(V,Z)$ is the initial topology generated by the functions $\tilde f\mapsto  \int_{\pi^{-1}(w)} \E\big(F_1\,|\,\tilde f(v)\big)\;F_2(v)~\ud\mu_w(v)$. This restricts to the original topology on $\cL(V,Z)$. A straightforward calculation shows that $f_n$ converges in $D(V,Z)$ to the function $\tilde f$ mapping $\infty$ to the uniform distribution on $\{0,1\}$. (I am grateful to Szegedy for providing this example.)
\end{remark}

\noindent The topology on $\cL(V,Z)$ can be viewed as a generalization of the $L^1$ topology, as illustrated by the following result.
\begin{lemma}\label{lem:Ltop-restrict}
Let $V,W$ be compact spaces, let $Z$ be the closed unit disc in $\C$, and let  $w\in W$. Then a sequence  $(f_n)$ in $L(\pi^{-1}(w),Z)$ converges to $f$ in $\cL(V,Z)$ if and only if it converges to $f$ in $L^1(\pi^{-1}(w),Z)$.
\end{lemma}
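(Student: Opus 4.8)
The plan is to fix the fibre once and for all and compare the two topologies on it. First I would observe that since $\tilde\pi$ is continuous (Proposition \ref{prop:CSM-top}) and $\tilde\pi(f_n)=w$ for every $n$, any $\cL(V,Z)$-limit $f$ of $(f_n)$ must satisfy $\tilde\pi(f)=w$, so $f$ lies in the same fibre $L(\pi^{-1}(w),Z)$ and the comparison with the $L^1$ topology is meaningful. On this fibre the subspace topology inherited from $\cL(V,Z)$ is, by definition, the coarsest one making the maps $f\mapsto \int_{\pi^{-1}(w)} F_1(f(v))\,F_2(v)\,\ud\mu_w(v)$ continuous over all continuous $F_1:Z\to\C$, $F_2:V\to\C$; and for such an initial topology a sequence converges if and only if each of these integrals converges. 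Thus the task splits into the two implications below, both set on the single probability space $(\pi^{-1}(w),\mu_w)$, where every function involved is bounded by $1$ since $Z$ is the closed unit disc.

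For the implication from $L^1$-convergence to $\cL(V,Z)$-convergence, I would fix continuous $F_1,F_2$ and bound $\big|\varphi_{F_1,F_2}(f_n)-\varphi_{F_1,F_2}(f)\big|$ by $\|F_2\|_\infty\int_{\pi^{-1}(w)}|F_1(f_n)-F_1(f)|\,\ud\mu_w$. Since $Z$ is compact, $F_1$ is bounded and uniformly continuous, so splitting the domain according to whether $|f_n-f|$ is below or above a threshold $\delta$, and using Markov's inequality to control the measure of the exceptional set by $\delta^{-1}\|f_n-f\|_{L^1(\mu_w)}$, makes this integral arbitrarily small. This direction is routine.

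The substantive direction is the converse, and this is where the main work lies. Assume $\int_{\pi^{-1}(w)} F_1(f_n)F_2\,\ud\mu_w\to \int_{\pi^{-1}(w)} F_1(f)F_2\,\ud\mu_w$ for all continuous $F_1,F_2$. First I would widen the class of admissible multipliers: by Tietze's extension theorem every continuous function on the closed set $\pi^{-1}(w)$ extends to a continuous function on $V$, and continuous functions are dense in $L^1(\mu_w)$; together with the uniform bound $\|F_1\co f_n\|_\infty\leq\|F_1\|_\infty$, a standard three-epsilon argument upgrades the hypothesis to: for every continuous $F_1$, the functions $F_1\co f_n$ converge to $F_1\co f$ in the weak-$*$ topology of $L^\infty(\mu_w)=L^1(\mu_w)^*$. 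Now choosing $F_1=\id$ and testing against $\overline f\in L^1(\mu_w)$ gives $\int_{\pi^{-1}(w)} f_n\,\overline f\,\ud\mu_w\to\int_{\pi^{-1}(w)}|f|^2\,\ud\mu_w$, while choosing $F_1(z)=|z|^2$ and testing against the constant function $1$ gives $\int_{\pi^{-1}(w)}|f_n|^2\,\ud\mu_w\to\int_{\pi^{-1}(w)}|f|^2\,\ud\mu_w$. Expanding $\int_{\pi^{-1}(w)}|f_n-f|^2\,\ud\mu_w = \int_{\pi^{-1}(w)}|f_n|^2\,\ud\mu_w-2\,\tRe\Big(\int_{\pi^{-1}(w)} f_n\,\overline f\,\ud\mu_w\Big)+\int_{\pi^{-1}(w)}|f|^2\,\ud\mu_w$ and letting $n\to\infty$ shows the right-hand side tends to $0$, so $f_n\to f$ in $L^2(\mu_w)$, and hence in $L^1(\mu_w)$ since $\mu_w$ is a probability measure. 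The one genuine obstacle is exactly this passage from the weak information carried by the maps $\varphi_{F_1,F_2}$ to norm convergence; I would overcome it by exploiting the Hilbert-space structure of $L^2(\mu_w)$, where weak convergence together with convergence of the norms forces strong convergence.
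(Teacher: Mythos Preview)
Your proof is correct and follows essentially the same strategy as the paper. The paper treats this lemma as a special case of the subsequent Lemma \ref{lem:Ltop-gen-restrict}, whose proof for the converse direction embeds a general compact $Z$ into $\ell^2$ via a homeomorphism and then runs exactly your Hilbert-space argument (weak convergence plus convergence of norms implies strong convergence); since here $Z$ is already a subset of $\C$, your direct use of $F_1=\id$ and $F_1(z)=|z|^2$ is precisely what that general argument collapses to.
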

\noindent This is a special case of a more general result that we shall use in the sequel, namely Lemma \ref{lem:Ltop-gen-restrict} below. This result  involves the following generalization of the $L^1$ distance for functions taking values in a compact metric space.
\begin{defn}
Given a compact space $Y$ with a Borel probability measure, and a compact space $Z$ with a compatible metric $d$, we define the metric $d_1$ on $L(Y,Z)$ by
\begin{equation}\label{eq:gen-L1}
d_1(f_1,f_2)=\E_{v\in Y}\; d\big(f_1(v),f_2(v)\big).
\end{equation} 
\end{defn}
\begin{lemma}\label{lem:Ltop-gen-restrict}
Let $V,W,Z$ be compact spaces, let $\{\mu_w:w\in W\}$ be a CSM on $\pi:V\to W$,  let $d$ be a compatible complete metric on $Z$, and let $w\in W$. Then a sequence $(f_n)$ in $L(\pi^{-1}(w),Z)$ converges to $f$ in  $\cL(V,Z)$ if and only if $d_1(f,f_n)\to 0$ as $n\to \infty$.
\end{lemma}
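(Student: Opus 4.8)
The plan is to reduce the topological convergence in $\cL(V,Z)$ to an elementary statement about the single fibre over $w$, and then to recognize that statement as the equivalence between narrow convergence of graph measures and convergence in measure. First I would record that, by Definition \ref{def:extspace}, the topology on $\cL(V,Z)$ is the initial topology induced by the functionals $\varphi_{F_1,F_2}$; hence (by the standard characterization of convergence in an initial topology) a sequence converges in $\cL(V,Z)$ if and only if its image under every such functional converges. Since all the $f_n$ and $f$ lie in the same fibre $L(\pi^{-1}(w),Z)$, we have $\tilde\pi(f_n)=\tilde\pi(f)=w$, so $\mu_{\tilde\pi(\cdot)}=\mu_w$ throughout. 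Writing $Y=\pi^{-1}(w)$ and $\mu=\mu_w$, and using the Tietze extension theorem \cite{Munkres} to see that restricting continuous functions on $V$ to the closed set $Y$ yields all of $C(Y)$, the statement to prove becomes: $\int_Y F_1(f_n)\,F_2\,\ud\mu \to \int_Y F_1(f)\,F_2\,\ud\mu$ for all continuous $F_1:Z\to\C$, $F_2:Y\to\C$, if and only if $d_1(f,f_n)=\int_Y d(f_n(v),f(v))\,\ud\mu(v)\to 0$.

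For the implication from $d_1$-convergence to the weak convergence of the functionals, I would use that $F_1$ is uniformly continuous on the compact space $Z$ and that $F_2$ is bounded. Splitting $Y$ according to whether $d(f_n(v),f(v))$ is below or above a threshold $\delta$, and bounding the measure of the latter set by $d_1(f,f_n)/\delta$ via Markov's inequality, gives $|\varphi_{F_1,F_2}(f_n)-\varphi_{F_1,F_2}(f)|\le \|F_2\|_\infty(\omega(\delta)+2\|F_1\|_\infty\, d_1(f,f_n)/\delta)$, where $\omega$ is a modulus of continuity for $F_1$; letting $n\to\infty$ and then $\delta\to 0$ finishes this direction.

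The substantial direction is the converse. Here I would pass to the \emph{graph measures} $\nu_g=(\id,g)_*\mu$ on $Y\times Z$, so that $\varphi_{F_1,F_2}(g)=\int_{Y\times Z}F_2(v)F_1(z)\,\ud\nu_g(v,z)$ and, crucially, every $\nu_g$ has $Y$-marginal equal to $\mu$. The hypothesis says exactly that $\int(F_2\otimes F_1)\,\ud\nu_{f_n}\to\int(F_2\otimes F_1)\,\ud\nu_f$; since the products $F_2\otimes F_1$ span a conjugation-closed, point-separating subalgebra of $C(Y\times Z)$ containing the constants, Stone--Weierstrass \cite[Appendix A, \S A14]{Rudin} together with a standard $3\varepsilon$ argument (the measures being probabilities) upgrades this to $\int G\,\ud\nu_{f_n}\to\int G\,\ud\nu_f$ for every continuous $G:Y\times Z\to\R$. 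The point is then to feed in the \emph{discontinuous} test function $G_0(v,z)=d(z,f(v))$, since $\int G_0\,\ud\nu_g=\int_Y d(f(v),g(v))\,\ud\mu$ computes exactly $d_1(f,g)$, while $\int G_0\,\ud\nu_f=0$.

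The main obstacle is thus the non-continuity of $G_0$, which I would resolve by a Lusin--Tietze approximation. Given $\varepsilon>0$, Lusin's theorem \cite[Appendix E, \S E8]{Rudin} furnishes a compact $K\subseteq Y$ with $\mu(Y\setminus K)<\varepsilon$ on which $f$, and hence $G_0$ on $K\times Z$, is continuous; the Tietze extension theorem \cite{Munkres} then provides a continuous $\tilde G:Y\times Z\to[0,\mathrm{diam}(Z)]$ agreeing with $G_0$ on $K\times Z$. Because $G_0-\tilde G$ vanishes on $K\times Z$ and is bounded by $\mathrm{diam}(Z)$, while every $\nu_{f_n}$ and $\nu_f$ has $Y$-marginal $\mu$, the error incurred by replacing $G_0$ with $\tilde G$ is at most $\mathrm{diam}(Z)\,\mu(Y\setminus K)<\mathrm{diam}(Z)\,\varepsilon$ against each of these measures. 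Combining the continuity-based convergence $\int\tilde G\,\ud\nu_{f_n}\to\int\tilde G\,\ud\nu_f$ with the bound $\int\tilde G\,\ud\nu_f\le\mathrm{diam}(Z)\,\varepsilon$ would then yield $\limsup_n d_1(f,f_n)\le 2\,\mathrm{diam}(Z)\,\varepsilon$, and letting $\varepsilon\to 0$ completes the proof. (Completeness of $d$ plays no essential role beyond compactness, as any compatible metric on a compact space is automatically complete.)
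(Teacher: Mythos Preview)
Your proof is correct, and the forward direction matches the paper's argument essentially verbatim (uniform continuity of $F_1$ plus Markov's inequality). For the substantial converse direction, however, you take a genuinely different route from the paper.

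The paper's converse argument embeds $Z$ homeomorphically into a compact subset of the real Hilbert space $\ell^2$ (invoking the Keller--Anderson theorem), and then uses the standard Hilbert-space trick: from the $\cL(V,Z)$-convergence one deduces that $F\circ f_n \to F\circ f$ weakly in $L^2(\mu_w,\ell^2)$ \emph{and} that $\|F\circ f_n\|_{L^2}\to\|F\circ f\|_{L^2}$, which together force strong $L^2$-convergence; this is then transferred back to $d_1$ using compactness of $Z$. Your approach instead stays purely measure-theoretic: you recast the hypothesis as narrow convergence of the graph measures $\nu_{f_n}\to\nu_f$ on $Y\times Z$, upgrade via Stone--Weierstrass to all continuous test functions, and handle the discontinuous test function $G_0(v,z)=d(z,f(v))$ by a Lusin--Tietze approximation, exploiting the crucial fact that every $\nu_g$ has the same $Y$-marginal $\mu$ so the approximation error is uniform in $g$. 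Your argument is more self-contained---it avoids the external embedding theorem and makes transparent that the result is really the equivalence between weak convergence of graph laws and convergence in measure---while the paper's argument trades that elementary character for the neat one-line identity ``weak convergence plus convergence of norms implies strong convergence'' in $L^2$.
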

\begin{proof}
Let $\varphi_{F_1,F_2}$ be any of the functions generating the topology on $\cL(V,Z)$. We then have
\[
|\varphi_{F_1,F_2}(f_n)- \varphi_{F_1,F_2}(f)|\leq  \|F_2\|_\infty \int_{\pi^{-1}(w)} |F_1(f_n(v))-F_1(f(v)) | \ud\mu_w(v).
\]
We claim that if $d_1(f,f_n)\to 0$ then the last integral tends to 0 as $n\to\infty$. To see this fix any $\epsilon>0$ and note that by uniform continuity of $F_1$ there is $\delta_0>0$ such that if $d(f_n(v),f(v))<\delta_0$ then $|F_1(f_n(v))-F_1(f(v))|<\epsilon/2$. Fix a positive $\delta<\min(\delta_0,\epsilon/4 \|F_1\|_\infty )$. For $n$ sufficiently large we have $d_1(f_n,f)<\delta^2$, whence, by Markov's inequality, the set $D_\delta=\{v\in \pi^{-1}(w):d(f_n(v),f(v))\geq \delta\}$ has $\mu_w(D_\delta)\leq \delta$. It follows that
\[
\int_{\pi^{-1}(w)} |F_1(f_n(v))-F_1(f(v)) | \,\ud\mu_w\;  \leq\;  \epsilon/2+ 2 \|F_1\|_\infty\, \mu_w(D_\delta) \;\leq\; \epsilon.
\]
The claim follows, and so $d_1(f_n,f)\to 0$ implies that $f_n\to f$ in $\cL(V,Z)$.\\ 
\indent For the converse, suppose that $f_n\to f$ in 
$\mathcal{L}(V,Z)$. By \cite[Theorem (4.17)]{Ke} and \cite[Theorem 1]{Ander}, there exists a compact subset $B$ of the real Hilbert space $\ell^2$ such that there is a homeomorphism $F:Z\to B$. We have that $F\co f_n$, $F\co f$ are Bochner measurable functions in $L^2(V,\ell^2)$. We claim that if $F\co f_n\to F\co f$ in $L^2(V,\ell^2)$ (that is $\int_{\pi^{-1}(w)}\|F\co f_n(v) - F\co f(v)\|_{\ell^2}\ud\mu_w \to 0$) then $d_1(f_n,f)\to 0$ as required. Indeed, for any $\epsilon>0$, the set $E=\{(z,z'):d(z,z')\geq \epsilon\}$ is closed in $Z\times Z$ so compact, and therefore the continuous function $\frac{d(z,z')}{\|F(z) - F(z')\|_{\ell^2}}$ on $E$ reaches a maximum $C$, whence for $n$ sufficiently large we have
\begin{eqnarray*}
d_1(f_n,f) & = & \int_{\pi^{-1}(w)} d\big(f_n(v),f(v)\big)\ud\mu_w \;\;\leq\;\; \epsilon+C \int_{\pi^{-1}(w)}1_E\big(f_n(v),f(v)\big)\,\|F\co f_n(v) - F\co f(v)\|_{\ell^2}\ud\mu_w\\
& \leq & 2 \epsilon.
\end{eqnarray*}
This proves our claim. We now show that $F\co f_n\to F\co f$ in $L^2(V,\ell^2)$.\\
\indent Firstly, for every $F_0\in L^2(V,\ell^2)$, we show that as $n\to \infty$ we have $\int_{\pi^{-1}(w)} \langle F\co f_n(v),F_0(v)\rangle_{\ell^2}\ud\mu_w\to \int_{\pi^{-1}(w)} \langle F\co f(v),F_0(v)\rangle_{\ell^2}\ud\mu_w$. By definition of the Bochner integral, it suffices to prove this for simple functions $F_0$. By linearity, it then  suffices to prove it for $F_0$ of the form $\alpha\,1_A$ where $A\subset \pi^{-1}(w)$ is measurable and $\alpha\in \ell^2(\R)$. But in this case the function $F_1:Z\to \R$, $z\mapsto \langle F(z),\alpha\rangle$ is continuous, and the function $1_A:\pi^{-1}(w)\to \R$ can be approximated in $L^1(\mu_w)$ by continuous functions, hence in this case the convergence follows from the definition of the topology on $\cL(V,Z)$.\\
\indent Secondly, we show that $\int_{\pi^{-1}(w)} \| F\co f_n(v) \|_{\ell^2}^2\ud\mu_w$ converges to $\int_{\pi^{-1}(w)} \| F\co f(v) \|_{\ell^2}^2\ud\mu_w$. This follows from the fact that $F_1:Z\to \R$, $z\to \| F(z) \|_{\ell^2}^2$ is continuous, so that letting $F_2$ be the function with value 1 on all of $V$ we have $\int_{\pi^{-1}(w)} \| F\co f_n(v) \|_{\ell^2}^2\ud\mu_w=\varphi_{F_1,F_2}(f_n)$, which converges to $\varphi_{F_1,F_2}(f)$ as required.\\
\indent From the last two paragraphs we deduce the desired convergence by a standard argument, namely
\begin{eqnarray*}
&& \int_{\pi^{-1}(w)}\|F\co f_n(v) - F\co f(v)\|_{\ell^2}^2\,\ud\mu_w \; = \; \int_{\pi^{-1}(w)} \langle F\co f_n - F\co f,F\co f_n - F\co f\rangle\, \ud\mu_w\\
& = & \int \|F\co f_n\|_{\ell^2}^2 -2 \langle F\co f_n ,F\co f\rangle +\|F\co f\|_{\ell^2}^2\,\ud\mu_w 
\; \to \;  \int 2\|F\co f\|_{\ell^2}^2 -2 \langle F\co f ,F\co f\rangle\, \ud\mu_w\;= \;0.\qedhere
\end{eqnarray*}
\end{proof}
\noindent It is a basic fact that if $(f_n)$ is a sequence of real-valued functions on a standard probability space such that $f_n\to f$ in $L^1$ and each $f_n$ is constant almost surely, then the limit $f$ is also constant almost surely. We shall use the following analogous fact for $\cL(V,Z)$.
\begin{lemma}\label{lem:constconv}
Let $V,W,Z$ be compact spaces, let $\{\mu_w:w\in W\}$ be a CSM on $\pi:V\to W$, and let $(f_n)$ be a convergent sequence in $\cL(V,Z)$ such that each $f_n$ is constant almost surely on its fibre $\pi^{-1}(w_n)$. Then the limit is also constant almost surely.
\end{lemma}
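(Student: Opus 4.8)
The plan is to extract from the abstract convergence in $\cL(V,Z)$ a concrete statement about the almost-sure values of the $f_n$, and then to upgrade this to information about the limit. Write the limit as $f\in L(\pi^{-1}(w),Z)$ with $w=\tilde\pi(f)$, and for each $n$ let $z_n\in Z$ be a value with $f_n=z_n$ for $\mu_{w_n}$-almost every point of $\pi^{-1}(w_n)$, where $w_n=\tilde\pi(f_n)$. The difficulty compared with Lemma \ref{lem:Ltop-gen-restrict} is that the $f_n$ lie in \emph{different} fibres, so one cannot directly invoke the $d_1$-description of convergence established there. To circumvent this, I would test convergence only against the generating functions $\varphi_{F_1,F_2}$ from Definition \ref{def:extspace} with $F_2\equiv 1$: since each $\mu_{w_n}$ is a probability measure concentrated on $\pi^{-1}(w_n)$ and $f_n$ is almost everywhere equal to the constant $z_n$, we obtain $\varphi_{F_1,1}(f_n)=F_1(z_n)$, with no dependence on the fibre. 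Continuity of $\varphi_{F_1,1}$ then yields
\[
F_1(z_n)\longrightarrow \varphi_{F_1,1}(f)=\int_{\pi^{-1}(w)}F_1(f(v))\,\ud\mu_w(v)
\]
for every continuous $F_1:Z\to\C$.

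Next I would deduce that $(z_n)$ converges in $Z$. Since $Z$ is compact it has convergent subsequences; if $z_{n_j}\to z'$ and $z_{m_k}\to z''$ are two subsequential limits, then the display above forces $F_1(z')=F_1(z'')$ for every continuous $F_1$, and since continuous functions separate points on the compact Hausdorff space $Z$ (by Urysohn's lemma), we get $z'=z''$. Hence $(z_n)$ has a single subsequential limit $z^*$ and therefore converges to it, and passing to the limit in the display gives $\int_{\pi^{-1}(w)}F_1(f)\,\ud\mu_w=F_1(z^*)$ for every continuous $F_1$.

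Finally I would identify the limit. Let $\nu=\mu_w\co f^{-1}$ be the pushforward probability measure on $Z$; the previous identity reads $\int_Z F_1\,\ud\nu=F_1(z^*)=\int_Z F_1\,\ud\delta_{z^*}$ for all continuous $F_1$. Because $Z$ is a compact metric space (Remark \ref{rem:stfacts}(ii)), a Borel probability measure on $Z$ is determined by its integrals against continuous functions, so $\nu=\delta_{z^*}$; in particular $\mu_w\big(f^{-1}(\{z^*\})\big)=1$, that is, $f=z^*$ almost surely, which is the claim. The only genuinely delicate point is the first paragraph---recognizing that testing against $F_2\equiv 1$ decouples the computation from the varying fibres $\pi^{-1}(w_n)$ and reduces everything to the scalar sequence $F_1(z_n)$---after which the argument is a routine combination of compactness of $Z$, point-separation, and the uniqueness of Borel measures on a compact metric space.
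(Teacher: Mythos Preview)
Your argument is correct, but it takes a genuinely different route from the paper's proof. The paper never extracts the constant values $z_n$; instead, for each continuous $F:Z\to\R$ it observes directly that $\big(\int F\co f_n\big)^2=\int F^2\co f_n$ (since $f_n$ is constant almost surely), passes to the limit using the generating functions $\varphi_{F,1}$ and $\varphi_{F^2,1}$, and then invokes the equality case of Cauchy--Schwarz to conclude that $F\co f$ is constant almost everywhere; Urysohn's lemma then pins down $f$ itself. Your approach, by contrast, tracks the actual constants $z_n$, shows via point-separation that they converge to some $z^*$, and then identifies the pushforward $\mu_w\co f^{-1}$ with $\delta_{z^*}$ using the Riesz-type uniqueness of Borel measures. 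Your argument is more constructive (it names the limiting constant), while the paper's Cauchy--Schwarz trick is a bit slicker and avoids any compactness/subsequence manoeuvre in $Z$; both rely at some point on continuous functions separating points of $Z$.
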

\begin{proof}
Let $f$ be the limit of the sequence $(f_n)$. By Urysohn's lemma on $Z$, it suffices to show that for every continuous function $F:Z\to\R$ the function $F\co f$ is constant. Now since $F$ and $F^2$ are both continuous, by definition of the topology on $\cL(V,Z)$ we have $\int_{\pi^{-1}(w_n)} F\co f_n \ud\mu_{w_n} \to \int_{\pi^{-1}(w)} F\co f \ud\mu_w$ and $\int_{\pi^{-1}(w_n)} F^2\co f_n \ud\mu_{w_n} \to \int_{\pi^{-1}(w)} F^2\co f \ud\mu_w$. On the other hand, since each $f_n$ is constant almost everywhere, we have $\Big(\int_{\pi^{-1}(w_n)} F\co f_n \ud\mu_{w_n}\Big)^2 - \int_{\pi^{-1}(w_n)} F^2\co f_n \ud\mu_{w_n} =0$. Taking limits as $n\to\infty$, we deduce that $\Big(\int_{\pi^{-1}(w)} F\co f \ud\mu_w\Big)^2 - \int_{\pi^{-1}(w)} F^2\co f \ud\mu_w =0$. By the equality case of the Cauchy-Schwarz inequality, this implies that $F\co f$ is constant almost everywhere.
\end{proof}

The following result will be used several times in the next section.
\begin{lemma}\label{lem:prodCSMconv}
Let $V,W,Z$ be compact spaces, and let $\{\mu_w : w\in W\}$ be a CSM on $\pi:V\to W$. Suppose that $(f_n)$ and $(g_n)$ are sequences in $\cL(V,Z)$ converging to $f$ and $g$ respectively, where $f_n,g_n$ are defined on $\pi^{-1}(w_n)$ and $f,g$ are defined on $\pi^{-1}(w)$. Then the functions $v\mapsto (f_n(v),g_n(v))$ converge to $v\mapsto (f(v),g(v))$ in $\cL(V,Z\times Z)$.
\end{lemma}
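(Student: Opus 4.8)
The plan is to exploit the fact that convergence in $\cL(V,Z)$ is a \emph{strong}, $L^1$-type mode of convergence (as made precise in Lemma \ref{lem:Ltop-gen-restrict}), so that, since $f_n$ and $g_n$ live on a \emph{common} fibre $\pi^{-1}(w_n)$, their two marginal convergences can be assembled into joint convergence of the pair. First I would reformulate both hypothesis and conclusion in a more flexible form. Since $\cL(V,Z)$ carries the initial topology generated by the maps $\varphi_{F_1,F_2}$ and is metrizable (Proposition \ref{prop:CSM-top}), a sequence $(f_n)$ converges to $f$ iff $\varphi_{F_1,F_2}(f_n)\to\varphi_{F_1,F_2}(f)$ for all continuous $F_1:Z\to\C$, $F_2:V\to\C$. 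By linearity this gives convergence of $\int_{\pi^{-1}(w_n)}G(f_n(v),v)\,\ud\mu_{w_n}\to\int_{\pi^{-1}(w)}G(f(v),v)\,\ud\mu_w$ for every $G$ in the span of products $F_1(z)F_2(v)$; since these are uniformly dense in $C(Z\times V)$ by Stone--Weierstrass \cite[Appendix A, \S A14]{Rudin} and the $\mu_w$ are probability measures, the convergence holds for \emph{every} $G\in C(Z\times V)$. Dually, to prove the lemma it suffices to show $\int_{\pi^{-1}(w_n)}G(f_n(v),g_n(v),v)\,\ud\mu_{w_n}\to\int_{\pi^{-1}(w)}G(f(v),g(v),v)\,\ud\mu_w$ for every $G\in C(Z\times Z\times V)$, since the generating functions $\varphi_{G_1,G_2}$ on $\cL(V,Z\times Z)$ correspond to the special case $G(z,z',v)=G_1(z,z')G_2(v)$. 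I would also record that $w_n=\tilde\pi(f_n)\to\tilde\pi(f)=w$ by continuity of $\tilde\pi$.

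Next, following the proof of Lemma \ref{lem:Ltop-gen-restrict}, I would embed $Z$ homeomorphically onto a compact set $B\subset\ell^2$ via a map $F$ (using \cite[Theorem (4.17)]{Ke} and \cite[Theorem 1]{Ander}), let $\Phi:Z\times Z\to\ell^2\oplus\ell^2$, $(z,z')\mapsto(F(z),F(z'))$ have compact image $B'$, and pass to the vector-valued functions $U_n=\Phi\circ(f_n,g_n)=(F\circ f_n,F\circ g_n)$ and $U=(F\circ f,F\circ g)$. The crucial step is then to establish, for every continuous $\Theta=(\Theta_1,\Theta_2):V\to\ell^2\oplus\ell^2$, the cross-fibre convergence $\int\|U_n-\Theta\|^2\,\ud\mu_{w_n}\to\int\|U-\Theta\|^2\,\ud\mu_w$. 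Expanding the square, the norm terms $\int\bigl(\|F(f_n)\|^2+\|F(g_n)\|^2\bigr)\ud\mu_{w_n}$ converge by the reformulated marginal convergences applied to the continuous function $z\mapsto\|F(z)\|^2$; the cross terms $\int\langle U_n,\Theta\rangle\,\ud\mu_{w_n}$ converge because $(z,v)\mapsto\langle F(z),\Theta_i(v)\rangle$ lies in $C(Z\times V)$; and $\int\|\Theta\|^2\,\ud\mu_{w_n}\to\int\|\Theta\|^2\,\ud\mu_w$ by the continuity property of the CSM (Definition \ref{def:CSM}) together with $w_n\to w$. This is precisely where the common-fibre structure is used: the squared distance to a fixed continuous reference splits additively over the two $Z$-coordinates, so the two separate marginal convergences suffice to control the pair.

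Finally I would upgrade this to convergence of the $G$-integrals. Writing $G(f_n(v),g_n(v),v)=\tilde G(U_n(v),v)$ with $\tilde G(y,v)=G(\Phi^{-1}(y),v)$ continuous on $B'\times V$, I would approximate $U$ in $L^2(\mu_w)$ by a continuous function: Lusin's theorem \cite[Appendix E, \S E8]{Rudin} gives a compact $K\subset\pi^{-1}(w)$ of nearly full $\mu_w$-measure on which $U$ is continuous, and Dugundji's extension theorem extends $U|_K$ to a continuous $\Theta:V\to\ell^2$ whose image lies in the closed convex hull of $B'$ (compact by Mazur's theorem). Then $\int_{\pi^{-1}(w)}\|U-\Theta\|^2\,\ud\mu_w$ is small, and by the previous paragraph so is $\int_{\pi^{-1}(w_n)}\|U_n-\Theta\|^2\,\ud\mu_{w_n}$ for large $n$. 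Extending $\tilde G$ continuously to $\overline{\mathrm{conv}}(B')\times V$ (hence uniformly continuous there), and combining its modulus of continuity with Markov's inequality, I would bound $\int\tilde G(U_n,v)\,\ud\mu_{w_n}$ and $\int\tilde G(U,v)\,\ud\mu_w$ against the intermediary integrals $\int\tilde G(\Theta(v),v)\,\ud\mu_{w_n}$ and $\int\tilde G(\Theta(v),v)\,\ud\mu_w$ with controlled error; these two intermediaries converge to one another because $v\mapsto\tilde G(\Theta(v),v)$ is a genuine continuous function on $V$ and $\{\mu_w\}$ is a CSM. A triangle inequality then gives the claim.

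I expect the decoupling of the second paragraph to be the main obstacle, conceptually: convergence of marginals does not in general force convergence of joints, and the argument works only because $\cL$-convergence is genuinely $L^2$-type and the Hilbert embedding renders the squared distance additive over coordinates. The reason to pass to $\ell^2$, rather than to work with a metric on $Z\times Z$ directly, is exactly to make the continuous-approximation step available: $\ell^2$ is convex, so Dugundji's theorem applies, whereas a general compact $Z$ need not be an absolute retract and the analogous extension into $Z$ could fail.
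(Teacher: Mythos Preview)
Your argument is correct, but it takes a heavier route than the paper's. The paper proceeds much more directly: after the same Stone--Weierstrass reduction you describe, it reduces further to test functions of product form $F_1(z,z')=F_{1,1}(z)F_{1,2}(z')$. Then, instead of embedding $Z$ into $\ell^2$, it simply approximates the scalar function $F_{1,2}\circ g$ on $\pi^{-1}(w)$ by a continuous function $q'$ via Lusin, extends $q'$ to a continuous $q:V\to\C$ via Tietze, and shows by expanding the $L^2$ norm (exactly your additive-decoupling idea, but for a single scalar coordinate) that $\|F_{1,2}\circ g_n - q|_{\pi^{-1}(w_n)}\|_{L^2(\mu_{w_n})}$ stays small. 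This lets one absorb the $g_n$-dependence into the $F_2$ slot as the continuous function $qF_2$, and the remaining integral $\int F_{1,1}(f_n)\,(qF_2)\,\ud\mu_{w_n}$ converges by the marginal hypothesis on $f_n$ alone.

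So the underlying mechanism --- replace one factor by a continuous function of $v$, control the error in $L^2$ via the expansion of the square, then reduce to a single marginal --- is the same in both proofs. The difference is that the paper stays entirely with scalar-valued functions and ordinary Tietze, avoiding the Hilbert embedding, Dugundji's theorem, and Mazur's theorem altogether. Your detour through $\ell^2$ buys you the ability to handle all $G\in C(Z\times Z\times V)$ at once without the product reduction, at the cost of more machinery; the paper's version shows that the product reduction makes all of that unnecessary.
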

\begin{proof}
It suffices to show that for any continuous functions $F_1:Z\times Z\to\C$ and $F_2:V\to\C$ we have $\lim_{n\to\infty}\int_{\pi^{-1}(w_n)}F_1(f_n(v),g_n(v))\, F_2(v)\ud\mu_{w_n}(v)=\int_{\pi^{-1}(w)}F_1(f(v),g(v))\,F_2(v)\ud\mu_w(v)$. 
By the Stone-Weierstrass theorem the function $F_1$ can be approximated arbitrarily closely in the uniform metric by finite linear combinations of  functions of the form  $(z_1,z_2)\mapsto F_{1,1}(z_1)F_{1,2}(z_2)$ where $F_{1,1},F_{1,2}:Z\to\C$ are continuous, so we may assume that $F_1$ is of this form. Fix any $\epsilon>0$. By Lusin's theorem there exists a continuous function $q':\pi^{-1}(w)\to \C$ such that $\|F_{1,2}\co g-q'\|_{L^2}<\epsilon$ and $\|q'\|_\infty\leq \|F_{1,2}\|_\infty$, and then by the Tietze extension theorem \cite[Theorem 35.1]{Munkres} there exists a continuous function $q:V\to \C$ such that the restriction $q|_{\pi^{-1}(w)}$ equals $q'$ and $\|q\|_\infty\leq \|F_{1,2}\|_\infty$. For each $n$ let $q_n=q|_{\pi^{-1}(w_n)}$. Now  
\begin{eqnarray}\label{eq:L2-arg}
\|F_{1,2}\co g_n - q_n\|_{L^2}^2 & = & \int_{\pi^{-1}(w_n)}|F_{1,2}\co g_n - q_n|^2\ud\mu_{w_n} \nonumber \\
& = & \int_{\pi^{-1}(w_n)}|F_{1,2}\co g_n|^2 - 2\tRe\,(F_{1,2}\co g_n\,\overline{ q_n})+|q_n|^2\ud\mu_{w_n}.
\end{eqnarray}
Using that $g_n\to g$ in $\cL(V,Z)$, that $w_n\to w$, and the uniform continuity of $q$, we deduce that the integral above converges to $\|F_{1,2}\co g -q' \|_{L^2}^2$. Setting $h_n=(F_{1,1}\co f_n) (F_{1,2}\co g_n-q_n)$, we then have
\[
F_1\co(f_n,g_n)=(F_{1,1}\co f_n) \big(q_n+(F_{1,2}\co g_n-q_n)\big)=(F_{1,1}\co f_n)\,q_n+h_n
\]
and similarly $F_1\co(f,g)=(F_{1,1}\co f)\, q'+h$, where for $n$ sufficiently large we have $\|h_n\|_{L^2}$ and $\|h\|_{L^2}$ both at most $2\epsilon \|F_{1,1}\|_\infty$. The convergence $f_n\to f$ also implies that
\[
\int_{\pi^{-1}(w_n)} (F_{1,1}\co f_n)\,q_n\, F_2\, \ud\mu_{w_n}\;\to\; \int_{\pi^{-1}(w)}(F_{1,1}\co f)\, q'\, F_2\, \ud\mu_w.
\]
Combining the above estimates we deduce that for all $n$ sufficiently large we have
\[
\Big|\int_{\pi^{-1}(w_n)}F_1\co (f_n,g_n)\, F_2\,\ud\mu_{w_n}- \int_{\pi^{-1}(w)}F_1\co(f,g)\,F_2\ud\mu_w\Big| \leq 5 \epsilon \|F_{1,1}\|_\infty\|F_2\|_\infty.
\]
Since $\epsilon> 0$ was arbitrary, the result follows.
\end{proof}
\noindent For the main applications of the space $\cL(V,Z)$ in the sequel, we will suppose that $Z$ is a compact abelian group. We now show that in this case the group has a continuous action on the space.
\begin{lemma}\label{lem:contact} Let $V,W$ be compact spaces, let $\ab$ be a compact abelian group, and let $\{\mu_w: w\in W\}$ be a CSM on $\pi:V\to W$. Then we have the following free continuous action of $\ab$ on $\cL(V,\ab)$:
\[
\alpha\,:\;\ab\times\cL(V,\ab)\;\to\;\cL(V,\ab),\quad  \alpha(z, f): v\mapsto f(v) + z.
\]
\end{lemma}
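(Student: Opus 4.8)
The plan is to verify the three requirements in Lemma~\ref{lem:contact}: that $\alpha$ is well-defined, that the action is free, and that it is continuous. Well-definedness and freeness are immediate. Indeed, for fixed $z\in\ab$ and $f\in L(\pi^{-1}(w),\ab)$, the map $v\mapsto f(v)+z$ is Borel measurable (as the composition of the Borel map $f$ with the continuous translation $y\mapsto y+z$), and it lies in the same fibre $L(\pi^{-1}(w),\ab)$; moreover the equivalence class is unchanged if $f$ is replaced by a function agreeing $\mu_w$-almost everywhere, so $\alpha(z,f)$ is well-defined on the quotient. That $\alpha$ is an action (i.e.\ $\alpha(z',\alpha(z,f))=\alpha(z'+z,f)$ and $\alpha(0,f)=f$) follows from the group law on $\ab$. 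Freeness holds because if $\alpha(z,f)=f$ then $f(v)+z=f(v)$ for $\mu_w$-almost every $v$, forcing $z=0$ (here we use that $\mu_w$ is strictly positive, so $\pi^{-1}(w)$ is nonempty).

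The substantive part is continuity of $\alpha$. Since $\cL(V,\ab)$ is second-countable and metrizable (Proposition~\ref{prop:CSM-top}), it suffices to check sequential continuity. So I would take sequences $z_n\to z$ in $\ab$ and $f_n\to f$ in $\cL(V,\ab)$, with $f_n$ defined on $\pi^{-1}(w_n)$ and $f$ on $\pi^{-1}(w)$, and show that $\alpha(z_n,f_n)\to\alpha(z,f)$ in $\cL(V,\ab)$. By definition of the topology, this amounts to showing that for every pair of continuous functions $F_1:\ab\to\C$ and $F_2:V\to\C$,
\[
\int_{\pi^{-1}(w_n)} F_1\big(f_n(v)+z_n\big)\,F_2(v)\,\ud\mu_{w_n}(v)\;\longrightarrow\;\int_{\pi^{-1}(w)} F_1\big(f(v)+z\big)\,F_2(v)\,\ud\mu_w(v).
\]
The difficulty is that both the integrand (through $z_n$) and the fibre/measure (through $w_n$) vary with $n$, so I cannot simply apply the definition of convergence $f_n\to f$ directly to a single fixed $\varphi_{F_1,F_2}$.

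To handle the moving translation $z_n$, the key step is to replace $F_1(\cdot+z_n)$ by $F_1(\cdot+z)$ at a controlled cost. Since $\ab$ is compact, $F_1$ is uniformly continuous, so $\|F_1(\cdot+z_n)-F_1(\cdot+z)\|_\infty\to0$ as $z_n\to z$; hence the difference between $\int F_1(f_n(v)+z_n)F_2(v)\,\ud\mu_{w_n}$ and $\int F_1(f_n(v)+z)F_2(v)\,\ud\mu_{w_n}$ is bounded by $\|F_2\|_\infty\,\|F_1(\cdot+z_n)-F_1(\cdot+z)\|_\infty$, which tends to $0$ (using that each $\mu_{w_n}$ is a probability measure). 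With $z_n$ thus replaced by the fixed $z$, I would set $G_1:=F_1(\cdot+z)$, which is a fixed continuous function on $\ab$. The remaining quantity is exactly $\varphi_{G_1,F_2}(f_n)$, which converges to $\varphi_{G_1,F_2}(f)=\int F_1(f(v)+z)F_2(v)\,\ud\mu_w$ by the very definition of the convergence $f_n\to f$ in $\cL(V,\ab)$. Combining these two steps via the triangle inequality gives the claimed convergence. I expect the main obstacle to be precisely this decoupling of the two sources of variation; once the moving translate is absorbed into a supremum-norm estimate exploiting uniform continuity of $F_1$ on the compact group $\ab$, the residual convergence is handed to us by the topology of $\cL(V,\ab)$.
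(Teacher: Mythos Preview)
Your proof is correct. Both your argument and the paper's reduce continuity of $\alpha$ to showing that each composite $\varphi_{F_1,F_2}\circ\alpha:(z,f)\mapsto\int F_1(f(v)+z)\,F_2(v)\,\ud\mu_{\tilde\pi(f)}$ is continuous, but the two executions diverge. The paper applies the Stone--Weierstrass theorem to the continuous function $(z_1,z_2)\mapsto F_1(z_1+z_2)$ on $\ab\times\ab$, approximating it uniformly by finite sums of products $h_1(z_1)h_2(z_2)$; substitution then gives a uniform approximation of $\varphi_{F_1,F_2}\circ\alpha$ by functions of the form $(z,f)\mapsto h_2(z)\,\varphi_{h_1,F_2}(f)$, which are manifestly continuous. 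Your route instead passes to sequences (legitimately, via the metrizability of $\ab\times\cL(V,\ab)$ established in Proposition~\ref{prop:CSM-top}) and handles the moving translate by the uniform continuity of $F_1$ on the compact group $\ab$: once $z_n$ is replaced by the fixed $z$ at cost $\|F_2\|_\infty\,\|F_1(\cdot+z_n)-F_1(\cdot+z)\|_\infty\to0$, the remainder is exactly $\varphi_{G_1,F_2}(f_n)\to\varphi_{G_1,F_2}(f)$ with $G_1=F_1(\cdot+z)$. Your argument is a touch more elementary, avoiding Stone--Weierstrass; the paper's approach has the advantage of working directly with the initial topology without appealing to metrizability, and it recycles a decomposition technique used repeatedly elsewhere in the section.
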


\begin{proof}
The topology on $\cL(V,\ab)$ is generated by the functions $\varphi_{F_1,F_2}$, so to check continuity of $\alpha$ it suffices to show that for every $\varphi_{F_1,F_2}$ the map $\varphi_{F_1,F_2}\circ \alpha$ is continuous. Thus, it suffices to show that for any continuous functions $F_1 : \ab \to \C$ and $F_2 : V \to \C$, the following  function is continuous:
\begin{equation}\label{eq:contact}
q: \;(z, f) \;\mapsto\; \int_{\pi^{-1}(\tilde{\pi}(f))} F_1(f(v)+z)\; F_2(v) \; \ud\mu_{\tilde{\pi}(f)}(v).
\end{equation}
Applying the Stone-Weierstrass theorem as in previous proofs, the continuous function $\ab \times \ab \to \C$ defined by $(z_1,z_2) \mapsto F_1(z_1 + z_2)$ can be approximated uniformly by linear combinations of functions of the form $(z_1, z_2) \mapsto h_1(z_1)\,h_2(z_2)$ where $h_1$ and $h_2$ are continuous. Substituting any of these into \eqref{eq:contact}, we obtain a function of the form
\[
q':\;(z, f) \;\mapsto \;h_2(z) \int_{\pi^{-1}(\tilde{\pi}(f))} h_1(f(v))\; F_2(v) \; \ud\mu_{\tilde{\pi}(f)}(v) \;=\;h_2(z)\, \varphi_{h_1,F_2}(f).
\]
This is continuous by definition of $\cL(V,\ab)$. Hence $q$ is a uniform limit of continuous functions and is therefore continuous.
\end{proof}
\noindent In our uses of $\cL(V,\ab)$, we shall want to view this space as a continuous $\ab$-bundle, for  some appropriate base $S$ and projection $\cE$. We would like  $S,\cE$ to have the following features. Firstly, the projection should send two functions $f_1,f_2$ in $\cL(V,\ab)$ to the same point of $S$ if and only if $f_1$ and $f_2$ are in the same $\ab$-orbit (i.e. $f_2=f_1+z$ for some $z\in \ab$). Secondly, we would like $S$ to be also of the form $\cL(Y,\ab)$, for some other similar space $Y$ having a projection to $W$, as this would help to relate the topologies on $S$ and $\cL(V,\ab)$.\\
\indent A natural way to obtain these features is to define $\cE$ as a  difference map, sending $f$ to the function $\cE(f):V\times V\to \ab$, $(v_0,v_1)\mapsto f(v_0)-f(v_1)$. We are thus led to the following construction.
\begin{defn}
Let $V,W$ be compact spaces and let $\{\mu_w:w\in W\}$ be a CSM on $\pi:V\to W$. We define the compact space
\[
V\times_W V = \{(v_0,v_1): v_0,v_1\in V,\;\pi(v_0)=\pi(v_1)\},
\]
and define the projection $\pi':V\times_W V\to W$ by $\pi'(v_0,v_1)=\pi(v_0)$.
\end{defn}
\begin{lemma}
The family of measures $\{\mu_w\times\mu_w:w\in W\}$ on $V\times_W V$ is a  CSM on $\pi'$.
\end{lemma}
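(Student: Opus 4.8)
The plan is to check the two conditions of Definition~\ref{def:CSM} for the map $\pi':V\times_W V\to W$. I would begin with the simple but useful observation that $(\pi')^{-1}(w)=\pi^{-1}(w)\times\pi^{-1}(w)$: a point $(v_0,v_1)\in V\times_W V$ lies in $(\pi')^{-1}(w)$ precisely when $\pi(v_0)=w$, and since every point of $V\times_W V$ satisfies $\pi(v_0)=\pi(v_1)$ this is equivalent to $\pi(v_0)=\pi(v_1)=w$. Note also that $V\times_W V$ is compact, being the preimage of the (closed, since $W$ is Hausdorff) diagonal of $W\times W$ under the continuous map $(v_0,v_1)\mapsto(\pi(v_0),\pi(v_1))$; hence each $\mu_w\times\mu_w$ is a finite Borel measure on $V\times V$ that I may regard as living on $V\times_W V$. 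Condition (i) is then immediate: since $\mu_w$ is concentrated on $\pi^{-1}(w)$, the complement of $\pi^{-1}(w)\times\pi^{-1}(w)$ in $V\times V$ is covered by the two null sets $(V\setminus\pi^{-1}(w))\times V$ and $V\times(V\setminus\pi^{-1}(w))$, so $\mu_w\times\mu_w$ is concentrated on $(\pi')^{-1}(w)$.

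For condition (ii), I must show that $w\mapsto\int_{(\pi')^{-1}(w)}f\,\ud(\mu_w\times\mu_w)$ is continuous for every continuous $f:V\times_W V\to\C$. The key reduction, exactly as in the proof of Lemma~\ref{lem:GenCSM}, is to treat first the case of a product function $f(v_0,v_1)=F_0(v_0)\,F_1(v_1)$ with $F_0,F_1:V\to\C$ continuous. The restrictions to $V\times_W V$ of such products span a conjugation-closed subalgebra of $C(V\times_W V,\C)$ containing the constants and separating points, so by the Stone--Weierstrass theorem \cite[Appendix A, \S A14]{Rudin} every continuous $f$ is a uniform limit of finite linear combinations of them. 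Applying condition (ii) of the CSM $\{\mu_w\}$ to the constant function $1$ shows that $w\mapsto\mu_w(\pi^{-1}(w))$ is continuous, hence bounded on the compact space $W$; consequently the masses of the $\mu_w\times\mu_w$ are uniformly bounded, and uniform convergence of integrands transfers to uniform convergence of the integrals over $w$. A uniform limit of continuous functions of $w$ being continuous, it suffices to treat the product case.

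For a product function, Fubini's theorem, together with the concentration of $\mu_w\times\mu_w$ on $\pi^{-1}(w)\times\pi^{-1}(w)$, gives
\[
\int_{(\pi')^{-1}(w)}F_0(v_0)\,F_1(v_1)\,\ud(\mu_w\times\mu_w)\;=\;\Big(\int_{\pi^{-1}(w)}F_0\,\ud\mu_w\Big)\Big(\int_{\pi^{-1}(w)}F_1\,\ud\mu_w\Big).
\]
Each factor on the right is continuous in $w$ by condition (ii) for the CSM $\{\mu_w\}$ on $\pi$, and a product of continuous functions is continuous. Combined with the reduction of the previous paragraph, this establishes condition (ii) for $\{\mu_w\times\mu_w\}$, so the family is a CSM on $\pi'$.

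The whole argument is routine; the only step needing slight care is the reduction to product functions, where one must verify that the Stone--Weierstrass approximation is uniform and, since the $\mu_w$ need not be probability measures, that their total masses are uniformly bounded so that the approximation of the integrand passes to the $w$-integral. Both points are handled by invoking condition (ii) of the given CSM for the constant function, so I do not expect any genuine obstacle here.
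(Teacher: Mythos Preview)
Your proof is correct and follows essentially the same approach as the paper: verify concentration on $(\pi')^{-1}(w)=\pi^{-1}(w)\times\pi^{-1}(w)$, reduce condition~(ii) via Stone--Weierstrass to product functions $F_0(v_0)F_1(v_1)$, factor the resulting integral, and invoke the CSM property of $\{\mu_w\}$ on each factor. Your version is in fact more careful than the paper's, which simply refers to ``an argument similar to previous ones'' and does not spell out the uniform boundedness of the masses or the point-separation check for Stone--Weierstrass on $V\times_W V$.
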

\begin{proof}
For every $w\in W$ the measure $\mu_w$ is concentrated on $\pi^{-1}(w)$, so in $V\times V$ the measure $\mu_w\times \mu_w$ is concentrated on $\pi^{-1}(w) \times \pi^{-1}(w)={\pi'}^{-1}(w)$. For any continuous function $f:V\times_W V\to \C$, the continuity of $w\mapsto \int_{{\pi'}^{-1}(w)} f \ud (\mu_w\times \mu_w)$ is proved by an argument similar to previous ones (e.g. in the proof of Lemma \ref{lem:contact}), showing that this function is a uniform limit of continuous functions $F_n$, where $F_n$ is obtained by approximating $f$ to within $1/n$ in the supremum norm by a finite linear combination of functions of the form $(v_0,v_1)\mapsto h_0(v_0)\,h_1(v_1)$ with $h_i:V\to \C$ continuous, and then using the continuity of $w\mapsto \prod_{i=0,1}\int_{\pi^{-1}(w)} h_i  \ud \mu_w$ guaranteed by the CSM on $\pi$.
\end{proof}
\noindent We are now able to complete the construction of the continuous abelian bundle announced at the beginning of this section, by showing that the map $\cE$ has the required properties to be a suitable projection.

\begin{proposition}\label{prop:CSM-dif-bund}
Let $V,W$ be compact spaces, let $\{\mu_w :w\in W\}$ be a CSM on $\pi:V\to W$, and let $\ab$ be a compact abelian group. Let $\cE:\cL(V,\ab)\to\cL(V\times_W V,\ab)$ be defined by $\cE(g): (v_0,v_1) \mapsto g(v_0)-g(v_1)$. Then $\cL(V,\ab)$ is a continuous $\ab$-bundle over $\cE(\cL(V,\ab))$ with projection $\cE$.
\end{proposition}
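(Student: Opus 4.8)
The plan is to verify that $\cE$ exhibits $\cL(V,\ab)$ as a continuous $\ab$-bundle over $S:=\cE(\cL(V,\ab))$, equipped with the subspace topology from $\cL(V\times_W V,\ab)$, by checking the algebraic bundle structure together with conditions (i)--(iv) of Definition \ref{def:CpctAbBund}. Conditions (i) and (ii) are immediate, and condition (iii) (continuity of the $\ab$-action) is exactly Lemma \ref{lem:contact}. So the substance lies in three points: (a) $\cE$ is continuous; (b) the fibres of $\cE$ are precisely the $\ab$-orbits, which, with the freeness from Lemma \ref{lem:contact}, gives the abelian-bundle structure and lets $\cE$ serve as the projection; and (c) condition (iv). For (c) I would invoke Remark \ref{rem:open-and-closed} to replace (iv) by the equivalent (iv'), i.e.\ to show that the induced map $\bar\cE:\cL(V,\ab)/\alpha\to S$ is a homeomorphism. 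This $\bar\cE$ is a continuous bijection: it is continuous because $\cE=\bar\cE\co q$ factors through the orbit map $q$, which is an open continuous surjection and hence a quotient map, and it is bijective by (b).

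For (a), to show that each generating function $\varphi_{F_1,F_2}$ of the topology on $\cL(V\times_W V,\ab)$ pulls back to a continuous function on $\cL(V,\ab)$, I would run the Stone--Weierstrass device used throughout the section (as in Lemmas \ref{lem:GenCSM} and \ref{lem:contact}): approximate $F_2(v_0,v_1)$ uniformly by sums of products $h_0(v_0)h_1(v_1)$, and $(z_0,z_1)\mapsto F_1(z_0-z_1)$ uniformly by sums of products $G_0(z_0)G_1(z_1)$, so that $\varphi_{F_1,F_2}\co\cE$ becomes a uniform limit of products $\varphi_{G_0,h_0}\cdot\varphi_{G_1,h_1}$ of functions continuous by the definition of $\cL(V,\ab)$. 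For (b), if $\cE(f_1)=\cE(f_2)$, then continuity of $\tilde\pi'$ forces $f_1,f_2$ to lie over the same base point $w$, and equality of the difference functions says that $h:=f_1-f_2$ satisfies $h(v_0)=h(v_1)$ for $(\mu_w\times\mu_w)$-almost every $(v_0,v_1)$; the usual Fubini argument makes $h$ constant almost everywhere, so $f_1=f_2+z$ for some $z\in\ab$, the converse being immediate.

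The heart of the proof is (c). Both $S$ and $\cL(V,\ab)/\alpha$ are metrizable (by Proposition \ref{prop:CSM-top}, and since $\ab$ is compact), so it suffices to show $\bar\cE^{-1}$ is sequentially continuous; using continuity of $q$, this reduces to the following: whenever $\cE(f_n)\to\cE(f)$, one can pass to a subsequence and find shifts $z_n\in\ab$ with $f_n+z_n\to f$ in $\cL(V,\ab)$. I expect this to be the main obstacle, the difficulty being that the $f_n$ live on \emph{different} fibres $\pi^{-1}(w_n)$ (with $w_n\to w$), so that the fixed-fibre criterion of Lemma \ref{lem:Ltop-gen-restrict} does not apply directly. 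My plan to overcome this is to pass to Young (graph) measures: let $\lambda_n$ be the pushforward of $\mu_{w_n}$ under $v\mapsto(v,f_n(v))$, a Borel probability measure on the compact space $V\times\ab$; by weak-$*$ compactness, pass to a subsequence with $\lambda_n\to\lambda$. The key point is that weak-$*$ convergence of such graph measures is, via the same Stone--Weierstrass identification, exactly convergence in $\cL(V,\ab)$ once the limit is itself a graph; so it remains to show that $\lambda$ is the graph of some $g$ with $\cE(g)=\cE(f)$.

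To identify $\lambda$, I would push $\lambda_n\times\lambda_n$ forward under $(v_0,a_0,v_1,a_1)\mapsto(v_0,v_1,a_0-a_1)$; this yields precisely the graph measure of $\cE(f_n)$ on $(V\times_W V)\times\ab$, which converges (since $\cE(f_n)\to\cE(f)$) to the graph measure of $\cE(f)$, a measure whose $\ab$-coordinate is \emph{deterministic}. Disintegrating $\lambda=\int\delta_v\otimes p_v\,\ud\mu_w(v)$, the limit forces the convolutions $p_{v_0}*\check p_{v_1}$ to be Dirac masses for almost every pair $(v_0,v_1)$; since a convolution of probability measures on a compact group is a Dirac mass only if both factors are, each $p_v$ equals a Dirac mass $\delta_{g(v)}$, i.e.\ $\lambda$ is the graph of a function $g$. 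The deterministic relation then reads $g(v_0)-g(v_1)=\cE(f)(v_0,v_1)$ almost everywhere, so $\cE(g)=\cE(f)$ and hence $g=f+z_*$ by part (b). Therefore $f_n\to f+z_*$ in $\cL(V,\ab)$, so the orbits of $f_n$ converge to the orbit of $f$; as this holds along every subsequence, $\bar\cE^{-1}$ is continuous, (iv') follows, and the proof is complete.
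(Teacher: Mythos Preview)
Your argument is correct and your parts (a), (b) match the paper's, but your treatment of condition (iv) is genuinely different. The paper also reduces to the subsequence claim (if $\cE(f_n)\to\cE(f)$ then some shifted subsequence $f_m+z_m$ converges), but it proves this character by character: for each $\chi\in\widehat{\ab}$ it uses Lusin's theorem and an $L^2$ estimate to show that $(\chi\co f_n)\overline{\chi_\delta}$ is $\epsilon$-close to a constant $\lambda\in\cC$ for large $n$, then diagonalizes over the countable dual group to get a subsequence with $\chi\co f_m\to\lambda_\chi\,(\chi\co f)$ for every $\chi$, and finally invokes Pontryagin duality to realize $\chi\mapsto\lambda_\chi$ as evaluation at some $t\in\ab$, whence $f_m\to f+t$ via Stone--Weierstrass. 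Your Young-measure route trades this explicit Fourier bookkeeping for a single compactness step (weak-$*$ compactness of probabilities on $V\times\ab$) plus one clean structural fact (a convolution of probabilities on a compact abelian group is a Dirac mass only if both factors are); the Fourier content is hidden inside that lemma. Your approach is more conceptual and arguably shorter, and it makes transparent why the limit graph measure must be a graph; the paper's approach is more hands-on and gives, as a byproduct, quantitative $L^2$ closeness at the level of each character.
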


\begin{proof} Recall that the topology on $\cL(V\times_W V,\ab)$ is generated by functions $\varphi_{F_1,F_2}$ sending $f:V\times_W V\to \ab$ with $\tilde\pi(f)=w$ to $\int_{{\pi'}^{-1}(w)} F_1(f(v_0,v_1))\,F_2(v_0,v_1) \ud\mu_w(v_0,v_1)$. To see that $\cE$ is continuous, fix any such function and suppose that $g_n\to g$ in $\cL(V,\ab)$. Then, approximating the continuous functions $(z_0,z_1)\mapsto F_1(z_0-z_1)$ on $\ab^2$ and $F_2$ on ${\pi'}^{-1}(w)=\pi^{-1}(w)\times \pi^{-1}(w)$ by combinations of continuous functions $h_0\,h_1$ as in previous proofs, we approximate $\varphi_{F_1,F_2}(\cE(g_n))$ by finite combinations of products $\varphi_0(g_n)\varphi_1(g_n)$, where $\varphi_0,\varphi_1$ are functions among those generating the $\cL(V,\ab)$ topology. By assumption, each of these products satisfies $\varphi_0(g_n)\varphi_1(g_n)\to \varphi_0(g)\varphi_1(g)$, and it follows that $\varphi_{F_1,F_2}(\cE(g_n))\to \varphi_{F_1,F_2}(\cE(g))$. This proves the continuity of $\cE$.\\
\indent To ensure that we have a continuous $\ab$-bundle, it only remains to check the `if' part of condition (iv) in Definition \ref{def:CpctAbBund}. Equivalently, it now suffices to prove that if $\cE^{-1}(U)$ is closed then $U$ is closed. For that, we claim it is enough to show that if for a sequence of functions $f_n\in\cL(V,\ab)$ the images $\cE(f_n)$ converge, then there is a convergent subsequence  $(f_m)$ of $(f_n)$. Indeed, suppose that this holds and suppose for a contradiction that $U$ is not closed. Then (using the surjectivity of $\cE$)  there is a sequence of functions $\cE(f_n)\in U$ converging to a function $\cE(f)\not\in U$. But then letting $f'$ be the limit of the guaranteed subsequence $(f_m)$, the closure of $\cE^{-1}(U)$ implies that $f'\in \cE^{-1}(U)$, and so by continuity of $\cE$ we have $U\ni \cE(f')=\lim_{m\to \infty} \cE(f_m) =\cE(f) \not\in U$, a contradiction. This proves our claim.\\
\indent So let us suppose that $\cE(f_n)$ converges to $\cE(f)$ for some $f\in\cL(V,\ab)$. Let $w_n=\tilde\pi(f_n)$ and $w=\tilde\pi(f)$, and note that $w_n\to w$ in $W$ by continuity of $\tilde{\pi}$.\\
\indent Let $\cC$ denote the circle group, viewed as the unit circle in $\C$. We shall prove the following fact:
\begin{eqnarray}\label{eq:cE-keyfact}
&& \textrm{There exists a subsequence }(f_m)\textrm{ and some }t\in \ab\textrm{ such that, for every character }\\
&& \chi:\ab\to \cC\textrm{ in }\widehat{\ab},\textrm{ we have }\chi\co f_m\to \chi\co\,(f+t)\textrm{ in }\cL(V,\cC)\textrm{ as }m\to \infty. \nonumber 
\end{eqnarray}
We prove this below, but before that let us show how it implies that $f_m$ converges to $f+t$ in $\cL(V,\ab)$.\\
\indent Let $\varphi_{F_1,F_2}$ be any of the functions generating the $\cL(V,\ab)$ topology, and note that by the Stone-Weierstrass theorem for every $\epsilon>0$ there exists a trigonometric polynomial $F_1'=\sum_{j\in [K]} \lambda_j \chi_j$ on $\ab$ such that $\|F_1-F_1'\|_\infty \leq\epsilon$. Therefore it suffices to show that $\varphi_{F_1',F_2}(f_m)\to \varphi_{F_1',F_2}(f+t)$ for any such polynomial $F_1'$. We have
\[
\varphi_{F_1',F_2}(f_m)=\sum_{j\in [K]} \lambda_j \int_{\pi^{-1}(w_m)} (\chi_j\co f_m)\, F_2 \;\ud\mu_{w_m}=\sum_{j\in [K]} \lambda_j\, \varphi_{1,F_2}(\chi_j\co f_m).
\]
Since $\varphi_{1,F_2}$ is one of the functions generating the topology on $\cL(V,\cC)$, we have $\varphi_{1,F_2}(\chi_j\co f_m)\to \varphi_{1,F_2}\big(\chi_j\co\,(f+t)\big)$ by assumption, and the desired convergence follows.\\
\indent We now prove \eqref{eq:cE-keyfact}. Given $\chi\in\widehat{\ab}$, the function $\chi\circ f$ is in $L^2(\mu_w)$. Continuous functions are dense in this space, and any such function can be extended to a continuous function on $V$ (by the Tietze extension theorem). Hence, for every $\delta>0$ there exists a continuous function $\chi_\delta:V\to\cC$ whose restriction to $\pi^{-1}(w)$ satisfies $\|\chi_\delta-\chi\co f\|_{L^2(\mu_w)}\leq \delta$. We claim that for every $\chi\in\widehat{\ab}$ and $\epsilon>0$, there exist  $\delta>0$, $N\in\N$, and $\lambda\in \cC$, such that for all $n>N$ we have $\|(\chi\co f_n)\,\overline{\chi_\delta}-\lambda\|_{L^2(\mu_{w_n})} \leq \epsilon$. 

To prove this claim, let $\cE':\cL(V,\cC)\to\cL(V\times_W V,\cC)$ be defined by $\cE'(g):(v_0,v_1)\mapsto g(v_0)\overline{g(v_1)}$. We have that  $\cE'$ is continuous, by an argument similar to the one used above for $\cE$. It is also clear that for any $g_1,g_2\in \cL(V,\cC)$ lying in the same fibre of $\tilde\pi$ we have $\cE'(g_1g_2)=\cE'(g_1)\cE'(g_2)$, and that $\cE'(\chi\circ g)=\chi\circ\cE(g)$ for every $\chi\in\widehat{\ab}$, $g\in \cL(V,\ab)$. Now, from the definition of $\chi_\delta$, the continuity of $\cE'$, and Lemma \ref{lem:Ltop-gen-restrict}, it follows that for $\delta$ sufficiently small we have $\big\| \cE'\big((\chi\circ f)\,\overline{\chi_\delta}\big)-1\big\|_{L^2(\mu_w\times\mu_w)}\leq \epsilon/2$. On the other hand, we have $\cE'\big((\chi\circ f_n)\overline{\chi_\delta}\big)=\cE'(\chi\circ f_n)\,\cE'(\overline{\chi_\delta})=\chi\co \cE(f_n)\,\cE'(\overline{\chi_\delta})$ and similarly for $f$. This together with the assumption $\cE(f_n)\to\cE(f)$ implies, by an argument similar to \eqref{eq:L2-arg}, that for $n$ sufficiently large we have $\big\| \cE'\big((\chi\circ f_n)\,\overline{\chi_\delta}\big)-1\big\|_{L^2(\mu_{w_n}\times \mu_{w_n})}\leq \epsilon$. Taking a minimum over $v_1$ in this integral, we deduce that there exists $v_1\in \pi^{-1}(w_n)$ such that, for $\lambda= (\chi\circ f_n)(v_1)\overline{\chi_\delta}(v_1)$, we have indeed $\|(\chi\circ f_n)\overline{\chi_\delta}-\lambda\|_{L^2(\mu_{w_n})}\leq \epsilon$.

Since there are at most countably many elements in $\widehat{\ab}$, it follows by a standard diagonalization argument (using the above claim with smaller and smaller $\epsilon$ for each $\chi$) that for some sequence $(m_n)_{n\in \N}$ of positive integers, for every $\chi\in\widehat{\ab}$ there is a constant $\lambda_\chi\in\cC$ such that $\chi\co f_{m_n}\to \lambda_\chi (\chi\co f)$ in $\cL(V,\cC)$ as $n\to\infty$. A straightforward calculation shows that the function $\chi\mapsto \lambda_\chi$ is a homomorphism, hence a character on $\widehat{\ab}$, which by duality must be of the form $\chi\mapsto \chi(t)$ for some $t\in \ab$. This confirms \eqref{eq:cE-keyfact} and completes the proof.
\end{proof}
We close this section with two technical lemmas that we shall use several times.
\begin{lemma}\label{lem:csmtech} Let $\{\mu_w:w\in W\}$ be a CSM on $\pi:V\to W$. Let $K$ be a compact space with a Borel measure $\nu$. Suppose that $f_1:V \to K$ is continuous and that, for every $w\in W$, the restriction of $f_1$ to  $\pi^{-1}(w)$ satisfies $\mu_w\co f_1^{-1}=\nu$. Let $Z$ be a compact space and let $f_2:K\to Z$ be a Borel function. Let $h: W\to\cL(V,Z)$ be the map $w\;\mapsto \;f_2\co f_1|_{\pi^{-1}(w)}$. Then $h$ is continuous. 
\end{lemma}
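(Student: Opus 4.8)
The plan is to exploit the fact that the topology on $\cL(V,Z)$ is, by definition, the coarsest one making all the maps $\varphi_{G_1,G_2}$ continuous, where $G_1:Z\to\C$ and $G_2:V\to\C$ range over continuous functions. By the universal property of this initial topology, $h$ is continuous if and only if $\varphi_{G_1,G_2}\co h:W\to\C$ is continuous for every such pair $G_1,G_2$. Since for each $w$ the function $h(w)=f_2\co f_1|_{\pi^{-1}(w)}$ is defined on the fibre $\pi^{-1}(w)$, we have $\tilde\pi(h(w))=w$, so that
\[
\varphi_{G_1,G_2}\big(h(w)\big)=\int_{\pi^{-1}(w)} G_1\big(f_2(f_1(v))\big)\,G_2(v)\,\ud\mu_w(v),
\]
and the task reduces to proving that this quantity is continuous in $w$.

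The main obstacle is that $G_1\co f_2:K\to\C$ is only Borel (as $f_2$ is only assumed Borel), so the integrand $\big(G_1\co f_2\co f_1\big)\,G_2$ is not continuous on $V$, and the CSM continuity condition \eqref{contprop} of Definition \ref{def:CSM} cannot be applied directly. The device that circumvents this is precisely the hypothesis $\mu_w\co f_1^{-1}=\nu$ for \emph{every} $w$, which makes a single $L^1$-approximation effective simultaneously across all fibres. Concretely, I would first fix $\epsilon>0$ and, using that $K$ is compact metrizable (Remark \ref{rem:stfacts} (ii)) and that continuous functions are dense in $L^1(\nu)$ (via Lusin's theorem), choose a continuous $\phi:K\to\C$ with $\|G_1\co f_2-\phi\|_{L^1(\nu)}\leq\epsilon$. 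Then $\phi\co f_1:V\to\C$ is continuous, so $(\phi\co f_1)\,G_2$ is continuous on $V$, and hence the auxiliary function $w\mapsto\int_{\pi^{-1}(w)}\phi(f_1(v))\,G_2(v)\,\ud\mu_w(v)$ is continuous by condition \eqref{contprop} of the CSM.

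The final step is a uniform error estimate. Applying the pushforward identity $\mu_w\co f_1^{-1}=\nu$ as a change of variables, I would bound the difference between $\varphi_{G_1,G_2}(h(w))$ and the auxiliary function by
\[
\|G_2\|_\infty\int_{\pi^{-1}(w)}\big|G_1(f_2(f_1(v)))-\phi(f_1(v))\big|\,\ud\mu_w(v)=\|G_2\|_\infty\int_K\big|G_1\co f_2-\phi\big|\,\ud\nu\leq\epsilon\,\|G_2\|_\infty,
\]
a bound that is independent of $w$. Thus $\varphi_{G_1,G_2}\co h$ is a uniform limit (as $\epsilon\to0$) of continuous functions on $W$, hence continuous; since the pair $G_1,G_2$ was arbitrary, $h$ is continuous. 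I expect the only genuinely subtle point to be the recognition that the fibrewise-invariant law $\mu_w\co f_1^{-1}=\nu$ is exactly what upgrades a per-fibre $L^1$-approximation into one that is uniform in $w$; granted this, the remaining steps are routine applications of Lusin's theorem and the defining continuity property of the CSM.
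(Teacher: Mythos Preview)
Your proposal is correct and follows essentially the same approach as the paper's proof: reduce to continuity of each $\varphi_{G_1,G_2}\co h$, use Lusin's theorem to approximate $G_1\co f_2$ in $L^1(\nu)$ by a continuous function on $K$, and then exploit the hypothesis $\mu_w\co f_1^{-1}=\nu$ to turn this into a uniform-in-$w$ approximation, so that $\varphi_{G_1,G_2}\co h$ is a uniform limit of continuous functions. The paper's argument is identical in structure and detail.
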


\begin{proof} By definition of the topology on $\cL(V,Z)$ it suffices to show that for every $\varphi_{F_1,F_2}$ the function $\varphi_{F_1,F_2}\circ h:W\to \C$ is continuous. So let $F_1:Z\to\mathbb{C}$ and $F_2:V \to \C$ be continuous and let $q$ denote $\varphi_{F_1,F_2}\circ h$, that is $q: \; w\;\mapsto\;\int_{\pi^{-1}(w)}\;F_1(f_2\circ f_1(v))\;F_2(v)~\ud\mu_w(v)$. To prove that $q$ is continuous we show that it is a uniform limit of continuous functions.

Fix an arbitrary $\epsilon>0$. By Lusin's theorem there exists a continuous function $F_3:K\to \C$ such that $\|F_1\co f_2-F_3\|_{L^1(\nu)}\leq\epsilon$. Letting $q': w\mapsto\int_{\pi^{-1}(w)}\;F_3(f_1(v))\;F_2(v)~\ud\mu_w(v)$, the triangle inequality and the assumption that $f_1$ is measure-preserving imply that $\sup_{w\in W}|q(w)-q'(w)|\leq \epsilon \|F_2\|_\infty$. The function $q'$ is continuous by condition \eqref{contprop} from Definition \ref{def:CSM}. The result follows.
\end{proof}

\begin{lemma}\label{lem:CSMlincombin}
Let $\{\mu_w:w\in W\}$ be a CSM on $\pi:V\to W$, let $\ab$ be a compact abelian group, let $P$ be a finite set, and for each $r\in P$ let $\lambda_r$ be an integer. Let $\Sigma_P:\cL(V,\ab^P)\to \cL(V,\ab)$, $f\mapsto \sum_{r\in P} \lambda_r\, \pi_r\co f$, where $\pi_r$ is the projection to the $r$-component on $\ab^P$. Then $\Sigma_P$ is continuous.
\end{lemma}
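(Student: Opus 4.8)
The plan is to exploit directly the definition of the topology on the target space $\cL(V,\ab)$ as the initial (coarsest) topology making all the functions $\varphi_{F_1,F_2}$ of \eqref{eq:initopfns} continuous. By the universal property of such an initial topology, a map into $\cL(V,\ab)$ is continuous if and only if its composition with each generating function $\varphi_{F_1,F_2}$ is continuous. Thus it suffices to fix arbitrary continuous $F_1:\ab\to\C$ and $F_2:V\to\C$ and to show that $\varphi_{F_1,F_2}\co\Sigma_P:\cL(V,\ab^P)\to\C$ is continuous.

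First I would record that $\Sigma_P$ respects fibres and is well defined: for $f\in L\big(\pi^{-1}(w),\ab^P\big)$ the image $\Sigma_P(f)$ is a Borel function on the same fibre $\pi^{-1}(w)$, so $\tilde\pi(\Sigma_P(f))=\tilde\pi(f)=w$, and altering $f$ on a $\mu_w$-null set alters $\Sigma_P(f)$ only on a null set. Writing out the composite for such an $f$ gives
\[
\varphi_{F_1,F_2}(\Sigma_P(f))=\int_{\pi^{-1}(w)}F_1\Big(\textstyle\sum_{r\in P}\lambda_r\,\pi_r(f(v))\Big)\,F_2(v)\;\ud\mu_w(v).
\]

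The key observation is then that the map $\ab^P\to\ab$, $z\mapsto\sum_{r\in P}\lambda_r z_r$, is continuous: it is built from the group operation and integer multiples, both of which are continuous because $\ab$ is a topological group. Hence $G_1:=F_1\co\big(z\mapsto\sum_{r\in P}\lambda_r z_r\big)$ is a continuous function $\ab^P\to\C$. With this definition the displayed integral is exactly $\varphi_{G_1,F_2}(f)$, i.e. $\varphi_{F_1,F_2}\co\Sigma_P=\varphi_{G_1,F_2}$, which is one of the functions generating the topology on $\cL(V,\ab^P)$ and is therefore continuous by Definition \ref{def:extspace}. Since this holds for every $F_1,F_2$, the map $\Sigma_P$ is continuous.

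I do not expect any real obstacle here; the argument follows the same ``recognize the composite as a generating function'' pattern used in the proof of Lemma \ref{lem:contact}, and indeed it is even simpler, since no Stone--Weierstrass approximation is needed. The only point requiring care is the verification that $G_1$ is continuous, which relies precisely on the topological-group hypothesis on $\ab$ (so that taking integer linear combinations of coordinates is a continuous operation on $\ab^P$); everything else is routine bookkeeping about fibres and null sets.
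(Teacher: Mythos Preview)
Your proof is correct and matches the paper's argument essentially verbatim: the paper also reduces to showing $\varphi_{F_1,F_2}\co\Sigma_P$ is continuous and observes that $F_1\co\Sigma_P(f)=F_1'\co f$ for the continuous function $F_1':\ab^P\to\C$, $(z_r)_{r\in P}\mapsto F_1\big(\sum_r\lambda_r z_r\big)$ (your $G_1$). The only cosmetic difference is that the paper phrases the conclusion via sequences rather than invoking the universal property of the initial topology.
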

\begin{proof}
For any of the functions $\varphi_{F_1,F_2}$ generating the topology on $\cL(V,\ab)$, letting $w=\tilde\pi(f)$, we have
$\varphi_{F_1,F_2}(\Sigma_P(f))=\int_{\pi^{-1}(w)} F_1\co\Sigma_P(f)\, F_2\, \ud\mu_w$. But $F_1\co \Sigma_P(f)=F_1'\co f$, where $F_1':\ab^P\to \C$, $(z_r)_{r\in P}\mapsto F_1\big(\sum_r \lambda_r z_r\big)$ is continuous. Hence, by definition of $\cL(V,\ab^P)$, if $f_n\to f$ in this space then $\varphi_{F_1,F_2}( \Sigma_P(f_n))\to\varphi_{F_1,F_2}( \Sigma_P(f))$, and since this holds for every $\varphi_{F_1,F_2}$, we have $\Sigma_P(f_n)\to \Sigma_P(f)$ in $\cL(V,\ab)$.
\end{proof}

\medskip
\section{Borel measurable cocycles generate compact extensions}

\medskip
Let $\ns$ be a compact nilspace and let $\ab$ be a compact abelian group. The purpose of this section is to show that any measurable cocycle $\rho:\cu^k(\ns)\to\ab$ yields a compact nilspace that is a $\ab$-bundle over $\ns$.

Recall that for each $x\in \ns$ we denote by $\cu^k_x(\ns)$ the set of $k$-cubes $\q$ satisfying $\pi(\q):=\q(0^k)=x$. By Lemma \ref{lem:cube-set-CSM} each space $\cu^k_x(\ns)$ has a Haar measure, denoted $\mu_x$, and the family $\{\mu_x: x\in \ns\}$ is a CSM on $\pi$ of strictly positive measures.
\begin{defn}
We denote by $\cL_k(\ns,\ab)$ the space $\cL(V,\ab)$ with $V=\cu^n(\ns)$, $W=\ns$, and $\pi:\q\mapsto \q(0^k)$. 
\end{defn}
\noindent Thus $\cL_k(\ns,\ab)=\bigcup_{x\in \ns} L\big(\cu_x^k(\ns),\ab\big)$, its topology is second-countable regular Hausdorff, and we have a projection $\tilde\pi:\mathcal{L}_k(\ns,\ab)\to \ns$ defined by $\tilde\pi(f)=x$ for $f\in L(\cu_x^k(\ns),\ab)$. For such a function $f$, we denote by $f+\ab$ the set of functions $\{f+z: z\in \ab\}$.

The desired nilspace extending $\ns$ is found inside $\cL_k(\ns,\ab)$ by taking, for each $x\in \ns$, the orbit of the restricted cocycle $\rho_x=\rho|_{\cu_x^k(\ns)}$.

\begin{proposition}\label{prop:cocyclext}
Let $\rho:\cu^k(\ns)\to \ab$ be a Borel measurable cocycle and let  $M=\bigcup_{x\in \ns} \,(\rho_x+\ab)$. Then, as a subspace of $\cL_k(\ns,\ab)$ with the inherited $\ab$-action, the space $M$ is a compact $\ab$-bundle over $\ns$.
\end{proposition}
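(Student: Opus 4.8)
The plan is to realize $M$ as an $\ab$-invariant subspace of $\cL_k(\ns,\ab)$ and to transfer to it the continuous $\ab$-bundle structure provided by Proposition~\ref{prop:CSM-dif-bund}, the only substantial point being compactness. First I would record the soft part. By Lemma~\ref{lem:contact} the group $\ab$ acts continuously and freely on $\cL_k(\ns,\ab)$, and this action restricts to $M$ because each fibre $\tilde\pi^{-1}(x)\cap M$ is exactly the single orbit $\rho_x+\ab$. Thus $M$ is, set-theoretically, an $\ab$-bundle over $\ns$ with projection $\tilde\pi|_M$, and conditions (i)--(iii) of Definition~\ref{def:CpctAbBund} hold immediately; it remains to prove that $M$ is compact and to verify condition (iv').

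Next I would bring in the difference map $\cE:\cL_k(\ns,\ab)\to\cL(\cu^k(\ns)\times_\ns\cu^k(\ns),\ab)$. Since $\cE(f+z)=\cE(f)$, the map $\cE$ is constant on $\ab$-orbits, so $\cE(M)=\{\cE(\rho_x):x\in\ns\}$ and $\cE^{-1}(\cE(\rho_x))=\rho_x+\ab$; hence $M=\cE^{-1}(\cE(M))$ is $\ab$-saturated. By Proposition~\ref{prop:CSM-dif-bund} the map $\cE$ exhibits $\cL_k(\ns,\ab)$ as a continuous $\ab$-bundle over $\cE(\cL_k(\ns,\ab))$, and restricting this bundle over the subset $\cE(M)$ presents $M$ as a continuous $\ab$-bundle over $\cE(M)$. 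Granting for the moment that $\cE(M)$ is compact, compactness of $M$ follows: since $\ab$ is compact the orbit map $\cE$ is closed (Remark~\ref{rem:open-and-closed}), so its restriction $\cE|_M:M\to\cE(M)$ to the saturated set $M$ is again a closed continuous surjection, with every fibre homeomorphic to the compact group $\ab$; being a perfect map onto a compact space it has compact domain, so $M=\cE^{-1}(\cE(M))$ is compact. Condition (iv') then follows formally: $\tilde\pi|_M$ descends to a continuous bijection $M/\alpha\to\ns$, which is a homeomorphism because $M$ is compact and $\ns$ is Hausdorff. As $\ns$ and $\ab$ are compact by hypothesis, this shows that $M$ is a compact $\ab$-bundle over $\ns$.

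Everything therefore reduces to the compactness of $\cE(M)$, equivalently to the continuity of the assignment $x\mapsto\cE(\rho_x)$ from $\ns$ to $\cL(\cu^k(\ns)\times_\ns\cu^k(\ns),\ab)$, and this is the step I expect to be the main obstacle. The difficulty is that $\rho$ is only assumed Borel, so the naive assignment $x\mapsto\rho_x$ into $\cL_k(\ns,\ab)$ is generally discontinuous; what must be exploited is that $\cE(\rho_x)$ depends on $\rho$ only through the differences $(\q_0,\q_1)\mapsto\rho(\q_0)-\rho(\q_1)$, and that the cocycle identity regularizes these differences. Concretely, I would aim to express the difference as $\rho\co c$ for a continuous concatenation map $c$ (built from the tricube constructions, e.g. via Lemma~\ref{lem:key-tricube-mp} and Corollary~\ref{cor:ext-in-Tn}) whose fibrewise push-forward of the Haar measures is a single fixed measure, and then invoke Lemma~\ref{lem:csmtech} (with this continuous, measure-preserving $c$ as $f_1$ and the Borel cocycle $\rho$ as $f_2$) to conclude that each generator $\varphi_{F_1,F_2}(\cE(\rho_x))=\int F_1(\rho(\q_0)-\rho(\q_1))\,F_2\,\ud(\mu_x\times\mu_x)$ varies continuously in $x$. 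Arranging the concatenation so that its push-forward is genuinely independent of the base point $x$ — so that the Borel wildness of $\rho$ is absorbed into $f_2$ while Lemma~\ref{lem:csmtech} applies with a fixed target measure — is the delicate heart of the argument; if a direct concatenation proves awkward, the convergence lemmas (Lemmas~\ref{lem:Ltop-gen-restrict}, \ref{lem:prodCSMconv} and \ref{lem:constconv}) together with the characterization obtained in the proof of Proposition~\ref{prop:CSM-dif-bund} give an alternative, sequential route to the same continuity.
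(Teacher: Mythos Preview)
Your proposal is correct and follows the paper's route closely: reduce to compactness of $\cE(M)$ via properness of $\cE$ (Proposition~\ref{prop:CSM-dif-bund}), then prove continuity of $x\mapsto\cE(\rho_x)$ by passing to tricubes and invoking Lemma~\ref{lem:csmtech}. One sharpening of your sketch: the cocycle identity (\cite[Lemma~3.3.31]{Cand:Notes1}) does not give the difference as a single $\rho\co c$ but as the alternating sum $\sum_{v\neq 0^k}(-1)^{|v|}\rho(t\co\Psi_v)$ over the nontrivial subcubes of $T_k$, each term of which has $x$-independent push-forward to $\cu^k(\ns)$ by Lemma~\ref{lem:key-tricube-mp}; the paper then assembles these via Lemmas~\ref{lem:prodCSMconv} and~\ref{lem:CSMlincombin}, and transfers back from $\cL(Q,\ab)$ to $\cL(Y,\ab)$ using that $t\mapsto(t\co\omega_k,\,t\co\Psi_{0^k})$ is measure-preserving $Q_x\to Y_x$ (Corollary~\ref{cor:ext-in-Tn}).
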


\begin{proof}
Let $Y=\cu^k(\ns)\times_{\ns} \cu^k(\ns)$, that is (recalling the notation from Proposition \ref{prop:CSM-dif-bund}),
\[
Y\,=\,\big\{ (\q_0,\q_1):\,\q_0,\q_1 \in \cu^k(\ns),\, \q_0(0^k)=\q_1(0^k)\big\}\,=\,\bigcup_{x\in \ns} \cu^k_x(\ns)\times \cu^k_x(\ns).
\]
Recall that $\cE:\cL(\cu^k(\ns),\ab)\to \cL(Y,\ab)$ is defined by $\cE(f)\;:\; (\q_0,\q_1)\mapsto f(\q_0)-f(\q_1)$. We now compose the function $x\mapsto \rho_x$ with $\cE$, obtaining the function
\begin{eqnarray}
g':\;\ns & \to & L\big(\,\cu^k_x(\ns)\times \cu^k_x(\ns)\,,\,\ab\,\big)\subset \cL(Y,\ab)\\
 x & \mapsto & \cE(\rho_x).\nonumber
\end{eqnarray} 
Note that $g'$ is injective, and that its inverse is the projection $\tilde\pi$ on $\cL(Y,\ab)$, a continuous map. We have
\begin{equation}\label{eq:MoverX}
M=\cE^{-1}(g'(\ns)).
\end{equation}
Indeed, if $f\in \cL(\cu^k(\ns),\ab)$ satisfies $\cE(f)=\cE(\rho_x)$, then for every $(\q_0,\q_1)\in \cu^k_x(\ns)^2\subset Y$ we have $f(\q_0)-\rho_x(\q_0)= f(\q_1)-\rho_x(\q_1)$, so $f-\rho_x$ is a constant $\ab$-valued function on $\cu^k_x(\ns)$.

Now, by Proposition \ref{prop:CSM-dif-bund} the preimage under $\cE$ of every point is a compact set (homeomorphic to $\ab$), and $\cE$ is also a closed map (using Remark \ref{rem:open-and-closed}). Hence $\cE$ is a proper map (see \cite[\S 10.2, Theorem 1]{Bourb1}). In particular, the preimage under $\cE$ of any compact set is compact (\cite[\S 10.2, Proposition 6]{Bourb1}). Therefore, if we show that $g'$ is continuous, then $g'(\ns)$ is compact and then $M$ must be compact, by \eqref{eq:MoverX}. (Note that $M$ is then also a continuous $\ab$-bundle over $\ns$, since by \eqref{eq:MoverX} it is such a bundle over $g'(\ns)$, and $g'$ is a homeomorphism $\ns\to g'(\ns)$.)

To prove the continuity of $g'$, the key idea is to express this function in terms of compositions of $\rho$ with certain tricube morphisms $T_k\to \ns$ (recall \cite[Definition 3.1.13]{Cand:Notes1}), and thereby to use these morphisms in a smoothening operation that turns the measurability of $\rho$ into the continuity of $g'$.\\
\indent Let $Q=\hom(T_k,\ns)$ and $Q_x=\hom_{1^k\to x}(T_k,\ns)$. For each $v\in \{0,1\}^k\setminus\{0^k\}$, let
\[
g_v:\ns\to\cL(Q,\ab),\;\; x\;\mapsto \;\big(g_v(x):t\mapsto \rho( t\co\Psi_v)\,\in L(Q_x,\ab)\big).
\]
We claim that $g_v$ is continuous. Indeed, by Lemma \ref{lem:key-tricube-mp} if $v\neq 0^k$ then $t\mapsto t\circ\Psi_v$ is measure-preserving from $Q_x$ to $\cu^k(\ns)$, so we can deduce the continuity of $g_v$ by applying Lemma \ref{lem:csmtech} (setting $V=Q$, $K=\cu^k(\ns)$, $W=\ns$, $f_2=\rho$, $h=g_v$).\\
\indent Now consider the following function:
\[
g\;:\; \ns \to \cL(Q,\ab),\quad x\;\mapsto\; \sum_{v\in\{0,1\}^k\setminus \{0^k\}}(-1)^{|v|}\,g_v.
\]
The continuity of each function $g_v$ implies that $g$ is continuous. Indeed, letting $P=\{0,1\}^k\setminus \{0^k\}$, by Lemma \ref{lem:prodCSMconv} the function $\ns \to \cL(Q,\ab^P)$, $x\,\mapsto\, \prod_{v\in P}g_v(x)$ is continuous, and then Lemma \ref{lem:CSMlincombin} gives the continuity of $g$. Now, by \cite[Lemma 3.3.31]{Cand:Notes1}, for each $x\in \ns$ we have that $g(x)$ is the function $t\mapsto \rho_x(t\circ\omega_k)-\rho_x(t\circ\Psi_{0^k})$ (defined on $Q_x$). In particular, given any $t\in Q_x$, we have that  $\q_0:=t\co\omega_k$ and $\q_1:=t\co\Psi_{0^k}$ are both cubes in $\cu^k_x(\ns)$ ($\q_0$ is a cube by \cite[Lemma 3.1.16]{Cand:Notes1}), and $g(x)(t)=\rho_x(\q_0)-\rho_x(\q_1) = g'(x)(\q_0,\q_1)$. Given this equality, we now claim that continuity of $g'$ follows from continuity of $g$. Indeed, we know that $g'$ is continuous if for any of the functions $\varphi_{F_1,F_2}$ generating the topology on $\cL(Y,\ab)$ we have that $\ns \to \C$, $x\mapsto \varphi_{F_1,F_2}(g'(x))$ is continuous. Letting $Y_x=\cu^k_x(\ns)\times \cu^k_x(\ns)$, we have 
\[
\varphi_{F_1,F_2}(g'(x)) = \int_{Y_x} F_1(g'(x)(\q_0,\q_1))\; F_2((\q_0,\q_1)) \;\ud\mu((\q_0,\q_1) ),
\]
where $F_1:\ab\to \C$ and $F_2:Y_x\to \C$ are both continuous.
Now let $\theta : Q_x \to Y_x $, $t\mapsto (t\co \omega_k, t\co\Psi_{0^k})$. If we show that this map is surjective and measure-preserving, then since $g'(x)\co\theta(t)=g(x)(t)$, we would conclude that
\[
\varphi_{F_1,F_2}(g'(x)) = \int_{Q_x} F_1(g(x)(t))\; F_2(\theta(t))\; \ud\mu(t)
\]
and continuity of $g'$ would then indeed follow from that of $g$ (via approximating the measurable function $F_2\co \theta$ by continuous functions as in previous proofs).\\
\indent The surjectivity of $\theta$ follows from combining Lemma  \ref{lem:Gpair-in-Tn} and \ref{lem:Gpair-union-ext}. (To apply the latter result recall that we are implicitly embedding $T_k$ as a subcubespace in $\{0,1\}^{2k}$, using \cite[Lemma 3.1.17]{Cand:Notes1}.)\\
\indent To see that $\theta$ is measure-preserving, first note that by Lemma \ref{lem:GoodPairHoms} there is a CSM on the map $Q_x\to \cu^k_x(\ns),\; t\mapsto t\circ \omega_k$ consisting of the Haar probabilities on the fibres, giving a disintegration of the Haar probability on $Q_x$. On each of these fibres, of the form $\hom_{\q\co \omega_k^{-1}}(T_k,\ns)$, $\q\in \cu^k_x(\ns)$, it follows from Corollary \ref{cor:ext-in-Tn} that the map $\hom_{\q\co \omega_k^{-1}}(T_k,\ns) \to \cu^k_x(\ns)$, $t\mapsto t\co\Psi_{0^k}$  preserves the Haar measures. Using this, a straightforward calculation shows that for product sets $A_1\times A_2\subset \cu^k_x(\ns)\times \cu^k_x(\ns)$ we have $\mu_{Q_x}\co\theta^{-1}(A_1\times A_2)=\mu_{\cu^k_x(\ns)\times \cu^k_x(\ns)}(A_1\times A_2)$, and it follows that $\theta$ is measure-preserving as claimed. 
\end{proof}
\noindent Let us now recall the definition of cubes on $M$ from \cite[Definition 3.3.25]{Cand:Notes1}. Note that, by definition of $M$, for every function $f:\{0,1\}^k\to M$ there is some function $a=a_f:\{0,1\}^k\to \ab$ such that $\rho_x(v)=f(v)+a(v)$, where $x=\tilde\pi(f(v))\in \ns$.
\begin{defn}\label{def:compextcubes}
We define $\cu^k(M)$ to be the set of functions $f:\{0,1\}^k\to M$ such that $\tilde\pi\co f\in \cu^k(\ns)$ and
\begin{equation}\label{eq:compextcubes}
\rho(\tilde\pi\co f) = \sigma_k(a).
\end{equation}
For $n\neq k$, a function $f:\{0,1\}^n\to M$ is declared to be in $\cu^n(M)$ if $\tilde\pi\co f\in \cu^n(\ns)$ and every $k$-dimensional face-restriction of $f$ is in $\cu^k(M)$.
\end{defn}

\indent We can now complete the main task of this section.

\begin{proposition}\label{prop:comp-extcompleted}
The space $M$ together with the cube sets $\cu^n(M)$ is a compact nilspace.
\end{proposition}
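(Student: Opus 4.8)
The plan is to separate the (already established) algebraic content from the topological content, and to reduce the latter to a single closedness statement. By \cite[Proposition 3.3.26]{Cand:Notes1} the set $M$ equipped with the cube sets of Definition \ref{def:compextcubes} satisfies the nilspace axioms, and by Proposition \ref{prop:cocyclext} the space $M$ is a compact $\ab$-bundle over $\ns$, hence a compact space. Thus the only thing left to check is that each $\cu^n(M)$ is closed in $M^{\{0,1\}^n}$. Since $\tilde\pi$ is continuous and each $\cu^n(\ns)$ is closed, and since for $n\neq k$ one has $f\in\cu^n(M)$ exactly when $\tilde\pi\co f\in\cu^n(\ns)$ and every $k$-dimensional face-restriction of $f$ lies in $\cu^k(M)$, each $\cu^n(M)$ with $n\neq k$ is an intersection of a closed condition on $\tilde\pi\co f$ with preimages of $\cu^k(M)$ under the continuous face-restriction maps. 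Hence it suffices to prove that $\cu^k(M)$ is closed.

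The obstruction is that the defining relation $\rho(\tilde\pi\co f)=\sigma_k(a_f)$ involves the merely Borel cocycle $\rho$, so it is not visibly closed; the idea is to recast it, via the cocycle identity over tricubes, as a condition continuous in the topology of $\cL_k(\ns,\ab)$. Fix $f$ with $\q:=\tilde\pi\co f\in\cu^k(\ns)$, and realise $\q$ as $t\co\omega_k$ for $t$ ranging over the fibre $\hom_{\q\co\omega_k^{-1}}(T_k,\ns)$ (possible by Lemma \ref{lem:ope-to-tricube-ext}). For such $t$ the $2^k$ faces $t\co\Psi_v$ are $k$-cubes, each rooted at one vertex of $\q$; after reindexing the $\Psi_v$ so that $t\co\Psi_v$ is rooted at $\q(v)$, define $\Lambda(f)$ to be the $\ab$-valued function $t\mapsto\sum_{v}(-1)^{|v|}f(v)(t\co\Psi_v)$ on that fibre. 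Since each $f(v)$ agrees almost everywhere on $\cu^k_{\q(v)}(\ns)$ with $\rho_{\q(v)}-a_f(v)$, and since $t\mapsto t\co\Psi_v$ is measure-preserving onto $\cu^k_{\q(v)}(\ns)$, for almost every $t$ we may substitute $f(v)(t\co\Psi_v)=\rho(t\co\Psi_v)-a_f(v)$; the cocycle identity \cite[Lemma 3.3.31]{Cand:Notes1} then makes the $\rho$-terms telescope to $\rho(t\co\omega_k)=\rho(\q)$, so that $\Lambda(f)$ is almost everywhere equal to the constant $\rho(\q)-\sigma_k(a_f)$ (up to an overall sign). Consequently $f\in\cu^k(M)$ if and only if $\Lambda(f)=0$ almost everywhere.

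It remains to prove that $f\mapsto\Lambda(f)$ is continuous from $E:=\{f\in M^{\{0,1\}^k}:\tilde\pi\co f\in\cu^k(\ns)\}$ into the space $\cL(\hom(T_k,\ns),\ab)$ associated with the CSM of fibrewise Haar measures on the map $t\mapsto t\co\omega_k$ (a CSM by Lemma \ref{lem:GoodPairHoms}). This is the heart of the argument and closely parallels the continuity of $g'$ in Proposition \ref{prop:cocyclext}: one writes $\Lambda$ as an integer linear combination of the single-term maps $f\mapsto(t\mapsto f(v)(t\co\Psi_v))$, treats each of these as the maps $g_v$ were treated there by disintegrating against the measure-preserving face map $t\mapsto t\co\Psi_v$ (measure-preservation from Corollary \ref{cor:ext-in-Tn} and Lemma \ref{lem:key-tricube-mp}) and testing against the generating functions $\varphi_{F_1,F_2}$, and finally recombines the $2^k$ terms using Lemma \ref{lem:prodCSMconv} and Lemma \ref{lem:CSMlincombin}. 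The step I expect to be the genuine obstacle is exactly this continuity: here the integrand involves the varying $\cL_k$-element $f(v)$ rather than the fixed Borel function $\rho$ used in Proposition \ref{prop:cocyclext}, so the argument behind Lemma \ref{lem:csmtech} must be upgraded to let the tested function move with $f$, while the base cube $\q=\tilde\pi\co f$ and the vertex bases $\q(v)$ are tracked simultaneously.

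Granting this continuity, the proof concludes quickly. If $f_n\to f$ in $E$ with $f_n\in\cu^k(M)$, set $\q_n:=\tilde\pi\co f_n$ and $\q:=\tilde\pi\co f$; then $\q\in\cu^k(\ns)$ by closedness of $\cu^k(\ns)$, and $\Lambda(f_n)\to\Lambda(f)$, where each $\Lambda(f_n)$ is the zero function. Testing against the generating functions $\varphi_{F_1,1}$ and using that $\Lambda(f)$ is almost everywhere the constant $\rho(\q)-\sigma_k(a_f)$, we obtain $F_1(\rho(\q)-\sigma_k(a_f))=F_1(0)$ for every continuous $F_1:\ab\to\C$; since such functions separate the points of $\ab$, this forces $\rho(\q)=\sigma_k(a_f)$, i.e. $f\in\cu^k(M)$. (Alternatively one may invoke Lemma \ref{lem:constconv} to see that the limit of the constant functions $\Lambda(f_n)$ is constant, and then identify its value.) Thus $\cu^k(M)$ is closed, and by the reduction in the first paragraph every $\cu^n(M)$ is closed, so $M$ is a compact cubespace and hence a compact nilspace.
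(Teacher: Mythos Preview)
Your proposal is correct and follows essentially the same route as the paper: reduce to closedness of $\cu^k(M)$, define the map $\phi$ (your $\Lambda$) into $\cL(\hom(T_k,\ns),\ab)$, observe via \cite[Lemma 3.3.31]{Cand:Notes1} that $\phi_f$ is the constant $\rho(\tilde\pi\co f)-\sigma_k(a_f)$, and conclude by proving $\phi$ continuous. The one place where your sketch stops short of the paper is the ``upgrade'' you flag: the paper handles the continuity of each single-term map $\phi_v:f\mapsto(t\mapsto f(v)(t\co\Psi_v))$ not by adapting Lemma \ref{lem:csmtech}, but by disintegrating the Haar measure on the fibre $\pi_o^{-1}(\tilde\pi\co f)$ through the CSM on the spaces $Q_t=\{t':t'\co\Psi_v=t\co\Psi_v\}$, which rewrites $\varphi_{F_1',F_2'}(\phi_{v,f})$ exactly as $\varphi_{F_1',F_2''}(f(v))$ for a continuous $F_2''$ on $\cu^k(\ns)$; continuity of $\phi_v$ then follows directly from the definition of the topology on $M\subset\cL_k(\ns,\ab)$.
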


\begin{proof}
By \cite[Proposition 3.3.26]{Cand:Notes1} we have that $M$ is a nilspace, and by Proposition \ref{prop:cocyclext} we have that $M$ is a compact space. Arguing as in the proof of Lemma \ref{lem:compclosure}, we see that it suffices to show that $\cu^k(M)$ is a closed subset of $M^{\{0,1\}^k}$.  
We use the probability space $Q=\hom(T_k,\ns)$ again, but this time we let $\pi_o:Q\to \cu^k(\ns)$ be the map $t\mapsto t\co \omega_k$, and apply Lemma \ref{lem:GoodPairHoms} (with $P=T_k$ embedded in $\{0,1\}^{2k}$, $P_1=\emptyset$, and $P_2=\omega_k(\{0,1\}^k)$ also embedded in $\{0,1\}^{2k}$) to obtain a CSM on $\pi_o$ consisting of the Haar measures on the fibres $\pi_o^{-1}(\q)=\hom_{\q\co\omega_k^{-1}}(T_k,\ns)$. Let $\tilde\pi$ be the projection $M\to \ns$, $\rho_x+z\mapsto x$, and let
\[
R_k(M)=\{f:\{0,1\}^k\to M : \tilde\pi\co f \in \cu^k(\ns)\}.
\]
Since $\tilde\pi$ is continuous and $\cu^k(\ns)$ is closed, we have that $R_k(M)$ is closed in $M^{\{0,1\}^k}$. We define the following map:
\begin{eqnarray*}
\phi\;:\;\;R_k(M)& \quad\to\quad & L\big(\pi_o^{-1}(\tilde\pi\co f),\ab\big) \; \subset \; \mathcal{L}(Q,\ab)\\
f &\quad \mapsto\quad & \phi_f(t)=\sum_{v\in\{0,1\}^k}(-1)^{|v|}\,f(v)(t\co\trem_v).
\end{eqnarray*}
Arguing as in the proof of \cite[Lemma 3.3.32]{Cand:Notes1}, we see that $\phi_f$ is actually a constant function for every $f\in R_k(M)$, namely the constant $\rho(\tilde\pi\co f)-\sigma_k(a)\in \ab$. By \cite[(3.16)]{Cand:Notes1} we have that $\cu^k(M)=\phi^{-1}(0)$. Hence it now suffices to show that $\phi$ is continuous, as then $\cu^k(M)$  is closed as required.\\
\indent To see the continuity of $\phi$, we first express it as the composition of simpler functions. For each $v\in \{0,1\}^k$, let
\begin{eqnarray*}
\phi_v\;:\;\;R_k(M)& \quad\to\quad & L\big(\pi_o^{-1}(\tilde\pi\co f),\ab\big) \; \subset \; \mathcal{L}(Q,\ab)\\
f &\quad \mapsto\quad & \phi_{v,f}(t)= f(v)(t\co\trem_v).
\end{eqnarray*}
We claim that $\phi_v$ is continuous. To see this, we suppose that $f_n\to f$ in $R_k(M)$, which implies that the component $f_n(v)$ converges to $f(v)$ in $M$, and we want to show that then $\phi_{v,f_n}\to \phi_{v,f}$ in $\cL(Q,\ab)$. For this it suffices to show that $\varphi_{F_1',F_2'}(\phi_{v,f_n})\to \varphi_{F_1',F_2'}(\phi_{v,f})$ for any continuous functions $F_1':\ab\to \C$, $F_2':Q\to \C$, where
\[
\varphi_{F_1',F_2'}(\phi_{v,f})=\int_{\pi_o^{-1}(\tilde\pi\co f)} F_1'(\phi_{v,f}(t))\; F_2'(t) \ud\mu_{\tilde\pi \co f}(t)=\int_{\pi_o^{-1}(\tilde\pi\co f)} F_1'\big(f(v)(t\co\trem_v)\big)\; F_2'(t) \ud\mu_{\tilde\pi\co f}(t).
\]
We shall now show that, for one of the functions $\varphi_{F_1,F_2}$ generating the topology on $M$, we have $\varphi_{F_1',F_2'}(\phi_{v,f})=\varphi_{F_1,F_2}(f(v))$. Recall that for any $g\in M$ we have
\begin{equation}\label{eq:testfn1}
\varphi_{F_1,F_2}(g)=\int_{\pi^{-1}(\tilde\pi(g))} F_1(g(\q))\; F_2(\q) \ud\mu_{\tilde\pi(g)}(\q).
\end{equation}
Now for each $t\in Q$ let $Q_t$ denote the space of morphisms $t'\in Q$ such that $\trem_v\co t'=\trem_v\co t$, and let $\mu_{Q_t}$ denote the Haar measure on this space given by Corollary \ref{cor:tricubeprobspaces}. These measures form a CSM by Lemma \ref{lem:GoodPairHoms}. We then have
\begin{eqnarray*}
\varphi_{F_1',F_2'}(\phi_{v,f}) & = & \int_{\pi_o^{-1}(\tilde\pi\co f)}\;\int_{Q_t} F_1'\big(f(v)(t'\co\trem_v)\big)\; F_2'(t')\ud\mu_{Q_t}(t') \;\ud\mu_{\tilde\pi\co f}(t)\\
& = & \int_{\pi_o^{-1}(\tilde\pi\co f)}F_1'\big(f(v)(t\co\trem_v)\big)\; \Big(\int_{Q_t}F_2'(t')\ud\mu_{Q_t}(t')\Big)\ud\mu_{\tilde\pi\co f}(t)\\
& = & \int_{\pi_o^{-1}(\tilde\pi\co f)}F_1'\big(f(v)(t\co\trem_v)\big)\; F_2''(t\co \trem_v)\ud\mu_{\tilde\pi\co f}(t),
\end{eqnarray*}
where $F_2''(t\co \trem_v)=\int_{t'\in Q_t}F_2'(t')\ud\mu_{Q_t}(t')$ is a continuous function of $\q=t\co \trem_v\in \cu^k(\ns)$, by Definition \ref{def:CSM}. We can now use the fact that $t\mapsto t\co \trem_v$ is measure-preserving from $\pi_o^{-1}(\tilde\pi\co f)$ to $\pi^{-1}(\tilde\pi\co f(v))$, to conclude that for $f(v)\in M$ we have
\[
\varphi_{F_1',F_2'}(\phi_{v,f}) = \int_{\pi^{-1}(\tilde\pi\co f(v))}F_1'\big(f(v)(\q)\big)\; F_2''(\q)\ud\mu_{\tilde\pi\co f}(\q).
\]
This is of the form $\varphi_{F_1',F_2''}(f(v))$ as in \eqref{eq:testfn1}, and the continuity of $\phi_v$ follows.\\
\indent The continuity of each function $\phi_v$ implies that the following function is continuous:
\begin{eqnarray*}
\phi_1\;:\;\;R_k(M)& \quad\to\quad & L\big(\pi_o^{-1}(\tilde\pi\co f),\ab^{\{0,1\}^k}\big) \; \subset \; \mathcal{L}(Q,\ab^{\{0,1\}^k})\\
f &\quad \mapsto\quad & \phi_{1,f}(t)= (f(v)(t\co\trem_v))_{v\in \{0,1\}^k}.
\end{eqnarray*}
Indeed, if $f_n\to f$ in $R_k(M)$ then for each $v$ by continuity of $\phi_v$ we have $\phi_{v,f_n}\to \phi_{v,f}$ in $\cL(Q,\ab)$. This then implies that $\phi_{1,f_n}\to\phi_{1,f}$ in $\cL(Q,\ab^{\{0,1\}^k})$, by Lemma \ref{lem:prodCSMconv}.\\
\indent The proof can now be completed using Lemma \ref{lem:CSMlincombin}. Indeed, for $P=\{0,1\}^k$ and appropriate choices of coefficients $\lambda_r$ in that lemma, the original function $\phi: R_k(M)\to \cL(Q,\ab)$ is the composition $\Sigma_P\co \phi_1$. The lemma tells us that $\Sigma_P$ is continuous, and so $\phi$ is continuous.
\end{proof}

\noindent Recall from \cite[Lemma 3.3.28]{Cand:Notes1} that, up to isomorphisms of nilspaces, every extension of $\ns$ by $\ab$ is of the form  $M(\rho)$ above for some cocycle $\rho$. We now want an analogue of this result for compact nilspaces, which requires that the isomorphism be also a homeomorphism. To ensure this, recall first from \cite[Lemma 3.3.21]{Cand:Notes1} that given a degree-$k$ extension of a nilspace, every cross section $\cs$ for this extension generates a cocycle,  denoted  $\rho_{\cs}$. We now show that for compact nilspaces one can take $\rho_{\cs}$ to be Borel measurable.

\begin{lemma}\label{lem:meascross}
Let $\ns$ be a compact nilspace, and let $\nss$ be a compact nilspace that is a degree-$k$ extension of $\ns$ by a compact abelian group $\ab$. Then there is a Borel cross section $\cs$ for this extension and therefore a Borel cocycle $\rho_{\cs}:\cu^{k+1}(\ns)\to \ab$.
\end{lemma}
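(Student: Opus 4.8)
The plan is to derive the cross section from a classical Borel selection theorem and then to propagate its measurability to $\rho_{\cs}$. Since $\nss$ is a degree-$k$ extension of $\ns$ by $\ab$, the bundle projection $p:\nss\to\ns$ is a continuous surjection whose fibres are the $\ab$-orbits, and both $\nss$ and $\ns$ are compact spaces in the sense of Definition \ref{def:compspace}, hence compact metrizable by Remark \ref{rem:stfacts}(ii). I would invoke the Federer--Morse selection theorem, according to which a continuous surjection between compact metric spaces admits a Borel right inverse (see, e.g., \cite{Ke}), to obtain a Borel map $\cs:\ns\to\nss$ with $p\co\cs=\id_{\ns}$. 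This is the required Borel cross section.

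It then remains to check that the cocycle $\rho_{\cs}:\cu^{k+1}(\ns)\to\ab$ generated by $\cs$ as in \cite[Lemma 3.3.21]{Cand:Notes1} is Borel. Recall that $\rho_{\cs}(\q)$ is computed by choosing any cube $\hat\q\in\cu^{k+1}(\nss)$ with $p\co\hat\q=\q$, writing $\hat\q(v)=\cs(\q(v))+a(v)$ with $a(v)\in\ab$ for each $v\in\{0,1\}^{k+1}$, and setting $\rho_{\cs}(\q)=\sigma_{k+1}(a)$; by \cite[Lemma 3.3.21]{Cand:Notes1} this value does not depend on the chosen lift $\hat\q$. To exhibit it as a Borel function of $\q$, I would fix one Borel lift once and for all: the induced map $\cu^{k+1}(\nss)\to\cu^{k+1}(\ns)$, $\hat\q\mapsto p\co\hat\q$, is again a continuous surjection of compact metric spaces (cubes lift along the extension), so the same selection theorem yields a Borel map $\ell:\cu^{k+1}(\ns)\to\cu^{k+1}(\nss)$ with $p\co\ell(\q)=\q$. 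For each fixed $v$ the evaluation $\q\mapsto\q(v)$ is continuous, so $\q\mapsto\ell(\q)(v)$ and $\q\mapsto\cs(\q(v))$ are Borel maps into $\nss$ taking values in the common fibre over $\q(v)$. The ``vertical difference'' map sending a pair $(y,y')$ with $p(y)=p(y')$ to the unique $z\in\ab$ with $y=y'+z$ is continuous, since its graph $\{(y,y',z):\alpha(z,y')=y\}$ is closed and $\ab$ is compact (Remark \ref{rem:stfacts}(i)); hence each coordinate $\q\mapsto a(v)$ is Borel. Finally $\sigma_{k+1}:\ab^{\{0,1\}^{k+1}}\to\ab$ is continuous, being a fixed word in the group operation of the topological group $\ab$, so $\q\mapsto\sigma_{k+1}\big((a(v))_{v\in\{0,1\}^{k+1}}\big)$ is Borel; that is, $\rho_{\cs}$ is Borel.

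The only substantial point is the existence of the Borel section; everything afterwards is routine propagation of measurability through continuous maps. The care required lies in checking the hypotheses of the selection theorem (a continuous surjection of compact, hence Polish, spaces, guaranteed by the compactness of $\nss$ and $\ns$ and the continuity of $p$) and in confirming, via \cite[Lemma 3.3.21]{Cand:Notes1}, that $\sigma_{k+1}(a)$ is independent of the auxiliary Borel lift $\ell$, so that it genuinely computes the cocycle $\rho_{\cs}$ attached to $\cs$. I expect no further difficulty.
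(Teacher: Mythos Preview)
Your proof is correct and follows the same overall architecture as the paper's: obtain a Borel cross section for $p:\nss\to\ns$ via a selection theorem, then a Borel lift $\cu^{k+1}(\ns)\to\cu^{k+1}(\nss)$ via a second selection, and propagate measurability through the continuous difference map and $\sigma_{k+1}$. The difference lies in the choice of selection theorem. The paper invokes \cite[Corollary 18.7]{Ke}, which requires a Borel family of measures giving positive mass to each fibre, and supplies this family via the CSM on $\pi$ constructed earlier in the chapter; for the lift on cubes it appeals to \cite[Theorem (12.16) and Corollary (15.2)]{Ke} using the Polish bundle structure of $\cu^{k+1}(\nss)$ over $\cu^{k+1}(\ns)$. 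Your use of Federer--Morse is more elementary and self-contained: it needs only that $p$ (and likewise the induced map on cubes) is a continuous surjection between compact metric spaces, so it sidesteps the CSM machinery entirely. The paper's route has the minor advantage of being consistent with the toolkit it has already set up, but yours is the shorter path to the same conclusion.
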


\begin{proof}
Let $\pi : \nss \to \ns$ be the projection of the extension. Let $P = \{(x,y) \in \ns \times \nss \,:\, \pi(y) = x \}$. A cross-section for $\pi$ is a subset of $P$ which happens to be the graph of a function $\ns \to \nss$. Let $\mathcal{P}(\nss)$ denote the set of Borel probability measures on $\nss$. By \cite[Corollary 18.7]{Ke}, a sufficient condition for a Borel cross section to exist is that for some Borel function $\mu : \ns \to \mathcal{P}(\nss)$ we have $\mu_x(P \cap (\{x\} \times \nss)) >0$. The measures $\mu_x$ from the CSM structure on $\pi$ satisfy this, and the function $x \to \mu_x$ is continuous, by Definition \ref{def:CSM}. There is therefore a Borel cross section $\cs:\ns\to \nss$. We can then define a Borel function $\rho:\cu^{k+1}(\nss)\to \ab$ by $\rho(\q')=\sigma_{k+1}(\cs\co\pi\co\q' -\q')$. Moreover, as is shown in the proof of \cite[Lemma 3.3.21]{Cand:Notes1}, the function $\rho$ is constant on each fibre of the map $\cu^{k+1}(\nss)\to \cu^{k+1}(\ns)$, $\q'\mapsto \pi\co\q'$, thus yielding the well-defined cocycle $\rho_{\cs}:\cu^{k+1}(\ns)\to \ab$. It can then be seen by general results that $\rho_{\cs}$ is also Borel measurable.  For instance, one can use that by \cite[Definition 3.3.13]{Cand:Notes1} the space $\cu^{k+1}(\nss)$ is a continuous abelian bundle over $\cu^{k+1}(\ns)$ with Polish structure group $\cu^{k+1}(\cD_k(\ab))$, so that by \cite[Theorem (12.16) and Corollary (15.2)]{Ke} there is a Borel cross section $\q\mapsto\q'$ for this bundle.
\end{proof}
\noindent We shall also need the following useful generalization of a classical  automatic-continuity result. The classical result in question is a theorem of Kleppner stating that a Borel measurable homomorphism between two locally compact groups must be continuous \cite[Theorem 1]{Klep}. We extend the case of compact abelian groups as follows.

\begin{theorem}\label{thm:Klepgen}
Let $\ns,\nss$ be compact nilspaces of finite step, and let $\phi:\ns\to \nss$ be a Borel measurable morphism. Then $\phi$ is continuous.
\end{theorem}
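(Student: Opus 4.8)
The plan is to argue by induction on the step, the base case being trivial and the essential external input being Kleppner's automatic-continuity theorem \cite[Theorem 1]{Klep} for compact abelian groups. Write $k$ for the larger of the two steps and pass to the $(k-1)$-factors on both sides. Since $\phi$ is a morphism it respects the relation $\sim_{k-1}$ (recall \cite[Lemma 3.2.4]{Cand:Notes1}), so it descends to a map $\bar\phi:\cF_{k-1}(\ns)\to\cF_{k-1}(\nss)$ which is again a morphism, now between the $(k-1)$-step compact nilspaces furnished by Lemma \ref{lem:compfactor}. I would first check that $\bar\phi$ is Borel: the projection $\pi_{k-1}:\ns\to\cF_{k-1}(\ns)$ is a continuous, open, proper surjection (Remark \ref{rem:open-and-closed}), and it admits a Borel section $s$ by exactly the measurable-selection argument used for Lemma \ref{lem:meascross} (via the CSM on $\pi_{k-1}$ and \cite[Corollary 18.7]{Ke}); then $\bar\phi=\pi_{k-1}'\co\phi\co s$ is Borel, and by the induction hypothesis $\bar\phi$ is continuous.

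Next I would record the top-level structure. As a bundle morphism in the sense of \cite[Definition 3.3.1]{Cand:Notes1}, $\phi$ carries a homomorphism $\tau=\alpha_k:\ab_k\to\ab_k'$ on the top structure groups and satisfies the equivariance $\phi(x+a)=\phi(x)+\tau(a)$ for all $x\in\ns$ and $a\in\ab_k$. Fixing a base point $x_0$, one recovers $\tau$ as $a\mapsto\phi(x_0+a)-\phi(x_0)$, a difference taken inside the $\ab_k'$-torsor $\big(\pi_{k-1}'\big)^{-1}\!\big(\bar\phi(\pi_{k-1}(x_0))\big)$; hence $\tau$ is a \emph{Borel} homomorphism of compact abelian groups, and is therefore continuous by Kleppner's theorem. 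Thus both the base map $\bar\phi$ and the fibrewise group homomorphism $\tau$ are continuous, and it remains to assemble these into continuity of $\phi$ itself.

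The main leverage for the assembly is an averaging identity. Fixing an $\ab_k'$-invariant metric $d$ on $\nss$ (Lemma \ref{lem:d-invariance}) and using the equivariance above, the quantity $d(\phi(x),\phi(x'))$ is unchanged under translating $x$ and $x'$ simultaneously by any $z\in\ab_k$, so integrating over Haar measure gives
\[
d\big(\phi(x),\phi(x')\big)=\int_{\ab_k} d\big(\phi(x+z),\phi(x'+z)\big)\,\ud\mu_{\ab_k}(z).
\]
Since translation by $z$ is a homeomorphism of $\ns$ and (via $\tau$) of $\nss$, this shows that continuity of $\phi$ at one point of a fibre $\pi_{k-1}^{-1}(b)$ forces continuity at every point of that fibre, so that the set of continuity points is a union of whole fibres. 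Combined with the continuity of $\bar\phi$, for a convergent sequence $x_n\to x_0$ the images $\phi(x_n)$ already converge in the base; what remains is to pin down the displacement in $\ab_k'$, and the identity reduces this to controlling $\phi$ on almost every translate within the fibre of $x_0$.

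I expect this last step to be the crux, precisely because a compact abelian bundle need not be locally trivial: a Lusin set of large \emph{total} measure on which $\phi$ is continuous may meet a given fibre in a null set, so a naive Pettis-type translation argument confined to a single fibre breaks down. To overcome this I would exploit that $\phi$, as a morphism, commutes with cube completion, and invoke the continuity of completion (Lemma \ref{lem:contcomp}). Concretely, one realizes an arbitrary $x_0$ as the completion of a $(k+1)$-corner whose remaining vertices lie in the good Lusin set and vary continuously with $x_0$; applying $\phi$ vertex-by-vertex produces a corner in $\nss$ whose completion is $\phi(x_0)$, so continuity of completion transports continuity from the good set to $x_0$. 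Arranging such corners with the required genericity is where the real work lies, and here I would use the strict positivity of the Haar measure on open sets (Proposition \ref{prop:posmeasopen}) together with the continuous systems of measures on the corner spaces to select, continuously in the base point, a corner through the given point whose other vertices avoid the null bad set; the equivariance of $\phi$ and the continuity of $\tau$ then normalize these choices and control the top-structure-group displacement, completing the induction.
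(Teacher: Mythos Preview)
Your overall architecture is quite different from the paper's, and while the first two phases (descent to $\bar\phi$ via a Borel section, continuity of $\tau$ via Kleppner, the $\ab_k$-equivariance identity showing the continuity set is fibre-saturated) are sound, the final phase has a genuine gap. You propose to select, \emph{continuously in $x_0$}, a corner through $x_0$ whose remaining vertices lie in a fixed Lusin set; but no finite-rank (hence no local-triviality) hypothesis is available here, and continuous sections of the corner bundle with this avoidance property need not exist in general. The CSM positivity you invoke tells you such corners exist for each $x_0$, and even that they form a set of full measure in $\cu^{k+1}_{x_0}(\ns)$, but it does not give a continuous choice. Without that, the argument that $\phi(x_0)$ depends continuously on $x_0$ is incomplete.

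The paper's proof sidesteps exactly this selection problem, and in doing so dispenses with induction, with Kleppner, and with the bundle decomposition entirely. It fixes $k$ so that $\nss$ is $(k-1)$-step, so that for \emph{every} $\q\in\cu^k_x(\ns)$ the value $\phi(x)$ is the unique completion in $\nss$ of the corner $\big(\phi\co\q(v)\big)_{v\neq 0^k}$. Rather than choosing one good corner, it works with the constant-valued function $x\mapsto\phi(x)$ viewed as an element of $\cL\big(\cu^k(\ns),\nss\big)$: for each $v\neq 0^k$ the map $x\mapsto\big(\phi\co\pi_v\big)|_{\cu^k_x(\ns)}$ is continuous by Lemma~\ref{lem:csmtech} (this is precisely the smoothing step that replaces your Lusin argument, using that $\pi_v$ is measure-preserving on each rooted cube space), one assembles these into a continuous map $\xi$ into $\cL\big(\cu^k(\ns),\cor^k(\nss)\big)$ via Lemma~\ref{lem:prodCSMconv}, and then post-composes with the continuous completion operator. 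The result is that $x\mapsto\phi(x)$ is continuous as a map into $\cL$, hence continuous. No selection, no induction, no reference to structure groups. Your corner-completion intuition is exactly the right one; the point is that averaging over corners in the $\cL$-topology is what makes it rigorous.
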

\begin{proof} Suppose that $\nss$ is a $(k-1)$-step nilspace, and consider the CSM on $\pi:\cu^k(\ns)\to \ns$, $\q\mapsto \q(0^k)$, given by Lemma \ref{lem:cube-set-CSM}. For each $v\in \{0,1\}^k\setminus \{0^k\}$ let $\pi_v:\cu^k(\ns)\to \ns$, $\q\mapsto \q(v)$. We have that $\pi_v$ is continuous and each of its restrictions to a space $\cu^k_x(\ns)$ is measure-preserving, by Lemma \ref{lem:GoodPairHoms} (applied with $P_1=\{0^k\}$, $P_2=\{v\}$ and $f:0^k\mapsto x$, identifying $\hom_{f|_{P_1\cap P_2}}(P_2,\ns)$ with $\ns$). Now for each $v$ let $f_v:\ns\to\cL(\cu^k(\ns),\nss)$ be the function sending $x$ to the restriction $\phi\co\pi_v|_{\cu_x^k(\ns)}$. Then $f_v$ is continuous, by Lemma \ref{lem:csmtech} (applied with $V=\cu^k(\ns)$, $W=K=\ns$, $f_1=\pi_v$ and $f_2=\phi$).\\
\indent Let $f:\ns\to\cL\big(\cu^k(\ns),\nss\big)$ be the function mapping $x\in \ns$ to the constant function with value $\phi(x)$ on $\cu_x^k(\ns)$. From the definition of $\cL\big(\cu^k(\ns),\nss\big)$ it follows that if $f$ is continuous then so is $\phi$.\\
\indent To see that $f$ is continuous, recall that $\cor^k(\nss)$ denotes the space of $k$-corners on $\nss$ and consider the following function:
\[
\xi : \ns \to \cL\big(\cu^k(\ns),\cor^k(\nss)\big),\;\;\; x\mapsto\;\; \Big( \xi_x:\q\in\cu^k_x(\ns)\; \mapsto\; \big(f_v(x)(\q)\big)_{v\neq 0^k}=\big(\phi\co\q(v)\big)_{v\neq 0^k}\Big).
\]
Combining the continuity of each function $f_v$ with Lemma \ref{lem:prodCSMconv}, we deduce  that $\xi$ is continuous. Now let $\cK$ denote the function $\cL\big(\cu^k(\ns),\cor^k(\nss)\big)\to \cL\big(\cu^k(\ns),\nss\big)$ that sends a function $g$ to $\comp\co g$, where $\comp:\cor^k(\nss)\to\nss$ is the unique completion of $k$-corners on $\nss$. By Lemma \ref{lem:contcomp} we know that $\comp$ is  continuous, and we claim that $\cK$ is therefore continuous. Indeed, suppose we have a sequence $g_n$ in $\cL\big(\cu^k(\ns),\cor^k(\nss)\big)$ converging to $g$, and let $F_1:\nss\to \C$ and $F_2:\cu^k(\ns)\to \C$ be continuous. Then
\[
\varphi_{F_1,F_2}(\cK(g_n))\;=\;\int_{\cu^k_{x_n}(\ns)} F_1(\comp \co g_n(\q))\;F_2(\q)\; \ud\mu(\q)\;=\;\varphi_{F_1\co\comp, F_2}(g_n),
\]
where $\varphi_{F_1\co\comp, F_2}$ is one of the functions generating the topology on $\cL(\cu^k(\ns),\cor^k(\nss))$. Hence by assumption this converges to $\varphi_{F_1\co\comp, F_2}(g)=\varphi_{F_1,F_2}(\cK(g))$, so $\cK$ is continuous.\\
\indent Finally, note that $f=\cK\co\, \xi$, so $f$ is continuous, which completes the proof.
\end{proof}

We are now able to prove the analogue of \cite[Lemma 3.3.28]{Cand:Notes1} for compact nilspaces.

\begin{lemma}\label{lem:compextisotoM} Let $\ns$ be a compact nilspace of finite step. Let $\nss$ be a degree-$k$ extension of $\ns$ by a compact abelian group $\ab$. Let $\cs:\ns\to \nss$ be a Borel cross-section and let $\rho=\rho_{\cs}$ be the associated Borel cocycle. Then $\nss$ is isomorphic as a compact nilspace to the extension $M(\rho)$ from Proposition \ref{prop:cocyclext}.
\end{lemma}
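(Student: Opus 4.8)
The plan is to obtain the isomorphism from its purely algebraic counterpart together with the automatic-continuity result of Theorem \ref{thm:Klepgen}. Recall from \cite[Lemma 3.3.28]{Cand:Notes1} that there is a nilspace isomorphism $\Theta:\nss\to M(\rho)$; concretely, writing $\pi:\nss\to\ns$ for the projection of the extension, $\Theta$ is the $\ab$-equivariant map over $\id_\ns$ determined by $\Theta(\cs(x))=\rho_x$ for each $x\in\ns$, so that
$$\Theta(y)=\rho_{\pi(y)}+\big(y-\cs(\pi(y))\big),$$
where $y-\cs(\pi(y))\in\ab$ denotes the displacement of $y$ from $\cs(\pi(y))$ in the fibre over $\pi(y)$. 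Both $\nss$ and $M=M(\rho)$ are compact nilspaces of finite step, by hypothesis and by Propositions \ref{prop:cocyclext} and \ref{prop:comp-extcompleted} respectively. Since $\Theta$ is a nilspace isomorphism, $\Theta$ and $\Theta^{-1}$ are both nilspace morphisms. Hence, once we show that $\Theta$ is Borel measurable, Theorem \ref{thm:Klepgen} will give that $\Theta$ is continuous; being a continuous bijection from a compact space to a Hausdorff space, $\Theta$ will then be a homeomorphism, so that $\Theta$ and $\Theta^{-1}$ are both continuous nilspace morphisms and $\Theta$ is the desired isomorphism of compact nilspaces. It therefore suffices to prove that $\Theta$ is Borel measurable.

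To this end, note first that the displacement map $(y,y')\mapsto y-y'$ is continuous on $\nss\times_\ns\nss$ (continuous because $\nss$ is a compact $\ab$-bundle, so the free action map $\ab\times\nss\to\nss\times_\ns\nss$, $(z,y')\mapsto(y'+z,y')$ is a homeomorphism), whence $\tau:\nss\to\ab$, $y\mapsto y-\cs(\pi(y))$ is Borel, since $\cs$ is Borel and $\pi$ is continuous. Recall from Proposition \ref{prop:cocyclext} (applied with $\rho$ on $\cu^{k+1}(\ns)$) that $M\subset\cL(\cu^{k+1}(\ns),\ab)$, whose topology is second-countable by Proposition \ref{prop:CSM-top} and is generated by the functions $\varphi_{F_1,F_2}$ of \eqref{eq:initopfns}. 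A map into such a space is Borel as soon as its composite with each $\varphi_{F_1,F_2}$ is Borel, so it suffices to check that $y\mapsto\varphi_{F_1,F_2}(\Theta(y))$ is Borel for all continuous $F_1:\ab\to\C$ and $F_2:\cu^{k+1}(\ns)\to\C$. By the formula for $\Theta$ we have $\varphi_{F_1,F_2}(\Theta(y))=H(\pi(y),\tau(y))$, where
$$H(x,z)=\int_{\cu^{k+1}_x(\ns)}F_1\big(\rho(\q)+z\big)\,F_2(\q)\,\ud\mu_x(\q),$$
and $\{\mu_x:x\in\ns\}$ is the CSM of Lemma \ref{lem:cube-set-CSM}. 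Since $\pi$ and $\tau$ are Borel, it remains only to prove that $H:\ns\times\ab\to\C$ is Borel.

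This last point is where the main work lies, and it follows from a standard functional-monotone-class argument. Consider the class $\mathcal{H}$ of bounded Borel functions $G:\cu^{k+1}(\ns)\times\ab\to\C$ for which $(x,z)\mapsto\int_{\cu^{k+1}_x(\ns)}G(\q,z)\,\ud\mu_x(\q)$ is Borel. If $G$ is continuous, then this fibre-integral is in fact continuous: continuity in $x$ for fixed $z$ is the defining property of the CSM, and joint continuity then follows because $\ab$ is compact (so $G$ is uniformly continuous) and the $\mu_x$ are probability measures. Moreover $\mathcal{H}$ is a vector space containing the constants and is closed under bounded pointwise limits, by dominated convergence. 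Since the continuous functions form a point-separating subalgebra generating the Borel $\sigma$-algebra of the compact metric space $\cu^{k+1}(\ns)\times\ab$, the functional monotone class theorem yields that $\mathcal{H}$ contains every bounded Borel function. Applying this to $G(\q,z)=F_1(\rho(\q)+z)\,F_2(\q)$, which is Borel since $\rho$ is Borel and $F_1,F_2$ are continuous, we conclude that $H$ is Borel, and hence that $\Theta$ is Borel, completing the proof.
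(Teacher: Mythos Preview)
Your proof is correct and follows essentially the same approach as the paper's: define the same map $\Theta(y)=\rho_{\pi(y)}+(y-\cs\co\pi(y))$, invoke \cite[Lemma 3.3.28]{Cand:Notes1} for the algebraic isomorphism, and then apply Theorem~\ref{thm:Klepgen} to upgrade Borel measurability to continuity. The paper simply asserts that $\theta$ is Borel measurable, whereas you supply a careful justification via the generating functions $\varphi_{F_1,F_2}$ and a monotone-class argument; this extra detail is correct and a welcome elaboration of a point the paper leaves implicit.
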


\begin{proof} Let $\pi:\nss\to \ns$ be the projection for the extension. The isomorphism is given by the following map:
\begin{equation}
\theta:\nss\to M,\;\; x\mapsto \rho_{\pi(x)}+(x-\cs\co\pi(x)).
\end{equation}
It is checked in the proof of \cite[Lemma 3.3.28]{Cand:Notes1} that $\theta$ is an isomorphism of nilspaces. Since $\nss$ and $M$ are both compact Hausdorff spaces, if we show that $\theta$ is continuous then it is a homeomorphism \cite[Theorem 26.6]{Munkres}. Since $\theta$ is Borel measurable, it is continuous by Theorem \ref{thm:Klepgen}.
\end{proof}
\noindent Recall from \cite[Subsection 3.3.3]{Cand:Notes1} that $H_k(\ns,\ab)$ denoted the quotient of the abelian group of degree-$k$ cocycles by the subgroup of coboundaries. In the category of compact nilspaces we restrict these groups to the Borel measurable cocycles. Combining the results from this section, we now deduce the following analogue of \cite[Corollary 3.3.29]{Cand:Notes1} for compact nilspaces.
\begin{corollary}\label{cor:compclassrep} Let $\Phi$ denote the map which sends each class $C\in H_k(\ns,\ab)$ to the isomorphism class of $M(\rho)$, for any choice of $\rho\in C$. Then $\Phi$ is a surjection from $H_k(\ns,\ab)$ to the set of isomorphism classes of compact degree-$k$ extensions of $\ns$ by $\ab$.
\end{corollary}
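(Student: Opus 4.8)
The plan is to deduce the statement from the algebraic version \cite[Corollary 3.3.29]{Cand:Notes1} together with the compact-specific results of this section, establishing separately that $\Phi$ is well-defined and that it is surjective. The single new ingredient needed throughout, compared with the purely algebraic argument, is the automatic-continuity principle of Theorem \ref{thm:Klepgen}, which upgrades the Borel nilspace isomorphisms supplied by the algebraic theory into isomorphisms of compact nilspaces.

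For well-definedness, I would fix a class $C \in H_k(\ns,\ab)$ and two Borel cocycles $\rho,\rho' \in C$, so that $\rho'-\rho$ is a Borel coboundary. First I would invoke \cite[Corollary 3.3.29]{Cand:Notes1} to obtain an isomorphism of nilspaces $\psi : M(\rho) \to M(\rho')$; since this $\psi$ is built from the Borel function witnessing that $\rho'-\rho$ is a coboundary, it is Borel measurable. By Proposition \ref{prop:comp-extcompleted} both $M(\rho)$ and $M(\rho')$ are compact nilspaces, so Theorem \ref{thm:Klepgen} makes $\psi$ continuous; being a continuous bijection between compact Hausdorff spaces, it is then a homeomorphism \cite[Theorem 26.6]{Munkres}, hence an isomorphism of compact nilspaces. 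This shows the isomorphism class of $M(\rho)$ is independent of the choice of $\rho \in C$, so $\Phi$ is well-defined.

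For surjectivity, I would start from an arbitrary compact degree-$k$ extension $\nss$ of $\ns$ by $\ab$ and produce a preimage under $\Phi$. Lemma \ref{lem:meascross} yields a Borel cross-section $\cs$ for the extension together with an associated Borel cocycle $\rho = \rho_{\cs}$, which represents a class $C = [\rho] \in H_k(\ns,\ab)$. Then Lemma \ref{lem:compextisotoM} identifies $\nss$ with the extension $M(\rho)$ as compact nilspaces, so $\Phi(C)$ is exactly the isomorphism class of $\nss$. As $\nss$ was arbitrary, this proves surjectivity.

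I expect the only genuine obstacle to be the passage from algebraic isomorphism to homeomorphism in the well-definedness step: a set-theoretic nilspace isomorphism need not respect the topology a priori, and the argument depends on checking that the isomorphism $\psi$ is Borel (so that Theorem \ref{thm:Klepgen} applies) --- this is where one must be careful to track that the coboundary, and hence $\psi$, is genuinely Borel measurable in the compact setting. Surjectivity, by contrast, is essentially a direct assembly of Lemma \ref{lem:meascross} and Lemma \ref{lem:compextisotoM}.
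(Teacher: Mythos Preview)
Your proposal is correct and is exactly the approach the paper intends: the paper's own proof is the single line ``The proof is similar to that of \cite[Corollary 3.3.29]{Cand:Notes1}'', and you have correctly identified the compact-specific ingredients needed to make that work, namely Lemmas \ref{lem:meascross} and \ref{lem:compextisotoM} for surjectivity and Theorem \ref{thm:Klepgen} to upgrade the Borel nilspace isomorphism (built from the Borel $g$ witnessing the coboundary) to a homeomorphism for well-definedness. Your caution about tracking Borel measurability of the coboundary is apt; the paper's convention that ``we restrict these groups to the Borel measurable cocycles'' should be read as restricting to Borel coboundaries as well, so the witnessing $g$ is indeed Borel.
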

\noindent The proof is similar to that of \cite[Corollary 3.3.29]{Cand:Notes1}.

\medskip
\section{Compact abelian bundles of finite rank, and averaging}\label{sec:CFRdef}

\medskip
In this section we define a notion of rank for compact abelian bundles and nilspaces, and we give a first topological description of nilspaces of finite rank (Lemma \ref{lem:finrankloctriv}). We then define an averaging operation for certain functions on these spaces, which enables us to prove a useful rigidity result concerning cocycles (Lemma \ref{lem:smallco}).
\begin{defn}
Let $\bnd$ be a $k$-fold compact abelian bundle, with structure groups $\ab_1,\dots,\ab_k$. We define the \emph{rank} of $\bnd$ by 
\[
\rk(\bnd)=\sum_{i=1}^k \rk(\widehat{\ab_i})
\]
where $\widehat{\ab_i}$ is the Pontryagin dual of $\ab_i$ and $\rk(\widehat{\ab_i})$ is the minimal number of generators of $\widehat{\ab_i}$. If $\ns$ is a $k$-step compact nilspace then we define its rank $\rk(\ns)$ to be $\rk(\bnd)$ for the associated bundle $\bnd$ given by Proposition \ref{prop:topbundec}.
\end{defn}
\noindent When working with a compact $\ab$-bundle $\bnd$ over $S$, we shall often want to ensure that the bundle is locally trivial, meaning that for every $x\in \bnd$ there is an open set $U\subset S$ containing the projection $\pi(x)$, and a homeomorphism $\pi^{-1}(U)\to U\times \ab$ of the form $x\mapsto (\pi(x),\varphi(x))$ where $\varphi:\pi^{-1}(U)\to \ab$ is $\ab$-equivariant (that is $\varphi(x+z)=\varphi(x)+z$ for every $x\in \pi^{-1}(U)$, $z\in \ab$). A theorem of Gleason \cite[Theorem 3.3]{Gl} tells us that this local triviality holds provided that $\ab$ is a Lie group (see also \cite[Theorem 1.1]{A&D}). We record the special case of this result that we shall use.
\begin{proposition}[Consequence of Gleason's slice theorem]\label{prop:Gleason}
Let $\ab$ be a compact abelian Lie group, and let $\bnd$ be a continuous $\ab$-bundle. Then $\bnd$ is locally trivial.
\end{proposition}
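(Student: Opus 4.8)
The proposition is, as its name suggests, essentially a direct application of Gleason's slice theorem, so the plan is to check that a continuous $\ab$-bundle satisfies the hypotheses of that theorem and then to translate its conclusion into the local-triviality statement required here. First I would observe that the bundle structure endows $\bnd$ with a \emph{free} continuous action of $\ab$ whose orbit space is precisely the base $S$. Continuity is condition (iii) of Definition \ref{def:CpctAbBund}. Freeness follows because each fibre $\pi^{-1}(s)$ is a principal homogeneous space of $\ab$ (by \cite[Definition 3.2.17]{Cand:Notes1}), so $\ab$ acts freely and transitively on each fibre and hence freely on all of $\bnd$; moreover, by Remark \ref{rem:open-and-closed} the orbits of $\alpha$ are exactly the fibres $\pi^{-1}(s)$, and the base $S$ is homeomorphic to the orbit space $\bnd/\alpha$.

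Next I would record the two remaining hypotheses. By assumption $\ab$ is a compact abelian Lie group, so in particular a compact Lie group (concretely $\ab\cong\T^a\times F$ with $F$ finite). For the point-set hypothesis, I would note that in the setting of these notes $\bnd$ is Hausdorff and second-countable and, in the relevant applications (where the bundle is compact), metrizable by Remark \ref{rem:stfacts} (ii), hence completely regular; thus the hypothesis of complete regularity needed to apply Gleason's theorem is met.

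With these hypotheses in place I would invoke Gleason's theorem \cite[Theorem 3.3]{Gl} (see also \cite[Theorem 1.1]{A&D}): a free action of a compact Lie group on a completely regular space admits a slice through each point, equivalently a local cross-section of the orbit map. Concretely, for each $x\in\bnd$ there is an open set $U\subset S$ containing $\pi(x)$ together with a continuous section $\cs:U\to\bnd$ of $\pi$. The remaining step is purely formal: such a section yields the trivialising homeomorphism $U\times\ab\to\pi^{-1}(U)$, $(u,z)\mapsto \cs(u)+z$, whose inverse has the form $x\mapsto(\pi(x),\varphi(x))$, where $\varphi:\pi^{-1}(U)\to\ab$ is the unique element with $x=\cs(\pi(x))+\varphi(x)$. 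Freeness makes $\varphi$ well-defined; it is continuous because the division map $(y,x)\mapsto(\text{unique }z\text{ with }x=y+z)$ is continuous for a free action of a compact (hence proper-acting) group, and it satisfies $\varphi(x+z)=\varphi(x)+z$, giving the required $\ab$-equivariance. This is exactly the local triviality asserted in the proposition.

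The real content of the argument lies outside it, in Gleason's theorem itself, so the main obstacle in writing the present proof is essentially bookkeeping: matching the hypotheses (freeness, compact Lie group, complete regularity) to the definition of a continuous abelian bundle, and converting the ``local slice / local cross-section'' conclusion of the transformation-group literature into the bundle-trivialisation language used in the statement. The one point-set subtlety worth getting right is the complete regularity of $\bnd$, which I would justify via metrizability in the compact case rather than appealing to any as-yet-unproved local product structure.
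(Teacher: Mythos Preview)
Your proposal is correct and matches the paper's approach: the paper does not actually give a proof of this proposition, but simply records it as a direct consequence of Gleason's theorem \cite[Theorem 3.3]{Gl} (with a further reference to \cite[Theorem 1.1]{A&D}), which is exactly the route you take. Your write-up supplies the verification of hypotheses (freeness of the action, identification of the orbit space with $S$, complete regularity via metrizability in the compact case) and the standard passage from a local slice/section to a local trivialisation, all of which the paper leaves implicit.
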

\noindent Recalling that a compact abelian group is a Lie group if and only if it has finite rank, we deduce the following result.
\begin{lemma}\label{lem:finrankloctriv}
Let $\ns$ be a $k$-step compact nilspace of finite rank with structure groups $\ab_1,\dots,\ab_k$ and factors $\ns_0,\dots,\ns_k$. Then $\ns_i$ is a locally trivial $\ab_i$-bundle over $\ns_{i-1}$ for each $i\in [k]$.
\end{lemma}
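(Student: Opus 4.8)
The plan is to apply the consequence of Gleason's slice theorem (Proposition \ref{prop:Gleason}) separately at each of the $k$ levels of the bundle decomposition of $\ns$. The only thing that needs checking is that the hypotheses of that proposition are met at each level $i$, namely that $\ns_i$ is a \emph{continuous} $\ab_i$-bundle over $\ns_{i-1}$ and that $\ab_i$ is a compact abelian \emph{Lie} group. Once both points are verified, the conclusion is immediate for each $i\in[k]$.

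First I would extract from the finite-rank hypothesis that every structure group is a Lie group. By definition $\rk(\ns)=\sum_{i=1}^k \rk(\widehat{\ab_i})$ is a finite sum of non-negative integers, and it is assumed finite, so each individual term $\rk(\widehat{\ab_i})$ is finite; that is, each dual group $\widehat{\ab_i}$ is finitely generated. By the characterization recalled just before the statement (a compact abelian group is a Lie group if and only if it has finite rank), each $\ab_i$ is therefore a compact abelian Lie group.

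Next I would invoke the bundle structure to supply the second hypothesis. By Proposition \ref{prop:topbundec} the compact nilspace $\ns$ is a compact degree-$k$ bundle, so by Definition \ref{def:compdegkbund} each factor $\ns_i$ is a compact $\ab_i$-bundle with base $\ns_{i-1}$; in particular $\ns_i$ is a continuous $\ab_i$-bundle over $\ns_{i-1}$ in the sense of Definition \ref{def:CpctAbBund}. Having verified both hypotheses, I would apply Proposition \ref{prop:Gleason} for each $i\in[k]$ to conclude that $\ns_i$ is a locally trivial $\ab_i$-bundle over $\ns_{i-1}$.

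There is no substantial obstacle here, since the deep content (Gleason's theorem and the Lie-group characterization of finite rank) is imported from the results collected above. The only point requiring a moment's care is the passage from the finiteness of the \emph{total} rank to the conclusion that \emph{each} structure group is a Lie group, and this is immediate once one observes that $\rk(\ns)$ is a sum of non-negative integer contributions coming from the separate structure groups, so finiteness of the sum forces finiteness of each summand.
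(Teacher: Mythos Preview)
Your proof is correct and follows essentially the same route as the paper: the paper also deduces the lemma directly from Proposition~\ref{prop:Gleason} after recalling that a compact abelian group is a Lie group if and only if it has finite rank. Your additional explicit invocation of Proposition~\ref{prop:topbundec} and Definition~\ref{def:compdegkbund} to justify the continuous bundle structure at each level is appropriate and makes the argument self-contained.
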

Thus, topologically a $k$-step compact nilspace of finite rank is a finite dimensional manifold.
\begin{defn}\label{def:calgmetric}
Recall that every compact abelian Lie group $\ab$ is isomorphic to a direct product $F\times \T^n$ for some finite abelian group $F$ and $n\geq 0$. We can then define a natural metric $d_2$ on $\ab$ as follows. For two elements $x,y\in \T^n=\R^n/\Z^n$ we define $d_2(x,y)$ as the minimum of the Euclidean distances between preimages of $x$ and $y$ under the map $\R^n\to\R^n/\Z^n$. If $\ab$ is not connected, then for points $x,y$ in different connected components we declare that $d_2(x,y)=\infty$.
\end{defn}
\noindent We now turn to the definition of averaging for a function on a nilspace taking values in a compact abelian group $\ab$. Here we face the problem that in general the average of a $\ab$-valued function is not well-defined. This is clear already in the simple case of the circle group $\ab=\T$, where there is no suitable notion of multiplication by a real number. However, if we assume that the function takes all its values in a short interval, then we can lift this $\T$-valued function to a real-valued one, then take the usual average, and then project the obtained value back to $\T$. We capture this idea more generally as follows.

\begin{defn}\label{def:calgaver}
Let $\ab\cong F\times \T^n$ be a compact abelian Lie group. Let $f$ be a Borel measurable function from a probability space $X$ to $\ab$, and suppose there exists $z=(g,\theta)\in \ab$, ($\theta\in \T^n$) such that every value of $f$ lies in the ball $B_{1/4}(z)$ in the metric $d_2$. Then, letting $\pi$ denote the projection $\R^n\to \T^n$, for any fixed lift $\theta'\in \R^n$ there is a unique function $f':X\to \R^n$ such that $f(x)=(g,\pi\co f'(x))$ and $f'$ takes all its values in $B_{z'}(1/4)$ in the euclidean distance. We then define the average value $\E f$ to be $(g,\pi(\E f'))$.
\end{defn}
\noindent Note that $\E(f)$ does not depend on the choice of the lift of $z$. Let us say that a function $f:X\to \ab$ is $\delta$\emph{-concentrated} if there exists $z\in \ab$ such that $f(x)\in B_\delta(z)$ for all $x\in X$.
\begin{lemma}\label{lem:avadd}
Let $\ab$ be a compact abelian Lie group, and let $m\in \N$. Then for any $(1/4m)$-concentrated measurable functions $f_1,\dots,f_m$ from a probability space $X$ to $\ab$, the average of $f_1+\cdots+f_m$ is well defined and satisfies $\E(f_1+\cdots+f_m)=\E f_1+\cdots+\E f_m$.
\end{lemma}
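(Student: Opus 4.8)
The plan is to reduce everything to the additivity of the ordinary expectation on $\R^n$-valued functions, once compatible lifts have been chosen. First I would record that, since $m\geq 1$, each $f_i$ is in particular $(1/4)$-concentrated, so that $\E f_i$ is well defined by Definition~\ref{def:calgaver}; write $z_i=(g_i,\theta_i)\in F\times\T^n$ for a point with $f_i(x)\in B_{1/4m}(z_i)$ for all $x$. Because the metric $d_2$ assigns infinite distance to points in distinct connected components (Definition~\ref{def:calgmetric}), the constraint $d_2(f_i(x),z_i)<1/4m$ forces the $F$-coordinate of $f_i$ to be the constant $g_i$. Fixing a lift $\theta_i'\in\R^n$ of $\theta_i$, Definition~\ref{def:calgaver} then gives the unique lift $f_i':X\to\R^n$ with $f_i=(g_i,\pi\co f_i')$ and $f_i'$ taking all its values in the Euclidean ball $B_{1/4m}(\theta_i')$, and $\E f_i=(g_i,\pi(\E f_i'))$.

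Next I would analyse the sum $s=f_1+\cdots+f_m$. Addition in $\ab\cong F\times\T^n$ is componentwise, so $s$ has constant $F$-coordinate $g_1+\cdots+g_m$ and $\T^n$-coordinate $\pi\co(f_1'+\cdots+f_m')$. Set $F'=f_1'+\cdots+f_m'$, $w'=\theta_1'+\cdots+\theta_m'$, and $w=\pi(w')$. The crucial point is the triangle inequality: for every $x$,
\[
\norm{F'(x)-w'}\;\leq\;\sum_{i=1}^m \norm{f_i'(x)-\theta_i'}\;<\;m\cdot\frac{1}{4m}\;=\;\frac14,
\]
so $F'$ takes all its values in $B_{1/4}(w')$, whence $s(x)\in B_{1/4}\big((g_1+\cdots+g_m,w)\big)$ in the metric $d_2$. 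Thus $s$ is $(1/4)$-concentrated and its average $\E s$ is well defined. Moreover, since $F'$ has values in the radius-$1/4$ Euclidean ball around $w'$ and projects to the $\T^n$-part of $s$, it is exactly the lift of $s$ singled out in Definition~\ref{def:calgaver} for the chosen lift $w'$ (uniqueness of the lift following from the ball having diameter $<1$), so $\E s=(g_1+\cdots+g_m,\pi(\E F'))$.

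Finally I would invoke linearity of the ordinary expectation for $\R^n$-valued functions, namely $\E F'=\E f_1'+\cdots+\E f_m'$. Substituting, and using that addition in $\ab$ is componentwise together with $\pi$ being a homomorphism and $\E f_i=(g_i,\pi(\E f_i'))$, gives
\[
\E s=\Big({\textstyle\sum_i g_i},\;\pi\big({\textstyle\sum_i \E f_i'}\big)\Big)=\sum_{i=1}^m\big(g_i,\pi(\E f_i')\big)=\sum_{i=1}^m \E f_i,
\]
as required. I expect no genuine obstacle here; the only point demanding care is the compatibility of the lifts, which is handled by choosing the lift $w'=\sum_i\theta_i'$ of the sum so that the additive lift $F'=\sum_i f_i'$ is precisely the one governing $\E s$, together with the triangle-inequality bound above that simultaneously guarantees $\E s$ is defined in the first place.
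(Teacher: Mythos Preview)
Your argument is correct and is precisely the approach the paper has in mind: show via the triangle inequality that the sum is $1/4$-concentrated, then choose compatible lifts so that the lift of the sum is the sum of the lifts and invoke linearity of the ordinary $\R^n$-valued expectation. The paper's proof is only a two-line sketch of exactly this, so you have simply filled in the details it leaves implicit.
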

\begin{proof}
The assumption implies that $f_1+\cdots+f_m$ is $1/4$-concentrated, so its average is well-defined. The additivity then follows in a straightforward way from the additivity of the usual average for real-valued functions.
\end{proof}
\noindent As a first application of this notion of averaging, we obtain the following rigidity result for cocycles. Stronger versions of this result will be very useful in later sections. Recall from \cite[Definition 3.3.18]{Cand:Notes1} that a function $\cu^k(\ns)\to\ab$ is called a coboundary if it is of the form $\q\mapsto \sigma_k(f\co \q)$ for some function $f:\ns\to \ab$. 
\begin{lemma}\label{lem:smallco} Let $\ns$ be a compact $\ell$-step nilspace and let $\ab$ be a compact abelian group of finite rank. There exists $\epsilon>0$ such that, for every Borel measurable cocycle $\rho:\cu^k(\ns)\to \ab$, if  $d_2(\rho(\q),0)\leq\epsilon$ for every $\q\in \cu^k(\ns)$ then there exists a Borel measurable function $g:\ns\to \ab$ such that $\rho$ is the  coboundary $\q\mapsto \sigma_k(g\co\q)$. 
\end{lemma}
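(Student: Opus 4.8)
The plan is to reduce the statement to the case of a real-valued cocycle, to establish that such a cocycle is a coboundary, and then to use the averaging machinery of the previous sections to make the primitive Borel.

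\textbf{Reduction to $\ab=\R$.} Being of finite rank, $\ab$ is a compact abelian Lie group, so $\ab\cong F\times\T^n$ with $F$ finite (Definition \ref{def:calgmetric}). Since $d_2$ assigns infinite distance to points in different connected components, the hypothesis $d_2(\rho(\q),0)\le\epsilon$ forces every value $\rho(\q)$ into the identity component $\{0\}\times\T^n$; hence the $F$-component of $\rho$ vanishes and we may take $\ab=\T^n$. A cocycle into $\T^n$ is a coboundary if and only if each of its $n$ coordinates is, so we may take $\ab=\T$. Writing $\pi:\R\to\T$ for the quotient and choosing $\epsilon<1/2$, each value of $\rho$ lies in an arc on which $\pi$ is invertible, so we obtain a Borel lift $\tilde\rho:\cu^k(\ns)\to(-\epsilon,\epsilon)\subset\R$ with $\pi\co\tilde\rho=\rho$. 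Each instance of the defining cocycle relation \cite[Definition 3.3.14]{Cand:Notes1} is a fixed $\Z$-linear identity among a bounded number (depending only on $k$) of values of $\rho$, holding in $\T$; lifting it, the same $\Z$-linear combination of values of $\tilde\rho$ is an integer of absolute value at most $C_k\,\epsilon$. Taking $\epsilon<1/C_k$ forces this integer to be $0$, so $\tilde\rho$ is a genuine $\R$-valued cocycle.

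\textbf{Existence of a real primitive.} The heart of the argument is to show that the $\R$-valued cocycle $\tilde\rho$ is a coboundary, i.e. $\tilde\rho=\sigma_k(\tilde g\co\cdot)$ for some (a priori not necessarily measurable) $\tilde g:\ns\to\R$. I would prove this by induction on the structure-group tower of $\ns$ provided by Proposition \ref{prop:topbundec}. In the base case $\ns$ has a single structure group, where a degree-$k$ cocycle valued in the divisible torsion-free group $\R$ is a coboundary because the relevant cohomology vanishes for divisible coefficients (equivalently, degree-$k$ extensions by $\R$ split). For the inductive step, one views $\ns$ as an abelian bundle over its factor $\cF_{\ell-1}(\ns)$, pushes $\tilde\rho$ down and applies the inductive hypothesis to the base, while the remaining fibrewise contribution is handled by integrating over the fibres using the continuous systems of measures of Section \ref{subsec:csm}. \emph{This existence step is the main obstacle}: it is the place where the cocycle identity is used essentially, and where one must verify that the relation on the total space decomposes correctly into the base and fibre relations while keeping the resulting primitive consistently defined.

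\textbf{Upgrade to a Borel primitive and conclusion.} Once some primitive $\tilde g$ is known to exist, a Borel one is produced automatically by averaging, so no further use of the cocycle identity is needed. For $x\in\ns$ let $\mu_x$ be the Haar probability measure on $\cu^k_x(\ns)$ from Lemma \ref{lem:cube-set-CSM}, and set $g_\R(x)=\int_{\cu^k_x(\ns)}\tilde\rho\,\ud\mu_x$. This is Borel: since $\{\mu_x\}$ is a CSM (Lemma \ref{lem:cube-set-CSM}) the map $x\mapsto\int_{\cu^k_x(\ns)}f\,\ud\mu_x$ is continuous for continuous $f$, and the case of the bounded Borel function $\tilde\rho$ follows by a monotone-class argument. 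Moreover, for each $v\neq 0^k$ the projection $\q\mapsto\q(v)$ restricted to $\cu^k_x(\ns)$ is measure preserving onto $(\ns,\mu)$ (Lemma \ref{lem:GoodPairHoms}, exactly as in the proof of Theorem \ref{thm:Klepgen}). Substituting $\tilde\rho=\sigma_k(\tilde g\co\cdot)$ and integrating term by term therefore gives $g_\R(x)=\tilde g(x)+\sum_{v\neq 0^k}(-1)^{|v|}\int_\ns\tilde g\,\ud\mu=\tilde g(x)-\int_\ns\tilde g\,\ud\mu$, using $\sum_{v}(-1)^{|v|}=0$. Thus $g_\R$ differs from $\tilde g$ by a constant, whence $\sigma_k(g_\R\co\cdot)=\sigma_k(\tilde g\co\cdot)=\tilde\rho$. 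Finally set $g=\pi\co g_\R:\ns\to\T$, which is Borel and satisfies $\sigma_k(g\co\q)=\pi\big(\sigma_k(g_\R\co\q)\big)=\pi(\tilde\rho(\q))=\rho(\q)$; reassembling the $n$ coordinates together with the trivial $F$-component yields the required Borel function $g:\ns\to\ab$ with $\rho=\sigma_k(g\co\cdot)$.
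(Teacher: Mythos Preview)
Your reduction to $\R$ is fine, and your definition of $g$ by averaging $\rho$ over rooted cubes is exactly what the paper does. But the route you take to verify that this $g$ works is both unnecessarily indirect and contains a real gap.

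The gap is that your Step 3 relies on Step 2, and Step 2 is not done. You acknowledge this yourself (``this existence step is the main obstacle''), and the sketch you give---induction on the tower, ``cohomology vanishes for divisible coefficients'', integrating over fibres---does not amount to a proof. Worse, even granting a non-measurable primitive $\tilde g$, your Step 3 then writes $\int_{\ns}\tilde g\,\ud\mu$ and $\int \tilde g(\q(v))\,\ud\mu_x$; these integrals are not defined for a non-measurable function, so the computation $g_{\R}=\tilde g-\mathrm{const}$ is illegitimate as written.

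The paper's proof bypasses the existence question entirely. Having set $g(x)=\E_{\q\in\cu^k_x(\ns)}\rho(\q)$ (using the $\epsilon$-concentration hypothesis to make the average well-defined, Definition~\ref{def:calgaver}), it fixes an arbitrary $\q\in\cu^k(\ns)$ and invokes the tricube identity \cite[Lemma 3.3.31]{Cand:Notes1}: for any $t\in\hom_{\q\co\omega_k^{-1}}(T_k,\ns)$ one has $\rho(\q)=\sum_{v\in\{0,1\}^k}(-1)^{|v|}\rho(t\co\Psi_v)$. Averaging this over $t$ and using that each map $t\mapsto t\co\Psi_v$ is measure-preserving onto $\cu^k_{\q(v)}(\ns)$ (Corollary~\ref{cor:ext-in-Tn}) gives $\rho(\q)=\sum_v(-1)^{|v|}g(\q(v))=\sigma_k(g\co\q)$ directly. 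No prior primitive, no cohomology, no induction on the tower: the cocycle identity is used once, in the form of the tricube decomposition, and the averaging does the rest. Your averaged function $g_{\R}$ is the right object; what you are missing is this direct verification via \cite[Lemma 3.3.31]{Cand:Notes1} in place of your Steps 2--3.
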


\begin{proof}
For any $x\in\ns$, consider the space of cubes $\cu^k_x(\ns)$ with the Haar probability given by Lemma \ref{lem:cube-set-CSM}. By assumption $\rho$ is $\epsilon$-concentrated on $\cu^k_x(\ns)$, so we may define
\[
g:\ns\to \ab,\; x\mapsto \E_{\cu^k_x(\ns)}\,\rho(\q).
\]
Fix any $\q\in \cu^k(\ns)$ and let $\Omega=\hom_{\q\co\omega_k^{-1}}(T_k,\ns)$. For any $t\in \Omega$, by \cite[Lemma 3.3.31]{Cand:Notes1} we have $\rho(\q)=\sum_{v\in\{0,1\}^k} (-1)^{|v|} \rho(t\co\Psi_v)$. By Corollary \ref{cor:ext-in-Tn}, we have that for each $v$ the map $t\mapsto t\co\Psi_v$, $\Omega\to \cu^k_{\q(v)}(\ns)$ preserves the Haar measures (where the Haar measure on $\Omega$ is given by Corollary \ref{cor:tricubeprobspaces}). Hence, by averaging over $t$ in the last equation, we obtain $\rho(\q) = \sum_{v\in\{0,1\}^k} (-1)^{|v|} g(\q(v))=\sigma_k(g\co\q)$.
\end{proof}

\medskip
\section{Counting isomorphism classes of compact finite-rank nilspaces}

\medskip
In this section we use the parametrization of compact extensions by cocycles, given in Corollary \ref{cor:compclassrep}, to prove the following  result.

\begin{theorem}\label{thm:countably} There are countably many isomorphism classes of compact nilspaces of finite rank.
\end{theorem}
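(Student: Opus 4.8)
The plan is to induct on the number of steps and reduce the whole statement to a counting problem in measurable cohomology, which the rigidity result Lemma~\ref{lem:smallco} brings under control. First I would set up the reduction. If $\ns$ is a $k$-step compact nilspace of finite rank, then by Proposition~\ref{prop:topbundec} it is a compact degree-$k$ bundle with structure groups $\ab_1,\dots,\ab_k$, and finiteness of $\rk(\ns)=\sum_i\rk(\widehat{\ab_i})$ forces each $\ab_i$ to be a compact abelian group of finite rank, i.e.\ a compact abelian Lie group $\ab_i\cong F_i\times\T^{n_i}$. Up to isomorphism there are only countably many such groups (countably many finite abelian groups, countably many dimensions), so I may fix the structure groups. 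The factor $\cF_{k-1}(\ns)$ is a $(k-1)$-step compact nilspace of finite rank, and $\ns$ is a compact degree-$k$ extension of $\cF_{k-1}(\ns)$ by $\ab_k$. The base case $k\le 1$ reduces to the countably many compact abelian Lie groups $\ab$ viewed as the nilspaces $\cD_1(\ab)$. Assuming countability for $(k-1)$-step nilspaces, it suffices to show: for a \emph{fixed} finite-rank $(k-1)$-step base $\ns'$ and a fixed compact abelian Lie group $\ab$, there are only countably many isomorphism classes of compact degree-$k$ extensions of $\ns'$ by $\ab$. By Corollary~\ref{cor:compclassrep} these isomorphism classes form the image of the map $\Phi$ defined on the measurable cohomology group $H_k(\ns',\ab)$, so it is enough to prove that $H_k(\ns',\ab)$ is countable.

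Next I would extract uniform discreteness from the rigidity lemma. Equip the group of Borel cocycles $\cu^k(\ns')\to\ab$ with the pseudometric $D(\rho_1,\rho_2)=\sup_{\q}\,d_2(\rho_1(\q),\rho_2(\q))$ coming from the metric $d_2$ of Definition~\ref{def:calgmetric}, and let $\bar D$ be the induced quotient metric on $H_k(\ns',\ab)$ (infimizing over coboundaries). Let $\epsilon>0$ be the constant supplied by Lemma~\ref{lem:smallco}. If two classes satisfy $\bar D([\rho_1],[\rho_2])<\epsilon$, then some representative of $\rho_1-\rho_2$ is a Borel cocycle with $d_2(\,\cdot\,,0)\le\epsilon$ at every cube, hence a coboundary by Lemma~\ref{lem:smallco}, so $[\rho_1]=[\rho_2]$. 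Thus distinct classes are at $\bar D$-distance at least $\epsilon$: the space $(H_k(\ns',\ab),\bar D)$ is uniformly discrete. Since a uniformly $\epsilon$-separated subset of a separable metric space is automatically countable, countability of $H_k(\ns',\ab)$ will follow once I supply a separable family of representatives.

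The real work, and the step I expect to be the main obstacle, is to turn this discreteness into countability by showing that every class in $H_k(\ns',\ab)$ lies within $\bar D$-distance $<\epsilon/2$ of a class represented by a \emph{continuous} cocycle. Granting this, the continuous cocycles $Z_c\subset C(\cu^k(\ns'),\ab)$ form a separable metric space in the uniform metric (as $\cu^k(\ns')$ is compact metric and $\ab$ is a compact Lie group); the quotient map $Z_c\to H_k(\ns',\ab)$ is $\epsilon$-locally constant by the same application of Lemma~\ref{lem:smallco}, so its image is a countable set $\mathcal C$ of classes; and then, sending each class to a continuous class within $\epsilon/2$ of it, the $\epsilon$-separation forces this assignment to be injective into $\mathcal C$, so $H_k(\ns',\ab)$ is countable. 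The difficulty is precisely the continuous-representation step: a Borel cocycle need not be continuous (a global continuous cross-section of the extension need not exist), so I must produce from a Borel cocycle $\rho$ a continuous cocycle that differs from it by a coboundary up to arbitrarily small uniform error. Here I would use the finite-rank hypothesis through Lemma~\ref{lem:finrankloctriv}, so that $\ns'$ and $\cu^k(\ns')$ are finite-dimensional manifolds carrying the locally trivial bundle structure, and mollify $\rho$ in the local Lie-group charts against the Haar systems of Section~\ref{subsec:Haar}; the concentration-averaging of Lemma~\ref{lem:avadd} makes the mollified object well defined locally, while the cocycle identity expressed through the tricube (as in the proof of Lemma~\ref{lem:smallco}) should both preserve the cocycle relation and bound the difference from $\rho$ by a coboundary with small uniform error. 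Carrying out this smoothing while keeping the exact degree-$k$ cocycle equation intact is the delicate point on which the whole argument rests.
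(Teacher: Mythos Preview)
Your inductive reduction and the use of Lemma~\ref{lem:smallco} to obtain $\epsilon$-separation of cohomology classes are both sound and match the paper's strategy. The gap is in the step you yourself flag as the obstacle: showing that every class lies within $\bar D$-distance $\epsilon/2$ of a class with a \emph{continuous} cocycle representative. Combined with the $\epsilon$-separation you already proved, this would force every Borel cocycle to be cohomologous to a continuous one. But that is false. A continuous cocycle $\rho'$ yields a continuous cross section $x'\mapsto \rho'_{x'}$ of the extension $M(\rho')\to\ns'$ (by the CSM property, since $F_1\circ\rho'$ is continuous for every test function $F_1$), so the extension would be a trivial principal $\ab$-bundle. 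The Heisenberg nilmanifold $H/\Gamma$ is a degree-$2$ extension of $\T^2$ by $\T$ and is a \emph{nontrivial} $\T$-bundle (its fundamental group is the discrete Heisenberg group, not $\Z^3$); hence its cocycle class admits no continuous representative, and your mollification scheme cannot succeed in general.

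The paper avoids this by not asking for global continuity. Instead it builds, via local triviality (Lemma~\ref{lem:finrankloctriv}) and contractibility of Euclidean balls, a \emph{piecewise}-continuous cross section over a finite disjoint union $U=\bigsqcup_i U_i$ of open balls with $\mu(U)=1$; the resulting cocycle is continuous on each piece $T_i$ of the induced finite partition of a full-measure subset of $\cu^{k+1}(\ns')$. The paper then upgrades Lemma~\ref{lem:smallco} to Lemma~\ref{lem:smallco2}, the almost-everywhere version: if $d_2(\rho(\q),0)\le\epsilon$ for \emph{almost} every $\q$ then $\rho$ is a coboundary. Separability of $C(T_i,\ab)$ in the uniform metric then gives a countable $\epsilon/2$-net on each $T_i$, and any two extensions whose piecewise-continuous cocycles fall in the same net cell are isomorphic by Lemma~\ref{lem:smallco2}. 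So the fix is to replace ``continuous representative'' by ``piecewise-continuous on a conull set'' and to prove (and use) the a.e.\ rigidity lemma; your pointwise Lemma~\ref{lem:smallco} is not enough because the piecewise representatives only agree with the cocycle on a measure-$1$ set, not everywhere.
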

\noindent To prove this we shall use the following lemma, that strengthens Lemma \ref{lem:smallco} by allowing its premise to fail on a null set.

\begin{lemma} \label{lem:smallco2} Let $\ns$ be a compact $\ell$-step nilspace and $\ab$ be a compact abelian group of finite rank. Then there exists $\epsilon>0$ such that every Borel measurable cocycle $\rho:\cu^k(\ns)\to \ab$ satisfying $d_2(\rho(\q),0)\leq\epsilon$ for almost every $\q\in \cu^k(\ns)$ is a coboundary.
\end{lemma}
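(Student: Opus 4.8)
The plan is to re-run the fibre-averaging argument of Lemma \ref{lem:smallco}, taking care that the averaging only ever needs the bound on $\rho$ to hold almost everywhere. Fix $\epsilon\le 1/(4\cdot 2^k)$, so that the averaging operation of Definition \ref{def:calgaver} behaves additively on sums of $2^k$ functions by Lemma \ref{lem:avadd}. By Lemma \ref{lem:cube-set-CSM} the Haar measure $\mu_k$ on $\cu^k(\ns)$ disintegrates as $\int_{\ns}\mu_x\,\ud\mu(x)$ over the fibres $\cu^k_x(\ns)$, so the hypothesis $\mu_k(B)=0$, where $B=\{\q:d_2(\rho(\q),0)>\epsilon\}$, yields $\mu_x(B\cap\cu^k_x(\ns))=0$ for $\mu$-almost every $x$. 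Call such $x$ \emph{good}, and let $N$ be the remaining (null) set of \emph{bad} base points. For good $x$ the restriction $\rho|_{\cu^k_x(\ns)}$ is $\epsilon$-concentrated almost everywhere, so the average $g(x)=\E_{\cu^k_x(\ns)}\rho$ is well defined (since $\E$ is unaffected by modification on a null set); set $g(x)=0$ for $x\in N$. The resulting map $g:\ns\to\ab$ is Borel.

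Next I would establish the coboundary identity on all cubes avoiding $N$. Call a cube \emph{green} if all of its vertices are good. For a green $\q$ I run the argument of Lemma \ref{lem:smallco}: for every $t$ in $\Omega=\hom_{\q\co\omega_k^{-1}}(T_k,\ns)$ the tricube identity \cite[Lemma 3.3.31]{Cand:Notes1} gives $\rho(\q)=\sum_{v}(-1)^{|v|}\rho(t\co\Psi_v)$, and by Corollary \ref{cor:ext-in-Tn} each map $t\mapsto t\co\Psi_v$ pushes the Haar measure on $\Omega$ to $\mu_{\q(v)}$. Since each $\q(v)$ is good, $t\mapsto\rho(t\co\Psi_v)$ is $\epsilon$-concentrated almost everywhere; averaging the identity over $\Omega$ (its left side being constant in $t$) and distributing the average across the $2^k$ terms by Lemma \ref{lem:avadd} gives $\rho(\q)=\sum_v(-1)^{|v|}g(\q(v))=\sigma_k(g\co\q)$. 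Because each evaluation map $\q\mapsto\q(v)$ is measure-preserving $\cu^k(\ns)\to\ns$ (Lemma \ref{lem:GoodPairHoms} with $P_1=\emptyset$, $P_2=\{v\}$), the cubes with a bad vertex form a $\mu_k$-null set; hence $\rho=\sigma_k(g\co\cdot)$ holds $\mu_k$-almost everywhere, and the Borel cocycle $\beta=\rho-\sigma_k(g\co\cdot)$ vanishes on every green cube.

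The main obstacle is to upgrade this to an identity holding everywhere, i.e. to extend $g$ across the null set $N$ so that $\rho=\sigma_k(g\co\cdot)$ holds for all cubes. The difficulty is genuinely concentrated on cubes meeting $N$: at such a cube the tricube identity forces its (possibly bad) vertices to occur as the constrained outer points of the tricube, so the fibre-averaging that produced $g$ cannot avoid the bad fibres there. I would resolve this using the cocycle relation itself: for $p\in N$ and any cube $\q$ with $\q(w)=p$ and all other vertices good, the value $g(p):=(-1)^{|w|}\big(\rho(\q)-\sum_{v\neq w}(-1)^{|v|}g(\q(v))\big)$ is forced, and the crux is to verify that it is independent of the choice of $\q$ (and of $w$); this should follow by applying the cocycle identity of $\rho$ to a $(k+1)$-cube interpolating two competing choices. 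One then checks, by induction on the number of bad vertices of a cube, that the extended $g$ satisfies $\rho=\sigma_k(g\co\cdot)$ everywhere, exhibiting $\rho$ as a coboundary. This consistency verification is the principal technical point; the finite-rank hypothesis on $\ab$ is used throughout, as it makes $\ab$ a Lie group and hence renders the operations $\E$ and $d_2$ available.
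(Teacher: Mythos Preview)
Your argument through the construction of $g$ on the good set and the verification that $\rho=\sigma_k(g\circ\cdot)$ on green cubes is correct and matches the paper's proof. The divergence is in the final step. The paper does not try to extend $g$ across $N$; instead it observes that the residual cocycle $\rho-\sigma_k(g\circ\cdot)$ vanishes almost everywhere and invokes a separately proved lemma (Lemma \ref{lem:almostzero}) saying that any Borel cocycle vanishing almost everywhere is a coboundary. That lemma is where the substantive work hides: its proof uses the compactness of the extension $M(\rho)$ (Proposition \ref{prop:cocyclext}) together with a density argument and Lemma \ref{lem:constconv} to upgrade ``$\rho_x$ constant a.e.\ for almost every $x$'' to ``for every $x$'', after which a tricube averaging finishes it.

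Your direct extension strategy can be made to work, but the mechanism you name for the consistency check is not right. Two arbitrary $k$-cubes rooted at the same bad point $p$ will not in general sit as faces of a common $(k+1)$-cube, so the ``interpolating $(k+1)$-cube'' idea does not apply. The tool that does work is the same tricube identity you already used: for $\q$ rooted at $p$ with all other vertices good, and almost every $t\in\hom_{\q\circ\omega_k^{-1}}(T_k,\ns)$, the terms $\beta(t\circ\Psi_v)$ with $v\neq 0^k$ vanish (their root $\q(v)$ is good, hence $t\circ\Psi_v$ is green a.e.), leaving $\beta(\q)=\beta(t\circ\Psi_{0^k})$; since $t\mapsto t\circ\Psi_{0^k}$ is measure-preserving onto $\cu^k_p(\ns)$, this pins $\beta(\q)$ to the a.e.\ value of $\beta$ on that fibre, independently of $\q$. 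The same device then kills $\beta$ on cubes with several bad vertices in one further pass, so no induction is needed. Thus your plan is salvageable and arguably more elementary than the paper's route (it avoids the topology of $M(\rho)$), but the specific consistency argument you sketched would not have gone through.
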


This relies on the following fact.

\begin{lemma}\label{lem:almostzero} Let $\ns$ be a compact $\ell$-step nilspace and let $\ab$ be a compact abelian group of finite rank. Let $\rho:\cu^k(\ns)\to \ab$ be a Borel measurable cocycle such that $\rho=0$ for almost every element in $\cu^k(\ns)$. Then $\rho$ is a coboundary.
\end{lemma}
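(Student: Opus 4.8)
The plan is to build the coboundary datum $g:\ns\to\ab$ directly as a fibrewise essential value of $\rho$, and then to use the tricube identity twice: once to show this value exists at \emph{every} point of $\ns$, and once to show it reproduces $\rho$ at \emph{every} cube. Write $\mu_k$ for the Haar measure on $\cu^k(\ns)$, let $\mu$ be the Haar measure on $\ns$, and let $\{\mu_x:x\in\ns\}$ be the CSM on $\q\mapsto\q(0^k)$ disintegrating $\mu_k$ relative to $\mu$ (Lemma \ref{lem:cube-set-CSM}). Since $\rho=0$ $\mu_k$-a.e., this disintegration produces a conull set of \emph{good} points $x$, meaning $\rho$ vanishes $\mu_x$-a.e.\ on $\cu^k_x(\ns)$; let $B$ denote the complementary $\mu$-null set of bad points. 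The candidate is $g(x):=$ the $\mu_x$-essential value of $\rho$ on $\cu^k_x(\ns)$, which is $0$ at good points; the two nontrivial claims are that this essential value is defined for \emph{every} $x$ and that $\rho(\q)=\sigma_k(g\co\q)$ for \emph{every} $\q$.

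First I would record that for every $x\in\ns$ and every $v\neq 0^k$ the vertex map $e_v:\cu^k_x(\ns)\to\ns$, $\q\mapsto\q(v)$, preserves Haar measures. This follows from Lemma \ref{lem:GoodPairHoms} applied with $P=\{0,1\}^k$, $P_1=\{0^k\}$ (and $f:0^k\mapsto x$) and $P_2=\{v\}$: the pair $P_1,P_2$ is good, since for every abelian group $\ab$ and every $j$ a degree-$\le j$ cube on $\{0,1\}^k$ taking the value $0$ at $0^k$ and an arbitrary prescribed value at $v$ always exists (a linear map suffices for $j\ge1$). Hence the restriction map $\cu^k_x(\ns)=\hom_f(P,\ns)\to\hom(\{v\},\ns)\cong\ns$ is totally surjective, thus measure-preserving. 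Consequently $\mu_x\big(\{\q:\q(v)\in B\}\big)=\mu(B)=0$ for each $v\neq 0^k$, and summing over the finitely many such $v$ shows that for \emph{every} $x$, $\mu_x$-almost every cube rooted at $x$ has all of its non-root vertices good.

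Next I would run the forcing step. Fix any $x$ and, by the previous paragraph, choose $\q^\ast\in\cu^k_x(\ns)$ all of whose non-root vertices are good. Put $\Omega=\hom_{\q^\ast\co\omega_k^{-1}}(T_k,\ns)$ and apply the tricube identity \cite[Lemma 3.3.31]{Cand:Notes1}: for every $t\in\Omega$,
\[
\rho(\q^\ast)=\rho(t\co\Psi_{0^k})+\sum_{v\neq 0^k}(-1)^{|v|}\rho(t\co\Psi_v).
\]
For each $v\neq 0^k$ the map $t\mapsto t\co\Psi_v$ is measure-preserving from $\Omega$ onto $\cu^k_{\q^\ast(v)}(\ns)$ (Corollary \ref{cor:ext-in-Tn} together with the coordinate symmetries of $T_k$, exactly as used in Lemma \ref{lem:smallco}); since $\q^\ast(v)$ is good, $\rho(t\co\Psi_v)=0$ for $\mu_\Omega$-a.e.\ $t$, and intersecting finitely many conull sets makes the sum vanish, whence $\rho(\q^\ast)=\rho(t\co\Psi_{0^k})$ for a.e.\ $t$. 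As $t\mapsto t\co\Psi_{0^k}$ is measure-preserving onto $\cu^k_x(\ns)$, pushing forward shows $\rho=\rho(\q^\ast)$ $\mu_x$-a.e.\ on $\cu^k_x(\ns)$. Thus $\rho_x$ is essentially constant for every $x$, so $g(x)$ is well defined (and $0$ at good points). Borel measurability of $g$ then follows since for every continuous $F:\ab\to\C$ one has $\int_{\cu^k_x(\ns)}F\co\rho\,\ud\mu_x=F(g(x))$, the left side being a Borel function of $x$ (integration of a bounded Borel function against a CSM is Borel, by a monotone-class argument from Definition \ref{def:CSM}); as continuous functions generate the Borel $\sigma$-algebra of the compact metric group $\ab$, $g$ is Borel.

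Finally I would verify the coboundary identity at an arbitrary $\q\in\cu^k(\ns)$. With $\Omega_\q=\hom_{\q\co\omega_k^{-1}}(T_k,\ns)$ the same identity gives $\rho(\q)=\sum_v(-1)^{|v|}\rho(t\co\Psi_v)$ for every $t\in\Omega_\q$; for each $v$ the map $t\mapsto t\co\Psi_v$ is measure-preserving onto $\cu^k_{\q(v)}(\ns)$, where by the previous step $\rho$ equals $g(\q(v))$ a.e., so $\rho(t\co\Psi_v)=g(\q(v))$ for a.e.\ $t$. Intersecting over $v$ yields $\rho(\q)=\sum_v(-1)^{|v|}g(\q(v))=\sigma_k(g\co\q)$ for a.e.\ $t$, and since both sides are independent of $t$ they agree, so $\rho$ is the coboundary of $g$. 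The crux, and the step I expect to demand the most care, is the forcing in the third paragraph: its whole purpose is to promote the merely almost-everywhere vanishing of $\rho$ into pointwise information over \emph{every} fibre, and this rests on the existence of a good-surrounded cube above each $x$ (from the second paragraph) together with the exact measure-preserving behaviour of the restriction maps $t\mapsto t\co\Psi_v$ onto the \emph{full} rooted cube sets.
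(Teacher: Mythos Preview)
Your proof is correct, and the final step (using the tricube identity over $\hom_{\q\co\omega_k^{-1}}(T_k,\ns)$ together with Corollary~\ref{cor:ext-in-Tn} to upgrade fibrewise information to the pointwise identity $\rho(\q)=\sigma_k(g\co\q)$) is essentially the same as the paper's. The genuine difference lies in how you establish that $\rho_x$ is essentially constant for \emph{every} $x$, not merely almost every $x$. The paper does this topologically: it observes that the set $S$ of good points is dense (Proposition~\ref{prop:posmeasopen}), takes a sequence $x_n\in S$ converging to an arbitrary $x$, uses the compactness of the extension $M(\rho)$ (Proposition~\ref{prop:cocyclext}) to extract a convergent subsequence of the constant functions $\rho_{x_n}+a_n$, and then invokes Lemma~\ref{lem:constconv} to conclude that the limit $\rho_x+a$ is itself constant almost everywhere. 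Your route is instead purely combinatorial--measure-theoretic: you use the measure-preserving vertex maps $\q\mapsto\q(v)$ on $\cu^k_x(\ns)$ (the same good-pair argument the paper uses in the proof of Theorem~\ref{thm:Klepgen}) to find, above \emph{every} $x$, a cube $\q^\ast$ whose non-root vertices are all good, and then run the tricube identity once more to force $\rho_x$ to equal the constant $\rho(\q^\ast)$ almost everywhere. Your argument is more elementary in that it avoids the $\cL$-space machinery and the compactness of $M(\rho)$ entirely; the paper's argument, on the other hand, illustrates exactly why that machinery was developed and slots naturally into the surrounding narrative. Your Borel-measurability argument for $g$ is also slightly more explicit than the paper's.
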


\begin{proof} 
Let $S=\{x\in \ns : \rho_x=0\textrm{ almost surely on }\cu_x^k(\ns)\}$. It follows from the properties of the CSM on $\cu^k(\ns)\to \ns$, $\q\mapsto \q(0^k)$ (given by Lemma \ref{lem:cube-set-CSM}) that $S$ has probability $1$.  For every $x\in S$ the set $\rho_x+\ab\,\subset \cL_k(\ns,\ab)$ consists of functions that are constant almost everywhere. Since by Proposition \ref{prop:posmeasopen} every open set in $\ns$ has positive measure, we have that $S$ is dense in $\ns$. We claim that for every $x\in \ns$ the function $\rho_x$ equals a constant almost everywhere. Indeed, by density of $S$ there exists a sequence of points $x_n\in S$ converging to $x$. For each $n$ choose an element $\rho_{x_n}+a_n \in M(\rho)$ equal to a constant almost surely. By Proposition \ref{prop:cocyclext} the extension $M$ is compact and so there is a subsequence of such functions $\rho_{x_m}+a_m$ converging in $M$. By Lemma \ref{lem:constconv}, the limit is constant almost everywhere, and (as a point of $M$) is $\rho_x+a$ for some $a\in \ab$, which proves our claim. From this, we deduce that the function $g:\ns\to \ab$, $x\mapsto \E_{\q\in \cu^k_x(\ns)} \rho(\q)$ is well-defined and that for each $x\in \ns$ we have $\rho(\q)=g(x)$ for almost every $\q\in \cu^k_x(\ns)$. We now claim that for every $x\in \ns$, we have $\rho(\q)=\sigma_k(g\co \q)$ for almost every $\q\in \cu^k(\ns)$. This follows from an argument similar to the second paragraph in the proof of Lemma \ref{lem:smallco}. Thus, denoting by $f$ the coboundary $\q\mapsto \sigma_k(g\co \q)$, the cocycle $\rho'=\rho-f$ has the property that for every $x\in \ns$ we have $\rho'_x=0$ almost surely on $\cu_x^k(\ns)$. This implies that $\rho'=0$ everywhere. Indeed, for an arbitrary fixed $\q\in \cu^k(\ns)$, and any $t\in \hom_{\q\co\omega_k^{-1}} (T_k,\ns)$, by \cite[Lemma 3.3.31]{Cand:Notes1} we have $\rho'(\q)=\beta(t,\rho')$, and then, averaging both sides of this equation over $t$, the right side vanishes since $\rho_x'$ is 0 almost everywhere (using again that each map $t\mapsto t\circ \Psi_v$ is probability-preserving, by Corollary \ref{cor:ext-in-Tn}).
\end{proof}
\begin{proof}[Proof of Lemma \ref{lem:smallco2}] We argue as in the proof of Lemma \ref{lem:smallco}. Let $S$ be the set of elements $x\in \ns$ for which $\rho_x$ is almost surely within $\epsilon$ of $0$ in $d_2$. By the assumption, we have that $S$ has probability $1$. We define $g:\ns\to \ab$ for $x\in S$ by $g(x)=\E_{\cu^k_x(\ns)}\,\rho(\q)$, and for $x\in \ns\setminus S$ by setting $g(x)=0$. Letting $\rho_2(\q)=\sigma_k(g\co \q)$, the same averaging argument shows that $\rho_2=\rho$ almost surely. By Lemma \ref{lem:almostzero} the difference $\rho-\rho_2$ is a coboundary, whence $\rho$ is a coboundary.
\end{proof}
We can now prove the main result of this section.

\begin{proof}[Proof of Theorem \ref{thm:countably}]
We claim that for every $k\in \N$, for every compact $(k-1)$-step nilspace $\ns$ of finite rank and every compact abelian Lie group $\ab$, there are at most countably many non-isomorphic degree-$k$ extensions of $\ns$ by $\ab$. This implies the theorem by induction on $k$.\\ 
\indent To prove the claim, we first associate with each $\ab$-extension of $\ns$ a measurable cocycle $\rho_{\cs}$ generated by a piecewise-continuous cross section $\cs$ for the extension. We construct $\cs$ as follows.

Since $\ns$ is a finite-dimensional compact manifold, there exist disjoint open sets $U_1,U_2,\dots,U_r$, each homeomorphic to the open unit Euclidean ball in $\R^n$ for some $n\geq 0$, such that $U=\bigcup_{i\in [r]} U_i$ has measure $1$ in $\ns$. Let $M$ denote the given degree-$k$ extension of $\ns$ by $\ab$, with projection $\pi:M\to \ns$. Every $\ab$-bundle over a contractible space is trivial \cite[Ch. 4, Corollary 10.3]{Husem}. Therefore there exists a continuous cross section $\cs_i$ locally on each $U_i$, and combining these we obtain a piecewise continuous (hence Borel-measurable) cross section $\cs:\ns\to M$, with $\cs|_{U_i}=\cs_i$. Let $T\subseteq \cu^{k+1}(\ns)$ denote the set of cubes whose vertices are all in $U$. We can partition the cubes $\q\in T$ according to which set $U_i$ contains each value $\q(v)$, that is $T=\bigsqcup_{i\in [r]^{\{0,1\}^{k+1}}} T_i$, where $T_i\,=\,T\;\cap \prod_{v\in \{0,1\}^{k+1}} U_{i_v}$. The cocycle $\rho_{\cs}:\cu^{k+1}(\ns)\to \ab$ is defined by $\rho_{\cs}(\q)=\sigma_{k+1}(\cs\co \q-\q')$ for any $\q'\in\cu^{k+1}(M)$ with $\pi\co\q'=\q$ (recall \cite[Lemma 3.3.21]{Cand:Notes1}). Note that $\rho_{\cs}$ is continuous on each part $T_i$.\\
\indent Now let $\epsilon>0$ be as given by Lemma \ref{lem:smallco2}. Then for any other degree-$k$ extension $M'$, we can generate a cocycle $\rho_{\cs'}$ as above, and if $d_2\big(\rho_{\cs}(\q),\rho_{\cs'}(\q)\big)<\epsilon$ for almost every $\q$ then by the lemma $\rho_{\cs'}$ must equal $\rho_{\cs}$ plus a coboundary, which implies, by Corollary \ref{cor:compclassrep}, that $M'$ is isomorphic to $M$.\\
\indent Let $d_\infty$ denote the uniform metric on the set of continuous functions $\cu^{k+1}(\ns)\to\ab$, defined by $d_\infty(f_1,f_2)=\sup_{\q}  d_2\big(f_1(\q),f_2(\q)\big)$. For each $i$, the space of continuous functions $T_i\to \ab$ with $d_\infty$ is separable (essentially by separability of the space $C(Y,\ab)$ for any compact subset $Y$ of a Euclidean space \cite[Theorem (4.19)]{Ke}). Therefore there exists a countable $\frac{\epsilon}{2}$-net $(f_{i,j})_{j\in \N}$ of such continuous functions on each $T_i$. Then, by the previous paragraph, for each choice of functions $\{f_{i,j_i}: i\in [r]^{\{0,1\}^{k+1}}\}$ from these nets, there can only be at most one isomorphism class of extensions having an associated cocycle $\rho_{\cs}$ satisfying $d_\infty(\rho_{\cs}|_{T_i},f_{i,j_i})<\epsilon/2$ for each $i$.
\end{proof}

\medskip
\section{Compact nilspaces as inverse limits of finite-rank nilspaces}\label{sec:invlim}

\medskip
In this section we treat one of the central results from \cite{CamSzeg}, namely that every compact nilspace can be expressed as an inverse limit of compact nilspaces of finite rank. Before we give the formal statement, let us detail the inverse limit construction in this category.

\begin{defn}
An \emph{inverse system}  (or projective system) of compact nilspaces  over $\N$ is a family of continuous nilspace morphisms $\{\varphi_{ij}: \ns_j\to \ns_i~|~ i,j\in \N, i\leq j\}$, where $\ns_j$, $j\in \N$ are compact nilspaces, such that $\varphi_{jj}$ is the identity morphism for all $j\in \N$, and for all $i,j,k\in \N$ with $i\leq j\leq k$ we have $\varphi_{ij}\co\varphi_{jk}=\varphi_{ik}$. The inverse system is said to be \emph{strict} if every morphism $\varphi_{ij}$ is fibre-surjective.
\end{defn}

\begin{lemma}
Let $S=(\varphi_{ij}: \ns_j\to \ns_i)_{i\leq j}$ be a strict inverse system of compact nilspaces. Let
\[\ns=\Big\{(x_i)\in \prod_{i\in \N} \ns_i ~|~ \varphi_{ij}(x_j)=x_i\;\; \forall\,i,j\in \N,\;i\leq j\Big\},
\]
and for each $n\geq 0$ let $\cu^n(\ns)=\big\{(\q_i)\in \prod_{i\in \N} \cu^n(\ns_i)~|~ \varphi_{ij}\co \q_j= \q_i\;\;\forall\,i,j\in \N,\;i\leq j\big\}$. The space $\ns$ together with the cube sets $\cu^n(\ns)$ is a compact nilspace, called the \emph{inverse limit} of $S$, and denoted $\varprojlim_i \ns_i$. We have that $\ns$ is $k$-step if and only if every $\ns_i$ is $k$-step.
\end{lemma}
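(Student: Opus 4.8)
The plan is to realise $\ns$ as a closed subspace of the product $\prod_{i\in\N}\ns_i$ and to read off all the required structure coordinatewise, the only genuinely non-formal point being the corner-completion axiom, which I obtain from a compactness argument.

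First I would check that $\ns$ is a compact space. Since each $\ns_i$ is compact, Hausdorff and second-countable, so is the countable product $\prod_i\ns_i$; and $\ns$ is the intersection of the sets $\{(x_l): \varphi_{ij}(x_j)=x_i\}$, each closed as the equaliser of two continuous maps $\prod_l\ns_l\to\ns_i$ into a Hausdorff space, so $\ns$ is closed, hence a compact space. Next, identifying a function $f:\{0,1\}^n\to\ns$ with the tuple $(\pi_i\co f)_i$, where $\pi_i:\ns\to\ns_i$ is the projection, one sees that $f\in\cu^n(\ns)$ precisely when $\pi_i\co f\in\cu^n(\ns_i)$ for every $i$; since $f\mapsto\pi_i\co f$ is continuous and each $\cu^n(\ns_i)$ is closed, $\cu^n(\ns)$ is closed in $\ns^{\{0,1\}^n}$. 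Thus $\ns$ is a compact cubespace, and the cubespace composition axiom together with the ergodicity axiom $\cu^1(\ns)=\ns^{\{0,1\}}$ hold because they hold in each $\ns_i$ and the defining conditions of $\cu^n(\ns)$ are preserved under precomposition by morphisms of discrete cubes.

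The crux is corner-completion. Fix an $n$-corner $c:\{0,1\}^n\setminus\{1^n\}\to\ns$ and put $c_i=\pi_i\co c$, an $n$-corner in $\ns_i$. Let $C_i$ be the set of completions of $c_i$ in $\cu^n(\ns_i)$; each $C_i$ is non-empty (corner-completion in $\ns_i$) and compact (a closed subset of $\cu^n(\ns_i)$), and $\q_j\mapsto\varphi_{ij}\co\q_j$ sends $C_j$ into $C_i$ compatibly. I would then invoke the standard fact that an inverse limit over $\N$ of non-empty compact Hausdorff spaces is non-empty: by Tychonoff the subsets $\{(\q_l)\in\prod_l C_l:\varphi_{ij}\co\q_j=\q_i\}$ are closed and have the finite intersection property (finitely many constraints, involving indices up to some $N$, are met by choosing $\q_N\in C_N$ and pushing it down the tower), so $\varprojlim_i C_i\neq\emptyset$. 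Any element of it is a family $(\q_i)\in\cu^n(\ns)$ restricting to $c$, i.e.\ a completion of $c$. This is the step I expect to be the main obstacle, since completions in the individual $\ns_i$ are highly non-unique and must be chosen coherently along the whole tower; compactness is exactly what forces a coherent choice to exist.

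For the step claim I would use the characterisation that a nilspace has step $\le k$ iff every $(k+1)$-corner has a unique completion (equivalently, its degree-$(k+1)$ structure group is trivial). If every $\ns_i$ has step $\le k$, then two cubes in $\cu^{k+1}(\ns)$ agreeing off $1^{k+1}$ project, for each $i$, to two completions of the same $(k+1)$-corner of $\ns_i$, which coincide by uniqueness; hence the two cubes agree, so $\ns$ has step $\le k$. For the converse I would first check, using strictness together with the same finite-intersection argument, that each $\pi_i$ is surjective and indeed fibre-surjective, being a limit of the fibre-surjective $\varphi_{ij}$ (cf.\ \cite[Definition 3.3.7, Lemma 3.3.8]{Cand:Notes1}). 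A fibre-surjective morphism induces surjective structure morphisms on the associated bundles (cf.\ \cite[Lemma 3.3.12]{Cand:Notes1}), so the degree-$(k+1)$ structure group of $\ns_i$ is a quotient of that of $\ns$; when $\ns$ has step $\le k$ the latter is trivial, forcing $\ns_i$ to have step $\le k$. Combining the two directions (and comparing exact steps) gives that $\ns$ is $k$-step iff every $\ns_i$ is. It is precisely here that strictness enters; the compactness of $\ns$ and the nilspace axioms require no surjectivity of the bonding maps.
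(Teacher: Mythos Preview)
Your argument is correct and, for the corner-completion axiom, takes a genuinely different route from the paper. The paper proves completion constructively: starting from a completion $\q_1$ of $\q_1'$ in $\ns_1$, it uses fibre-surjectivity to lift $\q_1$ to a cube in $\ns_2$, then adjusts this lift along a Hamiltonian path through the $1$-faces (using the kernel of the structure morphism) so that it becomes a completion $\q_2$ of $\q_2'$ projecting to $\q_1$; iterating gives a coherent sequence $(\q_i)$, which is already an element of $\cu^n(\ns)$ completing $\q'$. Your proof bypasses this lifting-and-adjusting procedure entirely by observing that the sets $C_i$ of completions form an inverse system of non-empty compact Hausdorff spaces with continuous bonding maps, so $\varprojlim_i C_i\neq\emptyset$ by Tychonoff and the finite-intersection property. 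This is cleaner and, as you correctly note, does not use strictness at all for the nilspace axioms; the paper's construction genuinely relies on fibre-surjectivity at each step to produce the lifts. What the paper's approach buys is an explicit recipe for the completion that also exposes where the bundle structure enters, which is in keeping with the hands-on style used later in the chapter.

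For the step biconditional both treatments are brief. The paper simply cites that fibre-surjective morphisms preserve the property of being $k$-step; your argument via surjectivity of structure morphisms is essentially a restatement of the same fact, and your remark that one must first check the projections $\pi_i$ themselves are fibre-surjective (via the same inverse-limit-of-compacta argument applied to preimage fibres) is a point the paper only records after the proof.
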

\noindent We call the maps $\varphi_{ij}$ the \emph{transition morphisms}. The \emph{projections} on $\ns$, denoted by $\varphi_j$, are the coordinate projections on $\prod_{i\in \N} \ns_i$ restricted to $\ns$. If the inverse system is strict then the projections are also fibre-surjective morphisms.  The $\ell$-th structure group of $\varprojlim_i \ns_i$ is the inverse limit of the system $(\alpha_{\ell,ij}:\ab_{\ell,j}\to \ab_{\ell,i})_{i\leq j}$, where $\ab_{\ell,j}$ is the $\ell$-th structure group of $\ns_j$ and $\alpha_{\ell,ij}$ is the $\ell$-th structure morphism of $\varphi_{ij}$.
\begin{proof}
A straightforward argument with convergent sequences shows that $\ns$ is a closed subspace of the compact space $\prod_{i\in \N} \ns_i$, and similarly for each set $\cu^n(\ns)$. The ergodicity and composition axioms are clear. Let us prove that $\ns$ satisfies the completion axiom. Let $\q'$ be an $n$-corner on $\ns$. Then for each $i\in \N$ the projection $\q_i'=\varphi_i\co \q'$ is an $n$-corner on $\ns_i$. Now $\q_1'$ has a completion $\q_1$ and by fibre-surjectivity there exists $\tilde \q_1\in \cu^n(\ns_2)$ such that $\varphi_{12}\co \tilde\q_1=\q_1$ (recall \cite[Lemma 3.3.9]{Cand:Notes1}). 
The restriction of $\tilde\q_1$ to $P=\{0,1\}^n\setminus\{1^n\}$ is in the same fibre of the map $\q'\mapsto \varphi_{12}\co \q'$ as $\q_2'$, so by \cite[Lemma 3.3.12 (ii)]{Cand:Notes1} these two corners differ by an $n$-corner on $\cD_k(\ker(\alpha_k))$, where $\alpha_k$ is the $k$-th structure morphism of $\varphi_{12}$. Now by modifying $\tilde\q_1$, we can obtain a cube $\q_2$ that is still in this fibre (so that $\varphi_{12}\co \q_2=\q_1$) and such that $\q_2|_P=\q_2'|_P$. Indeed, we can obtain $\q_2$ by adding first an appropriate element of $\ker(\alpha_k)$ to the values of $\tilde\q_1$ at the vertices of the 1-face $\{0^n,(1,0,\dots,0)\}$ so that it agrees with $\q_2'$ at $0^n$, then repeat this for each subsequent 1-face along a Hamiltonian path in the graph of 1-faces on $\{0,1\}^n$ (similar arguments were used several times in \cite{Cand:Notes1}, for instance in the proof of \cite[Lemma 3.2.25]{Cand:Notes1}). We have thus obtained a cube $\q_2$ completing $\q_2'$ and such that $\varphi_{12}\co \q_2=\q_1$. Now we repeat this operation to obtain $\q_3$ completing $\q_3'$ such that $\varphi_{23}\co \q_3=\q_2$, and so on. This yields a sequence of cubes $\q_i\in \cu^n(\ns)$ such that for each $i$ we have $\varphi_j\co \q_i$ is a completion of $\q_j'$ for all $j\leq i$. The limit of a convergent subsequence of $(\q_i)$ is a completion of $\q'$. To see the last claim in the lemma, recall from \cite[Definition 3.3.7]{Cand:Notes1} that a fibre-surjective morphism preserves the property of being $k$-step.
\end{proof}
We can now state the main result of this section.
\begin{theorem}\label{thm:invlim} Every $k$-step compact nilspace is an inverse limit of compact nilspaces of finite rank.
\end{theorem}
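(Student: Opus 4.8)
The plan is to induct on the step $k$. For $k=0$ the space is a point, and for $k=1$, Proposition~\ref{prop:topbundec} identifies $\ns$ with $\cD_1(A)$ for a compact abelian group $A$; writing the discrete group $\widehat{A}$ as the increasing union of its finitely generated subgroups $\Gamma_m$ and setting $\Lambda_m=\Gamma_m^{\perp}$, the quotients $A/\Lambda_m$ are Lie groups with $\bigcap_m\Lambda_m=\{0\}$, so that $\ns=\varprojlim_m \cD_1(A/\Lambda_m)$ exhibits $\ns$ as an inverse limit of finite-rank nilspaces. For the inductive step I would use Proposition~\ref{prop:topbundec} to view a $k$-step compact nilspace $\ns$ as a degree-$k$ extension of its factor $\nss:=\cF_{k-1}(\ns)$ by the compact abelian group $A:=\ab_k$, and invoke Lemmas~\ref{lem:meascross} and~\ref{lem:compextisotoM} to fix a Borel cocycle $\rho:\cu^k(\nss)\to A$ with $\ns\cong M(\rho)$. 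By the inductive hypothesis $\nss=\varprojlim_i\nss_i$ with each $\nss_i$ a finite-rank $(k-1)$-step compact nilspace and fibre-surjective projections $\zeta_i:\nss\to\nss_i$; correspondingly $\cu^k(\nss)=\varprojlim_i\cu^k(\nss_i)$.

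My aim is then to produce, cofinally, finite-rank factors of $\ns$, and two independent refinements feed into this. First, as in the base case write $A=\varprojlim_m B_m$ with $B_m:=A/\Lambda_m$ compact abelian Lie groups and $\bigcap_m\Lambda_m=\{0\}$; composing $\rho$ with the quotient $q_m:A\to B_m$ gives Borel cocycles $q_m\co\rho:\cu^k(\nss)\to B_m$, and a routine compatibility check shows $M(\rho)=\varprojlim_m M(q_m\co\rho)$, reducing matters to extensions with Lie structure group. Second, for a fixed Lie target $B_m$ I would seek a Borel cocycle $\tau_{i,m}:\cu^k(\nss_i)\to B_m$ whose pullback $\q\mapsto\tau_{i,m}(\zeta_i\co\q)$ is cohomologous to $q_m\co\rho$; granting this, Corollary~\ref{cor:compclassrep} yields a fibre-surjective morphism $\ns\to\ns_{i,m}$, where $\ns_{i,m}:=M(\tau_{i,m})$ is a degree-$k$ extension of the finite-rank base $\nss_i$ by the Lie group $B_m$, hence is itself of finite rank $\rk(\nss_i)+\rk(\widehat{B_m})<\infty$.

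The crux, and the step I expect to be the main obstacle, is producing these finite-rank cocycles $\tau_{i,m}$. Since the $\sigma$-algebras pulled back along the $\zeta_i$ increase to the full Borel $\sigma$-algebra on $\cu^k(\nss)=\varprojlim_i\cu^k(\nss_i)$ (up to $\mu$-null sets, where $\mu$ is the Haar measure of Lemma~\ref{lem:cube-set-CSM}), the Borel cocycle $q_m\co\rho$ is approximated arbitrarily well in measure by functions pulled back from a finite level $i$. On such a level I would define $\tau_{i,m}$ by fibrewise averaging of $q_m\co\rho$ over the fibres of $\q\mapsto\zeta_i\co\q$, using the averaging operation of Definition~\ref{def:calgaver} and Lemma~\ref{lem:avadd}. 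The delicate points are (a) that this averaging is defined only where the integrand is concentrated, which must be arranged on a set of large measure and handled together with Lusin's theorem; and (b) that the averaged function is again a cocycle, which I expect to follow exactly as in the proof of Lemma~\ref{lem:smallco}, because the tricube maps $t\mapsto t\co\Psi_v$ are measure-preserving (Corollary~\ref{cor:ext-in-Tn}), so fibrewise averaging commutes with the cocycle identity exploited there. The remaining almost-everywhere discrepancy between $q_m\co\rho$ and $\tau_{i,m}(\zeta_i\co\,\cdot\,)$ is then a cocycle that is $d_2$-small almost everywhere, hence a coboundary by the rigidity Lemma~\ref{lem:smallco2}; this is precisely what yields the required cohomology and makes the approximation rigorous.

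Finally I would organise the factors $\ns_{i,m}$ into a strict inverse system, passing to a cofinal sequence and, where necessary, replacing pairs by a common refinement so that the transition maps are genuine fibre-surjective morphisms, and then verify that they separate the points of $\ns$. Indeed, two points with distinct images in $\nss$ are separated at some level $i$ by the inductive hypothesis, while two points in a common fibre of the projection $\ns\to\nss$ differ by a nonzero $a\in A$, which some $q_m$ detects and which the morphism $\ns\to\ns_{i,m}$, inducing $q_m$ on the top structure group, therefore registers. The canonical map $\ns\to\varprojlim_{i,m}\ns_{i,m}$ is then a continuous nilspace morphism that is injective (by separation) and surjective (by fibre-surjectivity together with a completion argument as in the inverse-limit lemma preceding the theorem); being a continuous bijection between compact Hausdorff spaces it is a homeomorphism \cite[Theorem 26.6]{Munkres}, and hence an isomorphism of compact nilspaces onto an inverse limit of finite-rank nilspaces, as required.
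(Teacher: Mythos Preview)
Your overall strategy matches the paper's: induct on $k$, write $\ns$ as a degree-$k$ extension of $\nss=\cF_{k-1}(\ns)$ by $\ab_k$, reduce to a Lie top structure group via Proposition~\ref{prop:k-level-invlim}, average the cocycle so that it factors through a finite-rank level of $\nss$, and invoke cocycle rigidity to control the error. However, two of your technical steps do not go through as stated, and the paper's proof is essentially built around repairing them.

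\textbf{The averaging step.} You propose to define $\tau_{i,m}$ by fibrewise averaging of $q_m\co\rho$ over the fibres of $\q\mapsto\zeta_i\co\q$, acknowledging that the averaging of Definition~\ref{def:calgaver} is only defined where the integrand is $1/4$-concentrated, and that measure-theoretic approximation only gives this on a set of large measure. This is not enough: the averaged function must be a genuine cocycle on \emph{all} of $\cu^{k+1}(\nss_i)$, and the paper's verification that the average inherits the cocycle axioms (by embedding adjacent $(k{+}1)$-cubes in a $(k{+}2)$-cube and using linearity of expectation, Lemma~\ref{lem:avadd}) requires the averaging to be defined \emph{everywhere}. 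Your reference to the proof of Lemma~\ref{lem:smallco} is misplaced here: that lemma averages $\rho_x$ over rooted cubes to build a coboundary, which is a different construction and does not show that averaging over fibres of $\zeta_i$ preserves the cocycle identities. The paper circumvents this by not starting from an arbitrary Borel cocycle at all. Instead it builds a specific cross section $\cs$ that is \emph{continuous on each preimage} $\tau_t^{-1}(v)$ with \emph{diameter at most $\epsilon$} there (properties (vii)--(x) in the proof), using the local triviality from Gleason's theorem and the open cover coming from the finite-rank factors of $\nss$. This yields the \emph{pointwise} bound \eqref{eq:rhoclose}, so the averaging \eqref{eq:avcocycle} is defined for every cube, the result $\rho'$ is a cocycle, and $\rho'-\rho_{\cs}$ is pointwise small, whence Lemma~\ref{lem:smallco} (not Lemma~\ref{lem:smallco2}) applies.

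\textbf{Assembling the inverse system.} You produce a family of finite-rank factors $\ns_{i,m}$ and then appeal to ``common refinements'' to organise them into a strict inverse system. This is not automatic: given two fibre-surjective factors of $\ns$ it is not clear that a finite-rank common fibre-surjective refinement exists. The paper builds the system sequentially, and the existence of the transition map $\psi'_{m+1}:\ns_{m+1}\to\ns_m$ hinges on the additional property (xi) of the constructed cross section (consistency with the previous level mod $B_m$), which forces the new equivalence $\sim$ of Lemma~\ref{lem:crossfactor} to refine the old one. Your construction has no analogue of this compatibility, and without it the final inverse-limit step does not get off the ground.
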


\noindent The basic case of this theorem, concerning 1-step nilspaces, is given by the standard result that every compact abelian group is an inverse limit of compact abelian Lie groups (see \cite[Corollary 2.43]{H&M}). This standard result also yields the following example of an inverse limit of nilspaces, which will be used in the proof of Theorem \ref{thm:invlim}.
\begin{proposition}\label{prop:k-level-invlim}
Let $\ns$ be a $k$-step compact nilspace. Then $\ns$ is an inverse limit of $k$-step nilspaces $\ns_i$, each with $k$-th structure group a compact abelian Lie group, and with $\cF_{k-1}(\ns_i)$ isomorphic to $\cF_{k-1}(\ns)$. \end{proposition}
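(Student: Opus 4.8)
The plan is to reduce the problem to the top structure group $\ab_k$ and then invoke the standard fact that a compact abelian group is an inverse limit of its compact abelian Lie quotients. By Proposition \ref{prop:topbundec}, $\ns$ is a compact degree-$k$ bundle, hence a degree-$k$ extension of its factor $\cF_{k-1}(\ns)$ by the compact abelian group $\ab_k$, so by Lemma \ref{lem:compextisotoM} I may assume $\ns=M(\rho)$ for a Borel cocycle $\rho$ valued in $\ab_k$ and defined on the relevant cube set of $\cF_{k-1}(\ns)$. Since $\ns$ is second-countable, $\ab_k$ is a compact metrizable abelian group, so its Pontryagin dual $\widehat{\ab_k}$ is a countable discrete group; I would write it as an increasing union $\widehat{\ab_k}=\bigcup_i H_i$ of finitely generated subgroups and let $K_i=H_i^\perp\leq\ab_k$ be their annihilators. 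Then $(K_i)$ is a decreasing sequence of closed subgroups with $\bigcap_i K_i=\{0\}$, and each quotient $A_i=\ab_k/K_i$ has finitely generated dual $H_i$, hence is a compact abelian Lie group. Moreover the canonical map $\ab_k\to\varprojlim_i A_i$ is an isomorphism, since its kernel is $\bigcap_i K_i=\{0\}$ and surjectivity follows from compactness (a compatible family of cosets is a nested family of nonempty closed sets).

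Next I would build the system. Writing $q_i:\ab_k\to A_i$ and $\bar q_{ij}:A_j\to A_i$ (for $i\leq j$, well defined as $K_j\subseteq K_i$) for the quotient homomorphisms, I set $\rho_i=q_i\co\rho$ and define $\ns_i=M(\rho_i)$. Each $\rho_i$ is a Borel cocycle valued in the compact abelian group $A_i$, so by Proposition \ref{prop:comp-extcompleted} the space $\ns_i$ is a compact nilspace; being a degree-$k$ extension of $\cF_{k-1}(\ns)$ by $A_i$, it is $k$-step, its $k$-th structure group is the Lie group $A_i$, and $\cF_{k-1}(\ns_i)=\cF_{k-1}(\ns)$, exactly as required. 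The transition and projection maps are given by post-composition with the structure-group homomorphisms: $\varphi_{ij}:\ns_j\to\ns_i$, $f\mapsto\bar q_{ij}\co f$, and $\varphi_i:\ns\to\ns_i$, $f\mapsto q_i\co f$ (viewing points of these extensions as functions in the relevant $\cL$-spaces). Each such map is continuous, since for any generating function $\varphi_{F_1,F_2}$ on the target one has $\varphi_{F_1,F_2}(q_i\co f)=\varphi_{F_1\co q_i,F_2}(f)$, which is continuous by definition of the $\cL$-topology (alternatively, continuity is automatic by Theorem \ref{thm:Klepgen}); it is a nilspace morphism because post-composing with a group homomorphism preserves the cube equations of Definition \ref{def:compextcubes}; and it is fibre-surjective because it induces $\bar q_{ij}$ (respectively $q_i$) on the $k$-th structure group and the identity on the lower ones. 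The relations $\bar q_{ij}\co q_j=q_i$ then make $(\varphi_{ij})$ a strict inverse system with projections $\varphi_i$.

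Finally I would identify $\ns$ with $\varprojlim_i\ns_i$ via $\Theta:\ns\to\varprojlim_i\ns_i$, $f\mapsto(q_i\co f)_i$. This is a continuous morphism lying over the identity of $\cF_{k-1}(\ns)$, and the map it induces on the top structure group is precisely the canonical isomorphism $\ab_k\to\varprojlim_i A_i$ identified above, while on every lower structure group it is the identity. Consequently $\Theta$ restricts to a bijection between the fibres of the two $\ab_k$-bundles over each point of $\cF_{k-1}(\ns)$ (an equivariant map of torsors under an isomorphism of structure groups), and, by the description \eqref{eq:comp-k-deg-bund} of the cube sets over a fixed base cube, it carries $\cu^n(\ns)$ bijectively onto $\cu^n(\varprojlim_i\ns_i)$ for every $n$. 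Hence $\Theta$ is an isomorphism of compact degree-$k$ bundles, and therefore an isomorphism of compact nilspaces by Proposition \ref{prop:topbundec}.

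The routine inputs (existence of the Lie-quotient system and continuity of the post-composition maps) are standard. The step requiring the most care is the last one: verifying that $\Theta$ is an isomorphism rather than merely a continuous bijective morphism. This rests on the identification $\ab_k=\varprojlim_i\ab_k/K_i$ together with the bundle description of the cube sets, which guarantee that $\Theta$ is fibrewise a torsor bijection and that the cube structures correspond exactly; one must also keep track of the fact that all the $\ns_i$ share the common base $\cF_{k-1}(\ns)$, so that the lower structure groups contribute trivially to the inverse limit.
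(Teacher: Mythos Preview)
Your proof is correct but takes a different route from the paper's. The paper argues more directly: given the surjections $\alpha_i:\ab_k\to\ab_{k,i}$ onto Lie quotients, it defines $\ns_i$ simply as the orbit space $\ns/\ker\alpha_i$ (with quotient cube structure and topology), notes in one line that this is a $k$-step compact nilspace with $k$-th structure group $\ab_{k,i}$, and observes that the projections $\ns\to\cF_{k-1}(\ns)$ and $\ns_i\to\cF_{k-1}(\ns_i)$ are both the $\ab_k$-orbit map, so the $(k-1)$-factors coincide. No cocycle representation is invoked.

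Your approach instead passes through the identification $\ns\cong M(\rho)$ from Lemma~\ref{lem:compextisotoM}, then builds $\ns_i=M(q_i\co\rho)$ and checks the inverse-limit identification by an explicit torsor argument. The two constructions are canonically isomorphic (your $\ns_i$ is exactly the paper's $\ns/\ker\alpha_i$ seen through the cocycle lens), but yours leans on more machinery: Proposition~\ref{prop:comp-extcompleted}, Lemma~\ref{lem:compextisotoM}, and implicitly Theorem~\ref{thm:Klepgen}. The upside of your route is that the compact-nilspace structure on $\ns_i$ comes for free from Proposition~\ref{prop:comp-extcompleted}, whereas the paper waves this off as ``a straightforward calculation''. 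The downside is length and the need to verify carefully (as you flag) that the induced map $\Theta$ on the inverse limit is a cube-set bijection, not just a continuous bijection; your torsor argument via \eqref{eq:comp-k-deg-bund} handles this, and continuity of the inverse is then automatic by compactness.
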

\begin{proof}
By the standard result for compact abelian groups recalled above, for the $k$-th structure group $\ab_k$ of $\ns$ we have for each $i\in \N$ a compact abelian Lie group $\ab_{k,i}$ and a surjective continuous homomorphism $\alpha_i:\ab_k\to \ab_{k,i}$ such that $\ab_k=\varprojlim \ab_{k,i}$. (For $i\leq j$ the transition morphism $\alpha_{i,j}$ is $x\mapsto x+\ker \alpha_i$ for $x\in \ab_{k,j}\cong \ab_k/\ker \alpha_j$.) Define for each $i$ the $k$-step compact nilspace $\ns_i$ as the image of $\ns$ under the map $\varphi_i$ that sends $x\in \ns$ to the orbit $x+\ker \alpha_i$. A straightforward calculation shows that $\ns_i$, with the quotient cube-structure of $\ns$ by $\ker \alpha_i$ and the quotient topology, is a $k$-step compact nilspace with $k$-th structure group $\ab_{k,i}$. The projections $\ns\to \cF_{k-1}(\ns)$ and $\ns_i\to \cF_{k-1}(\ns_i)$ are both given by the orbit map for the $\ab_k$-action, and it follows that $\cF_{k-1}(\ns)\cong \cF_{k-1}(\ns_i)$ as compact nilspaces. The transition morphisms $\varphi_{ij}:\ns_j\to \ns_i$, defined by $x+\ker \alpha_j\mapsto x+\ker \alpha_i$, are all clearly fibre-surjective.
\end{proof}
\noindent Recall from \cite[Lemma 3.3.21]{Cand:Notes1} that given a degree-$k$ extension $\nss$ of $\ns$ by $\ab$, and given a cross section $\cs:\ns\to \nss$, letting $f:\nss\to \ab$, $y\mapsto \cs\co \pi(y)-y$, the cocycle generated by $\cs$ is the degree-$k$ cocycle $\rho_{\cs}:\cu^{k+1}(\ns)\to \ab$ defined by $\rho_{\cs}(\q)=\sigma_{k+1}(f\co \q')$, for any $\q'\in \cu^{k+1}(\nss)$ such that $\pi\co \q'=\q$.

For the proof of Theorem \ref{thm:invlim}, it will be useful to be able to tell in a simple way whether a given cocycle on a nilspace $\ns$ induces a well-defined cocycle on another nilspace $\ns'$ via some given fibre-surjective morphism $\ns\to \ns'$. The following definition provides such a criterion.

\begin{defn}\label{def:crossfactdef}
Let $\ns$ be a $k$-step compact nilspace, and for some compact nilspace $\ns'$ let $\psi:\cF_{k-1}(\ns)\to \ns'$ be a continuous fibre-surjective morphism. We say that a measurable cross section $\cs:\cF_{k-1}(\ns)\to \ns$ is \emph{consistent with the factor} $\ns'$ if the cocycle $\rho_{\cs}:\cu^{k+1}(\cF_{k-1}(\ns))\to \ab_k$ satisfies $\rho_{\cs}(\q_1)=\rho_{\cs}(\q_2)$ for every $\q_1,\q_2$ such that $\psi\co \q_1=\psi\co \q_2$.
\end{defn}
\noindent Thus if $\cs$ is consistent with $\ns'$ then $\rho_{\cs}$ induces a  cocycle $\rho':\cu^{k+1}(\ns')\to \ab_k$, well-defined by $\rho'(\q')=\rho_{\cs}(\q)$ for any $\q$ such that $\psi\co\q=\q'$. Note that we shall often refer to the image of a fibre-surjective morphism on $\ns$ as a \emph{fibre-surjective factor} of $\ns$.

\begin{lemma}\label{lem:crossfactor} 
Let $\psi:\cF_{k-1}(\ns)\to \ns'$ be a fibre-surjective morphism and let $\cs:\cF_{k-1}(\ns)\to \ns$ be a cross section consistent with $\ns'$. Let $\sim$ be the equivalence relation on $\ns$ defined by
\[
x\sim y\;\;\Leftrightarrow\;\; \big(\;\psi\co \pi_{k-1}(x)=\psi\co \pi_{k-1}(y)\textrm{ and }x-\cs\co\pi_{k-1}(x)=y-\cs\co\pi_{k-1}(y)\;\big).
\]
Then the quotient nilspace $\ns/_{\sim}$ is a fibre-surjective factor of $\ns$ which is an extension of $\ns'$ by $\ab_k$. 
\end{lemma}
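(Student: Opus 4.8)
The plan is to identify $\ns/_{\sim}$ with the compact extension $M(\rho')$ of $\ns'$ by $\ab_k$ generated by the induced cocycle $\rho'$ (the descent of $\rho_{\cs}$ through $\psi$ discussed just before the lemma). The defining formula for $\sim$ is precisely the fibre relation of the natural factoring map $M(\rho_{\cs})\to M(\rho')$, so I would make this exact by exhibiting a single continuous fibre-surjective morphism $\Theta:\ns\to M(\rho')$ whose point-fibres are exactly the classes of $\sim$; this will simultaneously show that $\sim$ is closed and that $\ns/_{\sim}\cong M(\rho')$.

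First I would note that $\rho'$ is a Borel measurable cocycle on $\cu^{k+1}(\ns')$: it is a cocycle because $\rho_{\cs}$ is one and $\psi$ is a morphism, and it is Borel because it is the descent of the Borel cocycle $\rho_{\cs}$ (Lemma~\ref{lem:meascross}) through the continuous surjection $\hat\psi:\q\mapsto\psi\co\q$. Since $\ns'$ is $(k-1)$-step (being a fibre-surjective image of $\cF_{k-1}(\ns)$), Proposition~\ref{prop:comp-extcompleted} and Proposition~\ref{prop:cocyclext} then give that $M(\rho')=\bigcup_{s'\in\ns'}(\rho'_{s'}+\ab_k)\subset\cL(\cu^{k+1}(\ns'),\ab_k)$ is a compact degree-$k$ extension of $\ns'$ by $\ab_k$. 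This provides the target; it remains to connect it to $\ns$.

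Next I would build $\Theta$ as a composite. Applying Lemma~\ref{lem:compextisotoM} to the degree-$k$ extension $\ns$ of $\cF_{k-1}(\ns)$ by $\ab_k$ with the Borel cross section $\cs$ gives an isomorphism of compact nilspaces $\ns\cong M(\rho_{\cs})$, sending $x$ to the function $\q\mapsto\rho_{\cs}(\q)+(x-\cs\co\pi_{k-1}(x))$ on $\cu^{k+1}_{\pi_{k-1}(x)}(\cF_{k-1}(\ns))$; crucially, although $\cs$ is only measurable, this combination depends continuously on $x$, which is exactly the automatic-continuity content of Lemma~\ref{lem:compextisotoM}. I would then define the descent map $\delta:M(\rho_{\cs})\to M(\rho')$ sending a function $h$ on $\cu^{k+1}_s(\cF_{k-1}(\ns))$ to the function $\bar h$ on $\cu^{k+1}_{\psi(s)}(\ns')$ determined by $\bar h\co\hat\psi=h$; this is well defined because consistency of $\cs$ forces $h$ (a restriction of $\rho_{\cs}$ plus a constant) to be constant on the fibres of $\hat\psi$. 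Setting $\Theta=\delta$ followed by $\ns\cong M(\rho_{\cs})$, a direct check from Definition~\ref{def:compextcubes} using $\rho'\co\hat\psi=\rho_{\cs}$ shows $\Theta$ is a nilspace morphism; it is surjective and fibre-surjective since $\psi$ is fibre-surjective and the top structure morphism is the identity on $\ab_k$; and $\Theta(x)=\Theta(y)$ holds iff $\psi\co\pi_{k-1}(x)=\psi\co\pi_{k-1}(y)$ and $x-\cs\co\pi_{k-1}(x)=y-\cs\co\pi_{k-1}(y)$ (reading off the offset by freeness of the $\ab_k$-action), i.e.\ iff $x\sim y$.

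The main obstacle is the continuity of $\delta$, and this is where the measure theory of the earlier sections enters. Since the topology on $\cL(\cu^{k+1}(\ns'),\ab_k)$ is generated by the functions $\varphi_{F_1,F_2}$, it suffices to show each $\varphi_{F_1,F_2}\co\delta$ is continuous, and for $h$ with $\tilde\pi(h)=s$ I would rewrite
\begin{equation*}
\varphi_{F_1,F_2}(\bar h)=\int_{\cu^{k+1}_{\psi(s)}(\ns')}F_1(\bar h(\q'))\,F_2(\q')\ud\mu_{\psi(s)}(\q')=\int_{\cu^{k+1}_s(\cF_{k-1}(\ns))}F_1(h(\q))\,(F_2\co\hat\psi)(\q)\ud\mu_s(\q)=\varphi_{F_1,\,F_2\co\hat\psi}(h),
\end{equation*}
where $\mu_s,\mu_{\psi(s)}$ are the Haar measures on the rooted cube spaces from Lemma~\ref{lem:cube-set-CSM}. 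The middle equality is the crux: it requires that for \emph{every} $s$ the map $\hat\psi$ restricts to a measure-preserving surjection $(\cu^{k+1}_s(\cF_{k-1}(\ns)),\mu_s)\to(\cu^{k+1}_{\psi(s)}(\ns'),\mu_{\psi(s)})$. I would obtain this by observing that both rooted cube spaces are compact $(k-1)$-fold abelian bundles and that, since $\psi$ is fibre-surjective, $\hat\psi$ induces between them a totally surjective continuous bundle morphism (as in Lemma~\ref{lem:surjmorphcubemeas}), so Lemma~\ref{lem:MeasPres} yields measure preservation, the rooted Haar measures being exactly $\mu_s$ and $\mu_{\psi(s)}$. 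As $\varphi_{F_1,F_2\co\hat\psi}$ is one of the generating functions of the topology on $\cL(\cu^{k+1}(\cF_{k-1}(\ns)),\ab_k)$, continuity of $\delta$, hence of $\Theta$, follows. Finally, $\Theta$ being continuous into the Hausdorff space $M(\rho')$ makes $\{(x,y):\Theta(x)=\Theta(y)\}=\{(x,y):x\sim y\}$ closed, so $\ns/_{\sim}$ is compact by Remark~\ref{rem:stfacts}(iii); the continuous bijection $\ns/_{\sim}\to M(\rho')$ is then a homeomorphism and an isomorphism of compact nilspaces, and the quotient map $\ns\to\ns/_{\sim}$ inherits from $\Theta$ the property of being a fibre-surjective morphism. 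This yields all three assertions: $\ns/_{\sim}$ is a compact nilspace, a fibre-surjective factor of $\ns$, and an extension of $\ns'$ by $\ab_k$.
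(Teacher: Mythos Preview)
Your proposal is correct and follows essentially the same strategy as the paper: build a map $\Theta:\ns\to M(\rho')$ whose point-fibres are exactly the $\sim$-classes, and identify $\ns/_{\sim}$ with $M(\rho')$. The paper's map $f(x)=\rho'_{\psi(\pi_{k-1}(x))}+(x-\cs\co\pi_{k-1}(x))$ is precisely your $\Theta$, and both arguments verify the morphism, fibre-surjectivity, and fibre-identification properties in the same way.

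The only substantive difference is how continuity is obtained. The paper simply notes that $f$ is a Borel morphism and invokes Theorem~\ref{thm:Klepgen} once. You instead factor $\Theta$ as $\ns\cong M(\rho_{\cs})\xrightarrow{\delta} M(\rho')$, using Lemma~\ref{lem:compextisotoM} for the first arrow (which already hides an application of Theorem~\ref{thm:Klepgen}) and then proving continuity of $\delta$ by hand via the identity $\varphi_{F_1,F_2}\co\delta=\varphi_{F_1,F_2\co\hat\psi}$ and measure preservation of $\hat\psi$ on rooted cube spaces. This direct CSM argument is a nice self-contained alternative, but it is strictly more work than the paper's single appeal to automatic continuity; you could have shortcut it by observing that your $\Theta$ is a Borel morphism and applying Theorem~\ref{thm:Klepgen} directly.
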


\begin{proof} Let $\rho':\cu^{k+1}(\ns')\to \ab_k$ be the cocycle induced by $\rho_{\cs}$. Let $M(\rho')$ be the compact nilspace extending $\ns'$ by $\ab_k$  given by Proposition \ref{prop:cocyclext}. Let $f:\ns\to M(\rho')$ be defined by $f(x)=\rho'_{\psi(\pi_{k-1}(x))}+x-\cs\co\pi_{k-1}(x)$. Using Definition \ref{def:compextcubes} we check that $f$ is a morphism (a similar calculation appears in the proof of \cite[Lemma 3.3.28]{Cand:Notes1}). It is also checked easily that $f$ is fibre-surjective. Moreover, clearly $f(x_1)=f(x_2)$ if and only if $x_1\sim x_2$. It follows that $f$ factors through $\sim$ giving a bijective measurable morphism $f':\ns/_{\sim}\to\nss$. Then $f'$ is a continuous isomorphism of compact nilspaces, by Theorem \ref{thm:Klepgen}.
\end{proof}
\noindent The following lemma will be used in the proof of Theorem \ref{thm:invlim} and also in Section \ref{sec:CFRnilsnilm}. Recall the notation $d_2$ for the metric from Definition \ref{def:calgmetric}. 

\begin{lemma}\label{lem:vertiparathin}
Let $\ns$ be a $k$-step compact nilspace such that the $k$-th structure group $\ab_k$ has finite rank, and let $d$ be a compatible metric on $\ns$. Then for any $\epsilon>0$ there exists $\delta>0$ such that the following holds. Let $x_0,x_1,y_0,y_1\in \ns$ be such that $\pi_{k-1}(x_i)=\pi_{k-1}(y_i)$ for $i=0,1$ and $d(x_0,x_1),d(y_0,y_1)$ are both at most $\delta$. Then in $\ab_k$  we have $d_2(y_0-x_0,y_1-x_1)\leq \epsilon$.
\end{lemma}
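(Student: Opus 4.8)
The plan is to argue by contradiction, extracting limits via the compactness of $\ns$ and of $\ab_k$ and then using the continuity and freeness of the $\ab_k$-action to pin down these limits. Suppose the conclusion fails for some $\epsilon>0$. Then for each $n\in\N$, taking $\delta=1/n$, there are points $x_0^{(n)},x_1^{(n)},y_0^{(n)},y_1^{(n)}\in\ns$ with $\pi_{k-1}(x_i^{(n)})=\pi_{k-1}(y_i^{(n)})$ for $i=0,1$, with $d(x_0^{(n)},x_1^{(n)})\le 1/n$ and $d(y_0^{(n)},y_1^{(n)})\le 1/n$, yet with $d_2(a_n,b_n)>\epsilon$, where $a_n=y_0^{(n)}-x_0^{(n)}$ and $b_n=y_1^{(n)}-x_1^{(n)}$ are the well-defined vertical differences in $\ab_k$ (well-defined since $x_i^{(n)},y_i^{(n)}$ lie in a common fibre of $\pi_{k-1}$, which is a principal homogeneous space of $\ab_k$).

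First I would invoke the (sequential) compactness of $\ns$, which holds since $\ns$ is metrizable by Remark \ref{rem:stfacts}(ii), to pass to a subsequence along which all four sequences converge, say $x_i^{(n)}\to x_i$ and $y_i^{(n)}\to y_i$. Because $d(x_0^{(n)},x_1^{(n)})\to 0$ and $d(y_0^{(n)},y_1^{(n)})\to 0$, these limits collapse to single points $x_0=x_1=:x$ and $y_0=y_1=:y$, and continuity of $\pi_{k-1}$ gives $\pi_{k-1}(x)=\pi_{k-1}(y)$, so $x$ and $y$ lie in a common fibre. Since $\ab_k$ has finite rank it is a compact abelian Lie group, in particular a compact metric space, so after a further subsequence I may assume $a_n\to a$ and $b_n\to b$ in $\ab_k$.

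The heart of the argument is to identify $a$ with $b$. By Proposition \ref{prop:topbundec}, $\ns$ is a compact $\ab_k$-bundle over $\cF_{k-1}(\ns)$, so condition (iii) of Definition \ref{def:CpctAbBund} gives that the action $\ab_k\times\ns\to\ns$ is continuous, and this action is free on each fibre. From $y_0^{(n)}=x_0^{(n)}+a_n$ and the continuity of the action, passing to the limit yields $y=x+a$; likewise $y=x+b$. By freeness of the action, $a=b$.

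Finally I would derive the contradiction from the behaviour of $d_2$. Writing $\ab_k\cong F\times\T^m$ as in Definition \ref{def:calgmetric}, the connected components of $\ab_k$ are open, so the convergences $a_n\to a$ and $b_n\to a$ force $a_n,b_n$ into the component of $a$ for all large $n$; there $d_2$ agrees with the continuous quotient-Euclidean metric on $\T^m$, whence $d_2(a_n,b_n)\to d_2(a,a)=0$, contradicting $d_2(a_n,b_n)>\epsilon$. The one subtlety to handle with care — and the main potential obstacle — is precisely this treatment of $d_2$, which is a genuine metric only within a single connected component (taking the value $\infty$ across components); the argument resolves it by noting that $a_n$ and $b_n$ are eventually trapped in the component of their common limit. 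An equivalent and perhaps cleaner route is to first record that the vertical-difference map $(x,y)\mapsto y-x$, defined on the compact space $\{(x,y)\in\ns^2:\pi_{k-1}(x)=\pi_{k-1}(y)\}$, is continuous by the closed graph theorem (Remark \ref{rem:stfacts}(i)), since its graph is the (closed) graph of the action; the same compactness scheme then applies directly to this map.
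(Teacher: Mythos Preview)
Your argument is correct and takes a genuinely different route from the paper. The paper first invokes the local translation $\phi_{x_0,x_1}$ (from \cite[Definition 3.2.26]{Cand:Notes1}) to transport the problem so that all relevant points lie in a single fibre of $\pi_{k-1}$, and then appeals to the local triviality of the $\ab_k$-bundle (via Gleason's theorem, using that $\ab_k$ is a Lie group) together with compactness of $\ns_{k-1}$. Your approach bypasses both the local-translation machinery and local triviality entirely: a direct sequential-compactness argument, using only the continuity and freeness of the global $\ab_k$-action, yields the contradiction. This is more elementary and self-contained; indeed your scheme would work for any compatible metric on $\ab_k$ (not just $d_2$), so the finite-rank hypothesis is used only to make sense of the specific metric in the statement. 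The paper's route, on the other hand, ties the lemma into the local-translation framework that is reused elsewhere and is closer to giving explicit control of $\delta$ in terms of $\epsilon$. Your closing remark that the vertical-difference map is continuous by the closed-graph theorem is a clean way to package the argument and is itself a useful observation.
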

\begin{proof}
Consider the local translation $\phi=\phi_{x_0,x_1}$ (recall \cite[Definition 3.2.26]{Cand:Notes1}). By definition of $\phi$ we have $y_0-x_0=\phi(y_0)-x_1$ in $\ab_k$ (using implicitly the isomorphism from \cite[Lemma 3.2.24]{Cand:Notes1}), so it suffices to show that $d_2(\phi(y_0)-x_1,y_1-x_1)\leq \epsilon$ for $\delta$ sufficiently small. 

Since $d(x_0,x_1)$ is unchanged by adding $y_0-x_0=\phi(y_0)-x_1$ to $x_0$ and $x_1$, we have $d(y_0,\phi(y_0))\leq \delta$. This together with $d(y_0,y_1)\leq \delta$ implies that $d(\phi(y_0),y_1)\leq 2\delta$. Thus, the lemma will follow if we prove the following statement: for every $\epsilon>0$ there exists $\delta>0$ such that whenever $x_1,x_2,x_3\in \ns$ are in the same fibre of $\pi_{k-1}$ and $d(x_2,x_3)\leq \delta$, we have $d_2(x_2-x_1,x_3-x_1)\leq \epsilon$. This can be shown by a straightforward argument using the local triviality of the $\ab_k$-bundle $\ns$ and the compactness of $\ns_{k-1}$.
\end{proof}

\subsection{Proof of the inverse limit theorem}

To prove Theorem \ref{thm:invlim}, we argue by induction on $k$, starting with the trivial case $k=0$. For $k>0$ we suppose that $\ns$ is a $k$-step compact nilspace such that the nilspace $\nss=\cF_{k-1}(\ns)$ is the inverse limit of a strict inverse system $(\tau_{ij}:\nss_j\to\nss_i)_{i,j\in \N, i\leq j}$. We denote by $\pi$ the projection $\ns\to \nss$, and for each $i$ we denote by $\tau_i$ the projection $\nss\to\nss_i$ and let $\mathcal{Q}_i=\{\tau_i^{-1}(Q):Q\subset \nss_i,\; Q\textrm{ open}\}$. Since by assumption the topology on $\nss$ is the initial topology generated by the maps $\tau_i$, we have that the finite intersections of sets in $\bigcup_{i\in \N}\mathcal{Q}_i$ form a  base for this topology.

\subsubsection{The inductive step: construction of the sequence $S$}

We now apply Proposition \ref{prop:k-level-invlim} to $\ns$. Let $B_0=\ab_k$ be the $k$-th structure group of $\ns$, and let $B_i=\ker \alpha_i\leq \ab_k$ for each $i\in \N$ (for the homomorphisms $\alpha_i$ from the proof of the proposition). Then $B_0 \geq B_1 \geq \cdots$, with $\bigcap_{i\in \N} B_i=\{0\}$, and $\ns = \varprojlim_i \ns/_{B_i}$. Let $q_{i-1,i}$ denote the transition morphism $\ns/_{B_i}\to \ns/_{B_{i-1}}$ and $q_i$ the projection $\ns\to \ns/_{B_i}$. Note that $\ns/_{B_i}$ has $k$-th structure group $\ab_k/B_i$, of finite rank.

The main goal of the inductive step is to construct, starting with the 1-point nilspace $\ns_0$, a sequence $S=\{(\ns_i,\psi_i,\psi_i',h_i):i\in \N\}$, where for each $i$ we have a compact nilspace $\ns_i$ of finite rank, and fibre-surjective morphisms $\psi_i:\ns/_{B_i}\to \ns_i$ and $\psi_i':\ns_i\to \ns_{i-1}$, with the following properties:
\begin{enumerate}
\item For every $i$, the restriction of $\psi_i$ to each class of $\sim_{k-1}$ in $\ns/_{B_i}$ is injective. In other words, the $k$-th structure group of $\ns_i$ is the same as that of $\ns/_{B_i}$, namely $\ab_k/B_i$. 
\item For every $i$, we have $\psi'_i\circ\psi_i=\psi_{i-1}\co q_{i-1,i}$.
\item Letting $\pi_i$ denote the projection $\ns_i\mapsto  \cF_{k-1}(\ns_i)$, and $\kappa_i$ denote the projection $\ns/_{B_i}\to \nss$, we have $\cF_{k-1}(\ns_i)\cong \nss_{h_i}$ and $\pi_i\co \psi_i= \tau_{h_i}\co \kappa_i$. \end{enumerate}
Properties (i) and (iii) imply that any two distinct points in $\ns$ are separated by a map $\psi_i\co q_i$ for some $i$, so the initial topology on $\ns$ generated by these maps is Hausdorff. Since the original topology on $\ns$ is compact, it follows that this initial topology equals the original topology (see \cite[\S 9.4, Corollary 3]{Bourb1}). The maps $\varphi_{ij}=\psi'_{i+1}\co\cdots\co \psi'_j$ and $\varphi_i:=\psi_i\co q_i$ satisfy the relations $\varphi_{ij}\co \varphi_j= \varphi_i$ thanks to property (ii). Hence $\ns$ is the inverse limit of the system $(\varphi_{ij}:\ns_j\to\ns_i)_{i\leq j}$. \medskip \\ 
\includegraphics{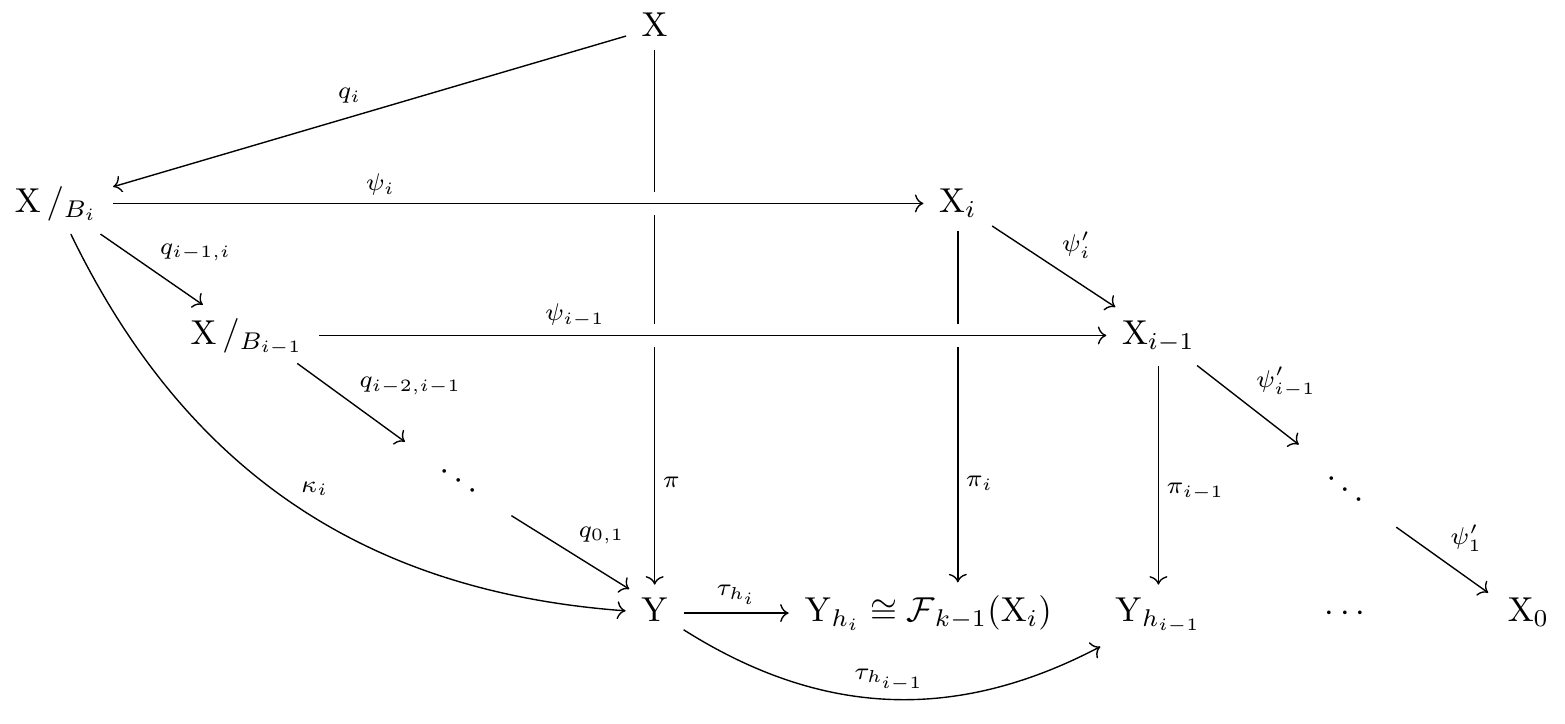}\\
\noindent We shall now construct the sequence $S$ inductively starting from $\ns_0$. Suppose that $(\ns_i,\psi_i,\psi'_i,h_i)$ has been constructed for $i\leq m$. Then, to begin with, we construct a Borel cocycle $\cu^{k+1}(\nss)\to \ab_k/B_{m+1}$ generated by a cross section $\cs$ that is almost consistent with some appropriate factor of $\nss$.

\noindent \textbf{Constructing a measurable cross section $\cs$:} since $\ns_m$ is a compact locally-trivial $\ab_k/B_m$-bundle over $\nss_{h_m}$, there is a finite cover of $\nss_{h_m}$ by closed subsets $W_1,\ldots,W_r$ such that every preimage $\pi_m^{-1}(W_i)\subset \ns_m$ is a trivial $\ab_k/B_m$-bundle over $W_i$. For each $i\in [r]$, let $\theta_i:W_i\to \ns_m$ be a continuous cross section (thus $\pi_m\co\theta_i$ is the identity map on $W_i$).\\
\indent For each $a\in [r]$ let $W'_a$ denote the preimage of the set $\theta_i(W_a)$ under $\psi_m\co q_{m,m+1}:\ns/_{B_{m+1}}\to \ns_m$. Note that $W'_a$ is a $\ab_k/B_{m+1}$-bundle over $\tau_{h_m}^{-1}(W_a)\subset \nss$, so it is locally trivial by Proposition \ref{prop:Gleason}.  Let $d$ be a metric on $\ns/_{B_{m+1}}$ generating its topology.\\
\begin{center}\includegraphics{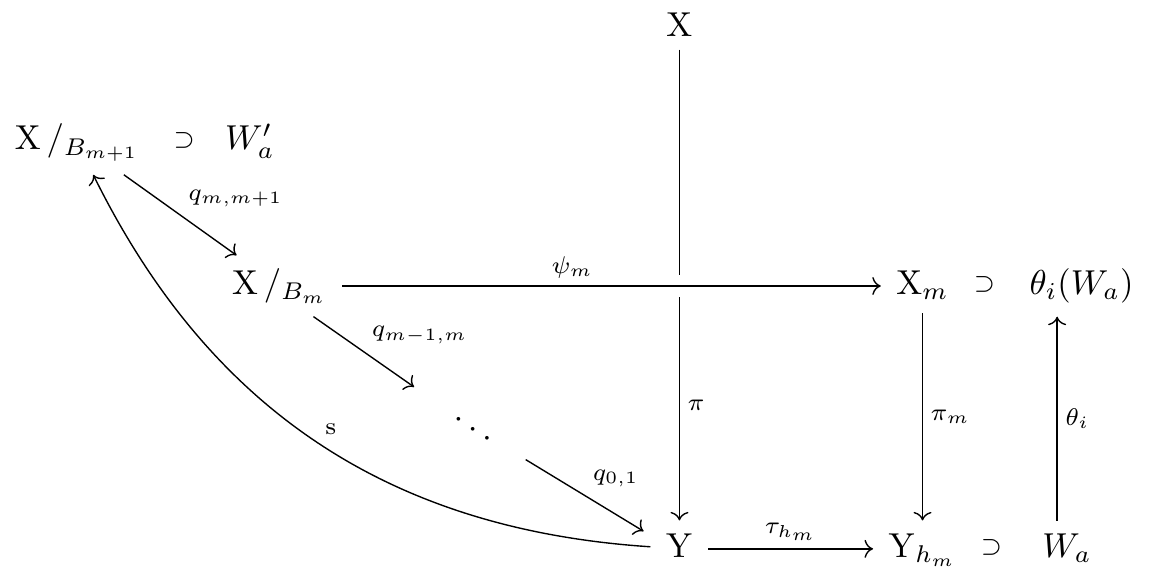}\end{center}

\noindent Now fix an arbitrary $\epsilon>0$. We claim that for every $p\in \tau_{h_m}^{-1}(W_a)$ there exists an open set $U_p\subset  \tau_{h_m}^{-1}(W_a)$ containing $p$ with the following properties:\\ \vspace{-0.7cm}
\begin{enumerate}
\setcounter{enumi}{3}
\item There exists a continuous cross section $\cs_p:U_p\to \ns/_{B_{m+1}}$.\\ \vspace{-0.7cm}
\item The diameter of $\cs_p(U_p)$ in the metric $d$ is at most $\epsilon$.\\ \vspace{-0.7cm}
\item For some $t(p)\in\N$, the set $U_p$ is in the collection $\mathcal{Q}_{t(p)}$ defined above.\\ \vspace{-0.7cm}
\end{enumerate}
\noindent Property (iv) is ensured by the local triviality of the bundle, and it can be satisfied together with (v) by letting $U_p$ be a ball of sufficiently small radius, say. Property (vi) can also be satisfied since $\bigcup_i \cQ_i$ generates the topology on $\nss$ and for every $i\leq j$ and open $Q\subset \nss_i$ we have $\tau_i^{-1}Q=\tau_j^{-1}(\tau_{ij}^{-1}Q)\in \cQ_j$. Since $\nss$ is compact there exists a finite cover by such sets $U_p$, thus for some points $p_1,p_2,\dots,p_n$ we have $\bigcup_{i=1}^n U_{p_i}\,\supset\,\tau_{h_m}^{-1}(W_a)$. Let $t_a=\max\{t(p_i):i \in [n]\}$, so that for every $i\in [n]$ we have $U_{p_i}\subset \mathcal{Q}_{t_a}$. Then, by dividing $\tau_{h_m}^{-1}(W_a)$ into the atoms of the Boolean algebra generated by $U_{p_1},\dots,U_{p_n}$, and then using one of the cross sections $\cs_p$ for each atom, we can construct a Borel measurable cross section $\cs_a:\tau_{h_m}^{-1}(W_a)\to \ns/_{B_{m+1}}$ with the following properties:\\ \vspace{-0.7cm}
 \begin{enumerate}
 \setcounter{enumi}{6}
\item The cross section $\cs_a$ is continuous on $\tau_{t_a}^{-1}(v)$ for every $v\in \nss_{t_a}$.\\ \vspace{-0.7cm}
 \item The diameter of $\cs_a(\tau_{t_a}^{-1}(v))$ in $d$ is at most $\epsilon$ for every $v\in \nss_{t_a}$.\\ \vspace{-0.7cm}
 \end{enumerate}
Note that we obtain (viii) from (v) because each $\tau_{t_a}^{-1}(v)$ is entirely contained in an atom of the Boolean algebra, which contains $\tau_{t_a}^{-1}(Q)$ for some open $Q\supset v$. Let $t=\max\{t_a: a\in [r]\}\cup \{h_m+1\}$. Using these partial cross sections, we now claim that we can construct a global Borel cross section $\cs:\nss\to \ns/_{B_{m+1}}$ with the following properties:\\ \vspace{-0.7cm}
\begin{enumerate}
\setcounter{enumi}{8}
\item The cross section $\cs$ is continuous on every preimage $\tau_t^{-1}(v)$ where $v\in \nss_t$.\\ \vspace{-0.7cm}
\item The diameter of $\cs(\tau_t^{-1}(v))$ in $d$ is at most $\epsilon$ for every $v\in \nss_t$.\\ \vspace{-0.7cm}
\item If $v_1,v_2\in \nss$ satisfy $\tau_{h_m}(v_1)=\tau_{h_m}(v_2)$, then $\psi_m\co q_{m,m+1} (\cs(v_1))=\psi_m\co q_{m,m+1}(\cs(v_2))$. 
\end{enumerate}
Indeed, we can construct $\cs$ by again dividing $\nss$ into the atoms of the Boolean algebra generated by the sets $\tau_{h_m}^{-1}(W_a)$ and then using one of the cross sections $\cs_a$ on each atom. This immediately gives (ix) and (x), and then (xi) follows from (iii) above. \\
\indent Let $\rho_{\cs}:\cu^{k+1}(\nss)\to \ab_k/B_{m+1}$ be the cocycle generated by $\cs$.\\
\indent Now for any fixed $\epsilon_2>0$, we note that if $\epsilon>0$ is small enough, then property (x) implies the following fact, which tells us that $\cs$ is almost consistent with the factor $\nss_t$:
\begin{equation}\label{eq:rhoclose}
\forall\,\q_1,\q_2\in \cu^{k+1}(\nss)\textrm{ with } \tau_t\co\q_1= \tau_t\co\q_2,\textrm{ we have } d_2\big(\rho_{\cs}(\q_1)-\rho_{\cs}(\q_2)\big)\leq \epsilon_2.
\end{equation}
Indeed $\tau_t\co\q_1= \tau_t\co\q_2$ tells us that for each $v$ the values $\q_1(v),\q_2(v)$ are in the same fibre of $\tau_t$ and so by (x) we have that $d\big(\cs\co \q_1(v),\cs\co\q_1(v)\big)\leq \epsilon$ for all $v$. Then by continuity of $\kappa_{m+1}$ the values $\q_i(v)$, $i=1,2$ are also close in $\nss$ for each $v$. Therefore by Lemma \ref{lem:closelifts} there exist $\tilde \q_1,\tilde \q_2\in \cu^{k+1}(\ns/_{B_{m+1}})$ such that $d\big(\tilde\q_1(v),\tilde\q_2(v)\big)\leq \epsilon_2$ for every $v$ and such that $\kappa_{m+1}\co \tilde\q_i=\q_i$, $i=1,2$. Finally, from the expression $\rho_{\cs}(\q_i)=\sigma_{k+1}(\cs\co \q_i-\tilde\q_i)$ we obtain the inequality in \eqref{eq:rhoclose} by applying Lemma \ref{lem:vertiparathin}.\\
\indent Note that property (xi) also implies that
\begin{equation}\label{eq:modBm}
\forall\,\q_1,\q_2\in \cu^{k+1}(\nss)\textrm{ with } \tau_{h_m} \co\q_1 = \tau_{h_m}\co \q_2,\textrm{ we have } \rho_{\cs}(\q_1)-\rho_{\cs}(\q_2)\in B_m/B_{m+1}. 
\end{equation}
Indeed, since $\psi_m$ is injective by (i), we have by (xi) that $\tau_{h_m} \co\q_1 = \tau_{h_m}\co \q_2$ implies $q_{m,m+1}\co \cs\co\q_1(v)=q_{m,m+1}\co \cs\co \q_2(v)$, so $\cs\co \q_1(v)$, $\cs\co \q_2(v)$ are in the same orbit of $B_m/B_{m+1}$ for every $v$. In particular we have $\rho_{\cs}(\q_1)-\rho_{\cs}(\q_2)=\sigma_{k+1}(\cs\co \q_1-\cs\co \q_2)$, whence \eqref{eq:modBm} follows.

\noindent To complete the proof we shall now use $\rho_{\cs}$ to find a new cross section $\cs': \nss\to \ns/B_{m+1}$ that is consistent with the factor $\nss_t$, and that is also congruent to $\cs$ modulo $B_m$, that is $q_{m,m+1}\co\cs'=q_{m,m+1}\co\cs$. If we find $\cs'$ then, firstly, thanks to the consistency, we can apply Lemma \ref{lem:crossfactor} with map $\psi=\tau_t$, obtaining a new fibre-surjective factor $\ns_{m+1}$ of $\ns/_{B_{m+1}}$ that is an extension of $\nss_t$ by $\ab_k/B_{m+1}$, and so (i) holds. Secondly $\psi_m\co q_{m,m+1}$ factors through $\ns_{m+1}$, which yields $\psi_{m+1}'$ satisfying (ii). Indeed, by definition of the relation $\sim$ in Lemma  \ref{lem:crossfactor} we have that $x,y\in \ns/_{B_{m+1}}$ are in the same fibre of $\psi_{m+1}$ if and only if $\tau_t\co\kappa_{m+1}(x)=\tau_t\co\kappa_{m+1}(y)$ and $x-\cs'\co\kappa_{m+1}(x)=y-\cs'\co\kappa_{m+1}(y)$. The first equality here implies, thanks to the congruence mod $B_m$ and (xi), that $\psi_m\co q_{m,m+1}(\cs'\co \kappa_{m+1}(x))=\psi_m\co q_{m,m+1}(\cs'\co\kappa_{m+1}(y))$, and this together with the second equality gives  $\psi_m\co q_{m,m+1}(x)=\psi_m\co q_{m,m+1}(y)$. Finally, property (iii) holds, since $x\sim y\Rightarrow \tau_t\co\kappa_{m+1}(x)=\tau_t\co\kappa_{m+1}(y)$ implies that $\tau_t\co\kappa_{m+1}$ factors through $\psi_{m+1}$, and it must then factor as $\pi_{m+1}$, by definition of $\kappa_{m+1}$. We will have thus found the next term in the sequence $S$.\\

\noindent \textbf{Averaging $\rho_{\cs}$ to obtain a fibre-surjective factor $\ns_{m+1}$ of $\ns/_{B_{m+1}}$:} to find $\cs'$, the idea is firstly to define a new function $\rho'$ by averaging $\rho_{\cs}$, in such a way that the approximate equality in \eqref{eq:rhoclose} becomes an equality for $\rho'$, and secondly then to show that $\rho'$ is itself a cocycle generated by some cross section, which will be the desired $\cs'$.\\
\indent Recall that, by \cite[Lemma 3.3.12 (ii) \& (iii)]{Cand:Notes1}, for each $i\in \N$ the map
$\beta_i:\cu^i(\nss)\to\cu^i(\nss_t)$, $\q\mapsto \tau_t\co \q$ is a totally surjective bundle morphism, and for every $\q\in \cu^i(\nss_t)$ the preimage $\beta_i^{-1}(\q)$ is a $(k-1)$-fold sub-bundle of $\cu^i(\nss)$. (Recall also Lemmas \ref{lem:surjmorphcubemeas} and \ref{lem:cube-set-CSM}.)
We define $\rho':\cu^{k+1}(\nss)\to \ab_k/B_{m+1}$ by
\begin{equation}\label{eq:avcocycle}
\rho'(\q)=\E_{\q'\in\beta_{k+1}^{-1}(\beta_{k+1}(\q))}\;\rho_{\cs}(\q').
\end{equation}
The averaging here is relative to the Haar measure on $\beta_{k+1}^{-1}(\beta_{k+1}(\q))$, and is well-defined for $\epsilon_2$ sufficiently small, thanks to  \eqref{eq:rhoclose} (recall Definition \ref{def:calgaver}). By \eqref{eq:modBm} and the fact that $t>h_m$, we have that $\rho'(\q)-\rho_{\cs}(\q)\in B_m/B_{m+1}$ for every $\q\in \cu^{k+1}(\nss)$. Note also that it follows clearly from \eqref{eq:avcocycle} that
\begin{equation}\label{eq:rhoexact}
\forall\,\q_1,\q_2\in \cu^{k+1}(\nss)\textrm{ with } \tau_t\co \q_1 = \tau_t\co \q_2,\textrm{ we have }\rho'(\q_1)=\rho'(\q_2).
\end{equation}
\noindent We shall now prove that $\rho'$ is a cocycle by showing that it inherits the required properties from $\rho_{\cs}$ thanks to the linearity of the averaging in \eqref{eq:avcocycle}. The required properties are the two axioms from \cite[Definition 3.3.14]{Cand:Notes1}. Axiom (i) follows straight from the same property for $\rho_{\cs}$ and linearity using the fact that any set $\beta_{k+1}^{-1}(\beta_{k+1}(\q))$ is globally invariant under composition with automorphisms in $\aut(\{0,1\}^{k+1})$. To see the second axiom we take two adjacent cubes $\q_1,\q_2\in \cu^{k+1}(\nss)$ with concatenation $\q_3$ and embed them into a $(k+2)$-dimensional cube $\q\in \cu^{k+2}(\nss)$ as restrictions to two adjacent $(k+1)$-dimensional faces $F_1$ and $F_2$ in $\{0,1\}^{k+2}$. The concatenation of $F_1$ and $F_2$ is a diagonal subcube $F_3$. We have that the concatenation of $\q_1$ and $\q_2$ is the restriction of $\q$ to $F_3$. Note that the existence of $\q$ is guaranteed by simplicial completion \cite[Lemma 3.1.5]{Cand:Notes1}. By \cite[Lemma 3.3.12 (iii)]{Cand:Notes1}, we have a Haar probability on $\Omega=\beta_{k+2}^{-1}(\beta_{k+2}(\q))$. Then, by \cite[Lemma 3.3.12 (iv)]{Cand:Notes1}, the probability spaces $\beta_{k+1}^{-1}(\beta_{k+1}(\q_i))$, $i=1,2,3$ are faithfully embedded as factors and coupled in the big probability space $\Omega$. Using the concatenation property for $\rho$ in $\Omega$ (when the random cube is restricted to $F_1,F_2,F_3$) and linearity of expectation we obtain that $\rho'(c_3)=\rho'(c_1)+\rho'(c_2)$.

It remains to prove that the cocycle $\rho'$ is generated by a cross section. Let $\rho''$ denote the cocycle $\rho'-\rho_{\cs}$, which takes values in $B_m/B_{m+1}$ (as noted just before \eqref{eq:rhoexact}). It follows from \eqref{eq:rhoclose} that $d_2(\rho''(\q),0)\leq\epsilon_2$ for every $\q\in \cu^{k+1}(\nss)$, so for $\epsilon_2$ sufficiently small we have that $\rho''$ is a coboundary, by Lemma \ref{lem:smallco}. Thus for some Borel function $g:\nss\to B_m/B_{m+1}$ we have $\rho''(\q)=\sigma_{k+1}(g\co \q)$. Hence, defining the Borel cross section $\cs':\nss\to \ns/_{B_{m+1}}$ by $\cs'(y)= \cs(y)+g(y)$, we have $\rho'=\rho_{\cs'}$.

By \eqref{eq:rhoexact} we have that $\cs'$ is consistent with the factor $\nss_t$. Applying Lemma \ref{lem:crossfactor} as described above, we find the new term in the sequence $S$. 

This completes the proof of Theorem \ref{thm:invlim}.

\medskip
\section{Rigidity of morphisms}

\medskip
Given a set $X$, a metric space $(Y,d)$, and maps $\phi,\phi':X\to Y$, we say that $\phi'$ is an \emph{$\epsilon$-modification} of $\phi$ if for every $x\in X$ we have $d(\phi(x),\phi'(x))\leq \epsilon$.

\begin{defn}
Let $\ns$ be a compact nilspace, and let $\nss$ be a $k$-step compact nilspace with a metric $d$ generating its topology. A map $\phi:\ns\to\nss$ is a $\delta$\emph{-quasimorphism} if for every $\q\in \cu^{k+1}(\ns)$ there exists $\q'\in \cu^{k+1}(\nss)$ such that $d\big(\phi\co\q(v),\q'(v)\big)\leq\delta$ for every $v\in \{0,1\}^{k+1}$.
\end{defn}
In this section we establish the following result.

\begin{theorem}\label{thm:rigidity}
Let $\nss$ be a $k$-step compact nilspace of finite rank, with a metric $d$ generating its topology. Then for every $\epsilon>0$ there exists $\delta>0$ such that the following holds. If $\ns$ is a  compact nilspace and $\phi:\ns\to \nss$ is a Borel $\delta$-quasimorphism, then $\phi$ has an $\epsilon$-modification that is a continuous morphism.
\end{theorem}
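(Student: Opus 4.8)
The plan is to induct on the step $k$ of the target $\nss$, the case $k=0$ being trivial since then $\nss$ is a single point. Throughout I will use the automatic–continuity theorem (Theorem \ref{thm:Klepgen}) to reduce the task to producing, for $\delta$ small enough, a \emph{Borel} morphism $\psi:\ns\to\nss$ with $d(\phi(x),\psi(x))\le\epsilon$ for all $x$; its continuity is then automatic. A secondary technical point, to be flagged below, is that when $\ns$ is not of finite step the cube–set tools used here (the Haar measures on the rooted cube sets $\cu^{k+1}_x(\ns)$ from Lemma \ref{lem:cube-set-CSM} and the rigidity Lemma \ref{lem:smallco}) must be seen to extend to a general compact nilspace through its representation as an inverse limit of finite–step ones; since $\nss$ is $k$-step, the morphism we ultimately build also factors through the finite–step factor $\cF_k(\ns)$, to which Theorem \ref{thm:Klepgen} applies.

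For the inductive step, write $\pi=\pi_{k-1}:\nss\to\cF_{k-1}(\nss)$ for the projection onto the $(k-1)$-step factor, which is again of finite rank. As $\pi$ is distance–nonincreasing for the quotient metric \eqref{eq:quotientmetric}, the composite $\pi\co\phi$ is a Borel $\delta$-quasimorphism into $\cF_{k-1}(\nss)$, so by the induction hypothesis there is a continuous morphism $\bar\phi:\ns\to\cF_{k-1}(\nss)$ that is uniformly $\epsilon_1$-close to $\pi\co\phi$, with $\epsilon_1$ as small as we wish provided $\delta$ is small. Using that $\nss$ is a locally trivial compact $\ab_k$-bundle over $\cF_{k-1}(\nss)$ (Lemma \ref{lem:finrankloctriv}) together with a measurable selection, I would lift $\bar\phi$ to a Borel map $\phi_1:\ns\to\nss$ with $\pi\co\phi_1=\bar\phi$ exactly and $d(\phi(x),\phi_1(x))$ uniformly small, by taking $\phi_1(x)$ to be a point in the fibre over $\bar\phi(x)$ nearest to $\phi(x)$; the uniform smallness follows from compactness and local triviality, exactly as in Lemma \ref{lem:closelifts}.

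The heart of the argument is to correct $\phi_1$ in the top ($\ab_k$) direction. Realising $\nss$ as the extension $M(\rho)$ of $\cF_{k-1}(\nss)$ by $\ab_k$ for a Borel cocycle $\rho:\cu^{k+1}(\cF_{k-1}(\nss))\to\ab_k$ (Lemmas \ref{lem:meascross} and \ref{lem:compextisotoM}), I write $\phi_1(x)=\rho_{\bar\phi(x)}+h(x)$ for a Borel $h:\ns\to\ab_k$ and define the \emph{defect}
\[
F:\cu^{k+1}(\ns)\to\ab_k,\qquad F(\q)=\rho(\bar\phi\co\q)+\sigma_{k+1}(h\co\q).
\]
By the cube equation \eqref{eq:compextcubes}, $\phi_1\co\q\in\cu^{k+1}(\nss)$ precisely when $F(\q)=0$. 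Since $\bar\phi$ is a morphism, $\q\mapsto\rho(\bar\phi\co\q)$ is a Borel cocycle (a pullback of the cocycle $\rho$) and $\sigma_{k+1}(h\co\cdot)$ is a coboundary, so $F$ is a Borel cocycle. The quasimorphism hypothesis, together with the uniform closeness of $\phi_1$ to $\phi$, forces $F$ to be small: given $\q$, the quasimorphism property supplies a genuine cube $\q'\in\cu^{k+1}(\nss)$ with $\phi\co\q$, hence $\phi_1\co\q$, uniformly close to $\q'$; lifting $\bar\phi\co\q$ to a genuine cube $\tilde\q$ agreeing with $\pi\co\q'$ up to a small error (Lemma \ref{lem:closelifts}) and comparing fibre coordinates via Lemma \ref{lem:vertiparathin} gives $F(\q)=\sigma_{k+1}(\phi_1\co\q-\tilde\q)$ with $d_2(F(\q),0)\le\epsilon_2$ for every $\q$, where $\epsilon_2\to0$ as $\delta\to0$.

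With $\epsilon_2$ sufficiently small, Lemma \ref{lem:smallco} provides a Borel $g:\ns\to\ab_k$ with $\sigma_{k+1}(g\co\q)=-F(\q)$; moreover, since $g$ is the fibrewise average $g(x)=\E_{\cu^{k+1}_x(\ns)}(-F)$, it satisfies $d_2(g(x),0)\le\epsilon_2$. Setting $\psi=\phi_1+g$ (acting in the $\ab_k$-direction) one has $\pi\co\psi=\bar\phi$, and the choice of $g$ makes the cube equation \eqref{eq:compextcubes} hold for $\psi\co\q$, so $\psi$ sends $(k+1)$-cubes to cubes on $\nss$. Because $\pi\co\psi=\bar\phi$ is a morphism and, by Definition \ref{def:compextcubes}, cubes on the degree-$k$ bundle $\nss$ are detected by their projection to $\cF_{k-1}(\nss)$ together with their $(k+1)$-dimensional face restrictions, $\psi$ is a morphism in every dimension; the $\ab_k$-invariance of $d$ and the smallness of $g$ keep $\psi$ within $\epsilon$ of $\phi$. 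Finally Theorem \ref{thm:Klepgen} upgrades the Borel morphism $\psi$ to a continuous one, closing the induction. I expect the main obstacle to be precisely this top–level step: verifying that the obstruction $F$ is a genuine cocycle and is uniformly $d_2$-small (this is where the quasimorphism hypothesis is consumed, through Lemmas \ref{lem:closelifts} and \ref{lem:vertiparathin}) and then trivialising it by a small coboundary via Lemma \ref{lem:smallco}; the extension of the measure-theoretic machinery to a non–finite–step source $\ns$, noted above, is the remaining technical hurdle.
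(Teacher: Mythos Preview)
Your approach is correct and takes a genuinely different route from the paper at the top-level correction. Both arguments share the first three moves: induct on $k$, apply the hypothesis to $\pi_{k-1}\circ\phi$ to get a continuous morphism into $\cF_{k-1}(\nss)$ (the paper's $\phi_2$, your $\bar\phi$), and lift it to a Borel map into $\nss$ (the paper's $\phi_3$, your $\phi_1$) uniformly close to $\phi$. From there the paper works intrinsically: it sets $\phi_4(x)=\E_{\q\in\cu^{k+1}_x(\ns)}\comp_0(\q)$, where $\comp_0(\q)$ is the value at $0^{k+1}$ completing the corner $(\phi_3\circ\q)|_{\{0,1\}^{k+1}\setminus\{0^{k+1}\}}$, proves that $\phi_4$ is a morphism via a second, tricube-based averaging over $\hom_{\q\circ\omega_{k+1}^{-1}}(T_{k+1},\ns)$, and then establishes continuity of $\phi_4$ directly through the $\cL$-space machinery of Section~\ref{subsec:CSM-bundle}. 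You instead pass to the realisation $\nss\cong M(\rho)$, isolate the obstruction as a uniformly small Borel cocycle $F=\rho(\bar\phi\circ\,\cdot\,)+\sigma_{k+1}(h\circ\,\cdot\,)$ on $\cu^{k+1}(\ns)$, kill it with Lemma~\ref{lem:smallco} (using the explicit formula $g(x)=\E_{\cu^{k+1}_x(\ns)}(-F)$ from its proof to keep $g$ small), and invoke Theorem~\ref{thm:Klepgen} for continuity. Your route is shorter and more conceptual, delegating both the averaging and the continuity to already-proved results; it is in fact closer in spirit to the cocycle-averaging used in the proof of Theorem~\ref{thm:invlim} than to the paper's own proof of rigidity. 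The paper's route is more self-contained and makes the averaging construction explicit.

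Two minor corrections. First, the reason it suffices to check that $\psi$ preserves $(k+1)$-cubes is \cite[Lemma~3.2.13]{Cand:Notes1} (used verbatim in the paper's proof), not Definition~\ref{def:compextcubes} as you write. Second, to get the lift $\tilde\q$ of $\bar\phi\circ\q$ close to the \emph{given} cube $\q'$ (rather than merely to some lift of $\pi\circ\q'$) you need a one-sided variant of Lemma~\ref{lem:closelifts}; this follows from the same proof, lifting the corner vertex-by-vertex via the definition of $d'$ and then applying Lemma~\ref{lem:contcomp}. Your flag about the source $\ns$ possibly not being of finite step is well taken and applies equally to the paper's argument, which also relies on the Haar measures on $\cu^{k+1}_x(\ns)$ from Lemma~\ref{lem:cube-set-CSM}.
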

\noindent In the proof we shall use the fact that the metric $d$ can be assumed to be invariant under the action of the structure group $\ab_k$ (Lemma \ref{lem:d-invariance}). We shall also use the following `rectification result' for cubes.
\begin{lemma}\label{lem:verticubapprox}
Let $\ns$ be a $k$-step compact nilspace with metric $d$. For every $\epsilon>0$ there exists $\delta>0$ such that the following holds. If $\q\in \cu^{k+1}(\ns)$ satisfies $d'\big(\pi_{k-1}\co \q(v,0),\pi_{k-1}\co\q(v,1)\big)\leq \delta$ for every $v\in \{0,1\}^k$, then $\q$ has an $\epsilon$-modification $\q'\in \cu^{k+1}(\ns)$ such that $\pi_{k-1}\co \q'(v,0) = \pi_{k-1}\co\q'(v,1)$ for every $v\in \{0,1\}^k$.
\end{lemma}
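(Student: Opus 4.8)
The plan is to rectify the problem on the factor $\nss:=\cF_{k-1}(\ns)$ first, and then pull the rectification back to $\ns$ via Lemma \ref{lem:closelifts}, using the bundle structure of $\ns$ and the $\ab_k$-invariance of $d$ to control the distance to $\q$. First I would produce, on $\nss$, a genuinely degenerate cube with equal top and bottom faces that is uniformly $d'$-close to $\pi_{k-1}\co\q$. Let $\phi:\{0,1\}^{k+1}\to\{0,1\}^k$ be the cube morphism $(v,b)\mapsto v$, and let $\bar\q_0\in\cu^k(\nss)$ be the bottom face $v\mapsto\pi_{k-1}\co\q(v,0)$. By the composition axiom $\bar\q':=\bar\q_0\co\phi$ lies in $\cu^{k+1}(\nss)$; it satisfies $\bar\q'(v,0)=\bar\q'(v,1)=\pi_{k-1}\co\q(v,0)$ for every $v$, and the hypothesis $d'\big(\pi_{k-1}\co\q(v,0),\pi_{k-1}\co\q(v,1)\big)\le\delta$ shows that $\bar\q'$ is a $\delta$-modification of $\pi_{k-1}\co\q$ in $d'$ (the two agree at the bottom vertices and are $\delta$-close at the top vertices).

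Next, fixing $\epsilon$ and letting $\delta$ be the constant furnished by Lemma \ref{lem:closelifts} for this $\epsilon$, I would apply that lemma to the pair $\q_1=\pi_{k-1}\co\q$ and $\q_2=\bar\q'$. This yields cubes $\tilde\q_1,\tilde\q_2\in\cu^{k+1}(\ns)$ with $\pi_{k-1}\co\tilde\q_i=\q_i$ and $d\big(\tilde\q_1(w),\tilde\q_2(w)\big)\le\epsilon$ for all $w\in\{0,1\}^{k+1}$. The point where one must be careful is that Lemma \ref{lem:closelifts} produces \emph{some} lift $\tilde\q_1$ of $\pi_{k-1}\co\q$, not the prescribed cube $\q$; this is the main obstacle, and I would resolve it by a translation. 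Since $\q$ and $\tilde\q_1$ have the same image under $\pi_{k-1}$, the vertexwise difference $c:=\q-\tilde\q_1$ is a cube in $\cu^{k+1}(\cD_k(\ab_k))$, by the bundle structure of $\ns$ (Proposition \ref{prop:topbundec} together with \eqref{eq:comp-k-deg-bund} for $i=k-1$). I then set $\q':=\tilde\q_2+c$, which lies in $\cu^{k+1}(\ns)$ by the same equation \eqref{eq:comp-k-deg-bund}.

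Finally I would verify the two conclusions. Adding an element of $\cD_k(\ab_k)$ does not change the projection to $\nss$, so $\pi_{k-1}\co\q'=\pi_{k-1}\co\tilde\q_2=\bar\q'$, whose top and bottom faces coincide; hence $\pi_{k-1}\co\q'(v,0)=\pi_{k-1}\co\q'(v,1)$ for every $v$, as required. For the distance bound, note that $\q(w)=\tilde\q_1(w)+c(w)$ and $\q'(w)=\tilde\q_2(w)+c(w)$, so by the $\ab_k$-invariance of $d$ (Lemma \ref{lem:d-invariance}) we get $d\big(\q(w),\q'(w)\big)=d\big(\tilde\q_1(w),\tilde\q_2(w)\big)\le\epsilon$ for every $w$; thus $\q'$ is an $\epsilon$-modification of $\q$. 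The invariance of $d$ is exactly what makes the translation by $c$ harmless for distances, and together with the degenerate-cube trick on the factor it reduces the whole statement to Lemma \ref{lem:closelifts}. (An alternative, should one prefer not to invoke Lemma \ref{lem:closelifts} as a black box, would be to lift the corner $\bar\q'|_{P}$, $P=\{0,1\}^{k+1}\setminus\{1^{k+1}\}$, to a corner on $\ns$ close to $\q|_{P}$ and complete it via Lemma \ref{lem:contcomp}, using uniqueness of completion on the $(k-1)$-step factor $\nss$ to force $\pi_{k-1}\co\q'=\bar\q'$; but the route above is shorter.)
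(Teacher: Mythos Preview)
Your argument is correct, but it takes a genuinely different route from the paper. The paper's proof is a one-paragraph compactness argument: assuming the statement fails for some $\epsilon>0$, one obtains a sequence $\q_n\in\cu^{k+1}(\ns)$ satisfying the hypothesis with $\delta=1/n$ yet having no suitable $\epsilon$-modification; by compactness of $\cu^{k+1}(\ns)$ a subsequence converges to some $\q^*$, which by continuity of $\pi_{k-1}$ and the triangle inequality for $d'$ satisfies $\pi_{k-1}\co\q^*(v,0)=\pi_{k-1}\co\q^*(v,1)$, and for large $m$ the cube $\q_m$ is within $\epsilon$ of $\q^*$, a contradiction. This uses nothing beyond closedness of $\cu^{k+1}(\ns)$ and continuity of $\pi_{k-1}$.

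Your approach is constructive: you explicitly build the degenerate cube on $\nss$, invoke Lemma~\ref{lem:closelifts} to lift it alongside $\pi_{k-1}\co\q$, and then correct the mismatch using the $\cD_k(\ab_k)$-action and the $\ab_k$-invariance of $d$. This is slightly longer and relies on more of the specific structure (the bundle equation~\eqref{eq:comp-k-deg-bund} and Lemma~\ref{lem:d-invariance}), but it has the advantage of giving an explicit $\q'$ and making transparent exactly where the $\delta$--$\epsilon$ dependence comes from, namely directly from Lemma~\ref{lem:closelifts}. The paper's compactness proof, by contrast, gives no effective control on $\delta$ in terms of $\epsilon$. Either argument is perfectly adequate here.
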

\begin{proof}
Suppose for a contradiction that for some $\epsilon>0$ there is a sequence of cubes $\q_n\in \cu^{k+1}(\ns)$ such that $d'\big(\pi_{k-1}\co \q_n(v,0),\pi_{k-1}\co\q_n(v,1)\big)\leq 1/n$ for every $v\in \{0,1\}^k$ and yet, for every cube $\q'\in \cu^{k+1}(\ns)$ such that $\pi_{k-1}\co \q'(v,0) = \pi_{k-1}\co\q'(v,1)$ for all $v\in\{0,1\}^k$, there is some $v\in \{0,1\}^{k+1}$ such that $d(\q_n(v),\q'(v))> \epsilon$. By compactness of $\cu^{k+1}(\ns)$, there is a subsequence $(\q_m)$ of $(\q_n)$ such that $\q_m\to \q^*\in \cu^{k+1}(\ns)$. The topology on $\cu^{k+1}(\ns)$ can be metrized by $d_\infty(\q_1,\q_2):=\max_{v\in \{0,1\}^{k+1}} d(\q_1(v),\q_2(v))$. It then follows from $d_\infty(\q_m, \q^*)\to 0$ and the triangle inequality for $d'$ that $\pi_{k-1}\co \q^*(v,0) = \pi_{k-1}\co\q^*(v,1)$ for every $v\in \{0,1\}^k$. However, it also follows from $\q_m\to \q^*$ that $d(\q_m(v),\q^*(v))\leq \epsilon$ for all $v\in \{0,1\}^{k+1}$, for sufficiently large $m$, and this  contradicts our initial assumption since $\pi_{k-1}\co \q^*(\cdot,0) = \pi_{k-1}\co\q^*(\cdot,1)$.
\end{proof}
\begin{proof}[Proof of Theorem \ref{thm:rigidity}]
We argue by induction on $k$, starting from the trivial case $k=0$. For $k>0$, suppose that the result holds for $k-1$, and fix any $\epsilon>0$. Let $\delta>0$ be a parameter to be determined later, suppose that $\phi:\ns\to\nss$ is a $\delta$-quasimorphism, and let $\phi_1=\pi_{k-1}\co\phi$. Then $\phi_1$ is a $\delta$-quasimorphism into $\cF_{k-1}(\nss)$. By induction, for some $\delta_1(\epsilon)>0$ to be fixed later, we have that if $\delta$ is small enough then $\phi_1$  has a $\delta_1$-modification $\phi_2:\ns\to \cF_{k-1}(\nss)$ that is a continuous morphism.

We shall now obtain a Borel measurable map $\phi_3:\ns\to \nss$ that lifts $\phi_2$, i.e. such that $\pi_{k-1}\co \phi_3=\phi_2$, and which is a $(\delta+\delta_1)$-modification of $\phi$, so that $\phi_3$ is a $\delta_2$-quasimorphism with $\delta_2=\delta_1+2\delta$. To obtain this, the idea is first to obtain such a lift locally for each neighbourhood in some appropriate cover of $\ns$, and then combine the local lifts into a global one. To this end we use the following set:
\[
G=\{(x,y):\pi_{k-1}(y)=\phi_2(x)\}\subset \ns\times \nss.
\]
We claim that $G$ is compact. Indeed, on one hand the map $f_1\times f_2:\ns\times \nss\to \cF_{k-1}(\nss)\times \cF_{k-1}(\nss)$, $(x,y)\mapsto (\phi_2(x),\pi_{k-1}(y))$ is continuous, and on the other hand $(x,y)\in G$ if and only if $f_1\times f_2(x,y)$ is in the diagonal of $\cF_{k-1}(\nss)\times \cF_{k-1}(\nss)$, which is a closed set in the product topology since $\cF_{k-1}(\nss)$ is Hausdorff. Hence $G$ is closed in $\ns\times \nss$ and therefore compact. We also claim that $G$ is a $\ab_k$-bundle over $\ns$, where $\ab_k$ is the $k$-th structure group of $\nss$. Indeed, the action of $\ab_k$ is given by $a\cdot (x,y)=(x,y+a)$, and the projection map is just $(x,y)\mapsto x\in \ns$.\\
\indent By Proposition \ref{prop:Gleason}, the bundle $G$ is locally trivial and so for every $p\in \ns$ we can find an open set $U_p\subset \ns$ containing $p$ with a continuous cross section $U_p\to G$, which must then be of the form $x\mapsto (x,\tau(x))$ for some continuous map $\tau:U_p\to \nss$. Thus $\phi_2(x)=\pi_{k-1}(\tau(x))$ for every $x\in U_p$. Then, by the definition of $d'$, since for all $x\in U_p$ we have
\[
d'\big(\pi_{k-1}(\tau(x)),\pi_{k-1}(\phi(x))\big)=d'(\phi_2(x),\phi_1(x))\leq \delta_1,
\]
in particular for $p$ there exists $z_p\in \ab_k$ such that $d\big(\tau(p)+z_p,\phi(p)\big)\leq \delta_1$. But then, since the function $f_p:x\mapsto \tau(x)+z_p$ is continuous on $U_p$, there is an open set $U_p'\subset U_p$ containing $p$ such that for all $x\in U_p'$ we have $d(\phi(x),\tau(x)+z_p)<\delta_1+\delta$. The function $f_p$ on $U'_p$ is the desired local lift of $\phi_2$.\\
\indent Now by compactness there is a finite covering of $\ns$ by such open sets $U_p'$. Let $U_1,\dots,U_s$ be the atoms of the Boolean algebra generated by the sets $U_p'$. For each $i\in [s]$ we fix one of the functions $f_p$ defined on $U_s$ and rename it $f_i$. We then define the Borel function $\phi_3$ by $\phi_3(x)=\sum_{i\in [s]}1_{U_i}(x) f_i(x)$.

We shall now apply an averaging argument to $\phi_3$ that will produce
a continuous morphism $\phi_4$ which is close to $\phi$ as desired.

Let $P_2=\{0,1\}^{k+1}\setminus\{0^{k+1}\}$. Note that since $\phi_2$ is a morphism $\ns\to \cF_{k-1}(\nss)$, it preserves $k$-cubes,
and since $\phi_3$ is a lift of $\phi_2$, by \cite[Remark 3.2.12]{Cand:Notes1} we have that $\phi_3$ also preserves $k$-cubes on $\nss$. This implies that for every $\q\in \cu^{k+1}(\ns)$ the restriction of $\phi_3\co \q$ to $P_2$ is a $(k+1)$-corner on $\nss$. For any given cube $\q\in \cu^{k+1}(\ns)$ let us denote by $\comp_0(\q)$ the value at $0^{k+1}$ of the unique completion in $\cu^{k+1}(\nss)$ of $(\phi_3\co \q)|_{P_2}$. Let
\begin{equation}\label{eq:fi4}
\phi_4(x)=\E_{\q\in \cu^{k+1}_x(\ns)}\, \comp_0(\q).
\end{equation}
Let us show that this averaging is well-defined. Note that for every $\q\in\cu^{k+1}_x(\ns)$ we have $\comp_0(\q)\sim_{k-1} \phi_3(x)$, indeed since the morphism $\phi_2$ preserves $(k+1)$-cubes, and $\phi_3$ is a lift of $\phi_2$, we must have $\comp_0(\q)$ and $\phi_3(x)$ both in the fibre  $\pi_{k-1}^{-1}(\phi_2(x))$. We have to show that if $\delta_2>0$ is small enough then the set of values $\comp_0(\q)$, $\q\in \cu^{k+1}_x(\ns)$ has small diameter in the fibre. We will show that $\comp_0(\q)$ is actually close to $\phi_3(x)$ for every $\q\in \cu^{k+1}_x(\ns)$, which will also be useful later. For any $\q\in \cu^{k+1}_x(\ns)$, since $\phi_3$ is a $\delta_2$-quasimorphism, there  is $\q_0\in \cu^{k+1}(\nss)$ such that for all $v$ we have $d\big(\phi_3\co\q(v),\q_0(v)\big)\leq\delta_2$. In particular, this inequality holds for all $v\in P_2$. By continuity of the corner-completion (Lemma \ref{lem:contcomp}) it follows that for $\lambda>0$, if $\delta_2$ is small enough then we have $d\big(\comp_0(\q),\q_0(0^{k+1})\big)\leq\lambda$. Hence $d\big(\phi_3(x),\comp_0(\q)\big)=d\big(\phi_3\co \q(0^{k+1}),\comp_0(\q)\big)\leq \delta_2+\lambda$. We can now fix $\lambda$ so that the averaging in \eqref{eq:fi4} is indeed well-defined.\\ 
\indent Let us now prove that $\phi_4$ is a morphism. By \cite[Lemma 3.2.13]{Cand:Notes1}, it suffices to show that for every $\q\in \cu^{k+1}(\ns)$ we have $\phi_4\co \q\in \cu^{k+1}(\nss)$. We shall do this using another averaging argument, but working this time with the tricube $T_{k+1}$. Let $B=T_{k+1}\setminus \{-1,1\}^{k+1}$. For every $t\in\hom_{\q\co \omega_{k+1}^{-1}}(T_{k+1},\ns)$, there is a unique completion of $\phi_3 \co t|_B$ to a morphism $t':T_{k+1}\to \nss$. Let $\q_t=t'\co \omega_{k+1}$, which is in $\cu^{k+1}(\nss)$ by \cite[Lemma 3.1.16]{Cand:Notes1}. If $\delta_2$ is small enough then the following function is well defined:
\[
\q_2: \{0,1\}^{k+1}\to \nss,\; v\mapsto \E_{t\in\hom_{\q\co \omega_{k+1}^{-1}}(T_{k+1},\ns)}\,\q_t(v).
\]
Indeed, by Corollary \ref{cor:ext-in-Tn}, for each $v$ this average is equal to $\E_{\q'\in \cu^{k+1}_{\q(v)}(\ns)} \comp_0(\q')=\phi_4(\q(v))$. Thus, we have to show that $\q_2\in \cu^{k+1}(\ns)$. To see this, recall that for every $\q'\in \cu^{k+1}_{\q(v)}(\ns)$ we have $\comp_0(\q')\sim_{k-1}\phi_3(\q(v))$, and so $\q_2(v)\sim_{k-1}\phi_3\co \q(v)$, which implies that $\pi_{k-1}\co\q_2= \phi_2\co \q\in \cu^{k+1}(\cF_{k-1}(\nss))$. Therefore, by \cite[Theorem 3.2.19]{Cand:Notes1} and the definition of degree-$k$ extensions, we just have to check that for some (any) $t_0$ we have $\sigma_{k+1}(\q_2-\q_{t_0})=0$. This follows from linearity of averaging (Lemma \ref{lem:avadd}) and the fact that for $\q_t,\q_{t_0}$ are cubes with the same projection to $\cF_{k-1}(\nss)$ for every $t$, so 
\[
\sigma_{k+1}(\q_2-\q_{t_0})\;=\;\sum_v (-1)^{|v|}\; \E_{t\in\hom_{\q\co \omega_{k+1}^{-1}}(T_{k+1},\ns)}\,\q_t(v) - \q_{t_0}(v)\;=\;\E_t\, \sigma_{k+1}\big(\q_t - \q_{t_0}\big) = 0.
\]
Finally, let us prove that $\phi_4$ is continuous. The argument is similar to the proof of Theorem  \ref{thm:Klepgen}. Consider the CSM structure on $\pi_0:\cu^{k+1}(\ns)\to \ns$, $\q\mapsto \q(0^{k+1})$, given by Lemma \ref{lem:cube-set-CSM}. For each $v\in \{0,1\}^{k+1}\setminus \{0^{k+1}\}$ let $\pi_v:\cu^{k+1}(\ns)\to \ns$, $\q\mapsto \q(v)$. We have that $\pi_v$ is continuous and each of its restrictions to a space $\cu^{k+1}_x(\ns)$ is measure-preserving, by Lemma \ref{lem:GoodPairHoms}. For each $v$ we let $f_v:\ns\to\cL(\cu^{k+1}(\ns),\nss)$ be the function sending $x$ to the restriction of $\phi_3\co\pi_v$ to $\cu_x^{k+1}(\ns)$. Then, just as in the proof of Theorem  \ref{thm:Klepgen}, we have that $f_v$ is continuous, by Lemma \ref{lem:csmtech}. We now want to express $\phi_4$ in terms of the functions $f_v$. To that end, consider again the map $\xi : \ns \to \cL(\cu^{k+1}(\ns),\cor^{k+1}(\nss))$, sending $x$ to the function $\xi_x:\q\in\cu^{k+1}_x(\ns) \mapsto (\q(v)=f_v(x)(\q))_{v\neq 0^{k+1}}$. Just as in the proof of Theorem \ref{thm:Klepgen}, the continuity of each function $f_v$ implies that $\xi$ is continuous. We then use again the continuous map $\cK: \cL(\cu^{k+1}(\ns),\cor^{k+1}(\nss))\to \cL(\cu^{k+1}(\ns),\nss)$ that sends a function $g$ to $g\co \comp$, where $\comp$ is the unique completion of $(k+1)$-corners on $\nss$. Now note that $\cK\co\,\xi: \ns\to \cL(\cu^{k+1}(\ns),\nss)$ is precisely the map that sends each $x\in \ns$ to the function $\q\mapsto \comp_0(\q)$ on $\cu^{k+1}_x(\ns)$. This map is continuous, and by the discussion justifying the averaging \eqref{eq:fi4}, we have that in fact $\cK\co\,\xi$ takes values in the following subset of $\cL\big(\cu^{k+1}(\ns),\nss\big)$:
\[
\cU= \bigcup_{x\in \ns} \Big\{g\in L\big(\cu^{k+1}_x(\ns),\pi_{k-1}^{-1}(\phi_2(x))\big)~:~\forall \q\in \cu^{k+1}_x(\ns),\;\; d\big(g(\q),\phi_3(x)\big)\leq \delta_2+\lambda\Big\}.
\]
Now $\phi_4$ is the composition of $\cK\co\,\xi$ with the averaging operator $\cA:\,\cU \to \pi_{k-1}^{-1}(\phi_2(x))$ that sends a function $g$ to $\E_{\q\in \cu^{k+1}_x(\ns)} g(\q)$. Thus it now suffices to show that $\cA$ is continuous. Note that since $ \pi_{k-1}^{-1}(\phi_2(x))$ is homeomorphic to $\ab_k$, it follows from the definition of the metric $d_2$ on $\ab_k$ that if $\delta_2+\lambda$ is sufficiently small then every function $g\in \cU$ can be lifted continuously to a function $g':\cu^{k+1}_x(\ns)\to \R^n$, where $\ab_k=F\times \T^n$, in such a way that two functions $g_1,g_2\in \cU$ are close in the metric $d_1$ from \eqref{eq:gen-L1} if and only if their lifts $g_1',g_2'$ are close in the same metric but with $d$ now being the Euclidean distance on $\R^n$. By Lemma \ref{lem:Ltop-gen-restrict}, if $g_n\to g$ in the topology on $\cL(\cu^{k+1}(\ns),\nss)$ restricted to $L(\cu^{k+1}_x(\ns),\nss)$ , then $d_1(g_n,g)\to 0$, whence by the triangle inequality $\cA(g_n)\to \cA(g)$, so $\cA$ is  continuous as required.
\end{proof}

\medskip
\section{Characterizing compact connected nilspaces of finite rank}\label{sec:CFRnilsnilm}

\medskip
Recall from \cite[Subsection 3.2.4]{Cand:Notes1} that given a $k$-step nilspace $\ns$ we denote by $\tran_i(\ns)$ the group of translations of height $i$ on $\ns$ (or $i$-translations). By a slight abuse of notation, when $\ns$ is a compact nilspace we shall write $\tran_i(\ns)$ for the group of $i$-translations that are also \emph{continuous} functions. A central goal in this section is to show that if $\ns$ has finite rank then $\tran_i(\ns)$ is a Lie group for each $i$ and $\tran_1(\ns)$ acts transitively on the connected components of $\ns$. This will then enable us to show that if $\ns$ has connected structure groups then it can be identified with a filtered nilmanifold (Theorem \ref{thm:toralnilspace}).

Throughout this section we shall abbreviate `compact and finite-rank' by writing `\textsc{cfr}'.

\begin{lemma}
Let $\ns$ be a \textsc{cfr} $k$-step nilspace. Then every element of $\tran(\ns)$ is a homeomorphism $\ns\to \ns$ preserving the Haar measure.
\end{lemma}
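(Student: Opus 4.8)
The plan is to establish the two conclusions separately: the homeomorphism property is essentially formal, and the measure preservation will follow from the uniqueness of the Haar measure.

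First I would treat the topological claim. By the convention adopted at the start of this section, an element $\alpha\in\tran(\ns)$ is by definition a \emph{continuous} map, and recall from \cite{Cand:Notes1} that every translation of a nilspace is a bijection. Since $\ns$ is compact and Hausdorff (Remark \ref{rem:stfacts}), a continuous bijection $\alpha:\ns\to\ns$ is automatically a homeomorphism, by \cite[Theorem 26.6]{Munkres}. In particular $\alpha^{-1}$ is again a continuous translation, so $\tran(\ns)$ is indeed a group of homeomorphisms.

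For the measure preservation I would use the characterization of the Haar measure $\mu$ on $\ns$ given by Proposition \ref{prop:nilspaceHaar}: writing $\bnd_i=\cF_i(\ns)$ for the factors, $\ab_i$ for the structure groups, and $\pi_i:\ns\to\bnd_i$ for the projections, $\mu$ is the unique regular Borel probability measure such that $\mu\co\pi_i^{-1}$ is $\ab_i$-invariant on $\bnd_i$ for every $i\in[k]$. I set $\nu=\mu\co\alpha^{-1}$ and aim to show that $\nu$ satisfies the same characterizing properties, so that $\nu=\mu$ and hence $\alpha$ preserves $\mu$. Since $\ns$ is metrizable, $\nu$ is a regular Borel probability measure (any Borel probability on a metric space is regular, \cite[Theorem 1.1]{Bill2}), so it only remains to verify the invariance of each $\nu\co\pi_i^{-1}$. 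The two algebraic inputs I would recall from \cite{Cand:Notes1} are: (a) a translation descends to each factor, i.e. there is an induced translation $\bar\alpha_i$ of $\cF_i(\ns)$ with $\pi_i\co\alpha=\bar\alpha_i\co\pi_i$ (and $\bar\alpha_i$ is continuous because $\cF_i(\ns)$ carries the quotient topology); and (b) the action of the top structure group of a compact nilspace is by translations of maximal height, which are central in the translation group, so that every translation commutes with this action. Applying (b) to $\cF_i(\ns)$, whose top structure group is $\ab_i$, gives that $\bar\alpha_i$ is $\ab_i$-equivariant, i.e. $\bar\alpha_i\co T_z=T_z\co\bar\alpha_i$ for every $z\in\ab_i$, where $T_z$ denotes the action of $z$. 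Using (a) one computes $\nu\co\pi_i^{-1}=(\mu\co\pi_i^{-1})\co\bar\alpha_i^{-1}$, and then a short formal manipulation combining the $\ab_i$-equivariance of $\bar\alpha_i$ (through $(T_z\co\bar\alpha_i)^{-1}=T_z^{-1}\co\bar\alpha_i^{-1}$) with the $\ab_i$-invariance of $\mu\co\pi_i^{-1}$ shows that $(\mu\co\pi_i^{-1})\co\bar\alpha_i^{-1}$ is again $\ab_i$-invariant. This gives the required invariance of $\nu\co\pi_i^{-1}$ for each $i$, and uniqueness in Proposition \ref{prop:nilspaceHaar} then yields $\nu=\mu$.

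The main obstacle is not analytic but lies in pinning down the two facts (a) and (b) about translations from \cite{Cand:Notes1}; once the centrality of the top-height translations (equivalently, the $\ab_i$-equivariance of the induced translations $\bar\alpha_i$) is in hand, both conclusions follow formally, and in fact the argument does not even use the finite-rank hypothesis. One should also double-check the bookkeeping in the pushforward identity $\nu\co\pi_i^{-1}=(\mu\co\pi_i^{-1})\co\bar\alpha_i^{-1}$ and in the commutation of $T_z$ with $\bar\alpha_i$, but these are routine.
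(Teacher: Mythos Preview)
Your argument is correct. The homeomorphism part is identical to the paper's. For measure preservation the paper takes a shorter route: it simply observes that a translation is a continuous fibre-surjective automorphism of $\ns$ and then invokes Corollary~\ref{cor:ctsfibsurmorph} (continuous fibre-surjective morphisms preserve Haar measures), which in turn rests on Lemma~\ref{lem:MeasPres}. Your approach bypasses that machinery and instead verifies directly the uniqueness characterization in Proposition~\ref{prop:nilspaceHaar}, using that the induced translation $\bar\alpha_i$ on each factor $\cF_i(\ns)$ commutes with the $\ab_i$-action (this is indeed the centrality of $\tran_i(\cF_i(\ns))=\tau(\ab_i)$ in $\tran(\cF_i(\ns))$, coming from the filtration property $[\tran_1,\tran_i]\subset\tran_{i+1}=\{\id\}$). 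Both arguments are equally valid and, as you note, neither uses the finite-rank hypothesis; the paper's version is just a one-line citation of earlier work, while yours makes the underlying equivariance explicit.
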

\begin{proof}
Each translation is an invertible continuous map from the compact Hausdorff space $\ns$ to itself and so its inverse is also continuous \cite[Theorem 26.6]{Munkres}. Moreover, the translation is a fibre-surjective automorphism of $\ns$, so it is measure preserving by Corollary \ref{cor:ctsfibsurmorph}.
\end{proof}
\noindent For a compact space $X$ and a space $Y$ with metric $d$,  recall that $C(X,Y)$ denotes the space of continuous functions $f:X\to Y$, with the topology induced by the uniform metric $d_\infty(f_1,f_2)=\sup_{x\in X} d(f_1(x),f_2(x))$. We record the following basic fact.
\begin{lemma}\label{lem:transpolish}
Let $\ns$ be a \textsc{cfr} $k$-step nilspace and let $d$ be a metric generating the topology on $\ns$. Then for every $i\in [k]$,  the group $\tran_i(\ns)$ equipped with the restriction of the uniform metric on $C(\ns,\ns)$ is a Polish group.
\end{lemma}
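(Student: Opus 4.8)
The plan is to realize $\tran_i(\ns)$ as a subspace of $C(\ns,\ns)$ equipped with the uniform metric $d_\infty$, and to verify separately that it is a topological group, that it is separable, and that its topology is completely metrizable. Since $\ns$ is a compact metric space, $C(\ns,\ns)$ with $d_\infty$ is a Polish space (it is separable by \cite[Theorem (4.19)]{Ke} and complete because $\ns$ is complete), and the composition map $C(\ns,\ns)\times C(\ns,\ns)\to C(\ns,\ns)$ is jointly continuous, by a routine argument using the uniform continuity of maps on the compact space $\ns$. Separability of $\tran_i(\ns)$ is then immediate, being a subspace of a separable metric space. The two substantive points are the continuity of inversion and complete metrizability; the main obstacle to be careful about is that $d_\infty$ itself is \emph{not} complete on $\tran_i(\ns)$, since a uniform limit of continuous translations can be a non-invertible map, so completeness has to be extracted indirectly.

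First I would prove that inversion is $d_\infty$-continuous on $\tran_i(\ns)$ (indeed on the homeomorphisms of any compact metric space), by a compactness argument. Suppose $\phi_n\to\phi$ uniformly with all maps homeomorphisms but $\phi_n^{-1}\not\to\phi^{-1}$; then for some $\epsilon>0$ and, after passing to a subsequence, points $x_n$ one has $d(\phi_n^{-1}(x_n),\phi^{-1}(x_n))>\epsilon$. Writing $y_n=\phi_n^{-1}(x_n)$ and passing to further subsequences so that $x_n\to x$ and $y_n\to y$ by compactness of $\ns$, the uniform convergence $\phi_n\to\phi$ together with continuity of $\phi$ gives $x=\lim\phi_n(y_n)=\phi(y)$, hence $y=\phi^{-1}(x)$; but continuity of $\phi^{-1}$ gives $\phi^{-1}(x_n)\to\phi^{-1}(x)=y$, so $d(y_n,\phi^{-1}(x_n))\to 0$, contradicting the choice of $x_n$. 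This shows $\tran_i(\ns)$ is a topological group and, crucially, that the one-sided metric $d_\infty$ and the two-sided metric $D(\phi,\psi)=d_\infty(\phi,\psi)+d_\infty(\phi^{-1},\psi^{-1})$ induce the same topology on $\tran_i(\ns)$.

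It then remains to deduce complete metrizability. Here I would use the embedding $\iota:\tran_i(\ns)\to C(\ns,\ns)\times C(\ns,\ns)$, $\phi\mapsto(\phi,\phi^{-1})$, which by the previous paragraph is a homeomorphism onto its image when the target carries the product topology. The plan is to show this image is closed in the Polish space $C(\ns,\ns)^2$, so that it, and hence $\tran_i(\ns)$, is Polish (a closed subspace of a Polish space is Polish). To identify the image, note first that the set $T\subset C(\ns,\ns)$ of continuous maps satisfying the defining conditions of an $i$-translation is closed: each such condition requires that a finite configuration assembled from the values $\alpha(\q(v))$ of $\alpha$ on a cube $\q$ lie in a cube set $\cu^m(\ns)$ (or that $\pi_{i-1}\co\alpha=\pi_{i-1}$), and these are closed conditions because the cube sets are closed and the evaluation maps $\alpha\mapsto\alpha(p)$ are continuous. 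Consequently the image of $\iota$ equals $\big(T\times C(\ns,\ns)\big)\cap\{(\phi,\psi):\phi\co\psi=\psi\co\phi=\id_{\ns}\}$, which is closed by continuity of composition; one checks this set really is the image, since any $\phi\in T$ admitting a two-sided continuous inverse $\psi$ is a homeomorphism (a continuous bijection of the compact Hausdorff space $\ns$, \cite[Theorem 26.6]{Munkres}), hence an element of $\tran_i(\ns)$ with $\psi=\phi^{-1}$. This completes the argument; I expect the only delicate point to be the bookkeeping showing that the graph is \emph{exactly} this closed set, everything else being either standard or a direct compactness argument. Note that only the compactness of $\ns$ is used here, the finite-rank hypothesis playing no role in this particular statement.
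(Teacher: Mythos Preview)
Your argument is correct and essentially reconstructs, in a self-contained way, the standard proof that the homeomorphism group $H(\ns)$ of a compact metric space is Polish, while simultaneously cutting down to the translation subgroup. The paper takes a shorter path: it simply cites that $H(\ns)$ with the relative $C(\ns,\ns)$-topology is a Polish group \cite[\S 1.3, Example (v)]{Be&Ke}, then observes that $\tran_i(\ns)$ is a closed subgroup of $H(\ns)$ (by the definition of translations and closure of the cube sets), hence Polish. What you do differently is avoid that citation by proving the continuity of inversion directly via compactness and then using the embedding $\phi\mapsto(\phi,\phi^{-1})$ into $C(\ns,\ns)^2$ to obtain complete metrizability from a closed-image argument; this is precisely the standard argument underlying the result cited in the paper. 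Your route is more transparent and requires no black-box reference, at the cost of being longer; the paper's route is terser but relies on the reader knowing or looking up the Polish structure on $H(\ns)$. Your final remark that finite rank plays no role here is also correct: only compactness of $\ns$ and closure of the cube sets are used.
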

\begin{proof}
The group $H(\ns)$ of homeomorphisms on $\ns$, with the relative topology from $C(X,X)$, is a Polish group \cite[\S 1.3, Example (v)]{Be&Ke}. It follows from the definition of translations \cite[Definition 3.2.27]{Cand:Notes1} and the closure of cube sets that $\tran_i(\ns)$ is a closed subgroup of $H(\ns)$, hence it is also a Polish group. By definition the metric $d_\infty$ generates the topology on $\tran_i(\ns)$.
\end{proof}
\noindent Recall that by \cite[Lemma 3.2.31]{Cand:Notes1} a translation maps every class of the relation $\sim_{k-1}$ onto another such class. This enables us to define a translation on the factor $\ns_{k-1}=\cF_{k-1}(\ns)$, as in the following lemma. Recall also that the topology on $\ns_{k-1}$ is the quotient topology from $\ns$, which can be metrized by the quotient metric $d'$ defined in \eqref{eq:quotientmetric}. We then denote by $d'_\infty$ the uniform metric on $\tran(\ns_{k-1})$ relative to $d'$.
\begin{lemma}\label{lem:hdefn}
For each $i\in \N$ let $h$ be the map sending each $\alpha\in \tran_i(\ns)$ to the map $h(\alpha)$ on $\ns_{k-1}$ defined by $h(\alpha)(y)=\pi_{k-1}(\alpha(x))$, for any $x\in \ns$ such that $\pi_{k-1}(x)=y$. Then $h$ is a continuous homomorphism $\tran_i(\ns)\to \tran_i(\ns_{k-1})$.
\end{lemma}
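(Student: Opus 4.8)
The plan is to establish four things in turn: that $h(\alpha)$ is well-defined, that it is a continuous $i$-translation of $\ns_{k-1}$, that $h$ respects the group operations, and that $h$ is continuous. The organising principle throughout is the intertwining identity
\[
h(\alpha)\co \pi_{k-1} \;=\; \pi_{k-1}\co \alpha ,
\]
which simultaneously defines $h(\alpha)$ and renders every subsequent property essentially formal. For well-definedness I would invoke \cite[Lemma 3.2.31]{Cand:Notes1}: the translation $\alpha$ carries each class of $\sim_{k-1}$ onto a class of $\sim_{k-1}$, so if $\pi_{k-1}(x)=\pi_{k-1}(x')$ then $\alpha(x)\sim_{k-1}\alpha(x')$, whence $\pi_{k-1}(\alpha(x))=\pi_{k-1}(\alpha(x'))$; thus $h(\alpha)(y)$ is independent of the chosen preimage and the displayed identity holds. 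Since $\pi_{k-1}$ is a quotient map and $\pi_{k-1}\co\alpha$ is continuous, $h(\alpha)$ is continuous. It is bijective: surjectivity follows by taking $x=\alpha^{-1}(x')$ for a given target $\pi_{k-1}(x')$, and injectivity follows because $\alpha^{-1}$ (again a translation) preserves $\sim_{k-1}$-classes, so $\alpha(x_1)\sim_{k-1}\alpha(x_2)$ forces $x_1\sim_{k-1}x_2$. A continuous bijection of the compact Hausdorff space $\ns_{k-1}$ is then a homeomorphism.

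The substantive step is to check $h(\alpha)\in\tran_i(\ns_{k-1})$ directly from \cite[Definition 3.2.27]{Cand:Notes1}. Membership in $\tran_i$ is a condition asserting that, for an arbitrary cube, applying the map along prescribed faces again produces a cube. The enabling fact is that $\pi_{k-1}$ is a fibre-surjective morphism, so that $\cu^n(\ns_{k-1})=\{\pi_{k-1}\co \q:\q\in\cu^n(\ns)\}$, i.e. the cube sets of the factor are the images of those of $\ns$. Given a cube $\q'$ on $\ns_{k-1}$ I would lift it to $\q\in\cu^n(\ns)$ with $\pi_{k-1}\co\q=\q'$; the intertwining identity then shows that the configuration obtained by applying $h(\alpha)$ along a face $F$ of $\q'$ is exactly the $\pi_{k-1}$-image of the configuration obtained by applying $\alpha$ along $F$ of $\q$. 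Since $\alpha\in\tran_i(\ns)$ the latter is a cube of $\ns$, and as $\pi_{k-1}$ is a morphism its image is a cube of $\ns_{k-1}$; this is precisely the defining condition for $h(\alpha)\in\tran_i(\ns_{k-1})$.

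The homomorphism property is then formal: composing intertwining identities gives $h(\alpha\beta)\co\pi_{k-1}=\pi_{k-1}\co\alpha\beta=h(\alpha)\co h(\beta)\co\pi_{k-1}$, and surjectivity of $\pi_{k-1}$ forces $h(\alpha\beta)=h(\alpha)h(\beta)$, with $h(\id)=\id$. Continuity of $h$ is likewise immediate, and in fact $h$ is $1$-Lipschitz: by the definition of the quotient metric \eqref{eq:quotientmetric} one has $d'(\pi_{k-1}(a),\pi_{k-1}(b))\le d(a,b)$, so for any $y=\pi_{k-1}(x)$,
\[
d'\big(h(\alpha)(y),h(\beta)(y)\big)=d'\big(\pi_{k-1}(\alpha(x)),\pi_{k-1}(\beta(x))\big)\le d(\alpha(x),\beta(x))\le d_\infty(\alpha,\beta),
\]
and taking the supremum over $y$ gives $d'_\infty(h(\alpha),h(\beta))\le d_\infty(\alpha,\beta)$. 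I expect the only genuine work to lie in the second step, namely matching the precise combinatorial condition defining $\tran_i$ with the image-of-cubes description of $\cu^n(\ns_{k-1})$ afforded by fibre-surjectivity; the well-definedness, the homomorphism property, and the continuity all fall out formally from the intertwining identity and the non-expansiveness of the quotient metric.
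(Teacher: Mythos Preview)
Your proposal is correct and follows essentially the same approach as the paper. The paper's proof is much terser---it simply asserts that the homomorphism property ``follows from the definitions'' and then gives exactly your Lipschitz computation for continuity---so your account is a faithful expansion of what the paper leaves implicit, including the well-definedness via \cite[Lemma 3.2.31]{Cand:Notes1} and the cube-lifting argument for $h(\alpha)\in\tran_i(\ns_{k-1})$.
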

\begin{proof}
\noindent That $h$ is a homomorphism $\tran_i(\ns)\to \tran_i(\ns_{k-1})$ follows from the definitions. The continuity follows similarly, thus
\begin{eqnarray*}
d'_\infty\big(h(\alpha_1),h(\alpha_2)\big)& =& \sup_{y\in \ns_{k-1}} d'\big(h(\alpha_1)(y),h(\alpha_2)(y)\big)\;\;=\;\;\sup_{x\in \ns} d'\big(\pi_{k-1}(\alpha_1(x)),\pi_{k-1}(\alpha_2(x))\big)\\
& = & \sup_{x\in \ns}\;\;\; \inf_{\substack{x_1\sim_{k-1} \alpha_1(x)\\ x_2\sim_{k-1} \alpha_2(x)}} d(x_1,x_2)\;\;\leq\;\;\sup_{x\in \ns}d\big(\alpha_1(x),\alpha_2(x)\big)= d_\infty(\alpha_1,\alpha_2).
\end{eqnarray*}
\end{proof}
\noindent An important result toward our goal in this section is that if an  $i$-translation on $\ns_{k-1}$ is sufficiently close to the identity then it can be lifted to an $i$-translation on $\ns$, in the following sense.
\begin{lemma}\label{lem:small-trans-lift}
Let $\ns$ be a \textsc{cfr} $k$-step nilspace and let $i\in \N$. There exists $\epsilon>0$ such that if $\alpha\in\tran_i(\ns_{k-1})$ satisfies $d'_\infty(\alpha,\id)< \epsilon$ then there exists $\beta\in\tran_i(\ns)$ such that $h(\beta)=\alpha$.
\end{lemma}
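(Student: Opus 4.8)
The plan is to imitate the construction used in the proof of Theorem~\ref{thm:rigidity}: first build a rough Borel lift of $\alpha$ that is close to the identity, and then correct it to a genuine $i$-translation by an averaging argument carried out in the top structure group. The key structural input is that, since $\ns$ is \textsc{cfr}, the projection $\pi_{k-1}:\ns\to\ns_{k-1}$ exhibits $\ns$ as a locally trivial $\ab_k$-bundle with $\ab_k$ a compact abelian Lie group (Lemma~\ref{lem:finrankloctriv} and Proposition~\ref{prop:Gleason}), so that each fibre of $\pi_{k-1}$ is a torsor for $\ab_k\cong F\times\T^n$ in which the averaging of Definition~\ref{def:calgaver} is available once the values being averaged are sufficiently concentrated.

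First I would produce a Borel map $\tilde\alpha:\ns\to\ns$ with $\pi_{k-1}\co\tilde\alpha=\alpha\co\pi_{k-1}$ and $d(\tilde\alpha(x),x)$ uniformly small. By local triviality, near each point the bundle $\{(x,x'):\pi_{k-1}(x')=\alpha(\pi_{k-1}(x))\}$ over $\ns$ admits a continuous section, and since $d'_\infty(\alpha,\id)<\epsilon$ this section value can be taken $O(\epsilon)$-close to $x$ in $d$; patching finitely many such local sections (one per atom of a finite cover, exactly as in the construction of $\phi_3$ in the proof of Theorem~\ref{thm:rigidity}) yields a global Borel lift $\tilde\alpha$ that is an $O(\epsilon)$-approximate $i$-translation. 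Then, for each $x\in\ns$ and each cube $\q\in\cu^{k+1}_x(\ns)$, I would apply $\tilde\alpha$ along the faces prescribed in the translation axiom \cite[Definition 3.2.27]{Cand:Notes1}, obtaining a $(k+1)$-corner on $\ns$ whose values all lie within $O(\epsilon)$ of $\tilde\alpha(x)$; completing this corner (the completion map $\comp$ is continuous by Lemma~\ref{lem:contcomp}) and reading off the relevant vertex gives a value $\comp_0(\q)$ in the fibre $\pi_{k-1}^{-1}(\alpha(\pi_{k-1}(x)))$, and I would set
\[
\beta(x)=\E_{\q\in\cu^{k+1}_x(\ns)}\,\comp_0(\q).
\]
Since all the $\comp_0(\q)$ lie within $O(\epsilon)$ of $\tilde\alpha(x)$ in the same fibre, they are $\tfrac14$-concentrated for $\epsilon$ small (this uniform concentration is exactly what Lemma~\ref{lem:vertiparathin} and the compactness of $\ns_{k-1}$ provide), so the average is well defined by Definition~\ref{def:calgaver} and Lemma~\ref{lem:avadd}. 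By construction $\pi_{k-1}\co\beta=\alpha\co\pi_{k-1}$, whence $h(\beta)=\alpha$; and $\beta$ is continuous by the $\cL(V,Z)$-machinery used for $\phi_4$ in Theorem~\ref{thm:rigidity}, combining Lemmas~\ref{lem:csmtech}, \ref{lem:prodCSMconv} and~\ref{lem:contcomp} with continuity of the averaging operator $\cA$.

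The hard part will be verifying that $\beta$ is an \emph{exact} $i$-translation rather than merely a continuous lift of $\alpha$. For this I would check each defining cube condition of \cite[Definition 3.2.27]{Cand:Notes1} by a second averaging over the tricube, taking $t\in\hom_{\q\co\omega_{k+1}^{-1}}(T_{k+1},\ns)$, completing $\tilde\alpha\co t$ off the outer points $\{-1,1\}^{k+1}$, and using Corollary~\ref{cor:ext-in-Tn} to identify the resulting averages with the values of $\beta$; this is the exact analogue of the proof that $\phi_4$ is a morphism in Theorem~\ref{thm:rigidity}. Because $\pi_{k-1}\co\beta=\alpha\co\pi_{k-1}$ and $\alpha$ is already an exact $i$-translation on $\ns_{k-1}$, the cube condition holds after projection to $\ns_{k-1}$, so by \cite[Theorem 3.2.19]{Cand:Notes1} it remains only to verify a single $\sigma_{k+1}$-identity in $\ab_k$, which vanishes by the linearity of averaging (Lemma~\ref{lem:avadd}). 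Finally I would obtain bijectivity of $\beta$ by running the same construction on $\alpha^{-1}$ (again $O(\epsilon)$-close to $\id$) to produce a lift $\beta'$ and checking that $\beta'\co\beta$ and $\beta\co\beta'$ are forced to be $\id$. The main quantitative obstacle throughout is choosing $\epsilon$ small enough that every family of values entering an average is $\tfrac14$-concentrated, which must be arranged uniformly in $x$ and across both averaging steps.
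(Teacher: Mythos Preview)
Your rigidity-style averaging will give a continuous \emph{morphism} $\beta$ lifting $\alpha$, but that is strictly weaker than $\beta\in\tran_i(\ns)$, and this is where the argument breaks. Concretely, for a map $\gamma:\ns_{k-1}\to\ab_k$, the bijection $x\mapsto x+\gamma(\pi_{k-1}(x))$ is a morphism precisely when $\gamma\in\hom(\ns_{k-1},\cD_k(\ab_k))$, but by Lemma~\ref{lem:h-kernel} it lies in $\tran_i(\ns)$ only when $\gamma\in\hom(\ns_{k-1},\cD_{k-i}(\ab_k))$; so the morphism lifts of $\alpha$ form a strictly larger set than the $i$-translation lifts, and nothing in your construction singles out the latter. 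Your tricube step expresses $\beta\co\q=\E_t\,\q_t$ with each $\q_t\in\cu^{k+1}(\ns)$, and linearity of averaging then gives $\sigma_{k+1}(\beta\co\q-\q_{t_0})=0$, which is the morphism property. But the $i$-translation condition concerns $\arr{\q,\beta\co\q}_i$, and there is no reason $\arr{\q,\q_t}_i$ should be a cube for each $t$, so ``linearity of averaging'' does not deliver the $\sigma_{k+1}$-identity you need. In effect, Theorem~\ref{thm:rigidity} corrects a quasimorphism at degree $k$, whereas here one must correct at degree $k-i$.

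The paper does this via a different device: the translation bundle $\cT^*(\alpha,\ns,i)$ of \cite[Lemma~3.3.38]{Cand:Notes1}, a degree-$(k-i)$ extension of $\ns_{k-1}$ by $\ab_k$ whose morphism cross sections correspond, via \cite[Proposition~3.3.39]{Cand:Notes1}, exactly to elements of $\tran_i(\ns)$ lifting $\alpha$. From $d'_\infty(\alpha,\id)<\epsilon$ one builds a Borel cross section $\cs$ whose classes consist of pairs $(x_0,x_1)$ with $d(x_0,x_1)\leq\epsilon$; then Lemmas~\ref{lem:verticubapprox} and~\ref{lem:vertiparathin} force the generated cocycle $\rho_{\cs}:\cu^{k-i+1}(\ns_{k-1})\to\ab_k$ to be uniformly small, and Lemma~\ref{lem:smallco} makes it a coboundary. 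The corrected cross section is then a morphism, hence yields $\beta\in\tran_i(\ns)$ with $h(\beta)=\alpha$, continuous by Theorem~\ref{thm:Klepgen}. The translation bundle is precisely what places the relevant cocycle at degree $k-i$ rather than $k$.
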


\begin{proof}
We shall use notation from \cite[Lemma 3.3.38]{Cand:Notes1}. Recall that by that lemma the translation bundle $\mathcal{T}^*(\alpha,\ns,i)=\cF_{k-1}(\mathcal{T}(\alpha,\ns,i))$ is, from the purely algebraic viewpoint, a degree-$(k-i)$ extension of $\ns_{k-1}$ by $\ab_k$, and note that by the results from Section \ref{sec:toprelims} we have that this extension is a continuous $\ab_k$-bundle. We show first that if $\epsilon$ is sufficiently small then this extension has a measurable cross section $\cs$ such that the corresponding cocycle $\rho_{\cs}$ is a coboundary.\\
\indent Let $\gamma$ be the projection $\mathcal{T}^*(\alpha,\ns,i)\to \ns_{k-1}$. We claim that if $\epsilon>0$ is sufficiently small, then we can choose a Borel cross section $\cs:\ns_{k-1}\to \cT^*$ with the property that for each $\cs(x)$, for every $(x_0,x_1)$ in the equivalence class $\pi_{k-1,\cT}^{-1}(\cs(x))\subset \cT$, we have $d(x_0,x_1)\leq \epsilon$. (Recall that we have $\alpha(\pi_{k-1}(x_0))=\pi_{k-1}(x_1)$ for $(x_0,x_1)\in \cT$, by \cite[Definition 3.3.34]{Cand:Notes1}.) Indeed, by definition $d'_\infty(\alpha,\id)< \epsilon$ implies that, for each $x\in \ns_{k-1}$, there exist $x_0,x_1\in \ns$ with $\pi_{k-1}(x_0)=x$, $\pi_{k-1}(x_1)=\alpha(x)$, and such that $d(x_0,x_1)< \epsilon$. Then, by $\ab_k$-invariance of $d$, we have that every pair $(y_0,y_1)$ in the same equivalence class as $(x_0,x_1)$ in $\cT^*$ satisfies $d(y_0,y_1)\leq \epsilon$. We choose $\cs(x)$ to be one of these pairs, say $(x_0,x_1)$. Note that this choice can be made in a Borel measurable way. Indeed, using the fact that $\ns$ is a locally trivial $\ab_k$-bundle over $\ns_{k-1}$ (Proposition \ref{prop:Gleason}), we can construct $\cs$ as a piecewise continuous function.

Now let $\rho_{\cs}$ be the cocycle generated by $\cs$, defined for $\q\in \cu^{k-i+1}(\ns_{k-1})$ by $\rho_{\cs}(\q)= \sigma_{k-i+1}(\cs\co\q-\q')$ for any lift $\q'\in \cu^{k-i+1}(\cT^*)$ of $\q$. We have that $\rho_{\cs}$ is measurable (recall the end of the proof of Lemma \ref{lem:meascross}). We claim that, for some $\epsilon_2>0$ to be fixed later, if $\epsilon$ is sufficiently small then $\rho_{\cs}$ is also small in the sense that $d_2(\rho_{\cs}(\q),0_{\ab_k})\leq \epsilon_2$ for every $\q\in \cu^{k-i+1}(\ns_{k-1})$. To show  this we first give an alternative expression of the function $\cs\co\q-\q'$.  The lift $\q'$, being a cube on  $\cT^*$, has itself a lift $\tilde \q\in \cu^{k-i+1}(\cT)$. By definition we have $\tilde \q=\tilde\q_0\times \tilde\q_1$ where $\pi_{k-1}\co\tilde\q_1(v) = \alpha (\pi_{k-1}\co\tilde\q_0(v))$ for all $v$. By the claim in the previous paragraph, for each $v$ we have that $\cs\co \q(v)$ is a class of pairs $(x_0,x_1)\in \pi_{k-1}^{-1}(\q(v))\times \pi_{k-1}^{-1}(\alpha\co\q(v))$ satisfying $d(x_0,x_1)\leq \epsilon$ and corresponding to a local translation $\phi$, thus $x_1=\phi(x_0)$ for every such pair. The difference $\cs\co\q(v)-\q'(v)$ is then the element $a_v\in \ab_k$ which has to be added to $\tilde\q_0(v)$ in order to have $\phi(\tilde\q_0(v)+a_v)=\tilde\q_1(v)$, namely $a_v=\phi^{-1}(\tilde\q_1(v))-\tilde\q_0(v)$. Thus, letting $a$ denote the function $\{0,1\}^{k-i+1}\to \ab_k$, $v\mapsto a_v$, we have $\rho_{\cs}(\cs\co\q-\q')=\sigma_{k-i+1}(a)$. To show that this must be a small element of $\ab_k$, we argue as follows. By assumption on $\alpha$, the cube $\tilde\q$ satisfies the premise of Lemma \ref{lem:verticubapprox} with $\delta=\epsilon$. Thus, given $\epsilon_1>0$ to be fixed later, if $\epsilon<\epsilon_1$ is sufficiently small then by that lemma there must exist $\q^*=\q^*_0\times \q^*_1\in \cu^{k-i+1}(\cT)$ such that for all $v$ we have $d(\q^*_i(v),\tilde\q_i(v))\leq \epsilon_1$ for  $i=0,1$, and with $\pi_{k-1}\co \q^*_1=\pi_{k-1}\co \q^*_0$. For each $v$ let $a^*(v)=\q^*_1(v)-\q^*_0(v)\in \ab_k$. It now suffices to show that $d_2(a^*(v),a(v))\leq \epsilon_2/2^{k-i+1}$ for every $v$, for then, by virtue of $\arr{\q^*_0,\q^*_1}_1$ being a cube, we must have $\sigma_{k-i+1}(a^*)=0$ and so $\sigma_{k-i+1}(a)\leq \epsilon_2$ as claimed. To show that $d_2(a^*(v),a(v))$ is small we can apply Lemma \ref{lem:vertiparathin} with $\delta=2\epsilon_1$, since we know on one hand that $d(\q_0^*(v),\tilde \q_0(v))\leq \epsilon_1$, and on the other hand that $d(\phi^{-1}(\tilde\q_1(v)),\tilde\q_1(v))\leq \epsilon$ and $d(\q_1^*(v),\tilde \q_1(v))\leq \epsilon_1$, so that $d(\phi^{-1}(\tilde\q_1(v)),\q_1^*(v))\leq \epsilon+\epsilon_1\leq 2\epsilon_1$.

We can now apply Lemma \ref{lem:smallco}. Thus if $\epsilon_2$ is sufficiently small then there exists a Borel function $g:\cF_{k-1}(\ns)\to \ab_k$ such that $\rho_{\cs}(\q)=\sigma_{k-1+i}(g\co \q)$ for all $\q\in \cu^{k-i+1}\big(\cF_{k-1}(\ns)\big)$. Let $m:\cF_{k-1}(\ns)\to \cT^*$ be the measurable function $x\mapsto \cs(x)-g(x)$. We have that $m$ is a morphism, indeed for any $\q\in \cu^n(\cF_{k-1}(\ns))$, on one hand we clearly have that the $\cT^*$-valued map $m\co \q$ is a lift of $\q$, and on the other hand for some (any) lift $\q'\in \cu^n(\cT^*)$ of $\q$ we have $\sigma_{k-i+1}(m\co \q - \q')=\sigma_{k-i+1}(\cs\co \q - \q')-\sigma_{k-i+1}(g\co \q)=\rho_{\cs}(\q)-\rho_{\cs}(\q)=0$, so by \cite[Definition 3.3.13 (ii)]{Cand:Notes1} we have indeed $m\co\q\in \cu^n(\cT^*)$. Now \cite[Proposition 3.3.39]{Cand:Notes1} gives us an $i$-translation $\beta$ on $\ns$ that is a lift of $\alpha$, and is defined as follows: for each $x\in \ns$ we have $\beta(x)=\phi(x)$, where $\phi$ is the local translation corresponding to the equivalence class $m\co\pi_{k-1}(x)\in \cT^*$. It only remains to check that $\beta$ is Borel measurable, as then, being a morphism, it must be continuous by Theorem \ref{thm:Klepgen}. By \cite[Definition 3.3.34 and Proposition 3.3.36]{Cand:Notes1}, the nilspace $\cT$ is a continuous $\ab$-bundle over $\cT^*$, for the polish group $\ab=\{(z,z):z\in \ab_k\}\leq \ab_k\times \ab_k$. Therefore, arguing as in the end of the proof of Lemma \ref{lem:meascross}, we obtain a Borel cross section $\lambda:\cT^*\to\cT$, and so the map $\lambda\co m\co\pi_{k-1}:\ns\to \cT$ is Borel. Moreover, letting $\lambda\co m\co\pi_{k-1}(x)=(y_0,y_1)$, we have $\beta(x)=y_1+(x-y_0)$, so $\beta$ is indeed Borel.
\end{proof}
\noindent We shall need a useful criterion for a translation to lie in the kernel of $h$. Note that if $\alpha\in \ker(h)$ then, since by \cite[Lemma 3.2.31]{Cand:Notes1} the restriction of $\alpha$ to each fibre of $\pi_{k-1}$ is a local translation, we deduce that the map $x\mapsto \alpha(x)-x$ is a constant element of $\ab_k$ for every $x$ in a given fibre. Thus $\alpha$ induces a well-defined map $\alpha':\ns_{k-1}\to \ab_k$, $\pi_{k-1}(x) \mapsto \alpha(x)-x$. Conversely, given $\alpha':\ns_{k-1}\to \ab_k$ we can define $\alpha:\ns \to \ns$, $x\mapsto x+\alpha'(\pi_{k-1}(x))$, and we can ask when is $\alpha$ in $\tran(\ns)$.
\begin{lemma}\label{lem:h-kernel}
Let $i<k$. Then $\alpha\in \ker (h)\cap \tran_i(\ns)$ if and only if $\alpha'\in \hom(\ns_{k-1},\cD_{k-i}(\ab_k))$.
\end{lemma}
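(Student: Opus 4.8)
The plan is to read off both implications directly from the defining cube condition for an $i$-translation, combined with the degree-$k$ bundle structure of $\ns$ over $\ns_{k-1}$ given by Proposition~\ref{prop:topbundec}. Recall that $\alpha\in\tran_i(\ns)$ precisely when, for every $n$ and every $\q\in\cu^n(\ns)$, the arrow $\arr{\q,\alpha\co\q}$ lies in $\cu^n(\ns\Join_i\ns)$ (the cube condition defining an $i$-translation, \cite[Definition 3.2.27]{Cand:Notes1}, recalling arrow spaces from \cite[Definition 3.1.19]{Cand:Notes1}). Since $\alpha\in\ker(h)$ is vertical, we have $\pi_{k-1}\co\alpha=\pi_{k-1}$, and by the discussion preceding the lemma $\alpha(x)=x+\alpha'(\pi_{k-1}(x))$ for the induced map $\alpha':\ns_{k-1}\to\ab_k$. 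Thus $\pi_{k-1}\co(\alpha\co\q)=\pi_{k-1}\co\q$, so the image of $\arr{\q,\alpha\co\q}$ under the projection $\ns\Join_i\ns\to\ns_{k-1}\Join_i\ns_{k-1}$ is the diagonal arrow $\arr{\pi_{k-1}\co\q,\pi_{k-1}\co\q}$, which is a cube for every $\q$. The whole content of the translation condition therefore sits in the top structure group $\ab_k$.

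The main step is to identify this remaining vertical content. Using \eqref{eq:comp-k-deg-bund}, the fibre of the projection $\cu^n(\ns\Join_i\ns)\to\cu^n(\ns_{k-1}\Join_i\ns_{k-1})$ over the diagonal arrow is a coset of $\cu^n\big(\cD_k(\ab_k)\Join_i\cD_k(\ab_k)\big)$, and the arrows $\arr{\q,\alpha\co\q}$ and the diagonal lift $\arr{\q,\q}$ differ precisely by the $\ab_k$-valued pair $\arr{0,\,\alpha'\co\pi_{k-1}\co\q}$. Hence $\arr{\q,\alpha\co\q}\in\cu^n(\ns\Join_i\ns)$ if and only if $\arr{0,\,\alpha'\co\pi_{k-1}\co\q}\in\cu^n\big(\cD_k(\ab_k)\Join_i\cD_k(\ab_k)\big)$. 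The crux is then the purely algebraic description of the arrow space of $\cD_k(\ab_k)$: an $n$-cube of $\cD_k(\ab_k)\Join_i\cD_k(\ab_k)$ is a pair $(c_0,c_1)$ with $c_0\in\cu^n(\cD_k(\ab_k))$ and $c_1-c_0\in\cu^n(\cD_{k-i}(\ab_k))$, the degree dropping by exactly $i$ because the $i$ arrow-directions absorb $i$ orders of the difference (recall the description of $\cD_k(\ab_k)$-cubes via \cite[(2.9)]{Cand:Notes1}). Applying this with $c_0=0$ shows that $\arr{\q,\alpha\co\q}$ is a cube if and only if $\alpha'\co\pi_{k-1}\co\q\in\cu^n(\cD_{k-i}(\ab_k))$.

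It remains to quantify over $\q$. By condition (3.5) in Definition~\ref{def:compdegkbund} (the surjectivity part of the bundle structure), as $\q$ ranges over $\cu^n(\ns)$ its projection $\pi_{k-1}\co\q$ ranges over all of $\cu^n(\ns_{k-1})$. Consequently the requirement ``$\alpha'\co\pi_{k-1}\co\q\in\cu^n(\cD_{k-i}(\ab_k))$ for all $n$ and all $\q\in\cu^n(\ns)$'' is equivalent to ``$\alpha'\co\q_*\in\cu^n(\cD_{k-i}(\ab_k))$ for all $n$ and all $\q_*\in\cu^n(\ns_{k-1})$'', which is exactly the statement that $\alpha'\in\hom(\ns_{k-1},\cD_{k-i}(\ab_k))$. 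Reading this chain forwards gives the ``only if'' direction (starting from $\alpha\in\ker(h)\cap\tran_i(\ns)$), and reading it backwards, after defining $\alpha(x)=x+\alpha'(\pi_{k-1}(x))$ from a given morphism $\alpha'$ and noting that such $\alpha$ is automatically vertical and hence in $\ker(h)$, gives the ``if'' direction. I expect the main obstacle to be the arrow-space computation for $\cD_k(\ab_k)$ in the second paragraph: one must verify carefully, from the gluing definition of $\Join_i$ and the difference-of-cubes description of $\cD_k(\ab_k)$, that the height-$i$ arrow condition is equivalent to the difference being a degree-$(k-i)$ cube, and it is here that the hypothesis $i<k$ is used, guaranteeing $k-i\geq 1$ so that $\cD_{k-i}(\ab_k)$ is a genuine nilspace into which morphisms from $\ns_{k-1}$ are meaningful.
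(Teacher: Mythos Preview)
Your argument is correct and follows essentially the same route as the paper: both reduce the $i$-translation condition to the difference $\arr{\q,\alpha\co\q}-\arr{\q,\q}=\arr{0,\alpha'\co\pi_{k-1}\co\q}$ being a cube in the degree-$k$ structure on $\ab_k$, and both invoke the algebraic fact that $\arr{0,c}_i\in\cu^{n+i}(\cD_k(\ab_k))$ if and only if $c\in\cu^n(\cD_{k-i}(\ab_k))$ (this is \cite[Lemma 3.3.37]{Cand:Notes1}, which you rederive from \cite[(2.9)]{Cand:Notes1}). The only cosmetic difference is that the paper phrases the translation condition via \cite[Lemma 3.2.32]{Cand:Notes1} as an $(n+i)$-cube in $\ns$, whereas you use the equivalent arrow-space formulation from \cite[Definition 3.2.27]{Cand:Notes1}.
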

\begin{proof}
By \cite[Lemma 3.2.32]{Cand:Notes1} we know that $\alpha\in \tran_i(\ns)$ if and only if for every $\q\in \cu^n(\ns)$ we have $\arr{\q,\alpha\co \q}_i\in \cu^{n+i}(\ns)$. If $\alpha\in \ker(h)\cap \tran_i(\ns)$ then we can take the difference $\alpha\co \q-\q=\alpha'\co\q'$ for $\q'=\pi_{k-1}\co\q$, and by definition of degree-$k$ extensions this difference must be a cube in $\cD_k(\ab_k)$. But we also have $\arr{\q,\alpha\co \q}_i-\arr{\q,\q}_i\in \cu^{n+i}(\cD_k(\ab_k))$, and this equals $\arr{0,\alpha'\co\q'}_i$. By \cite[Lemma 3.3.37]{Cand:Notes1}, we have $\arr{0,\alpha'\co\q'}_i\in \cu^{n+i}(\cD_k(\ab_k))$ if and only if $\alpha'\co\q'\in \cu^n(\cD_{k-i}(\ab_k)$. The converse follows similarly.
\end{proof}
\noindent The next main tool that we need, Lemma \ref{lem:hom-oscillation} below, is a type of rigidity result for morphisms between abelian torsors of the form $\cD_k(\ab)$. The proof will use the following basic fact.
\begin{lemma}\label{lem:abhomoscil}
Let $\ab'$ be a \textsc{cfr} abelian group, and let $d_2$ be the metric from Definition \ref{def:calgmetric}. Then there exists $\eta>0$ such that for every compact abelian group $\ab$ and every non-constant continuous affine homomorphism $\phi:\ab\to \ab'$, there exist $x,y\in \ab$ such that $d_2(\phi(x),\phi(y))\geq \eta$. 
\end{lemma}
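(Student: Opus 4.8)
The plan is to reduce the statement to a purely internal property of $\ab'$—a uniform positive lower bound on the $d_2$-diameter of its nontrivial closed subgroups—and then to prove that property by projecting onto a circle coordinate. First I would split off the constant: writing $\phi_0=\phi-\phi(0)$, the map $\phi_0$ is a continuous group homomorphism, and since $d_2$ is invariant under translation on $\ab'\cong F\times\T^n$ (translation permutes the connected components and acts by Euclidean translation within $\T^n$, so it preserves both the ``same-component'' relation and the intra-torus distance), we have $d_2(\phi(x),\phi(y))=d_2(\phi_0(x),\phi_0(y))$ for all $x,y$. Thus it suffices to find $x,y$ with $d_2(\phi_0(x),\phi_0(y))\geq\eta$, i.e. to bound below the $d_2$-diameter of the image $H=\phi_0(\ab)$. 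Because $\phi$ is non-constant, $\phi_0$ is nontrivial and $H\neq\{0\}$; because $\phi_0$ is continuous and $\ab$ is compact, $H$ is a compact, hence closed, subgroup of $\ab'$. So everything reduces to showing: there is $\eta>0$, depending only on $\ab'$, such that every nontrivial closed subgroup $H\leq\ab'$ satisfies $\operatorname{diam}_{d_2}(H)\geq\eta$.

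Next I would dispose of the disconnected directions. Using the decomposition $\ab'\cong F\times\T^n$ from Definition~\ref{def:calgmetric}, if $H$ is not contained in the identity component $\{e_F\}\times\T^n$, then $H$ contains an element lying in a different connected component from $0$, so the $d_2$-distance between this element and $0$ is $\infty$ and the diameter is infinite. Hence I may assume $H\leq\T^n$.

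The core step, and the place where uniformity is won, is the reduction to the circle. Since $H\neq\{0\}$, some coordinate projection $\pi_j:\T^n\to\T^1$ satisfies $\pi_j(H)\neq\{0\}$; as the continuous image of the compact group $H$, the set $\pi_j(H)$ is a nontrivial closed subgroup of $\T^1$. The closed subgroups of the circle are classified: $\pi_j(H)$ is either all of $\T^1$ or a finite cyclic group $\Z/m$ with $m\geq 2$ (the unique order-$m$ subgroup $\{a/m:0\le a<m\}$). In the first case the antipode gives diameter $1/2$; in the second, the point $\lfloor m/2\rfloor/m$ lies at distance $\lfloor m/2\rfloor/m\geq 1/3$ from $0$, the minimum over $m\geq 2$ being attained at $m=3$. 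Either way $\operatorname{diam}_{d_2}(\pi_j(H))\geq 1/3$. Finally, coordinate projection is $1$-Lipschitz for $d_2$ (the $\ell^2$-norm dominates any one component, and this domination survives minimizing over the lattice), so $\operatorname{diam}_{d_2}(H)\geq\operatorname{diam}_{d_2}(\pi_j(H))\geq 1/3$, and $\eta=1/3$ works.

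The only real difficulty here is conceptual rather than computational: the bound must hold uniformly over all domains $\ab$ and all affine homomorphisms simultaneously, and a priori nothing forbids the oscillation of such a map from being made arbitrarily small. The resolution is precisely the passage to the image subgroup $H\leq\ab'$, after which the classification of closed subgroups of $\T^1$ delivers a constant that references neither $\ab$ nor $\phi$ (and in fact not even $n$). I expect no genuine analytic obstacle beyond making this reduction cleanly and verifying translation-invariance of $d_2$ and the $1$-Lipschitz property of coordinate projections.
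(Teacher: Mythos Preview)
Your proof is correct and follows essentially the same approach as the paper: reduce to a genuine homomorphism by translation-invariance of $d_2$, dispose of the case where the image crosses components (infinite distance), then project onto a circle coordinate where the image is a nontrivial closed subgroup and use the structure of such subgroups to get a uniform lower bound. Your argument is in fact more explicit than the paper's (you justify the $1$-Lipschitz property of the projection and the reduction to the image subgroup $H$), and you obtain the slightly sharper constant $\eta=1/3$ where the paper settles for $1/4$.
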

\begin{proof}
We can assume without loss that $\phi$ is a homomorphism (not just an affine one). If $\ab'$ is a finite group then the claim is verified by any two points $x,y$ such that $\phi(x)\neq \phi(y)$. We may therefore suppose that $\ab'=F\times \T^n$ with $n>0$ and $F$ a finite abelian group, and that for some $i\in [n]$ the projection $\pi_i:\T^n\to \T$, $x\mapsto x_i$ satisfies that $\pi_i\co\phi(\ab)$ is a nontrivial subgroup of $\T$. Identifying $\T^n$ with $[0,1)^n$, there is then $y\in \ab$ such that $|\pi_i\co\phi(y)-0|_{\T}\geq 1/4$ and so with $x=0_{\ab}$ we have $d_2(\phi(x),\phi(y))\geq 1/4$.
\end{proof}
\begin{lemma}\label{lem:hom-oscillation}
Let $k,\ell\in\N$, let $\ab,\ab'$ be compact abelian groups, and suppose that $\ab'$ has finite rank. Then there exists $\epsilon=\epsilon(\ell,\ab')>0$ such that if $\phi\in \hom(\cD_k(\ab),\cD_\ell(\ab'))$ satisfies $d_2(\phi(x),\phi(y))\leq \epsilon$ for every $x,y\in \ab$ then $\phi$ is a constant function.
\end{lemma}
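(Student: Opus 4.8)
The plan is to reduce the problem to the abelian case handled by Lemma \ref{lem:abhomoscil}, by first showing that any $\phi\in\hom(\cD_k(\ab),\cD_\ell(\ab'))$ is a polynomial map of degree at most $\ell$, and then extracting a non-trivial homomorphism from its top-order difference whose oscillation is controlled by $\epsilon$.

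First I would dispose of the case $k=0$, where $\cD_k(\ab)$ is a single point and $\phi$ is trivially constant, and assume $k\ge 1$. For $k\ge 1$, every affine map $c:\{0,1\}^n\to\ab$, $v\mapsto x+\sum_{i:v_i=1}h_i$, is a degree-$1$ polynomial and hence lies in $\cu^n(\cD_k(\ab))$ (recall the defining equation for cubes of $\cD_k$ from \cite[(2.9)]{Cand:Notes1}). Taking $n=\ell+1$ and using that $\phi\co c$ must lie in $\cu^{\ell+1}(\cD_\ell(\ab'))$, the single defining equation for the top cube gives $\sum_{v\in\{0,1\}^{\ell+1}}(-1)^{|v|}\phi\big(x+\sum_{i:v_i=1}h_i\big)=0$, i.e. $\partial_{h_1}\cdots\partial_{h_{\ell+1}}\phi\equiv 0$, where $\partial_h\phi(x)=\phi(x+h)-\phi(x)$. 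Thus $\phi$ is a polynomial map $\ab\to\ab'$ of degree at most $\ell$, with the bound \emph{independent} of $k$ and $\ab$.

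Next, suppose $\phi$ is non-constant and let $d\in\{1,\ldots,\ell\}$ be its exact degree. The $d$-th difference $D(h_1,\ldots,h_d):=\partial_{h_1}\cdots\partial_{h_d}\phi$ is then independent of the base point (one further difference vanishes) and is a symmetric $\Z$-multiadditive map $\ab^d\to\ab'$ that is not identically zero (else $\deg\phi<d$). Writing $\ab'=F\times\T^{n}$ and using that by hypothesis all values of $\phi$ lie in a $d_2$-ball of diameter at most $\epsilon<1/2$, I would lift their torus coordinates consistently to a Euclidean ball. Since $D$ is a signed sum of $2^d$ values of $\phi$ whose signs $\sum_{S\subseteq[d]}(-1)^{d-|S|}$ sum to zero, the common reference point cancels: the $F$-coordinate of every $D(h_1,\ldots,h_d)$ vanishes and its $\T^n$-coordinate admits a lift of norm at most $2^{d+1}\epsilon$, giving $d_2\big(D(h_1,\ldots,h_d),0\big)\le 2^{\ell+1}\epsilon$ for all $h_1,\ldots,h_d$. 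Fixing $h_2,\ldots,h_d$ so that $D(\cdot,h_2,\ldots,h_d)$ is non-trivial (possible since $D\not\equiv0$ and $D$ is multiadditive), this map is a homomorphism $\ab\to\ab'$ all of whose values are within $2^{\ell+1}\epsilon$ of $0$, hence of oscillation at most $2^{\ell+2}\epsilon$. Applying Lemma \ref{lem:abhomoscil} to $\ab'$ yields $\eta=\eta(\ab')>0$ bounding below the oscillation of any non-trivial affine homomorphism into $\ab'$; choosing $\epsilon<\eta\,2^{-\ell-2}$ then forces a contradiction. Hence $D\equiv0$, contradicting $d\ge1$, so $\phi$ is constant, and $\epsilon=\eta(\ab')\,2^{-\ell-2}$ works (depending only on $\ell$ and $\ab'$).

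The main point requiring care is the application of Lemma \ref{lem:abhomoscil}: the extracted homomorphism $D(\cdot,h_2,\ldots,h_d)$ is continuous only if $\phi$ is, whereas a priori $\phi$ is only a (set-theoretic) morphism. I would resolve this by observing that the proof of Lemma \ref{lem:abhomoscil} uses continuity nowhere—it only uses that the image of an affine homomorphism is a coset of a subgroup, together with the fact that every non-trivial subgroup of $\T$ contains an element of $d_2$-distance at least $1/4$ from $0$—so its conclusion applies verbatim to the possibly discontinuous homomorphism $D(\cdot,h_2,\ldots,h_d)$. (If one prefers to invoke the lemma only as stated, note that in the applications of Lemma \ref{lem:hom-oscillation} the map $\phi$ is continuous, and then $D(\cdot,h_2,\ldots,h_d)$ is continuous as a finite signed sum of translates of $\phi$.)
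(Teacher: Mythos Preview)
Your proposal is correct and follows essentially the same approach as the paper: both recognize $\phi$ as a polynomial map of degree at most $\ell$, extract from it a non-trivial affine homomorphism via iterated differences, and invoke Lemma~\ref{lem:abhomoscil} for a contradiction. Your version is somewhat more explicit (you compute the bound $\epsilon=\eta(\ab')2^{-\ell-2}$ and justify the lifting step, and you address the continuity hypothesis in Lemma~\ref{lem:abhomoscil} that the paper's proof glosses over), but the argument is the same.
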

\begin{proof}
Since $\hom(\cD_k(\ab),\cD_\ell(\ab'))\subseteq\hom(\cD_1(\ab),\cD_\ell(\ab'))$ we can assume that $k=1$. Let $\phi$ be an arbitrary non-constant morphism from $\cD_1(\ab)$ to $\cD_\ell(\ab')$. As mentioned in \cite[Example 2.2.13]{Cand:Notes1} (recall also \cite[Definition 2.2.30]{Cand:Notes1}), the map $\phi$ is a polynomial map of degree at most $\ell$. Thus, if for every $t\in \ab$ and every $f:\ab\to \ab'$ we denote by $\Delta_t f$ the function $x\mapsto f(x+t)-f(x)$, then there is $i<\ell$ and elements $t_1,t_2,\dots,t_i\in \ab$ such that $\phi'=\Delta_{t_1}\co \Delta_{t_2}\co\cdots\co \Delta_{t_i}\phi$ is non-constant but $\Delta_t \phi'$ is constant for every $t\in \ab$. It follows that $\phi'$ is a non-constant continuous affine homomorphism from $\ab$ to $\ab'$, so by Lemma \ref{lem:abhomoscil} there are $x,y\in \ab$ with $d_2(\phi'(x),\phi'(y))\geq \eta(\ab')>0$. This cannot hold if $\max_{x,y}d_2(\phi(x),\phi(y))$ is too small, so this quantity must have a positive lower bound (depending only on $\ab'$ and $\ell$).
\end{proof}
Lemma \ref{lem:hom-oscillation} has the following consequence.

\begin{corollary}\label{cor:hom-oscillation}
Let $\ell\in \N$, let $\ns$ be a $k$-step compact nilspace, and let $\ab$ be a \textsc{cfr} abelian group. Then there exists $\epsilon=\epsilon(\ell,\ab)>0$ such that if $\phi\in \hom(\ns,\cD_\ell(\ab))$ satisfies $d_2(\phi(x),\phi(y))\leq \epsilon$ for every $x,y\in \ns$ then $\phi$ is constant. In particular, for any $x_0\in \ns$, $z_0\in \ab$, the set $Y=\{\phi\in \hom(\ns,\cD_\ell(\ab)): \phi(x_0)=z_0\}$, equipped with the metric $d_\infty(\phi,\phi')=\max_{x\in \ns}d_2(\phi(x),\phi'(x))$, is discrete.
\end{corollary}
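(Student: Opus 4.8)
The plan is to prove the first assertion by induction on the step $k$ of $\ns$, using Lemma \ref{lem:hom-oscillation} as the one-step input, and then to derive the discreteness of $Y$ as a formal consequence. Throughout I take $\epsilon=\epsilon(\ell,\ab)$ to be exactly the constant furnished by Lemma \ref{lem:hom-oscillation} with target group $\ab'=\ab$; the crucial feature is that this constant depends only on $\ell$ and on the target $\ab$, and not on the domain, so a single $\epsilon$ will serve at every level of the induction.

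For the base case $k=0$ the space $\ns$ is a one-point nilspace and there is nothing to prove. For the inductive step, suppose the statement holds for $(k-1)$-step compact nilspaces and let $\ns$ be $k$-step, with $\phi\in\hom(\ns,\cD_\ell(\ab))$ satisfying $d_2(\phi(x),\phi(y))\le\epsilon$ for all $x,y\in\ns$. First I would restrict $\phi$ to a single class $F$ of $\sim_{k-1}$. By (the proof of) Proposition \ref{prop:topbundec} together with Lemma \ref{lem:topkfolderg}, the set $F$, equipped with its $F$-valued cubes, is a $k$-fold ergodic $k$-step compact nilspace isomorphic to $\cD_k(\ab_k)$, where $\ab_k$ is the last structure group. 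The restriction $\phi|_F$ is then a morphism $\cD_k(\ab_k)\to\cD_\ell(\ab)$ whose values still satisfy the oscillation bound $\epsilon$, so Lemma \ref{lem:hom-oscillation} forces $\phi|_F$ to be constant. Hence $\phi$ is constant on every fibre of $\pi_{k-1}$, so it factors as $\phi=\bar\phi\co\pi_{k-1}$ for a unique map $\bar\phi:\cF_{k-1}(\ns)\to\cD_\ell(\ab)$. Since $\pi_{k-1}$ is surjective on cubes (every cube of $\cF_{k-1}(\ns)$ lifts to a cube of $\ns$), one checks at once that $\bar\phi$ is a morphism, and its oscillation equals that of $\phi$, hence is at most $\epsilon$. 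Applying the induction hypothesis to the $(k-1)$-step compact nilspace $\cF_{k-1}(\ns)$ shows $\bar\phi$ is constant, and therefore so is $\phi$.

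For the final clause, fix $x_0\in\ns$, $z_0\in\ab$ and let $\phi,\phi'\in Y$ satisfy $d_\infty(\phi,\phi')\le\epsilon/2$. Then the pointwise difference $\psi=\phi'-\phi$ is again a morphism $\ns\to\cD_\ell(\ab)$ (the set of such morphisms is an abelian group, $\cD_\ell(\ab)$ being an abelian group object), it satisfies $\psi(x_0)=z_0-z_0=0$, and by translation invariance of $d_2$ we have $d_2(\psi(x),0)=d_2(\phi'(x),\phi(x))\le\epsilon/2$ for every $x$; hence $d_2(\psi(x),\psi(y))\le\epsilon$ for all $x,y$. The first part then gives that $\psi$ is constant, and since $\psi(x_0)=0$ we conclude $\psi\equiv0$, i.e. $\phi=\phi'$. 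Thus distinct points of $Y$ lie at $d_\infty$-distance greater than $\epsilon/2$, so $Y$ is discrete.

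The argument is essentially a routine descent, and the only point requiring care — the main (mild) obstacle — is verifying that one fixed $\epsilon$ serves simultaneously at all levels of the induction. This is exactly what the formulation of Lemma \ref{lem:hom-oscillation} provides, since the constant there is independent of the domain group while the target $\cD_\ell(\ab)$ is unchanged at every level; the remaining verifications (that $F\cong\cD_k(\ab_k)$, that $\bar\phi$ is a well-defined morphism, and that the difference of two morphisms into $\cD_\ell(\ab)$ is again a morphism) are all supplied by the results cited above.
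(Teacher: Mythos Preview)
Your proof is correct and follows essentially the same approach as the paper: both argue by induction on $k$, apply Lemma~\ref{lem:hom-oscillation} to each $\sim_{k-1}$-class (identified with $\cD_k(\ab_k)$) to force constancy on fibres, factor through $\cF_{k-1}(\ns)$, and invoke the induction hypothesis; the discreteness of $Y$ is then deduced identically by considering $\phi-\phi'$. The only cosmetic differences are that the paper takes $k=1$ as its base case (handled directly by Lemma~\ref{lem:hom-oscillation}) rather than the trivial $k=0$, and is more terse in citing the structure of the fibres and the fact that $\bar\phi$ is a morphism.
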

\begin{proof}
Suppose that $d_2(\phi(x),\phi(y))<\epsilon$ for every $x,y\in \ns$, where $\epsilon=\epsilon(\ell,\ab)$ is the constant from Lemma \ref{lem:hom-oscillation}. We prove by induction on $k$ that $\phi$ is constant.\\
\indent If $k=1$ then $\ns$ is abelian and the result follows from Lemma \ref{lem:hom-oscillation}. Suppose that the statement holds for $k-1$. Then Lemma \ref{lem:hom-oscillation} tells us that $\phi$ is constant on the $\sim_{k-1}$ classes of $\ns$. This means that $\phi$ can be regarded as a function on $\cF_{k-1}(\ns)$, and so by our assumption $\phi$ is constant.\\
\indent To see the last sentence in the corollary, note that if $\phi,\phi'\in Y$ satisfy $d_\infty(\phi,\phi')=d_\infty(\phi-\phi',0)\leq \epsilon/2$, then by the triangle inequality $\phi-\phi'$ satisfies the premise of the previous sentence in the corollary, and then by that sentence and the definition of $Y$ we deduce that $\phi-\phi'$ must be the constant $0$.
\end{proof}
With these results we can now give a useful description of the kernel of $h$.
\begin{lemma}\label{lem:h-kernel-Lie}
Let $\ns$ be a \textsc{cfr} $k$-step nilspace, and let $h:\tran(\ns)\to \tran(\ns_{k-1})$ be the homomorphism from Lemma \ref{lem:hdefn}. Then $\ker(h)$ is isomorphic as a topological group to $F\times \ab_k$ for some discrete group $F$. In particular $\ker(h)$ is a Lie group.
\end{lemma}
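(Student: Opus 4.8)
The plan is to identify $\ker(h)$ explicitly with a group of morphisms and then split off the compact part $\ab_k$. Recall the bijection $\alpha\mapsto\alpha'$ described just before Lemma~\ref{lem:h-kernel}, sending $\alpha\in\ker(h)$ to the map $\alpha':\ns_{k-1}\to\ab_k$, $\pi_{k-1}(x)\mapsto\alpha(x)-x$; this is a well-defined injective homomorphism from $\ker(h)$ into the group of $\ab_k$-valued maps on $\ns_{k-1}$. Since $\ker(h)\subseteq\tran_1(\ns)$, applying Lemma~\ref{lem:h-kernel} with $i=1$ shows that its image is exactly $H:=\hom(\ns_{k-1},\cD_{k-1}(\ab_k))$. (When $k=1$ the factor $\ns_{k-1}$ is a point, $H=\ab_1$, and the result is immediate, so one may assume $k\geq 2$.) Thus $\alpha\mapsto\alpha'$ is a group isomorphism $\ker(h)\to H$.

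Next I would produce the algebraic splitting $H=F\oplus\ab_k$. Fix a basepoint $x_0\in\ns_{k-1}$ and set $F=\{\phi\in H:\phi(x_0)=0\}$, regarding $\ab_k$ as the subgroup of constant morphisms (these are the images of $\tran_k(\ns)$, i.e. the $\cD_0$-part of $H$). Since the difference of two morphisms into $\cD_{k-1}(\ab_k)$ is again such a morphism, every $\phi\in H$ decomposes uniquely as $\phi=(\phi-\phi(x_0))+\phi(x_0)$ with $\phi-\phi(x_0)\in F$ and $\phi(x_0)\in\ab_k$, and clearly $F\cap\ab_k=\{0\}$. Hence $H=F\oplus\ab_k$ as abstract groups.

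The main work is topological: I must check that the isomorphism $\ker(h)\cong H$ matches the uniform topology on $\tran(\ns)$ (coming from the $\ab_k$-invariant metric $d$ on $\ns$) with the uniform topology on $H$ (coming from the metric $d_2$ on $\ab_k$), and that $F$ is discrete. For the metric comparison, note that by $\ab_k$-invariance of $d$ one has $d(\alpha_1(x),\alpha_2(x))=d(x, x+(\alpha_2'-\alpha_1')(\pi_{k-1}(x)))$. One direction uses continuity of the action: $(z,x)\mapsto d(x,x+z)$ is continuous, vanishes on $\{0\}\times\ns$, and is uniformly continuous by compactness, so a small value of $\sup_{\bar x}d_2(\alpha_1'(\bar x),\alpha_2'(\bar x))$ forces a small $d_\infty(\alpha_1,\alpha_2)$. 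The converse direction is exactly the statement established inside the proof of Lemma~\ref{lem:vertiparathin} (applied with $x_1=x$, $x_2=\alpha_1(x)$, $x_3=\alpha_2(x)$ in a common fibre of $\pi_{k-1}$): a small $d(\alpha_1(x),\alpha_2(x))$ forces a small $d_2(\alpha_1'(\bar x),\alpha_2'(\bar x))$. Thus $\alpha\mapsto\alpha'$ is a uniform homeomorphism onto $H$ equipped with the uniform $d_2$-topology used in Corollary~\ref{cor:hom-oscillation}.

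Finally, $\ab_k$ is a \textsc{cfr} abelian group and $\ns_{k-1}$ is a $(k-1)$-step compact nilspace, so Corollary~\ref{cor:hom-oscillation} (with $\ell=k-1$) tells us precisely that $F=\{\phi\in H:\phi(x_0)=0\}$ is discrete. The evaluation $\phi\mapsto\phi(x_0)$ is continuous into $\ab_k$, and since $F$ is discrete the map $\phi\mapsto\phi-\phi(x_0)$ is continuous into $F$; its inverse $(\psi,c)\mapsto\psi+c$ is continuous on $F\times\ab_k$ (with $F$ discrete) because convergence there means $\psi$ is eventually fixed while $c$ varies continuously. Hence $H\cong F\times\ab_k$ as topological groups, and therefore $\ker(h)\cong F\times\ab_k$. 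As $\ab_k$ is a compact abelian Lie group and $F$ is discrete, $\ker(h)$ is a Lie group. The one genuinely delicate point is the two-sided metric comparison in the previous paragraph; everything else is bookkeeping, with Lemma~\ref{lem:h-kernel} and Corollary~\ref{cor:hom-oscillation} doing the real algebraic and rigidity work respectively.
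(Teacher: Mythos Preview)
Your proof is correct and follows essentially the same approach as the paper: both split $\ker(h)$ as (stabilizer of a point) $\times$ $\ab_k$ and invoke Lemma~\ref{lem:h-kernel} together with Corollary~\ref{cor:hom-oscillation} to obtain discreteness of the first factor. The only difference is that you first pass explicitly to the morphism space $H=\hom(\ns_{k-1},\cD_{k-1}(\ab_k))$ via Lemma~\ref{lem:h-kernel}, whereas the paper works directly inside $\ker(h)$; your version is also more careful about the topological isomorphism (the metric comparison via Lemma~\ref{lem:vertiparathin}), which the paper leaves implicit.
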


\begin{proof}
Let $x\in \ns$ be arbitrary and let $F$ be the stabilizer $\stab_{\ker(h)}(x)=\{\alpha\in \ker(h):\alpha(x)=x\}$. Let $\tau:\ab_k\to \ker(h)$ be the map sending $z\in \ab_k$ to the translation $\tau_z: x\mapsto x+z$, and note that $\tau$ is an isomorphism of topological groups between $\ab_k$ and $\tau(\ab_k)\leq \ker(h)$. Now given any $\alpha \in \ker(h)$, letting $z=\alpha(x)-x$ we have that $\alpha-\tau_z\in F$, so $\ker(h)=F\cdot \tau(\ab_k)$. Moreover, since every $\alpha\in F$ must in fact stabilize every point in the fibre of $x$ (as the trivial local translation from this fibre to itself), it follows that $F\lhd\,\ker(h)$. We also have $\tau(\ab_k)\lhd\,\ker(h)$. (In fact $\tau(\ab_k)\subset Z(\ker(h))$.) Using Lemma \ref{lem:h-kernel} and Corollary \ref{cor:hom-oscillation} we see that $F$ is discrete. It follows that $\ker(h)\cong F\times \ab_k$ as claimed. Since $\ab_k$ is a Lie group, the proof is complete.
\end{proof}
\noindent If $G$ is a topological group then we denote the connected component of $\id_G$ by $G^0$. This is a closed normal subgroup of $G$. It is also standard that $G$ is a Lie group if and only if $G^0$ is a Lie group.
\begin{theorem}\label{thm:transurj} Let $\ns$ be a \textsc{cfr} $k$-step nilspace and let $i\in [k]$. Then the following statements hold.\\ \vspace{-0.7cm}
\begin{enumerate}
\item $\tran_i(\ns)$ is a Lie group,
\item $h\big(\tran_i(\ns)^0\big)=\tran_i(\ns_{k-1})^0$.
\end{enumerate}
\end{theorem}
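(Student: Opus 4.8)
The plan is to prove both statements together by induction on $k$, with the central object being the continuous homomorphism $h:\tran_i(\ns)\to\tran_i(\ns_{k-1})$ of Lemma \ref{lem:hdefn}. Write $T=\tran_i(\ns)$, $T'=\tran_i(\ns_{k-1})$, $K=\ker(h)$ and $L=h(T)$. When $i=k$ there are no nontrivial height-$k$ translations on the $(k-1)$-step nilspace $\ns_{k-1}$, so $T'=\{\id\}$, and then $T=K$ is a Lie group directly by Lemma \ref{lem:h-kernel-Lie}, while both identity components in (ii) are trivial. So assume $i<k$; then $\ns_{k-1}$ is a \textsc{cfr} $(k-1)$-step nilspace and the inductive hypothesis gives that $T'$ is a Lie group. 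By the lifting Lemma \ref{lem:small-trans-lift}, $L$ contains a neighbourhood of $\id$ in $T'$, so $L$ is an open subgroup of $T'$ and hence itself a Lie group. Since $T$ and $T'$ are Polish (Lemma \ref{lem:transpolish}) and $L$, being open in $T'$, is Polish, the open mapping theorem for Polish groups shows that $h:T\to L$ is open.

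For (i) the substantial work is to show that $T$ is locally compact; granting this, the Gleason--Yamabe theorem (the solution of Hilbert's fifth problem) reduces the claim to the statement that $T$ has no small subgroups, which is routine: for a sufficiently small subgroup $H\le T$, continuity of $h$ forces $h(H)$ into an arbitrarily small subgroup of the Lie group $T'$, so $h(H)=\{\id\}$ and $H\le K$; and $K$ is a Lie group by Lemma \ref{lem:h-kernel-Lie}, so $H=\{\id\}$. To obtain local compactness I would use the compact subgroup $N=\{\tau_z:z\in\ab_k\}$ of translations by the top structure group, where $\tau_z:x\mapsto x+z$. This $N\cong\ab_k$ is compact, and it is \emph{central} in $T$: it lies in $\tran_k(\ns)$ and $[\tran_i(\ns),\tran_k(\ns)]\subseteq\tran_{i+k}(\ns)=\{\id\}$ by the filtration property of translation groups, so in particular $N$ is normal. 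By Lemma \ref{lem:h-kernel-Lie}, $N$ is exactly the $\ab_k$-factor of $K\cong F\times\ab_k$, so the induced continuous homomorphism $\bar h:T/N\to L$ has discrete kernel $K/N\cong F$. As a quotient of the open map $h$, the map $\bar h$ is open, and an open homomorphism with discrete kernel is a local homeomorphism; hence $T/N$ is locally homeomorphic to the Lie group $L$ and is therefore locally compact. Finally, the quotient map $T\to T/N$ is proper, being a quotient by the compact group $N$, so the preimage of a compact neighbourhood of the identity in $T/N$ is a compact neighbourhood of $\id$ in $T$, establishing local compactness of $T$.

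For (ii), the inclusion $h(T^0)\subseteq (T')^0$ is immediate since $h$ is continuous and $T^0$ is connected and contains $\id$. For the reverse inclusion I would use that, by part (i) and the inductive hypothesis, both $T$ and $T'$ are Lie groups, so $T^0$ is open in $T$; since $h:T\to T'$ is open (its image $L$ is open in $T'$), the set $h(T^0)$ is an open, connected subgroup of $T'$ contained in $(T')^0$. An open subgroup is also closed, so $h(T^0)$ is a nonempty clopen subset of the connected group $(T')^0$, whence $h(T^0)=(T')^0$.

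The main obstacle is the local-compactness step in (i): neither $K$ nor $T/K\cong L$ is compact, so one cannot pull back a compact neighbourhood through $h$ directly. The device that resolves this is to quotient only by the compact central piece $N=\tau(\ab_k)$, which collapses the noncompact structure group to the discrete group $F$ and turns the projection $\bar h$ into a local homeomorphism onto a Lie group; local compactness then propagates back through the proper quotient $T\to T/N$. The remaining ingredients — openness of $h$, the no-small-subgroups argument, and the identity-component argument for (ii) — are then formal.
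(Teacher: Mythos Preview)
Your argument is correct, and it reaches the same conclusions by a route that differs from the paper's in one substantive respect.

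For (i), the paper does not establish local compactness and invoke Gleason's NSS theorem. Instead, after observing that $K=\ker(h)\cap T$ is Lie (closed in the Lie group $\ker(h)$ of Lemma~\ref{lem:h-kernel-Lie}) and that $T/K\cong h(T)$ is Lie (as a closed---in fact open, by Lemma~\ref{lem:small-trans-lift}---subgroup of the Lie group $T'$), it applies directly the extension principle ``a topological group which is an extension of a Lie group by a Lie group is a Lie group'' (cited as \cite[Lemma A.3]{HK}). Your route trades this black box for a hands-on argument: you exhibit the compact central subgroup $N=\tau(\ab_k)\le T$, show that $\bar h:T/N\to L$ is a local homeomorphism onto a Lie group (open with discrete kernel $K/N$), and pull local compactness back through the proper map $T\to T/N$; then NSS plus Gleason finishes. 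Both are valid; yours is more self-contained but longer, while the paper's is shorter given the extension lemma. One small point: your citation of Lemma~\ref{lem:h-kernel-Lie} for ``$K$ is Lie'' is literally about $\ker(h)\le\tran(\ns)$, so strictly you are using that $K=\ker(h)\cap\tran_i(\ns)$ is a closed subgroup of that Lie group.

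For (ii) the two proofs are essentially the same clopen argument. The paper first shows $(T')^0\subseteq h(T)$ by writing each $\alpha\in (T')^0$ as a product of small elements along a path and lifting each factor via Lemma~\ref{lem:small-trans-lift}; you obtain the same inclusion implicitly from the fact that $L=h(T)$ is open (hence closed) in $T'$. Both then conclude by noting that $h(T^0)$ is open in $T'$ (since $T^0$ is open in the Lie group $T$ and $h$ is open), connected, and contained in $(T')^0$, hence equal to it.
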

\begin{proof}  We prove the statements by induction on $k$. The case $k=0$ being trivial, let $k>0$ and suppose that the statements hold for $k-1$.

To see statement $(i)$, note first that since $h$ is a continuous homomorphism and  $\tran_i(\ns)$ is closed, we have that $h(\tran_i(\ns))$ is a closed subgroup of $\tran_i(\ns_{k-1})$. By induction the latter is a Lie group, so by Cartan's theorem $h(\tran_i(\ns))$ is a Lie group \cite[Theorem 2.12.6]{Var}. By Lemma \ref{lem:h-kernel-Lie} we have that $\ker(h)$ is a Lie group, and so $H:= \tran_i(\ns)\cap\ker (h)$ is a Lie group (being a closed subgroup of $\ker(h)$). On the other hand $h(\tran_i(\ns))$ is isomorphic to $\tran_i(\ns)/H$ as a topological group (see  \cite[Theorem 1.2.6]{Be&Ke}). Therefore $\tran_i(\ns)/H$ must be a Lie group (this can be seen using \cite[Theorem 3.1]{GleasonSmall}). We can now deduce that $\tran_i(\ns)$ is itself a Lie group, by  \cite[Lemma A.3]{HK}.

Next we show that
\begin{equation}\label{eq:inclutrans}
\tran_i(\ns_{k-1})^0\subseteq h(\tran_i(\ns)).
\end{equation}
To see this we use that $\tran_i(\ns_{k-1})$ is a Lie group and so every element $\alpha\in\tran_i(\ns_{k-1})^0$ is connected to the identity by a continuous path $p:[0,1]\to\tran_i(\ns_{k-1})$ with $p(0)=\id$ and $p(1)=\alpha$. For $n\in\N$ let $\alpha_i=p((i-1)/n)^{-1}p(i/n)$ for $i\in [n]$ and let $\alpha=\prod_{i=1}^n\alpha_i$.
Lemma \ref{lem:small-trans-lift} implies that if $n$ is sufficiently large  then for every $\alpha_i$ there is $\beta_i\in\tran_i(\ns)$ with $h(\beta_i)=\alpha_i$. Let $\beta=\prod_{i=1}^n\beta_i$. Since $h$ is a homomorphism we have $h(\beta)=\alpha$.

To obtain statement $(ii)$, we first show that
\begin{equation}\label{eq:inclutrans2}
h(\tran_i(\ns)^0)= h(\tran_i(\ns))^0.
\end{equation}
On one hand we have $h(\tran_i(\ns)^0)\subset h(\tran_i(\ns))^0$ since $h$ is continuous. On the other hand $h(\tran_i(\ns))^0$ is open in $h(\tran_i(\ns))$  
and $h$ is an open map $\tran_i(\ns)\to h(\tran_i(\ns))$ (again by \cite[Theorem 1.2.6]{Be&Ke}), so $h(\tran_i(\ns)^0)$ is open in $h(\tran_i(\ns))$. Since it is also closed, we have that both $h(\tran_i(\ns)^0)$ and its complement in $h(\tran_i(\ns))^0$ are open, so this complement must be empty, since $h(\tran_i(\ns))^0$ is connected, and \eqref{eq:inclutrans2} follows. Now in statement $(ii)$, the inclusion $h(\tran_i(\ns)^0)\subset \tran_i(\ns_{k-1})^0$ follows by continuity of $h$, and for the opposite inclusion, note that by \eqref{eq:inclutrans} we have $\tran_i(\ns_{k-1})^0\subseteq h(\tran_i(\ns))^0$, so by \eqref{eq:inclutrans2} we have $\tran_i(\ns_{k-1})^0\subseteq h(\tran_i(\ns)^0)$.
\end{proof}
\begin{remark}
It follows from statement $(ii)$ above that $\tran_i(\ns)^0$ is a topological extension of $\tran_i(\ns_{k-1})^0$ by $\ker(h)\cap \tran_i(\ns)^0$, that is we have the short exact sequence
\[
0\;\;\xrightarrow{\makebox[0.5cm]{}}\;\; \ker(h)\cap \tran_i(\ns)^0\;\; \xrightarrow{\makebox[0.5cm]{$\iota$}}\;\; \tran_i(\ns)^0 \;\;\xrightarrow{\makebox[0.5cm]{$h$}}\;\; \tran_i(\ns_{k-1})^0 \;\;\xrightarrow{\makebox[0.5cm]{}}\;\; 0.
\]
This gives another way to obtain statement $(i)$ by induction given $(ii)$, using that a topological extension of a Lie group by a Lie group is again a Lie group (see \cite[\S 2.6]{Tao:Hilbert}).
\end{remark}
\begin{corollary}\label{cor:actrans} The Lie group $\tran(\ns)^0$ acts transitively on the connected components of $\ns$.
\end{corollary}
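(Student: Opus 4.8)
The plan is to prove, by induction on the number of steps $k$, the slightly stronger statement that \emph{every orbit of $\tran(\ns)^0$ on $\ns$ is open}. This suffices to conclude: by Lemma~\ref{lem:finrankloctriv} the space $\ns$ is a compact finite-dimensional manifold, hence locally connected, so its connected components are open; and since $\tran(\ns)^0$ is connected, every orbit is connected and therefore lies in a single component. Each component is thus partitioned into open orbits, and being connected it can contain only one, so $\tran(\ns)^0$ acts transitively on it. The base case $k=0$ is trivial, so I fix $x\in\ns$, write $\cO=\tran(\ns)^0\cdot x$ and $p_0=\pi_{k-1}(x)$, and assume the statement for the \textsc{cfr} $(k-1)$-step nilspace $\ns_{k-1}=\cF_{k-1}(\ns)$.

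Two ingredients control the two directions of the bundle $\pi_{k-1}:\ns\to\ns_{k-1}$. In the base direction, the definition of $h$ in Lemma~\ref{lem:hdefn} gives $\pi_{k-1}(g\cdot x)=h(g)\cdot p_0$, so $\pi_{k-1}(\cO)=h(\tran(\ns)^0)\cdot p_0=\tran(\ns_{k-1})^0\cdot p_0$ by Theorem~\ref{thm:transurj}(ii); by the induction hypothesis this orbit is an open subset $U\subseteq\ns_{k-1}$. In the fibre direction, the homomorphism $\tau$ of Lemma~\ref{lem:h-kernel-Lie} embeds $\ab_k$ into $\tran(\ns)$, and since $\tau(\ab_k^0)$ is connected and contains the identity it lies in $\tran(\ns)^0$; as $\ab_k$ is a Lie group, $\ab_k^0$ is open in $\ab_k$, so $\cO$ is invariant under the action of the full identity component $\ab_k^0$ of the top structure group. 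One cannot simply reduce transitivity to a purely vertical translation, since a point $g\cdot x$ lying over $p_0$ may differ from $x$ by an element of $\ab_k$ outside $\ab_k^0$ (this already happens for the connected double cover of the circle, where the fibrewise difference is the nontrivial element of $\ab_k=\Z/2$); the two ingredients must instead be glued together.

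The hard part, and the core of the argument, is precisely this gluing, i.e.\ showing that $\cO$ contains a neighbourhood of $x$. First I would produce a continuous local section of $\cO$ over the base: since $h:\tran(\ns)^0\to\tran(\ns_{k-1})^0$ is a surjective (Theorem~\ref{thm:transurj}(ii)) continuous homomorphism of second-countable Lie groups (Theorem~\ref{thm:transurj}(i)), it is open and admits a continuous local section near the identity; composing this with a continuous local section of the orbit map $\tran(\ns_{k-1})^0\to U$ (available because the open orbit $U$ is homeomorphic to the homogeneous space $\tran(\ns_{k-1})^0/G_{p_0}$ of the stabiliser $G_{p_0}$ of $p_0$) yields a continuous map $c$ on some open $V'\ni p_0$ with $c(p_0)=x$, $\pi_{k-1}(c(p))=p$, and $c(p)\in\cO$. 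Next, using Lemma~\ref{lem:finrankloctriv} and Proposition~\ref{prop:Gleason}, I would fix an equivariant local trivialisation $\Phi:\pi_{k-1}^{-1}(V)\to V\times\ab_k$ with $\Phi(x)=(p_0,0)$; writing $\Phi(c(p))=(p,\gamma(p))$ with $\gamma$ continuous and $\gamma(p_0)=0$, the openness of $\ab_k^0$ yields an open $V''\ni p_0$ on which $\gamma(p)\in\ab_k^0$. Then the $\ab_k^0$-invariance of $\cO$ together with the equivariance of $\Phi$ forces $\Phi^{-1}(V''\times\ab_k^0)\subseteq\cO$, an open neighbourhood of $x$. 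Hence $x$ is interior to $\cO$; and since each element of $\tran(\ns)^0$ is a homeomorphism of $\ns$ preserving $\cO$, translating this neighbourhood shows every point of $\cO$ is interior, so $\cO$ is open. This completes the induction and the proof.
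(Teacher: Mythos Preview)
Your proof is correct and follows a genuinely different route from the paper's. Both arguments are by induction on $k$ and both use Theorem~\ref{thm:transurj}(ii) to control the base direction and the $\ab_k$-action to control the fibre direction, but they combine these ingredients differently. The paper first \emph{enlarges} the acting group: it shows that $\tau(\ab_k)\cdot\tran(\ns)^0$ (using the \emph{full} structure group $\ab_k$ on fibres) acts transitively on each component, hence so does $\tran(\ns)$, and then invokes the standard Helgason-type fact that if a Lie group acts transitively on a connected manifold then so does its identity component. You instead stay with $\tran(\ns)^0$ throughout and prove directly that each of its orbits is open, using only $\ab_k^0$ on fibres and gluing via explicit local sections of $h$ and of the orbit map, together with a local trivialisation of the $\ab_k$-bundle. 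Your approach is more hands-on and avoids the black-box citation, at the cost of being longer; note, however, that the two facts you rely on---that the open orbit $U$ is homeomorphic to $\tran(\ns_{k-1})^0/G_{p_0}$, and that a surjective homomorphism of Lie groups admits a local section---are themselves essentially the same Baire-category / submersion machinery underlying the Helgason result, so the gain in self-containedness is partly organisational. Your remark about the double cover of the circle nicely explains why one cannot bypass the gluing by a purely vertical argument; the paper sidesteps this issue precisely by allowing itself the full $\ab_k$ before passing to the identity component at the end.
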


\begin{proof}
We argue by induction on $k$. For $k>0$ suppose that $\tran(\ns_{k-1})^0$ acts transitively on the connected components of $\ns_{k-1}$. Let $\tau$ be the isomorphism $\ab_k\to \tau(\ab_k)\leq \tran(\ns)$ in the proof of Lemma \ref{lem:h-kernel-Lie}. Consider the group $\tau(\ab_k)\cdot \tran(\ns)^0 = \{\tau(z)\,\alpha:z\in \ab_k,\,\alpha\in\tran(\ns)^0\}  \leq \tran(\ns)$, and let $M$ be any connected component of $\ns$. We claim that $\tau(\ab_k)\cdot \tran(\ns)^0$ acts transitively on $M$. Indeed, for any $x,y\in M$, by Theorem \ref{thm:transurj} (ii) and the induction hypothesis, there exists $\alpha\in \tran(\ns)^0$ such that $\pi_{k-1}\co\alpha(x)=\pi_{k-1}(y)$. Combining this with the transitivity of $\ab_k$ on the fibres of $\pi_{k-1}$ we deduce that there exists $\alpha'\in \tau(\ab_k)\cdot \tran(\ns)^0$ such that $\alpha'(x)=y$, which proves our claim. It follows that $\tran(\ns)$ acts transitively on $M$, and we can then deduce that $\tran(\ns)^0$ also acts transitively on $M$ by standard results (e.g. \cite[Chapter II, Theorem 3.2 and Proposition 4.3]{Helga}).
\end{proof}

\begin{example}\label{ex:heisentoral}
The main results of this section so far can be illustrated with the Heisenberg nilmanifold $H/\Gamma$, where $H=\begin{psmallmatrix} 1 & \R & \R\\[0.1em]  & 1 & \R \\[0.1em]  &  & 1 \end{psmallmatrix}$, $\Gamma=\begin{psmallmatrix} 1 & \Z & \Z\\[0.1em]  & 1 & \Z \\[0.1em]  &  & 1 \end{psmallmatrix}$ (recall \cite[Example 2.3.2]{Cand:Notes1}). Let $\ns$ denote $H/\Gamma$ equipped with the cubes associated with the lower central series on $H$. We then have that $\ns$ is a 2-step compact nilspace, with 1-step factor $\ns_1=\T^2$ equipped with the standard degree-1 cubes, and we compute that
\[
\tran_1(\ns)=\begin{psmallmatrix} 1 & \R & \T\\[0.1em]  & 1 & \R \\[0.1em]  &  & 1 \end{psmallmatrix}, \;\;\tran_2(\ns)=\begin{psmallmatrix} 1 & 0 & \T\\[0.1em]  & 1 & 0 \\[0.1em]  &  & 1 \end{psmallmatrix} \cong \T,\;\; \tran(\ns_1)=\T^2.
\] 
We also check that $\ker(h)=\begin{psmallmatrix} 1 & \Z & \T\\[0.1em]  & 1 & \Z \\[0.1em]  &  & 1 \end{psmallmatrix}\;\cong\; \Z^2 \times \T$. Note that the structure groups of $\ns$ are tori ($\ab_2=\T$, $\ab_1=\T^2$). Thus $\ns$ is also an example of the type of connected \textsc{cfr} nilspace to which we turn next.
\end{example}

\subsection{Toral nilspaces}

\indent Recall that a \textsc{cfr} abelian group $\ab$ has torsion-free dual group if and only if $\ab\cong \T^n$ for some $n\geq 0$.
\begin{defn} We say that a $k$-step nilspace $\ns$ is \emph{torsion-free} if every structure group of $\ns$ has torsion-free dual group. We call $\ns$ a \emph{toral nilspace} if it is \textsc{cfr} and torsion-free.
\end{defn}
\noindent Thus a $k$-step compact nilspace is toral if and only if its structure groups are all tori. One of the reasons for treating these nilspaces in particular is that they occur naturally in the study of the regularizations of ultralimits of functions on abelian groups, which can be used to prove inverse theorems for the Gowers uniformity norms (see the characteristic-0 case of \cite[Definition 1.3]{Szegedy:HFA}).
\begin{lemma}\label{lem:torcon}
Every toral nilspace is connected.
\end{lemma}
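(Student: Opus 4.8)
The plan is to argue by induction on the step $k$ of $\ns$. The base case $k=0$ is a one-point space, which is connected. For the inductive step, let $\ns$ be a toral $k$-step nilspace with structure groups $\ab_1,\dots,\ab_k$, each of which is a torus (since a \textsc{cfr} abelian group has torsion-free dual if and only if it is of the form $\T^n$). First I would observe that the factor $\cF_{k-1}(\ns)$ is again a toral nilspace: by Lemma \ref{lem:compfactor} it is a $(k-1)$-step compact nilspace, and its structure groups are $\ab_1,\dots,\ab_{k-1}$, so it is \textsc{cfr} and torsion-free. By the induction hypothesis, $\cF_{k-1}(\ns)$ is connected. It then suffices to transfer connectedness up through the projection $\pi_{k-1}:\ns\to\cF_{k-1}(\ns)$.

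The key tool is the following elementary topological fact, which I would state and prove directly: if $\pi:E\to B$ is a continuous open surjection whose base $B$ is connected and whose every fibre $\pi^{-1}(b)$ is connected, then $E$ is connected. Indeed, suppose $E=U\sqcup V$ with $U,V$ disjoint nonempty open sets. Each fibre, being connected, lies entirely in $U$ or entirely in $V$, so $\pi(U)$ and $\pi(V)$ are disjoint; they are open because $\pi$ is open, nonempty because $U,V$ are, and cover $B$ by surjectivity. This contradicts the connectedness of $B$, so no such splitting exists.

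To apply this to $\pi_{k-1}$, I would note that $\pi_{k-1}$ is a continuous open surjection by Remark \ref{rem:open-and-closed} (the projection of a continuous abelian bundle is open), and that each fibre $\pi_{k-1}^{-1}(y)$ is a principal homogeneous space of $\ab_k$, hence $\ab_k$-equivariantly homeomorphic to $\ab_k\cong\T^{n_k}$ (as in the proof of Lemma \ref{lem:bundHaar}), which is connected. The fact above then gives that $\ns$ is connected, completing the induction. I do not expect any serious obstacle here: the only points requiring care are the verification that $\cF_{k-1}(\ns)$ inherits the toral property (routine, since its structure groups form an initial segment of those of $\ns$) and the openness of $\pi_{k-1}$, which is already recorded in Remark \ref{rem:open-and-closed}.
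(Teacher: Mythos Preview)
Your proposal is correct and follows essentially the same approach as the paper: both argue by induction on $k$, using that a bundle with connected base and connected fibre is connected. The only difference is cosmetic: the paper cites the general fibre-bundle fact from Steenrod, whereas you prove the needed connectivity statement directly from the openness of $\pi_{k-1}$, which makes your version slightly more self-contained.
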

\begin{proof}
Recall the general fact that a fibre bundle is connected if the base space and the fibre are connected (see \cite[Part I, \S2.12]{Steenrod}). Thus if $\ab$ has finite rank and is connected then a continuous $\ab$-bundle over a connected space is connected. Now given a $k$-step toral nilspace $\ns$, with factors $\ns_i$, $i\in [k]$, a simple induction on $i$ using this fact shows that $\ns$ is connected. 
\end{proof}
\begin{remark}
The converse of Lemma \ref{lem:torcon} is false. Consider for example the connected 2-step nilspace $\ns$ consisting of the circle group $\T$ with cube sets $\cu^n(G_\bullet)$ for the degree-2 filtration $G_\bullet$ with $G_0=G_1=\T$ and $G_2=\{0,1/2\}\cong \Z/2\Z$ (viewing $\T$ as $[0,1)$ with addition mod 1). A straightforward calculation shows that $x\sim_1 y$ in $\ns$ if and only if $x-y\in G_2$, whence the structure group $\ab_2$ is the disconnected group $G_2$.
\end{remark}

We can now establish the main result of this section.
\begin{theorem}\label{thm:toralnilspace}
Let $\ns$ be a $k$-step toral nilspace, let $G=\tran(\ns)^0$,  let $G_\bullet$ denote the degree-$k$ filtration $(\tran_i(\ns)^0)_{i\geq 0}$, and for an arbitrary fixed $x\in \ns$ let $\Gamma=\stab_G(x)$. Then $\ns$ is isomorphic as a compact nilspace to the nilmanifold $G/\Gamma$ with cube sets $\cu^n(\ns)=(\cu^n(G_\bullet)\cdot \Gamma^{\{0,1\}^n})/\Gamma^{\{0,1\}^n}$, $n\geq 0$.
\end{theorem}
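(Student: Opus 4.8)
The plan is to exhibit the orbit map as the desired isomorphism, arguing by induction on $k$ (the case $k=0$ being trivial). First I would record the group-theoretic facts. By Theorem \ref{thm:transurj}(i) with $i=1$, the group $G=\tran(\ns)^0$ is a connected Lie group, and each $G_i=\tran_i(\ns)^0$ is the identity component of the closed subgroup $\tran_i(\ns)\le\tran(\ns)$ (Lemma \ref{lem:transpolish}), hence a closed normal subgroup of $G$ contained in $G$. The relation $[\tran_i(\ns),\tran_j(\ns)]\subseteq\tran_{i+j}(\ns)$ from \cite{Cand:Notes1}, together with the fact that $[G_i,G_j]$ is generated by a connected set containing $\id$, gives $[G_i,G_j]\subseteq G_{i+j}$; since $\ns$ is $k$-step we have $\tran_{k+1}(\ns)=\{\id\}$, so $G_\bullet$ is a degree-$k$ filtration of closed subgroups and $G$ is nilpotent of class at most $k$. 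Because $\ns$ is toral it is connected (Lemma \ref{lem:torcon}), so Corollary \ref{cor:actrans} shows that $G$ acts transitively on $\ns$. I would then consider the continuous equivariant map $\beta:G/\Gamma\to\ns$, $g\Gamma\mapsto g\cdot x$, which by transitivity and the definition of $\Gamma=\stab_G(x)$ is a bijection.

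Next I would show $\Gamma$ is discrete and $\beta$ is a homeomorphism. For discreteness, let $h:\tran(\ns)\to\tran(\ns_{k-1})$ be the homomorphism of Lemma \ref{lem:hdefn} and write $G'=\tran(\ns_{k-1})^0$, $x'=\pi_{k-1}(x)$. The image $h(\Gamma^0)$ is connected and fixes $x'$, so it lies in $\stab_{G'}(x')$, which is discrete by the inductive hypothesis; hence $h(\Gamma^0)=\{\id\}$, i.e. $\Gamma^0\subseteq\ker(h)\cap G$. Every element of $\Gamma^0$ also fixes $x$, so $\Gamma^0\subseteq\stab_{\ker(h)}(x)$, which is discrete by the proof of Lemma \ref{lem:h-kernel-Lie}; a connected subgroup of a discrete group is trivial, so $\Gamma^0=\{\id\}$ and $\Gamma$ is discrete. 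Since $G$ is a Lie group (in particular locally compact and $\sigma$-compact) acting continuously and transitively on the compact Hausdorff space $\ns$, the orbit map is open and $\beta$ is a homeomorphism by the standard micro-transitivity theorem (cf. \cite[Ch.~II]{Helga}); in particular $G/\Gamma$ is compact, so $\Gamma$ is cocompact and $G/\Gamma$ is a nilmanifold.

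It then remains to match the cube structures, i.e. to show that under the identification $g\Gamma\leftrightarrow g\cdot x$ one has $\cu^n(\ns)=\{v\mapsto\q(v)\cdot x:\q\in\cu^n(G_\bullet)\}$. The inclusion $\supseteq$ follows because each coefficient $g_j\in G_{\codim F_j}$ in the factorization $\q=\prod_j g_j^{F_j}$ is a translation of height $\codim F_j$, so applying it along the face $F_j$ preserves cubes (\cite[Lemma 3.2.32]{Cand:Notes1}); starting from the constant cube at $x$ and iterating shows $v\mapsto\q(v)\cdot x\in\cu^n(\ns)$ (this is the content of \cite[Prop.~2.3.1]{Cand:Notes1}). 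For the reverse inclusion $\subseteq$ I would argue inductively: given $\q\in\cu^n(\ns)$, the projection $\pi_{k-1}\co\q$ is realized by some $\q'\in\cu^n(G'_\bullet)$ via the inductive isomorphism $\ns_{k-1}\cong G'/\Gamma'$, which lifts to $\bar\q\in\cu^n(G_\bullet)$ using the surjectivity $h(G_i)=G'_i$ (Theorem \ref{thm:transurj}(ii)). Then $a(v)=\q(v)-\bar\q(v)\cdot x\in\ab_k$ defines a cube in $\cu^n(\cD_k(\ab_k))$ by the degree-$k$ extension structure (Definition \ref{def:compdegkbund}), using $\supseteq$ to know that $v\mapsto\bar\q(v)\cdot x$ is itself a cube. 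Since $G_k=\tran_k(\ns)^0=\tau(\ab_k)$ (with $\tau$ as in the proof of Lemma \ref{lem:h-kernel-Lie}) is central in $G$ — because $[\tran_1(\ns),\tran_k(\ns)]\subseteq\tran_{k+1}(\ns)=\{\id\}$ — and $G_k\subseteq G_{\codim F_j}$ whenever $\codim F_j\le k$, I can absorb $a$ by multiplying the coefficients of $\bar\q$ by suitable central elements $\tau(a_j)\in G_k$, obtaining $\hat\q\in\cu^n(G_\bullet)$ with $\hat\q(v)\cdot x=\q(v)$. This yields the required equality, so $\beta$ is a nilspace isomorphism; finally, applying the `only if' direction of Proposition \ref{prop:nilmanilspace} to the compact nilspace $G/\Gamma\cong\ns$ recovers the cocompactness of each $\Gamma\cap G_i$ in $G_i$, confirming that $(G/\Gamma,G_\bullet)$ is a filtered nilmanifold.

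The main obstacle is the reverse cube inclusion $\subseteq$: showing that the translation filtration $G_\bullet$ generates \emph{all} cubes of $\ns$ and not merely a subset. The delicate points there are the bookkeeping of the coefficient factorization and the noncommutativity of $G$, which is precisely what the centrality of $G_k$ and the inductive control of the base $\ns_{k-1}$ are designed to neutralize.
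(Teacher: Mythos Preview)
Your proposal is correct and follows essentially the same route as the paper: induction on $k$, transitivity from Corollary \ref{cor:actrans} plus connectedness (Lemma \ref{lem:torcon}), discreteness of $\Gamma$ via induction and Lemma \ref{lem:h-kernel-Lie}, and for the cube identification lifting the base cube through Theorem \ref{thm:transurj}(ii) and then correcting by an element of $\cu^n(\cD_k(\ab_k))$ using $G_k=\tau(\ab_k)$. Your discreteness argument via $\Gamma^0$ is a slight rephrasing of the paper's (which reasons with $h(\Gamma)$ and $\Gamma\cap\ker(h)$ directly), and your final appeal to Proposition \ref{prop:nilmanilspace} to recover cocompactness of each $\Gamma\cap G_i$ is a nice addendum that the paper leaves implicit.
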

\noindent Note that for the nilmanifold obtained here, every group in the  filtration $G_\bullet$ is connected. (The fact that $G_\bullet$ here is indeed a filtration follows from the fact that if $H_1,H_2$ are closed subgroups of a topological group and one of these subgroups is connected, then $[H_1,H_2]$ is connected.)
\begin{proof} We argue by induction on $k$. For $k=1$, by Lemma  \ref{lem:topkfolderg} we have that $\ns$ is the principal homogeneous space of a torus with cubes being the projections of the standard cubes on the torus, so the statement holds. Supposing then that the statement holds for $k-1$, we fix $x\in \ns$ and let $\Gamma=\stab_G(x)$.

We first claim that $\Gamma$ is discrete. To see this, note that $h(\Gamma)$ is a subgroup of the stabilizer of $\pi_{k-1}(x)$ in $\tran(\ns_{k-1})$, so by induction it is discrete. Then using that $h^{-1}(h(\Gamma))$ is a union of cosets of $\ker(h)$,  we have that it suffices to show that $\Gamma\cap\ker(h)$ is discrete. But this follows from Lemma \ref{lem:h-kernel-Lie}, since no non-trivial element of $\tau(\ab_k)$  stabilizes $x$.\\
\indent By Corollary \ref{cor:actrans} the Lie group $\tran(\ns)^0$ acts transitively on the connected space $\ns$, and it follows that $\ns$ is homeomorphic to the coset space $G/\Gamma$ (see \cite[Chapter II, Theorem 3.2]{Helga}). Therefore $\Gamma$ is cocompact.

It now only remains to determine the cubes on $\ns$. Recall from \cite[Definition 3.2.38]{Cand:Notes1} that two cubes $\q_1,\q_2\in \cu^n(\ns)$ are said to be translation equivalent if there is an element $\q\in \cu^n(G_\bullet)$ such that $\q_2(v)=\q(v)\cdot \q_1(v)$. We claim that for every cube $\q\in \cu^n(\ns)$ there is a cube $\q'\in \cu^n(\ns)$ that is translation equivalent to  the constant $x$ cube and such that $\pi_{k-1}\co \q=\pi_{k-1}\co \q'$. Indeed, given $\q\in \cu^n(\ns)$, we have $\pi_{k-1}\co\q\in \cu^n(\ns_{k-1})$, and by induction the latter cube is translation equivalent to the cube with constant value $x'=\pi_{k-1}(x)$, i.e. it is of the form $\tilde\q\cdot x'$ for some cube $\tilde \q$ on the group $\tran(\ns_{k-1})^0$ with the filtration $\big(\tran_i(\ns_{k-1})^0\big)_{i\geq 0}$. Now by the unique factorization result for these cubes \cite[Lemma 2.2.5]{Cand:Notes1}, we have $\tilde\q={\tilde g_0}^{F_0}\cdots {\tilde g_{2^n-1}}^{F_{2^n-1}}$ where $\tilde g_j\in \tran_{\codim(F_j)}(\ns_{k-1})^0$. Theorem \ref{thm:transurj} (ii) then tells us that for each $j\in [0,2^n)$ there is $g_j\in \tran_{\codim(F_j)}(\ns)^0$ such that $h(g_j)=\tilde g_j$. 
Let $\q^*$ be the cube in $\cu^n(\tran(\ns)^0)$ defined by $\q^*={g_0}^{F_0}\cdots {g_{2^n-1}}^{F_{2^n-1}}$. Let $\q'=\q^*\cdot x$. This is in $\cu^n(\ns)$,  and is translation equivalent to the constant $x$ cube. Moreover, by construction we have
\[
\pi_{k-1}\co \q'= \pi_{k-1}(\q^*\cdot x)= \Big(\prod_j h(g_j)^{F_j}\Big)\cdot x'= \Big(\prod_j \tilde g_j^{F_j}\Big)\cdot x'=\tilde \q \cdot x'=\pi_{k-1}\co \q.
\]
This proves our claim.

It follows from \cite[Theorem 3.2.19]{Cand:Notes1} and the definition of degree-$k$ bundles (in particular \cite[(3.5)]{Cand:Notes1}) that $\q-\q'\in \cu^n(\cD_k(\ab_k))$. But then using translations from $\tau(\ab_k)$ we can correct $\q'$ further to obtain $\q$, thus showing that $\q$ is itself a translation cube with translations from $\tran(\ns)^0$. (Such a correction procedure has been used in previous arguments, see for instance the proof of \cite[Lemma 3.2.25]{Cand:Notes1}.)

We have thus shown that $\cu^n(\ns)\subset (\cu^n(G_\bullet)\cdot \Gamma^{\{0,1\}^n})/\Gamma^{\{0,1\}^n}$. The opposite inclusion is clear, by definition of the groups $\tran_i(\ns)$.
\end{proof}

\begin{remark}\label{rem:simpconn} In Theorem \ref{thm:toralnilspace} we can replace the groups $G_i$ by \emph{simply connected} Lie groups $\tilde G_i$ while conserving the nilspace isomorphism with $\ns$. More precisely, let $\tilde G$ be the universal covering group of the connected Lie group $G$, with covering homomorphism $p:\tilde G\to G$ (see \cite[\S 47, Theorem 61]{Pont}). Let $\tilde\Gamma=p^{-1}(\Gamma)$, and for each $i\geq 0$, let $\tilde G_i$ be the identity component of the closed (hence Lie) subgroup $p^{-1}(G_i)$ of $\tilde G$. Then $\tilde G_\bullet=(\tilde G_i)_{i\geq 0}$ is a degree-$k$ filtration of closed connected (hence simply-connected \cite[Corollary 1.2.2]{C&G}) Lie subgroups of $\tilde G$. It can be checked in a straightforward way that $\tilde G/\tilde \Gamma$ with the cubes determined by $\tilde G_\bullet$ is isomorphic to $\ns$ as a compact nilspace.
\end{remark}

\noindent We know by a theorem of Palais and Stewart that $\ns$ is the total space of a principal torus bundle over a torus if and only if it is a 2-step nilmanifold (see \cite[Theorem 1]{P&S}).  Theorem \ref{thm:toralnilspace} can be viewed as a generalization of this result for arbitrary step, in the category of compact nilspaces (in particular, making use of the assumed cube structure).

One may wonder whether, by adding some simple assumption in Theorem \ref{thm:toralnilspace}, the description of the filtered nilmanifold can then be made even more precise. For instance, having seen Example \ref{ex:heisentoral}, we may seek some simple additional condition under which Theorem \ref{thm:toralnilspace} yields the Heisenberg nilmanifold $H/\Gamma$. In this direction we have the following result.

\begin{proposition}
Let $\ns$ be a 2-step toral nilspace of dimension 3. Then $\ns$ is isomorphic to one of the following compact nilspaces:  \vspace{-0.15cm}
\begin{enumerate}
\item The torus $\R^3/\Z^3$ with cube structure determined by a filtration of degree at most 2 on $\R^3$. \vspace{-0.15cm}
\item The Heisenberg nilmanifold $H/\Gamma$ with cube structure determined by the lower central series on $H$.
\end{enumerate}
\end{proposition}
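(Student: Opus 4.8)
The plan is to reduce the statement to the classification of $3$-dimensional simply connected nilpotent Lie groups, using Theorem \ref{thm:toralnilspace} as the main bridge. First I would apply that theorem to write $\ns\cong G/\Gamma$, where $G=\tran(\ns)^0$ carries the degree-$2$ filtration $G_\bullet=(\tran_i(\ns)^0)_{i\geq 0}$ and $\Gamma=\stab_G(x)$ is discrete and cocompact. Then, invoking Remark \ref{rem:simpconn}, I would pass to the universal cover $\tilde G$, equipped with the induced degree-$2$ filtration $\tilde G_\bullet$ by closed connected simply-connected subgroups and the lattice $\tilde\Gamma$, so that still $\ns\cong\tilde G/\tilde\Gamma$. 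Since $\ns$ is a $3$-dimensional manifold (Lemma \ref{lem:finrankloctriv}, with dimension $3$ by hypothesis) and $\tilde\Gamma$ is discrete, we get $\dim\tilde G=\dim G=\dim\ns=3$.

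Next I would extract the algebraic structure of $\tilde G$ from the filtration. The filtration property $[\tilde G_i,\tilde G_j]\subseteq\tilde G_{i+j}$ together with $\tilde G_3=\{e\}$ forces $[\tilde G,\tilde G]\subseteq\tilde G_2$ and $[\tilde G,\tilde G_2]\subseteq\tilde G_3=\{e\}$, so $\tilde G$ is at most $2$-step nilpotent and $\tilde G_2$ is central. At this point the Lie-theoretic classification enters: a connected simply connected $2$-step nilpotent Lie group of dimension $3$ is isomorphic either to the abelian group $\R^3$ or to the Heisenberg group $H$. On the Lie-algebra level this is an elementary dimension count: if $\tilde{\mathfrak g}$ is non-abelian then its center must be $1$-dimensional (dimensions $2$ or $3$ would force $[\tilde{\mathfrak g},\tilde{\mathfrak g}]=0$), and $[\tilde{\mathfrak g},\tilde{\mathfrak g}]$ is then exactly that $1$-dimensional center, pinning down the Heisenberg bracket after rescaling.

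In the abelian case $\tilde G\cong\R^3$, the lattice $\tilde\Gamma$ is isomorphic to $\Z^3$, so $\ns\cong\R^3/\Z^3$ is the $3$-torus with cube structure determined by the degree-$2$ filtration $\tilde G_\bullet$; this is alternative (i), and it absorbs all three possible splits $(\dim\ab_1,\dim\ab_2)\in\{(2,1),(1,2),(0,3)\}$ of the structure groups. In the Heisenberg case $\tilde G\cong H$, centrality of $\tilde G_2$ gives $\tilde G_2\subseteq Z(H)$, which is $1$-dimensional, while non-abelianness gives $\{e\}\neq[\tilde G,\tilde G]\subseteq\tilde G_2$; comparing dimensions of these connected subgroups yields $\tilde G_2=[\tilde G,\tilde G]=Z(H)$, so $\tilde G_\bullet$ is precisely the lower central series and $\ns\cong H/\tilde\Gamma$ is a Heisenberg nilmanifold with the cube structure of Example \ref{ex:heisentoral} (this split is necessarily $(2,1)$); this is alternative (ii).

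The only genuinely nilspace-theoretic input is Theorem \ref{thm:toralnilspace} together with its simply-connected refinement, after which the argument is a short dimension count. The main obstacle I anticipate is bookkeeping rather than depth: checking that the degree-$2$ filtration constraint is the correct translation of ``$2$-step'' into the Lie-group picture, and matching $\tilde G_2$ with the commutator subgroup in the Heisenberg case so that the resulting filtered nilmanifold really carries the lower-central-series cube structure. I would also note that ``$H/\Gamma$'' is here understood up to the choice of lattice, consistent with the fact that there are only countably many such nilspaces (Theorem \ref{thm:countably}).
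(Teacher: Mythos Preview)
Your proposal is correct and follows essentially the same route as the paper: apply Theorem \ref{thm:toralnilspace} together with Remark \ref{rem:simpconn}, observe that the resulting simply-connected group is $3$-dimensional and at most $2$-step nilpotent, and then invoke the classification of such Lie groups (the paper cites the Bianchi classification, whereas you carry out the elementary dimension count on the Lie algebra directly). Your argument is in fact slightly more explicit than the paper's in two places: you spell out why the filtration in the Heisenberg case must coincide with the lower central series, and you flag that ``$H/\Gamma$'' should be read up to choice of lattice---a point the paper's proof does not address.
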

\noindent Here the dimension is that of $\ns$ when viewed as a manifold (a view which we noted after Lemma \ref{lem:finrankloctriv}).
\begin{proof}
Applying Theorem \ref{thm:toralnilspace} and Remark \ref{rem:simpconn},  we have that $\ns$ is isomorphic as a compact nilspace to $G/\Gamma$ for a simply-connected Lie group $G$ with a filtration $G_\bullet$ of degree at most 2 consisting of closed (therefore Lie) simply-connected groups $G_i\leq G$, and a discrete cocompact subgroup $\Gamma\leq G$. The assumption that $\ns$ has dimension 3 implies that $G$ is 3-dimensional.

If $\ns$ is a 1-step nilspace then $\ns$ is isomorphic to the 3-torus $\R^3/\Z^3$ in case $(i)$, with the standard degree-1 cube structure.

If $\ns$ is not 1-step then, if $G$ is abelian, we have again that $\ns$ is isomorphic to $\R^3/\Z^3$ but now the cube structure is given by a degree-2 filtration on $\R^3$ (for example it could be the filtration determining the degree-2 structure on $\R^3/\Z^3$ defined in \cite[Definition 2.2.30]{Cand:Notes1}). The remaining possibility is that $G$ is non-abelian 2-step nilpotent. The simply-connected Lie group $G$ is uniquely determined by its Lie algebra, which is a real non-abelian, 2-step nilpotent Lie algebra of dimension 3. It follows from the Bianchi classification \cite{Bianchi} that the only such Lie algebra is the 3-dimensional Heisenberg Lie algebra, and so $G$ is the Heisenberg group. Finally, we have that the only degree-2 filtration of closed connected subgroups of $G$ is the lower central series.
\end{proof}
\noindent To close this section, we prove additional pleasant properties of toral nilspaces. In particular, property $(iii)$ below gives a simple relation between the translation groups and the structure groups.

\begin{proposition}\label{prop:abstranstrucrel}
Let $\ns$ be a $k$-step toral nilspace. For each $j\in [k]$ let $h_{j-1,j}$ denote the continuous surjective homomorphism $\tran(\ns_j)^0\to \tran(\ns_{j-1})^0$ defined as in Lemma \ref{lem:hdefn}, and for each $i\in [k]$ let $h_i=h_{i,i+1}\co\cdots\co h_{k-1,k}:\tran(\ns)^0\to \tran(\ns_i)^0$. Fix an arbitrary element $x\in \ns$, and for each $i\in [k]$ let $\Gamma_i=\tran_i(\ns)^0\cap \stab_{\tran(\ns)^0}(x)$. Then for each $i$, the following statements hold:
\begin{enumerate}
\item $\tran_i(\ns)^0$ acts transitively on each fibre of $\pi_{i-1}:\ns\to \ns_{i-1}$.
\item $\ker(h_i)= \Gamma_i\cdot \tran_{i+1}(\ns)^0$.
\item $\tran_i(\ns)^0\;/\; \big(\Gamma_i\cdot \tran_{i+1}(\ns)^0\big)\;\cong\; \ab_i$.
\end{enumerate}
\end{proposition}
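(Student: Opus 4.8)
The plan is to induct on $k$, handling the top level $i=k$ directly and reducing the levels $i<k$ to the factor $\ns_{k-1}$ through the projection $h:=h_{k-1,k}$, with the vertical translations $\tau(\ab_k)=\tran_k(\ns)^0$ (recall the map $\tau$ from the proof of Lemma \ref{lem:h-kernel-Lie}) as the main correction device. Note that $\tau(\ab_k)$ acts freely on $\ns$, trivially on $\ns_{k-1}$, and lies in $\tran_i(\ns)^0$ for every $i\le k$. For $i=k$ all three assertions are immediate: $\tau(\ab_k)$ acts simply transitively on each $\sim_{k-1}$-class, which is precisely a fibre of $\pi_{k-1}$, giving $(i)$; the freeness gives $\Gamma_k=\{\id\}$ and $\ker(h_k)=\{\id\}=\Gamma_k\cdot\tran_{k+1}(\ns)^0$, i.e. $(ii)$; and $\tran_k(\ns)^0=\tau(\ab_k)\cong\ab_k$ gives $(iii)$.

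For $i<k$ I would first record two liftings. By Theorem \ref{thm:transurj}$(ii)$ applied repeatedly down to level $i$, $h_i$ sends $\tran_j(\ns)^0$ onto $\tran_j(\ns_i)^0$ for every $j$; in particular any $\bar\beta\in\tran_j(\ns_{k-1})^0$ lifts to $\beta\in\tran_j(\ns)^0$ with $h(\beta)=\bar\beta$. If in addition $\bar\gamma$ fixes $\bar x:=\pi_{k-1}(x)$, I lift it to some $\gamma_0\in\tran_i(\ns)^0$ and replace $\gamma_0$ by $\tau_{-z}\gamma_0$, where $z\in\ab_k$ is chosen so that $\gamma_0(x)=x+z$ (possible since $h(\gamma_0)$ fixes $\bar x$); the result lies in $\Gamma_i$ and still projects to $\bar\gamma$, as $\tau_{-z}$ is trivial on $\ns_{k-1}$. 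Statement $(i)$ now follows: for $a,b$ in a fibre of $\pi_{i-1}$, the induction hypothesis $(i)$ on $\ns_{k-1}$ gives $\bar\alpha\in\tran_i(\ns_{k-1})^0$ carrying $\pi_{k-1}(a)$ to $\pi_{k-1}(b)$; lifting to $\alpha\in\tran_i(\ns)^0$ makes $\alpha(a)\sim_{k-1}b$, and one further vertical translation finishes.

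For $(iii)$ and the easy half of $(ii)$ I would study $\Phi_i:\tran_i(\ns)^0\to\ab_i$, the composite of $h_i$ with $\tran_i(\ns_i)^0=\tau(\ab_i)\cong\ab_i$; this is continuous and surjective. The inclusions $\Gamma_i,\tran_{i+1}(\ns)^0\subseteq\ker\Phi_i$ are clear (an element of $\Gamma_i$ maps under $h_i$ to an element of $\tran_i(\ns_i)^0$ fixing $\pi_i(x)$, hence to $\id$ by freeness; and $h_i(\tran_{i+1}(\ns)^0)=\tran_{i+1}(\ns_i)^0=\{\id\}$). Conversely, for $\alpha\in\tran_i(\ns)^0$ with $\Phi_i(\alpha)=0$, I apply the induction hypothesis to $h(\alpha)$ and lift the two factors as above, obtaining $\gamma\in\Gamma_i$ and $\beta\in\tran_{i+1}(\ns)^0$ with $\delta:=\alpha(\gamma\beta)^{-1}\in\ker(h)\cap\tran_i(\ns)^0$; writing $\delta(x)=x+w$ and correcting by $\tau_{-w}$ places $\delta$ in $\Gamma_i\cdot\tran_{i+1}(\ns)^0$. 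Thus $\ker\Phi_i=\Gamma_i\cdot\tran_{i+1}(\ns)^0$, and since a continuous surjection of Lie groups is open, this yields the isomorphism $(iii)$ together with the identity $\ker(h_i)\cap\tran_i(\ns)^0=\Gamma_i\cdot\tran_{i+1}(\ns)^0$.

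The hard part is to upgrade this to the full $(ii)$, i.e. to prove $\ker(h_i)\subseteq\tran_i(\ns)^0$ — equivalently, that a translation in $\tran(\ns)^0$ trivial on $\ns_i$ has height at least $i$. Connectedness is essential here, as the analogue for the full (disconnected) translation group is false. I would pass to the nilmanifold model $\ns\cong G/\Gamma$ of Theorem \ref{thm:toralnilspace}, with $G=\tran(\ns)^0$, $G_j=\tran_j(\ns)^0$ (connected) and $\Gamma=\stab_G(x)$; faithfulness of the action forces $Z(G)\cap\Gamma=\{\id\}$. Reducing via $h$ and the induction hypothesis, the problem collapses to showing that $\gamma\in\Gamma$ trivial on $\cF_i(\ns)=G/(G_{i+1}\Gamma)$ lies in $G_i$. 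Triviality gives $[g^{-1},\gamma]\in G_{i+1}\Gamma\cap G_2=G_{i+1}(\Gamma\cap G_2)$ for all $g$, and since this set has identity component $G_{i+1}$ while $g\mapsto[g^{-1},\gamma]$ is continuous on the connected $G$, in fact $[G,\gamma]\subseteq G_{i+1}$. Writing $\gamma$ (after a vertical reduction) as an element of $\ker(h)$ with morphism $\gamma''\in\hom(\ns_{k-1},\cD_{k-1}(\ab_k))$, the conjugation formula $(\beta\gamma\beta^{-1})(p)=p+\gamma''\bigl(h(\beta)^{-1}\pi_{k-1}(p)\bigr)$ identifies each $[\beta,\gamma]$ with a first difference of $\gamma''$, so $[G,\gamma]\subseteq G_{i+1}$ forces every first difference of $\gamma''$ to have degree $\le k-i-1$, whence $\gamma''$ has degree $\le k-i$ and $\gamma$ has height $\ge i$ (via Lemma \ref{lem:h-kernel}). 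The genuinely delicate point, which I expect to be the main obstacle, is the identity-component matching $\tran_i(\ns)\cap\tran(\ns)^0=\tran_i(\ns)^0$ needed to convert ``height $\ge i$'' into membership of the connected group $G_i$; I would establish it by the same inductive descent, exploiting that the successive quotients $G_i/G_{i+1}\cong\ab_i$ are connected tori.
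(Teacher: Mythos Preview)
Your approach to (i), (iii), and the equality $\ker(h_i)\cap\tran_i(\ns)^0=\Gamma_i\cdot\tran_{i+1}(\ns)^0$ is correct and essentially equivalent to the paper's, though organized differently: the paper does a single downward induction on $i$ for (i), then argues (ii) directly by taking $\alpha\in\ker(h_i)$, using (i) to find $\alpha'\in\tran_{i+1}(\ns)^0$ with $\alpha'\alpha(x)=x$, and declaring $\alpha'\alpha\in\Gamma_i$. This last step is exactly the point you flag as ``hard'': it tacitly assumes $\alpha'\alpha\in\tran_i(\ns)^0$, equivalently $\ker(h_i)\subset\tran_i(\ns)^0$, which the paper does not justify. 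So you have correctly identified a gap in the paper's proof of (ii) as stated. The paper's argument does cleanly establish the restricted version $\ker(h_i|_{\tran_i(\ns)^0})=\Gamma_i\cdot\tran_{i+1}(\ns)^0$, and this is all that is used for (iii).

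Where you go beyond the paper is in trying to prove the full inclusion $\ker(h_i)\subset\tran_i(\ns)^0$. Your commutator argument reducing to $[G,\gamma]\subset G_{i+1}$ via connectedness is sound (the closedness of $G_{i+1}\Gamma_2$ and the identification of its identity component follow from the lattice property of $\Gamma$ established in Proposition~\ref{prop:nilmanilspace}). However, the remaining steps are not fully worked out. First, the inference ``all translational first differences of $\gamma''$ lie in $\hom(\ns_{k-1},\cD_{k-i-1}(\ab_k))$, hence $\gamma''\in\hom(\ns_{k-1},\cD_{k-i}(\ab_k))$'' is the standard degree-raising principle for abelian groups, but for a general nilspace $\ns_{k-1}$ it requires an argument (you would need to use that every cube on $\ns_{k-1}$ is a translation cube, via Theorem~\ref{thm:toralnilspace} for $\ns_{k-1}$). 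Second, and more seriously, the step you yourself call ``genuinely delicate''---that $\tran_i(\ns)\cap\tran(\ns)^0=\tran_i(\ns)^0$---is not proved; your sketch (``inductive descent exploiting that $G_i/G_{i+1}\cong\ab_i$'') is circular as written, since you have only established $G_i/(\Gamma_iG_{i+1})\cong\ab_i$, not $G_i/G_{i+1}\cong\ab_i$. So your outline for the full (ii) remains incomplete, though the direction is reasonable.
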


\begin{proof}
We prove $(i)$ by downward induction on $i$. For the case $i=k$, since each fibre of $\pi_{k-1}$ is a homogeneous space of $\ab_k$, it suffices to show that $\tran_k(\ns)^0=\tau(\ab_k)$. This is immediate from the following equality, which actually holds for a general nilspace and is established in \cite[Lemma 3.2.37]{Cand:Notes1}:
\begin{equation}\label{eq:abstktk}
\tran_k(\ns)=\tau(\ab_k).
\end{equation}
Suppose now that $i\leq k-1$ and that the statement holds for $i+1$, and note that any fibre of $\pi_{i-1}=\pi_{i-1,i}\co \pi_i$ has the form $\pi_i^{-1}(\pi_{i-1,i}^{-1}(y))$ for some $y\in \ns_{i-1}$. Given $w,w'$ in such a fibre, we have $\pi_i(w),\pi_i(w')\in \pi_{i-1,i}^{-1}(y)$. Since $h_i(\tran_i(\ns)^0)= \tran_i(\ns_i)^0=\tau(\ab_i)$, we have that this acts transitively on $\pi_{i-1,i}^{-1}(y)$ and so there is $\alpha\in \tran_i(\ns)^0$ such that $\pi_i(\alpha(w))=\pi_i(w')$. Then, since $\tran_i(\ns)^0$ contains $\tran_{i+1}(\ns)^0$, by induction it acts transitively on each $\pi_i$-fibre and so there exists $\alpha'\in \tran_i(\ns)^0$ such that $\alpha'\alpha(w)=w'$. This proves statement $(i)$.\\
\indent To see statement $(ii)$, note that $\tran_{i+1}(\ns)^0\leq \ker(h_i)$, since $h_i$ sends $\tran_{i+1}(\ns)^0$ into the trivial  group $\tran_{i+1}(\ns_i)^0$. We also have that $\Gamma_i\subset \ker(h_i)$, because by \eqref{eq:abstktk} we have $h_i(\Gamma_i)\subset \tau(\ab_i)$ but also each element of $h_i(\Gamma_i)$ fixes $\pi_i(x)$, so must be the trivial translation in $\tau(\ab_i)$. It follows that $\Gamma_i\cdot \tran_{i+1}(\ns)^0 \subset \ker(h_i)$. To see the opposite inclusion, note that for $\alpha\in \ker(h_i)$  we have from the definitions that $\pi_i(\alpha(x))=h_i(\alpha)(\pi_i(x))=\pi_i(x)$. Therefore, by statement $(i)$ there exists $\alpha'\in \tran_{i+1}(\ns)^0$ such that $\alpha'\alpha(x)=x$, so $\alpha'\alpha\in \Gamma_i$, and so $\alpha\in \tran_{i+1}(\ns)^0\cdot \Gamma_i$.\\
\indent Statement $(iii)$ now follows from $(ii)$, the first isomorphism theorem for $h_i$, and \eqref{eq:abstktk} for $k=i$.
\end{proof}



\begin{dajauthors}
\begin{authorinfo}[pgom]
  Pablo Candela\\
  Universidad Aut\'onoma de Madrid\\
  Madrid, Spain\\
  pablo\imagedot{}candela\imageat{}uam\imagedot{}es \\
  \url{http://verso.mat.uam.es/~pablo.candela/index.html}
\end{authorinfo}

\end{dajauthors}

\end{document}